\theoremstyle{plain}
\newtheorem{theorem}{Theorem}
\newtheorem{proposition}{Proposition}
\newtheorem{corollary}{Corollary}
\newtheorem{lemma}{Lemma}
\theoremstyle{remark}
\newtheorem{assumption}{Assumption}
\newtheorem{remark}{Remark}
\setlist[enumerate,1]{label={(\roman*)}}
\setlist[enumerate,2]{label={(\alph*)}}
\setlist[enumerate,3]{label={(\Roman*)}}
\def\PP{\mathbb{P}}
\def\RR{\mathbb{R}}
\def\EE{\mathbb{E}}
\newcommand\fa{\mathfrak{a}}
\newcommand\fb{\mathfrak{b}}
\newcommand\fc{\mathfrak{c}}
\newcommand{\dd}{\mathrm{d}}
\def\E{\mathbb{E}}
\def\P{\mathbb{P}}
\def\R{\mathbb{R}}
\def\Q{\mathbb{Q}}
\def\1{\mathbf{1}}
\def\N{\mathbb{N}}
\def\d{\partial}
\def\Z{\mathbb{Z}}
\def\cD{{\cal D}}
\def\cF{{\cal F}}
\def\cL{{\cal L}}
\def\cA{{\cal A}}
\DeclareMathOperator{\supp}{supp}
\begin{document}

\begin{frontmatter}
\title{A quasi-stationary approach to the long-term asymptotics of the growth-fragmentation equation}
\runtitle{A quasi-stationary approach to the growth-fragmentation equation}

\begin{aug}
\author[A]{\fnms{Denis}~\snm{Villemonais}\ead[label=e1]{denis.villemonais@unistra.fr}}
\and
\author[B]{\fnms{Alexander R.}~\snm{Watson}\ead[label=e2]{alexander.watson@ucl.ac.uk}}
\address[A]{Université de Strasbourg, IRMA, Strasbourg, France\printead[presep={,\ }]{e1}}
\address[B]{Department of Statistical Science, University College London, London, UK\printead[presep={,\ }]{e2}}
\end{aug}

\begin{abstract}
  In a growth-fragmentation system, cells grow in size slowly and
  split apart at random. Typically, the number of cells in the
  system grows exponentially and the distribution of the sizes
  of cells settles into an equilibrium
  `asymptotic profile'. In this work
  we introduce a new method to prove this asymptotic
  behaviour for the growth-fragmentation equation,
  and show that the convergence to the asymptotic profile
  occurs at exponential rate.
  We do this by identifying an associated sub-Markov
  process and studying its quasi-stationary behaviour
  via a Lyapunov function condition.
  By doing so, we are able to simplify and generalise
  results in a number of common cases and offer a unified framework
  for their study.
  In the course of this work we are also able to prove
  the existence and uniqueness of solutions to the growth-fragmentation
  equation in a wide range of situations.
\end{abstract}

\begin{keyword}[class=MSC]
  \kwd[Primary ]{35Q92} 
  \kwd{60J25} 
  \kwd[; secondary ]{47G20} 
  \kwd{45K05} 
  \kwd{47D06} 
\end{keyword}

\begin{keyword}
  \kwd{growth-fragmentation equation}
  \kwd{transport equations}
  \kwd{cell division equation}
  \kwd{Feynman-Kac formula}
  \kwd{piecewise-deterministic Markov processes}
  \kwd{quasi-stationary distribution}
\end{keyword}

\end{frontmatter}
\tableofcontents


\section{Introduction}

Growth-fragmentation describes
a system of objects which grow slowly and deterministically,
and split apart suddenly at random. It arises in biophysical models
of cell division \cite[\S 4]{Per07},
cellular aggregates \cite{Banasiak2004}
and protein polymerisation \cite{PruessPujo-MenjouetEtAl2006}.
We are concerned in this work with a mathematical model of
a growth-fragmentation system
which describes its average behaviour over time. We will
give general conditions for such a model to make sense, and
characterise its long-term behaviour,
by showing that cell numbers grow exponentially
and the cell size distribution settles into an equilibrium,
and that this occurs at exponential rate.

In a growth-fragmentation system, each cell has a trait associated with it, called its size. 
As time
progresses, the size of the cell increases in a deterministic way,
mathematically modelled by an ordinary differential equation.
At some random time, it undergoes fragmentation, and splits its size,
again at random, into a collection of descendant cells. 

A common starting point for the study of these phenomena
is the equation
\begin{equation}\label{e:gfe-1}
  \partial_t u_t(x) + \partial_x \bigl( c(x) u_t(x) \bigr)
  = \int_x^\infty u_t(y) k(y,x) \, \dd y
  - K(x) u_t(x),
\end{equation}
where $u_t(x)$ represents the density of cells of size $x$ at time
$t$, $c$ and $K$ are growth and fragmentation rates respectively,
and $k$ represents the repartition of size between parent and descendant cells.

This equation can be expressed in a more general form, without requiring
densities, by considering a semigroup $T$ which solves the
following equation:
\begin{align}
\label{eq:intro}
  \partial_t T_t f(x)
  = T_t \cA f(x),
  \qquad
  \cA f(x) 
  = \frac{\d f}{\d s}(x)+\int_{(0,x)}f(y)k(x,\dd y)-K(x)f(x),
\end{align}
for suitable functions $f$.
Here, $s$ represents the growth term,
$K(x)$ is again the rate at which a cell of size $x$ experiences
fragmentation, and $k(x,\dd y)$ is the rate at which
a cell of size $y$ appears as the result of the fragmentation
of a cell of size $x$.
We call \eqref{eq:intro} the \emph{growth-fragmentation equation}.

Speaking formally, if $u_t$ solves \eqref{e:gfe-1}, then
$T_tf(u_0) \coloneqq \int u_t(x) f(x) \, \dd x$
solves a version of \eqref{eq:intro} integrated
against $u_0(x)\, \dd x$, with $s(x) = \int_1^x \frac{\dd y}{c(y)}$.  On the
other hand, if $T$ solves \eqref{eq:intro} and $T_tf(x) = \int u_t(y) f(y) \dd y$,
and once again
$s(x) = \int_1^x \frac{\dd y}{c(y)}$,
then $u_t$ satisfies \eqref{e:gfe-1} with $u_0 = \delta_x$, the
Dirac delta measure. However, it is not straightforward to make this connection
rigorous (see \cite{DebiecDoumicEtAl2018} for one possible approach for
growth-fragmentation, and \cite[Theorem~8.1]{CHHK-msnte} for a related model).
We take \eqref{eq:intro} as our main object of study.

Our standing assumptions on the coefficients of \eqref{eq:intro} will
be given at the beginning of section~\ref{sec:ExistenceUniqueness}.
For the moment, we note that $s$ should be continuous and strictly
increasing, we define (see e.g.~\cite{PousoRodriguez2015} and references therein)
\[
  \frac{\d f}{\d s}(x)=
  \lim_{\delta\to 0, \ \delta >0}\frac{f(x+\delta )-f(x)}{s(x+\delta )-s(x)},
\]
and $C^{(s)}$ to be the set of  continuous functions $f\colon (0,+\infty)\to(0,+\infty)$
such that $\d f/\d s$ is well-defined on $(0,+\infty)$.
We also write $C^{(s)}_c$ for functions $f \in C^{(s)}$ with compact support and
with $\d f/\d s$ bounded; and $C^{(s)}_{\text{loc}}$ the set of functions
$f\in C^{(s)}$ with $\d f/\d s$ locally bounded.

The purpose of this work is to give general conditions for the existence
and uniqueness of the semigroup solving the growth-fragmentation
\eqref{eq:intro}, and to describe its long-term behaviour precisely.

For the first of our results, we require the following assumption on the
existence of a Lyapunov type function for $\cA$.

\begin{assumption}
  \label{assumption1}
  There exists a positive function $h\in C^{(s)}_{\text{loc}}$ such that,
  for all $M>0$,
  \begin{align}
    \label{as:1c_1}
    \sup_{x\in(0,M)} \int_{(0,x)} \frac{h(y)}{h(x)}\,k(x,\mathrm dy)<+\infty
  \end{align}
  and such that $(0,\infty) \ni x \mapsto \frac{\mathcal A h(x)}{h(x)}$
  is bounded from above and locally bounded.
\end{assumption}

This assumption is quite abstract, but we will show shortly that
it is verified for a wide class of coefficients, covering
many commonly studied cases in the literature.
The main result of this paper is the following statement on existence and uniqueness
of solutions to the growth-fragmentation equation. We consider semigroups
acting on the Banach space
\[
  B = \{ f \colon (0,\infty)\to\RR : f \text{ is Borel and } f/h \text{ is bounded}\,\}
\]
with associated norm $\lVert f\rVert_B = \lVert f/h\rVert_{L^{\infty}((0,\infty))}$.
We caution that our definition of semigroup (which we defer to page~\pageref{para:semigroup})
does not impose any strong continuity requirement.

\begin{theorem}
  \label{thm:semigroup}
  Assume that Assumption~\ref{assumption1} holds true.
  There exists a unique semigroup $(T_t)_{t\geq 0}$ on $B$
  such that, for all $f \in \cD(\mathcal{A}) \coloneqq C_c^{(s)}\cup\{h\}$,
  \begin{align}
    \label{eq:gfint}
    \int_0^t T_u \lvert \cA f\rvert (x) \,\dd u < \infty
    \text{ and }
    T_t f(x) = f(x) + \int_0^t T_u\cA f(x)\, \dd u.
  \end{align}
\end{theorem}

We
study the semigroup $T$ by connecting it to that of a Markov
process via an $h$-transform,
and this is a feature shared by other recent work
such as
\cite{BW-spec, Cavalli2019, BPR12, CGY-spectral}.
However, whereas these previous works have been concerned
with finding either a specific superhamonic function (in the first two cases)
or an eigenfunction (in the latter two) connected to $\cA$, we are quite free 
in our choice of the function $h$, provided that we verify
Assumption~\ref{assumption1}.
In turn, we make use of the theory of sub-Markov
processes and their quasi-stationary distributions.
This gives us a great deal
of freedom and accounts for the flexibility of our approach. 
In particular, we do not require conservation of size at splitting events
(i.e., $K(x)=\int \frac{y}{x} k(x,\dd y)$),
and both $K(x)\leq \int \frac{y}{x} k(x,\dd y)$
and $K(x)\geq \int \frac{y}{x} k(x,\dd y)$ are possible in our framework,
modelling respectively size creation and destruction;
see section~\ref{sec:entrance} for a representative example.

Other approaches, which do not adapt well to our situation, have been proposed.
An approach via Hille-Yosida theory may be found
in~\cite{BanasiakLambEtAl2019,BernardGabriel2017,BernardGabriel2020},
and further references therein;
a method using strongly continuous semigroups in $L^1$ spaces is contained
in~\cite{Cavalli2019,MischlerScher2016,MaillardPaquette2020};
\cite{Mokhtar-Kharroubi2020}~discusses perturbation results for
$C_0$-semigroups in well chosen function spaces;
an approach from martingale theory can be found in~\cite{BW-spec};
and~\cite{BCGM-Harris} uses a fixed point argument.

The second part of our work consists of describing the long-term
behaviour of $T$, the unique solution of the growth-fragmentation equation.
In order to do this, we leverage a representation of $T$ in terms of a sub-Markov
process which is developed in the proof of Theorem~\ref{thm:semigroup},
and make use of the theory of quasi-stationary limits in weighted
total variation distance.

The following additional assumptions are required. These are discussed in
more detail in section~\ref{sec:long-time}.
The first can be regarded as a sufficient condition for irreducibility
of the auxiliary Markov process referred to above; see Proposition~\ref{prop:Markov}
in section~\ref{sec:long-time} for a proof of this.

\begin{assumption}
  \label{assumptionIrr}
  For all $x\in(0,+\infty)$, the Lebesgue measure of $s(\{y\in(x,+\infty): k(y,(0,x))>0\})$ is positive.
\end{assumption}

The second assumption implies a certain Doeblin condition for said Markov process,
as shown in Proposition~\ref{prop:doeblin} in section~\ref{sec:long-time}.

\begin{assumption}
  \label{assumptionDoebSec}
  One of the following holds true:
  \begin{enumerate}[label={(\alph*)}, ref={\ref{assumptionDoebSec}(\alph*)}]
    \item \label{assumptionDoebBoth}
      There exist a positive constant $a>0$, a non-empty, open, compactly
      contained interval~$I\subset (0,+\infty)$, a function 
      $T\colon [0,1]\times (0,+\infty) \to (0,+\infty)$
      and a probability measure $\mu$ on $[0,1]$ such that, for all
      positive measurable function $f$ on $(0,+\infty)$,
      \[
        \int_{(0,+\infty)} f(y) k(x,\mathrm dy)\geq a
        \int_{[0,1]} f({T(\theta,x)}) \,\mu(\mathrm d\theta),
        \qquad x\in I,
      \]
      and such that for all $\theta\in[0,1]$, $s\circ
      T(\theta,\cdot)$ is continuously differentiable with respect to $s$ on $I$,
      with
      \begin{align}
        \label{eq:conditionderivativethetaTer}
        \frac{\partial s\circ T(\theta,\cdot)}{\partial s}(x)\neq 1,
        \qquad x\in I.
      \end{align}
    \item 
      \label{assumptionDoebF}
      There exists a non-negative non-zero kernel 
      $\beta$ from $(0,+\infty)$ to $(0,+\infty)$  such that, for all $x\in(0,+\infty)$,
      $k(x,\mathrm dy)\geq \beta(x,\mathrm dy)$
      and such that, for all measurable $A\subset (0,+\infty)$ and $x\in(0,+\infty)$ with $\beta(x,A)>0$,
      \begin{align}
          \label{eq:semi-cont-pos-assumption}
          \liminf_{y\to x,\,y<x} \beta(y,A)>0.
      \end{align}
  \end{enumerate}
\end{assumption}

With these in place, we can state the second main result.
A more general version of this appears, with proof, as Theorem~\ref{thm:spectgap}
and Proposition~\ref{prop:lambda0upperbound} in section~\ref{sec:long-time}.

\begin{theorem}
  \label{thm:spectgap-intro}
  Assume that Assumptions~\ref{assumption1}, \ref{assumptionIrr} and~\ref{assumptionDoebSec}
  hold, 
  and that there exist 
  positive functions $\psi,\xi \in C^{(s)}_\text{loc}$,
  constants $\lambda_1 \geq \lambda_2$ and $c_1,c_2,C>0$, and a 
  compact interval $L\subset (0,+\infty)$,
  such that
  $c_1\xi \le h \le c_2\psi$,
  $\lim_{x\to 0, +\infty} \frac{\xi(x)}{\psi(x)} = 0$, 
  $\sup_{x\in(0,M)} \int_{(0,x)} \frac{\xi(y)}{\xi(x)} k(x,\mathrm dy)<+\infty$ for all $M>0$, 
  and
  \begin{align*}
    \cA \psi(x) &\le -\lambda_1 \psi(x) + C\mathbf{1}_{L}(x),
    &x &\in (0,+\infty), \\
    \cA \xi(x) &\ge -\lambda_2 \xi(x),
    &x &\in (0,+\infty).
  \end{align*}
  Then, there exist
  $\lambda_0 \le \lambda_2$, a
  unique positive measure $m$ on $(0,+\infty)$ 
  and a unique function $\varphi:(0,+\infty)\to (0,+\infty)$ 
  such that $m(\psi)=1$ and $\|\varphi/\psi\|_\infty=1$ and such that, 
  for all $t\geq 0$, $mT_t=e^{\lambda_0 t}m$ and $T_t\varphi=e^{\lambda_0 t}\varphi$.
  Moreover, for all $f:(0,+\infty)\to\R$ such that $|f|\leq \psi$, we have
  \begin{align*}
    \left|e^{\lambda_0 t} T_t f(x)-\varphi(x)m(f)\right|
    \leq c e^{-\gamma t} \psi(x).
  \end{align*}
  for some constants $c,\gamma>0$.
  If moreover $\frac{\cA \xi}{\xi}$ is not constant, then $\lambda_0 < \lambda_2$.
\end{theorem}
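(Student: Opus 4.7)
The plan is to exploit the probabilistic representation of $T$ constructed in the proof of Theorem~\ref{thm:semigroup} in order to reduce the statement to a quasi-stationary convergence problem for a sub-Markov semigroup, and then to apply a Foster--Lyapunov-plus-Doeblin criterion for exponential convergence in weighted total variation, in the spirit of Champagnat--Villemonais. Concretely, I would define the transformed semigroup $\widetilde T_t g := T_t(gh)/h$ acting on bounded Borel functions $g$. Since $\sup_{(0,\infty)} \cA h /h <\infty$, absorbing the non-conservative mass as a Feynman--Kac factor makes $\widetilde T$ the sub-Markov semigroup of a Markov process $Y$ on $(0,\infty)$ (possibly killed), whose generator acts as $\widetilde\cA g = \cA(gh)/h - (\cA h/h)g$ on functions with $gh \in \cD(\cA)$.

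\textbf{Step 2: translate the Lyapunov conditions.} Setting $\Psi := \psi/h$ and $\Psi' := \psi'/h$, the hypotheses give $\inf \Psi > 0$, $\sup \Psi' < \infty$ and $\Psi'/\Psi \to 0$ at both ends of $(0,\infty)$; hence every level set of $\Psi/\Psi'$ lies in a compact subinterval, providing the tightness needed to confine $Y$ away from the boundary. Substituting into $\widetilde\cA$ yields
\[
  \widetilde\cA \Psi \le -\left(\lambda_1 + \frac{\cA h}{h}\right)\Psi + \frac{C\mathbf{1}_L}{h},
  \qquad
  \widetilde\cA \Psi' \ge -\left(\lambda_2 + \frac{\cA h}{h}\right) \Psi'.
\]
Since $\cA h/h$ is bounded, these constitute a classical Foster--Lyapunov drift inequality for $\widetilde T$ with compact absorbing set $L$, together with its matching lower bound, controlling the principal eigenvalue of $\widetilde T$ from above and from below. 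The condition $\sup_{x\in(0,M)}\int \frac{\psi'(y)}{\psi'(x)}k(x,\dd y)<\infty$ is exactly what is needed to make $\widetilde\cA \Psi'$ pointwise well-defined and keep the lower drift under control in the fragmentation integral.

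\textbf{Step 3: quasi-stationary convergence and translation back.} Combining these inequalities with Assumption~\ref{assumptionIrr} and either form of the local Doeblin assumption, the abstract Champagnat--Villemonais convergence theorem produces a real $\lambda_0$, a unique (suitably normalised) positive eigenmeasure $\widetilde m$ and a unique positive eigenfunction $\widetilde\varphi$ for $\widetilde T$, satisfying $\widetilde T_t \widetilde\varphi = e^{\lambda_0 t}\widetilde\varphi$, $\widetilde m\widetilde T_t = e^{\lambda_0 t}\widetilde m$, with the exponential convergence $|e^{\lambda_0 t}\widetilde T_t g - \widetilde\varphi\,\widetilde m(g)|\le c\,e^{-\gamma t}\Psi$ for all $g$ with $|g|\le \Psi$. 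Setting $\varphi := h\widetilde\varphi$ and $m(\dd y) := \widetilde m(\dd y)/h(y)$ (renormalised so $m(\psi)=1$ and $\|\varphi/\psi\|_\infty = 1$), and taking $g = f/h$, transfers the eigenrelations and the convergence verbatim to $T$ and $\psi$. Integrating $\cA\psi'\ge -\lambda_2\psi'$ through~\eqref{eq:gfint} (after a standard monotone extension of $\cD(\cA)$) gives $T_t\psi' \ge e^{-\lambda_2 t}\psi'$ pointwise, and then applying $m$ yields $e^{\lambda_0 t}m(\psi')\ge e^{-\lambda_2 t} m(\psi')$, hence $\lambda_0 \le \lambda_2$ after the appropriate reading of the sign convention.

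\textbf{Step 4: strict inequality, and main obstacle.} Equality $\lambda_0 = \lambda_2$ would force the previous display to be an equality for every $t>0$, hence $T_t\psi' = e^{-\lambda_2 t}\psi'$, so $\cA\psi' = -\lambda_2\psi'$ and $\cA\psi'/\psi' \equiv -\lambda_2$ is constant; the contrapositive delivers the strict inequality. The principal obstacle I anticipate is making the abstract quasi-stationary criterion genuinely applicable to $Y$: one must check that the Foster--Lyapunov drift, the petite/Doeblin set and the irreducibility condition transfer cleanly through the $h$-transform and the Feynman--Kac penalty, and that the boundary behaviour of $Y$ at $0$ and $+\infty$ is harmless --- which is precisely where the assumptions $\inf \psi/h>0$, $\sup\psi'/h<\infty$ and $\psi'/\psi\to 0$ at the endpoints intervene, ensuring the tightness of the quasi-stationary measure and thereby the existence of the principal eigentriple.
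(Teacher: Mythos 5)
Your Steps 1--3 reproduce, in outline, the paper's proof of Theorem~\ref{thm:spectgap}: an $h$-transform to a sub-Markov semigroup, translation of the drift inequalities, verification of the Champagnat--Villemonais conditions, and translation back; that part is sound and is essentially the paper's route.

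The gap lies in how you obtain $\lambda_0\le\lambda_2$ and the strict inequality, and it is a circularity. The Champagnat--Villemonais criterion (condition (F2) in the paper) requires the Lyapunov rate $\lambda_1$ to \emph{strictly} exceed the growth rate $\lambda_0$ --- this is exactly Assumption~\ref{assumptionLyapGeneral}. You invoke the criterion to produce the eigenmeasure $m$, and only afterwards use $m$ to deduce $\lambda_0\le\lambda_2\le\lambda_1$. But when $\lambda_1=\lambda_2$ (which the hypotheses allow), the bound $\lambda_0<\lambda_1$ is precisely what must be established \emph{before} the criterion applies; the drift inequalities alone do not ``control the principal eigenvalue from above''. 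The paper resolves this with Proposition~\ref{prop:lambda0upperbound}, which bounds $\lambda_0$ with no recourse to the eigenmeasure: part~(a) by a direct supermartingale/Gronwall comparison of $\psi/h$ and $\psi'/h$ (using $\psi'/\psi\to0$ at the boundary to confine the relevant mass to a compact set), and part~(b) --- the one actually needed for the theorem as stated --- by passing to the $\psi'$-transformed PDMP with jump kernel $\frac{\psi'(y)}{\psi'(x)}k(x,\mathrm dy)$ (this is where the hypothesis $\sup_{x\in(0,M)}\int\frac{\psi'(y)}{\psi'(x)}k(x,\mathrm dy)<\infty$ enters), proving that this process is recurrent via the Lyapunov function $\psi/\psi'$, and invoking a growth bound for Feynman--Kac semigroups of recurrent processes, which gives $\lambda_0\le\sup_x(-\cA\psi'/\psi')\le\lambda_2$ with strict inequality when $\cA\psi'/\psi'$ is not constant.

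Your Step 4 has a second, related defect: integrating $T_t\psi'\ge e^{-\lambda_2 t}\psi'$ against $m$ and using $mT_t=e^{-\lambda_0 t}m$ only yields $T_t\psi'=e^{-\lambda_2 t}\psi'$ for $m$-almost every $x$, not pointwise; and even granting full support of $m$, one cannot conclude that $\cA\psi'/\psi'$ is \emph{everywhere} constant without a continuity assumption that the paper does not make ($K$ is only right-continuous in the applications). The recurrence/Feynman--Kac mechanism of Proposition~\ref{prop:lambda0upperbound}(b) is what actually delivers the strict inequality, and it is independent of the spectral-gap machinery.
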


This result is exactly what one hopes for from a Lyapunov function
approach, but the reader
may still wonder whether these conditions and assumptions can be
verified in practice.
A representative case is the following, which appears later, with proof,
as Proposition~\ref{prop:appli1}.

Consider the operator $\cA$ given in the form
\[
  \cA f = c(x) f'(x) + K(x) \left( \int_{(0,1)} f(ux) \, p(\dd u) - f(x) \right),
\]
where $p$ is a finite measure on $(0,1)$
such that $\int_{(0,1)} u\, p(\dd u) = 1$,
$K$ is right-continuous
and $c\colon (0,+\infty) \to (0,+\infty)$ is right-continuous and locally bounded.
This means that $p(\dd u)$ describes the rate
of seeing children of relative size $\dd u$ at splitting, regardless
of the size of the parent (we say that $k$ is `self-similar');
there is conservation of size
at splitting events; and that
prior to splitting, the size $x_t$ of a cell
follows the ordinary differential equation $\dot{x}_t = c(x_t)$.
To put this into the framework of \eqref{eq:intro}, we may take
$s(x) = \int_1^x \frac{\dd y}{c(y)}$ and $k(x,\cdot) = K(x) p\circ m_x^{-1}$,
where $m_x(u) = xu$.

\begin{proposition}
  \label{prop:appli1-intro}
  Assume  that $\sup_{x\in (0,M)} K(x)<+\infty$ for each $M>0$,
  that Assumptions~\ref{assumptionIrr} and~\ref{assumptionDoebSec}
  hold true, 
  that
  \begin{align*}
    \int_{(0,1)} \frac{K(x)}{c(x)}\, \dd x < +\infty
  \end{align*}
  and that there exists $\alpha>1$  such that, for all $u\in(0,1)$,
  \begin{align}
    \label{as:2appli1bis-intro}
    \liminf_{x\to+\infty}
    \int_{ux}^x \frac{K(x)}{c(x)}\, \dd x
    > \frac{-\alpha\ln u}{1-\int_{(0,1)} v^\alpha p(\mathrm dv)}.
  \end{align}
  Then,
  Assumption~\ref{assumption1} holds,
  and the conclusions of Theorem~\ref{thm:spectgap-intro} are valid,
  with $\lambda_0<0$.
\end{proposition}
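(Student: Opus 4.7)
The plan is to deduce this proposition from Theorem~\ref{thm:spectgap-intro} by exhibiting explicit Lyapunov functions adapted to the operator $\cA$ of the proposition and to the integral hypothesis~\eqref{as:2appli1bis-intro}. Concretely, I would take $\psi=h$ to be a positive function blowing up at both endpoints --- say $h(x)=\psi(x)=x^{\alpha}+x^{-\gamma}$ with $\gamma>0$ chosen small enough that $\int_{(0,1)}u^{-\gamma}\,p(\mathrm du)<+\infty$ --- so that the $x^{\alpha}$ behaviour at infinity matches the exponent in~\eqref{as:2appli1bis-intro} while the $x^{-\gamma}$ branch ensures $\psi'/\psi\to 0$ at $0$. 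For $\psi'$ I would take the constant function $\psi'(x)=1$: since $\int u\,p(\mathrm du)=1$ forces $p((0,1))>1$, the identity $\cA\psi'(x)/\psi'(x)=K(x)(p((0,1))-1)\ge 0$ gives the reverse Lyapunov bound trivially with $\lambda_{2}=0$, while $\psi'/\psi=1/h\to 0$ at both endpoints and $\sup\psi'/h=1/\min h<+\infty$.

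Assumption~\ref{assumption1} is then straightforward. The kernel bound~\eqref{as:1c_1} reduces to finiteness of $K(x)\int_{(0,1)}(u^{\alpha}+u^{-\gamma})\,p(\mathrm du)$ on $(0,M)$, immediate by hypothesis. For the upper bound on $\cA h/h$ I would expand
\[
  \frac{\cA h(x)}{h(x)}=\frac{c(x)h'(x)}{h(x)}+K(x)\Bigl(\int_{(0,1)}\frac{h(ux)}{h(x)}\,p(\mathrm du)-1\Bigr);
\]
near $0$, the $x^{-\gamma}$ branch dominates and boundedness follows from local boundedness of $c,K$ and finiteness of $\int u^{-\gamma}\,p(\mathrm du)$, whereas near $+\infty$ the expression is asymptotically $\alpha c(x)/x-K(x)(1-\int u^{\alpha}\,p(\mathrm du))$, bounded above using~\eqref{as:2appli1bis-intro}. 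The remaining ratio and kernel conditions on $\psi,\psi',h$ in Theorem~\ref{thm:spectgap-intro} all hold by construction.

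For the forward Lyapunov bound $\cA\psi\le-\lambda_{1}\psi+C\mathbf 1_{L}$ one needs to upgrade the above to a strict negative bound outside a compact. Near $0$, the hypothesis $\int_{0}^{1}K/c\,\mathrm dx<+\infty$ combined with the $-\gamma c(x)/x$ contribution from the growth term of $h$ gives the required strict negativity. Near $+\infty$, the strict inequality in~\eqref{as:2appli1bis-intro} should deliver $\cA\psi/\psi\le-\lambda_{1}<0$ for large $x$. Once the hypotheses of Theorem~\ref{thm:spectgap-intro} are verified, one obtains the eigenelements $(\lambda_{0},\varphi,m)$ and the exponential convergence, with $\lambda_{0}\le\lambda_{2}=0$. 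For the strict inequality $\lambda_{0}<0$, I would invoke the final clause of Theorem~\ref{thm:spectgap-intro}: $\cA\psi'/\psi'=K(x)(p((0,1))-1)$ is non-constant whenever $K$ is non-constant, giving $\lambda_{0}<\lambda_{2}=0$; the degenerate constant-$K$ case can be handled by choosing a different $\psi'$ (for example $\psi'=h^{\theta}$ with $\theta\in(0,1)$) whose non-constancy is inherited from the jump structure.

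The chief technical obstacle is the forward Lyapunov bound at infinity, because~\eqref{as:2appli1bis-intro} is an integrated rather than pointwise condition: it does not give a pointwise lower bound on $xK(x)/c(x)$, so the naive $h(x)=x^{\alpha}$ yields $\cA h(x)/h(x)\to 0$ rather than a strict negative value along sequences where $K/c$ is small. Bridging this gap likely requires modifying $h$ at infinity by a multiplicative factor $\exp(\int_{1}^{x}g(y)\,\mathrm dy)$ with $g$ designed so that the combined growth and jump contribution in $\cA h/h$ reduces to a quantity into which~\eqref{as:2appli1bis-intro} can be substituted directly, thereby converting the integrated hypothesis into the required pointwise strict inequality.
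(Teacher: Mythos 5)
There is a genuine gap, and you have in fact located it yourself in your last paragraph: the Lyapunov function you actually construct, $h(x)=\psi(x)=x^{\alpha}+x^{-\gamma}$, does not satisfy the required bounds, and the ``exponential integrating factor'' you gesture at in closing is not an optional refinement but the entire analytical content of the proof. At infinity, with $h\sim x^{\alpha}$ one gets $\cA h/h \approx \alpha c(x)/x - K(x)\bigl(1-\int v^{\alpha}p(\dd v)\bigr)$; since \eqref{as:2appli1bis-intro} is an integrated condition it gives no pointwise control of $xK(x)/c(x)$, so this quantity need not even be bounded above (no hypothesis prevents $c(x)/x$ from being unbounded along a sparse set), and Assumption~\ref{assumption1} already fails, let alone the forward Lyapunov bound. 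Near $0$ the situation is just as bad: your choice $\psi'\equiv 1$ forces $\psi\to\infty$ at $0$, hence the $x^{-\gamma}$ branch, and then $\cA h/h\approx -\gamma c(x)/x + K(x)\bigl(\int v^{-\gamma}p(\dd v)-1\bigr)$ with a \emph{positive} jump contribution; nothing in the hypothesis $\int_0^1 K/c<\infty$ makes the drift term dominate pointwise (take $c(x)/x\to 0$ with $K$ bounded away from $0$). So neither endpoint works as claimed.

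The paper's proof takes $\psi=h=\exp\bigl(a_0\int_x^1 K/c\bigr)$ for $x\le 1$ and $\exp\bigl(a_\infty\int_1^x K/c\bigr)$ for $x\ge 1$, precisely so that $\cA h(x)/h(x)=K(x)\bigl(\int\exp(\mp a\int_{ux}^{x}K/c)\,p(\dd u)-1\pm a\bigr)$ and the integrated hypotheses can be substituted directly; the exponents are tuned nontrivially ($a_0>p_0-1$; $a_\infty\in(\alpha/\ell,\,1-p_\alpha)$ chosen so that $\int u^{a_\infty(\varepsilon_u+\alpha/(1-p_\alpha))}p(\dd u)<p_\alpha$, which is where the strict inequality and the exponent $\alpha$ in \eqref{as:2appli1bis-intro} enter). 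Note in particular that this $\psi$ is \emph{bounded} near $0$, which is incompatible with $\psi'\equiv 1$; the paper instead takes $\psi'(x)=x$, exploiting mass conservation to get $\cA\psi'(x)/\psi'(x)=c(x)/x>0$, and then needs the dyadic iteration of \eqref{as:2appli1bis-intro} to show $\psi(x)\gtrsim x^{\alpha}$ at infinity so that $\psi'/\psi\to 0$ there. Your $\psi'\equiv 1$ idea for the lower Lyapunov bound is workable in principle (indeed $p((0,1))>1$, and the constant-$K$ case gives $\lambda_2<0$ outright, so your fallback to $\psi'=h^{\theta}$ is unnecessary), but it is precisely what traps you into an untenable construction of $\psi$ at the origin. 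As written, the proposal identifies the right obstruction but does not overcome it.
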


In the case where $p(\dd u) = 2\dd u$, which represents splitting into
an average of two children with uniform size repartition,
Assumption~\ref{assumptionDoebBoth}
is satisfied provided $K$ has some positive lower bound on a
compact interval (see Remark~\ref{assumptionDoeb} in section~\ref{s:app}), and 
the inequality
\eqref{as:2appli1bis-intro} holds if
\begin{equation}\label{e:3sqrt8}
  \liminf_{x\to +\infty} \frac{x K(x)}{c(x)} > 3 + 2\sqrt{2}.
\end{equation}

On the contrary, when $p(\dd u) = 2\delta_{1/2}(\dd u)$, representing
equal mitosis, 
Assumption~\ref{assumptionDoebBoth}
holds provided that $K$
has some positive lower bound on a compact interval $I$
and that $c(x) \ne 2c(x/2)$ for $x\in I$ 
(see Remark~\ref{rem:equamitosi} in section~\ref{s:app}).
Moreover, the right-hand side of
\eqref{as:2appli1bis-intro} has minimum approximately
$-3.86\ln u$ (with the exact expression involving
an implicit function).
This implies that \eqref{as:2appli1bis-intro}
holds if
\begin{equation}\label{e:Kc-mitosis}
  \liminf_{x\to+\infty} \frac{xK(x)}{c(x)}
  > 3.86.
\end{equation}
Proposition~\ref{prop:appli1-intro} and inequalities
\eqref{e:3sqrt8} and \eqref{e:Kc-mitosis} give very concrete conditions for checking
the long-term behaviour in these common cases.

\paragraph*{Comparison with the literature}
We give here a brief overview of the literature and describe how our results
compare with the state of the art, without pretending to be comprehensive.

The situation in Proposition~\ref{prop:appli1-intro} was considered in Theorem
1.3 of \cite{CGY-spectral} where, as discussed earlier, the authors begin by
finding an eigenfunction $\varphi$ for $\cA$, using functional analysis
techniques, and then use Lyapunov function criteria for the convergence of the
resulting (conservative) semigroup.  Proposition~\ref{prop:appli1-intro}
improves upon this by reducing the regularity assumptions on $c$ and $K$ and
the requirements on the relative growth rates of these functions.
Proposition~\ref{prop:appli1-intro} also recovers Theorem~4.3 in
\cite{BCGM-Harris} while making slightly weaker assumptions on the growth rate
and balance between $K$ and $c$. Both of these cases are discussed in more
detail in section~\ref{sec:pseudo-entrance}.

Bertoin~\cite[section~3.6]{Ber-fk} considers the setting where $c$ is a positive,
continuous, approximately linear function and $k$ is self-similar, and develops
moment conditions for convergence at geometric rate.  The method used in that
work bears some similarity with our own: the authors identify a particular superharmonic function
and study an associated Markov process. Section~6 of \cite{BW-lln} applies
the same idea in a more specific setting. Our approach is based on the study of a sub-Markov process, which allows us to extend the geometric convergence to a broader class of models.
In particular, 
Proposition~\ref{prop:lnxmodified}, whose conditions are similar to those
in the model studied in~\cite{Ber-fk},  gives
a similar, but more general moment conditions for this to occur.
Cavalli~\cite{Cavalli2019} uses methods similar to those of~\cite{Ber-fk} in a situation in which the fragmentation
rate is bounded and cells can grow to positive size starting from size $0$.  We
reach the same conclusion in section~\ref{sec:entrance}, making weaker
regularity assumptions than \cite{Cavalli2019}. 

In \cite{BCG-fine,CCM11,GabrielSalvarani2014}, $K$ is comparable to a power law, $c$ is
either constant or linear, and $\frac{k(x,\cdot)}{xK(x)}$ is bounded both below
and above. The authors prove geometric convergence in a weighted $L^2$ norm.
In the case of self-similar $k$, Propositions~\ref{prop:appli1}
and~\ref{prop:lnx} offer the convergence in~\cite{CCM11}
and~\cite{GabrielSalvarani2014} respectively, albeit in a weighted $L^\infty$
space; the bounds on $\frac{k(x,\cdot)}{xK(x)}$ can be replaced with the
weaker Assumption~\ref{assumptionDoebSec}.  When $k$ is not
self-similar, our main result also applies under weaker requirements, as detailed in
Remark~\ref{rem:pseudo}.
We also note in this remark that our geometric convergence result covers, under weaker assumptions, the setting of D\k{e}biec et al.~\cite{DebiecDoumicEtAl2018}, who use a relative entropy method to prove convergence
without geometric rate, and of Bernard and Gabriel~\cite{BernardGabriel2020}, where the authors prove geometric convergence rates by means of quasi-compactness. 

Maillard and Paquette~\cite{MaillardPaquette2020} consider the particular case where $c(x)=x$, $K(x)=xR(x)$ and $k(x,\mathrm dy)=xR(x)\frac{2y}{x^2}\mathrm dy$ for some positive function $R$, so that there is no conservation of mass, but rather conservation of the number of fragments.
They establish a concise necessary and sufficient condition
for existence of a stationary distribution and convergence without geometric rates. Our results can be applied to this particular model to  provide sufficient conditions for convergence with a geometric rate. This is 
 discussed in Remark~\ref{rem:pseudo}, where we emphasize the additional requirements of our conditions (which ensure geometric rates of convergence) to the ones from~\cite{MaillardPaquette2020}. Bouguet~\cite{Bouguet2018} also studies the situation where the solution to the growth fragmentation equation is the semigroup of a (conservative) Markov process, under moment conditions  on the fragmentation kernel and asymptotic conditions on the growth and fragmentation rate, which also enter in our setting.

Finally, we give some additional pointers to the (wide) literature. In~\cite{BanasiakLamb2012} the authors consider a growth-fragmentation model in a discrete state space; in~\cite{BernardGabriel2017}, the authors study the non-convergence of the growth fragmentation equation; in~\cite{Cav-refr}, the author obtains a sharp bound for the coefficients of a critical growth-fragmentation equation (we actually recover this sharp bound in Section~\ref{sec:criticalKcst}); the two papers \cite{Marguet2019,BW-lln} study the branching process representation of the growth-fragmentation phenomenon and corresponding laws of large numbers; in~\cite{LaurencotPerthame2009}, the authors prove an explicit geometric rate of convergence under  a specific monotonicity condition on integrands of the kernel in the case where $s(x)=x-1$ and we do recover under weaker assumptions geometric convergence in the examples they consider, however it does not seem that their assumptions imply the Doeblin condition required to apply our result; in~\cite{Mokhtar-Kharroubi2020}, the author studies a situation in
which loss of mass occurs, either at division events or directly by cell
`death' and proves quasi-compactness properties.
We also refer the reader to the seminal works~\cite{DiekmannHeijmansEtAl1984,Heijmans1985} where dynamics in $(0,1]$ are studied; see also~\cite{BPR12,RudnickiPichor2000} for an extension of this model.

\paragraph*{Discussion of the growth term}
Besides giving general Lyapunov function criteria for solutions of the
growth-fragmentation and their long-term behaviour, the present work also makes
it possible to consider more general growth dynamics, since the growth term in
$\cA$ is given by the general differential $\d f/\d s$.  As intimated in the
previous example, the classical situation, where $\d f/\d s$ is replaced by 
$cf'$ for some continuous positive function $c$, can be recovered by setting
$s(x) = \int_1^x \frac{\dd y}{c(y)}$.  However, our setting allows us to
handle, in particular, situations where the drift $c$ vanishes and is not
Lipschitz.  Indeed, consider the case where $c(x)=\sqrt{|x-1|}$.  Then the flow
directed by the generator $f\mapsto cf'$, acting on continuously differentiable
functions, has multiple solutions, whereas the flow directed by the generator
$f\mapsto \d f/\d s$, acting on functions with bounded $s$ derivatives, admits
only one solution.  It also covers seamlessly the situation where the drift $c$
is not locally bounded.  The fact that the generator is not restricted to
continuously differentiable functions is of course a central component.

\paragraph*{Outline of the paper}
In section~\ref{sec:ExistenceUniqueness}, 
we prove that the  growth-fragmentation equation admits a unique solution, 
by representing it as an $h$-transform of the semigroup of a sub-Markov process. 
In section~\ref{sec:long-time}, we state and prove a general result which 
implies Theorem~\ref{thm:spectgap-intro}, and we provide several applications to 
different families of growth fragmentation equations, with a comparison to the
state of the art.
Finally, in Appendix~\ref{sec:appendix}, we give some extensions of Davis'
work \cite{Davis} on piecewise-deterministic Markov processes (PDMP)
which are required in section~\ref{sec:ExistenceUniqueness},
proving in particular that the martingale problem is well-posed.

\section{Existence of a unique solution to the growth-fragmentation equation}
\label{sec:ExistenceUniqueness}

This section is devoted to the proof of Theorem~\ref{thm:semigroup},
which is to say, the existence and uniqueness of a semigroup $T$
solving the growth-fragmentation \eqref{eq:intro}.
Before discussing this in detail, we should clarify our standing assumptions,
notation and definitions.

The coefficients of \eqref{eq:intro} have the following
standing assumptions in place.
Let $k$ be a positive kernel from $(0,+\infty)$ to itself 
such that $k(x,[x,+\infty))=0$ for all $x\in(0,+\infty)$,
let $s\colon (0,+\infty)\to\R$ be a strictly 
increasing 
continuous function such that $s(1)=0$ and $\lim_{x\to+\infty} s(x)=+\infty$, and 
$K:(0,+\infty)\to \mathbb R$ be a measurable locally bounded function. 

Recall the definition given earlier of the derivative of $f$
with respect to $s$, 
\[
  \frac{\d f}{\d s}(x)=
  \lim_{\delta\to 0, \ \delta>0}\frac{f(x+\delta)-f(x)}{s(x+\delta)-s(x)},
\]
and the function spaces $C^{(s)}$, $C^{(s)}_c$ and $C^{(s)}_{\text{loc}}$
of $s$-differentiable functions.
It is also useful at this point to observe that,
if a function $f$ is $s$-differentiable on the right with locally bounded derivatives in the above sense, then $f$ is 
$s$-absolutely continuous  (as defined in the appendix) and $\partial f/\partial s$ is its Radon--Nikodym derivative.
On the other hand, if $f$ is $s$-absolutely continuous, then
the right-hand side above is equal to its Radon--Nikodym derivative almost everywhere.

\phantomsection
\label{para:semigroup}%
We say that $T = (T_t)_{t\ge 0}$ is a \emph{semigroup} on a measurable space $E$ if
\begin{enumerate}
  \item for each $t\ge 0$, $T_t$ is a kernel from $E$ to itself,
  \item for each $t,u\ge 0$, $x\in E$ and measurable $A\subset E$,
    $T_{t+u}(x,A) = \int_{E} T_t(x,\dd y) T_u(y,A)$,
  \item $T_0(x,\cdot) = \delta_x$
\end{enumerate}
As is usual for kernels,
we can regard $T_t$ as acting on a measurable
function $f \colon E \to \RR_+$ by
the definition $T_t f(x) = \int_{E} T_t(x,\dd y) f(y)$,
and if $\mu$ is a measure on $E$, we can also define a measure
$\mu T_t = \int_E \mu(\dd x) T_t(x,\cdot)$.
If $B$ is some space of functions on $E$ with the property that
$T_t(B)\subset B$, we will refer to $T_t$ as a semigroup on $B$.
Crucially, we do not make the requirement that $T$ is strongly continuous. 
In addition (see Corollary~\ref{cor:consistence} below) the semigroup $T$ 
does not depend on the choice of $h$ made in Assumption~\ref{assumption1}.

The proof of our first theorem is based on the study of
an auxiliary sub-Markov process. 
More precisely, setting
\begin{equation}
  \label{e:b}
  b \coloneqq \sup_{x\in(0,+\infty)} \frac{\cA h(x)}{h(x)}
\end{equation}
which is finite by assumption, we show 
that the action $\cL f(x)=\frac{\cA (h f)(x)}{h(x)}-bf(x)$ 
on suitable $f$ uniquely characterises a sub-Markov process $X$,
whose killing rate is given by 
\[
  q(x)
  :=
  b - \frac{\cA h(x)}{h(x)}
  \ge 0,
  \quad\forall x\in(0,+\infty).
\]
The auxiliary sub-Markov process $X$ can be described informally as follows.
The growth-fragmentation equation \eqref{eq:intro} can be seen as
characterising the expected behaviour of a system of growing and dividing
cells (see \cite{BW-lln} for a precise description).  We assign each cell a
time-dependent weight, with the property that at each time, the sum of weights
of cells is less than $1$.  At time $t$, we treat the weights as a probability
distribution and select a cell based on this (with positive probability of
selecting no cell). The size of the selected cell is equal in law to $X_t$, and
this procedure can be iterated to obtain the law of a sub-Markov process $X$.
This process is characterised in the following result, which is proved in
section~\ref{sec:markov}; however, for our purposes, we do not need any
system of cells, and work solely with semigroups and their associated operators.

\begin{proposition}
  \label{tNirr}
  Assume that Assumption~\ref{assumption1} holds true.
  Let $E = (0,\infty)\cup\{\partial\}$, where $\partial$ is an isolated point.
  Consider the operator $\cL$ given by $\cL f(\d) = 0$ and
  \begin{align*}
    \cL f(x)
    &= \frac{\cA(h f)(x)}{h (x)}-bf(x)+q(x)f(\d), \\
    &= \frac{\partial f}{\partial s}(x) + \int_{(0,x)} \bigl( f(y)-f(x) \bigr) \frac{h(y)}{h(x)} k(x,\dd y)
    + q(x)\bigl( f(\d) - f(x)\bigr),
    \quad x\in(0,+\infty).
  \end{align*}
  with domain
  \[
    \cD(\cL) = \big\{f\colon E\to\RR : f(\d)\in\R\text{ and }\,f\rvert_{(0,\infty)} \in C_c^{(s)}\big\}.
  \]
  There exists a unique c\`adl\`ag solution to the martingale
  problem $(\cL,\cD(\cL))$
  for any initial measure $\delta_x$.
  Moreover, its semigroup $Q$ satisfies:  
  for all $t\geq 0$, all $x\in E$ and all $f\in\cD(\cL)$,  
  \begin{equation}\label{eq:Q-gen-X}
    \int_0^t Q_u |\cL f|(x)\,\dd u<+\infty\quad\text{ and }\quad Q_t f(x)=f(x)+\int_0^t Q_u \cL f(x)\,\dd u.
  \end{equation}
\end{proposition}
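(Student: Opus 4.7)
The operator $\cL$ is visibly the generator of a piecewise-deterministic Markov process (PDMP) on $E$ with killing at $\partial$: between jumps the process should follow the deterministic flow $\phi_t(x) = s^{-1}(s(x)+t)$ on $(0,+\infty)$, which is well-defined for every $t \ge 0$ because $s$ is strictly increasing, continuous and unbounded above; at random times it should jump downward into $(0,x)$ according to $\frac{h(y)}{h(x)}\,k(x,\dd y)$, or be killed at $\partial$ with rate $q(x)$. My plan is to construct this process explicitly and then verify the martingale problem using PDMP machinery, relying on the general one-dimensional PDMP theory collected in Appendix~\ref{sec:appendix}.

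To set up the construction, I would introduce the total jump/killing rate $\lambda(x) = \int_{(0,x)} \frac{h(y)}{h(x)}\,k(x,\dd y) + q(x)$. By Assumption~\ref{assumption1}, the integral term is uniformly bounded on every $(0,M)$ and $q = b - \cA h/h$ is locally bounded on $(0,+\infty)$, so $\lambda$ is locally bounded. I would then build $X$ iteratively in the standard way: starting at $X_0 = x$, let $\tau_1$ be the first jump time, with hazard $\lambda(\phi_t(x))$; at $\tau_1$ send the process to $\partial$ with probability $q/\lambda$ evaluated at $\phi_{\tau_1}(x)$, and otherwise to a point in $(0,\phi_{\tau_1}(x))$ drawn proportionally to $\frac{h(y)}{h(\phi_{\tau_1}(x))}\,k(\phi_{\tau_1}(x),\dd y)$; then iterate from the new state. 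Since the flow is increasing and jumps are strictly downward, on $[0,T]$ the process either sits at $\partial$ or lies in $(0,\phi_T(x)]$ prior to any possible explosion. Combined with the absorbing character of $\partial$, the PDMP results of Appendix~\ref{sec:appendix} yield non-accumulation of jump times and the existence of a unique c\`adl\`ag process $X$ with the prescribed transitions.

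Finally, applying the Dynkin formula for PDMPs to $f \in \cD(\cL)$ shows that $f(X_t) - f(X_0) - \int_0^t \cL f(X_u)\,\dd u$ is a local martingale; taking expectations under $\delta_x$ yields~\eqref{eq:Q-gen-X}, with the integrability $\int_0^t Q_u|\cL f|(x)\,\dd u < +\infty$ coming from the local boundedness of $\cL f$ on the range of $X$ (the $s$-derivative is bounded by definition of $C_c^s$, the jump integral is controlled by Assumption~\ref{assumption1}, and $q$ is locally bounded). Uniqueness of the martingale problem follows by the standard PDMP argument: any c\`adl\`ag solution must follow $\phi$ between jumps and produce jumps of the prescribed intensity and distribution, so its law is forced to coincide with that of $X$. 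The principal obstacle is precisely this combination of non-explosion and uniqueness: since $\lambda$ and $q$ need not be globally bounded on $(0,+\infty)$, jump times could \emph{a priori} accumulate near $0$, but the combined effect of the monotone upward flow, the strictly downward jumps, and the killing at $\partial$---handled technically in Appendix~\ref{sec:appendix}---rules this out.
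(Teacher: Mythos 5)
Your construction of the PDMP and the non-explosion argument (monotone flow, strictly downward jumps, jump rate bounded on $(0,M)$ by Assumption~\ref{assumption1}) match the paper's first step. But two essential points are missing. First, ``taking expectations'' of the Dynkin local martingale does not yield~\eqref{eq:Q-gen-X}: you must show the local martingale is a true martingale, and your justification --- that $\cL f$ is locally bounded ``on the range of $X$'' --- does not work, because the range of $X$ on $[0,t]$ is only almost surely bounded away from $0$, not contained in a deterministic compact set, while $q$ (hence $\cL f$) may be unbounded near $0$. The paper closes this by first proving that for non-negative $f\in\cD(\cL)$ the function $\cL f$ is bounded \emph{below} (Lemma~\ref{lem:2bis}), then using monotone convergence along the localization sequence to show $\E_x\int_0^t|\cL f(X_u)-a|\,\dd u\le 2\|f\|_\infty+|a|t$, which gives a dominating integrable variable and allows the local martingale to be upgraded (and simultaneously gives the integrability in~\eqref{eq:Q-gen-X}); general $f$ is then handled by writing $f$ as a difference of non-negative elements of $\cD(\cL)$.

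Second, the uniqueness step is asserted rather than proved. The claim that ``any c\`adl\`ag solution must follow $\phi$ between jumps and produce jumps of the prescribed intensity and distribution'' is exactly what has to be established, and it is not a formal consequence of the martingale problem. Moreover, the appendix results you invoke (Propositions~\ref{prop:martProblem} and~\ref{prop:martProblem2}) require a globally bounded jump kernel and $s(0+)=-\infty$, which $\cL$ itself need not satisfy. The paper's route is a localization: for each $n$ it introduces a truncated operator $\cL_n$ (with kernel cut off outside $(\nicefrac1n,n)$ and $s$ modified near $0$ so that the appendix hypotheses hold), obtains well-posedness of the $\cL_n$ martingale problem from Proposition~\ref{prop:martProblem2}, transfers this to the \emph{stopped} martingale problem for $\cL$ on $(\nicefrac1n,n)\cup\{\d\}$ via Theorem~4.6.1 of~\cite{EK-mp}, and finally uses non-explosion together with Theorem~4.6.3 of~\cite{EK-mp} to patch the stopped problems into uniqueness for $(\cL,\cD(\cL))$. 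Without some version of this localization (or an equivalent rigorous identification of the jump structure of an arbitrary solution), the uniqueness claim is unsupported.
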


Then we show that there is at most one Markov semigroup $Q$ 
on $L^\infty(E)$ and satisfying~\eqref{eq:Q-gen-X}. 
A semigroup $Q$ on $E$ is called \emph{Markov} if $Q_t \1_E = \1_E$ for all $t\ge 0$.
The following result is proved in section~\ref{sec:uniqueness}.

\begin{proposition}
  \label{prop:uniqueQ}
  Assume that Assumption~\ref{assumption1} holds true. Then there is at most one Markov semigroup $Q$ on $L^\infty(E)$ satisfying:  
  for all $t\geq 0$, all $x\in E$ and all $f\in\cD(\cL)$,  
  \begin{equation}
    \label{eq:semigroupQ}
    \int_0^t Q_u |\cL f|(x)\,\dd u<+\infty
    \text{ and }
    Q_t f(x)=f(x)+\int_0^t Q_u \cL f(x)\,\dd u.
  \end{equation}
\end{proposition}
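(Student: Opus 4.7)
The plan is to reduce the uniqueness of $Q$ on $L^\infty(E)$ to the uniqueness of the càdlàg solution to the martingale problem $(\cL,\cD(\cL))$ established in Proposition~\ref{tNirr}. Let $Q,Q'$ be two Markov semigroups satisfying \eqref{eq:semigroupQ}. I will show that each is the semigroup of the (unique) càdlàg solution to that martingale problem, hence the two agree. A preliminary observation is that for $f\in\cD(\cL)$, $\cL f$ is bounded and supported in the support of $f|_{(0,\infty)}$: outside the compact support of $f|_{(0,\infty)}$ both the drift term and the jump integral vanish, and the killing term vanishes once one fixes $f(\partial)=0$. So every integral appearing below is finite, uniformly in $x$.

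Next, I would apply Kolmogorov's extension theorem to the consistent family of finite-dimensional distributions produced by $Q$ (consistency being the Chapman--Kolmogorov relation) to obtain, for each initial point $x\in E$, a probability measure $P_x$ on $E^{[0,\infty)}$ under which the coordinate process $Y$ is Markov with transition kernels $Q_t$. Combining \eqref{eq:semigroupQ}, the Markov property, and Fubini (made legitimate by the uniform bound on $\cL f$ above), one verifies that
\[
  M^f_t := f(Y_t)-\int_0^t \cL f(Y_u)\,\dd u
\]
is an $(\cF^Y_t)$-martingale under $P_x$ for every $f\in\cD(\cL)$.

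The crux of the argument is then to produce a càdlàg modification $\tilde Y$ of $Y$. Choose a countable family $\{f_n\}\subset\cD(\cL)$ whose restrictions separate points of $(0,\infty)$ (e.g.\ smooth compactly supported bumps approximating indicators of rational intervals, extended by $0$ at $\partial$), together with one function distinguishing $\partial$ from $(0,\infty)$. Each $M^{f_n}$ is a bounded martingale along rational times, hence admits a.s.\ left and right limits along $[0,\infty)\cap\Q$; continuity in $t$ of $\int_0^t \cL f_n(Y_u)\,\dd u$ transfers the same property to $t\mapsto f_n(Y_t)$. Standard regularisation arguments, using the Polish structure of $E$ and the separating property of $\{f_n\}$, yield a càdlàg process $\tilde Y_t := \lim_{s\downarrow t,\ s\in\Q} Y_s$ defined $P_x$-a.s. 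Stochastic continuity of $Y$ at each fixed $t$ -- a consequence of the uniform bound $\|Q_s f-f\|_\infty\le s\|\cL f\|_\infty$ for $f\in\cD(\cL)$ together with the separating property -- guarantees $\tilde Y_t = Y_t$ almost surely for every fixed $t$, so $\tilde Y$ has the finite-dimensional distributions prescribed by $Q$.

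Finally, the martingale property of $M^f$ transfers to the filtration generated by $\tilde Y$, so $\tilde Y$ is a càdlàg solution of the martingale problem $(\cL,\cD(\cL))$ under $P_x$. Proposition~\ref{tNirr} pins down its law uniquely, hence determines $Q_t(x,\cdot)$; applying the same construction to $Q'$ yields $Q=Q'$. The main obstacle is the càdlàg regularisation: since we assume neither a Feller property nor strong continuity of $Q$, path regularity must be manufactured purely from the integrated generator identity, which requires choosing the separating family in $\cD(\cL)$ carefully enough to recover the coordinate process itself (not merely its real-valued projections), with attentive treatment of the cemetery point $\partial$ and of the boundary behaviour near $0$ and $+\infty$.
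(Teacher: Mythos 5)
Your overall strategy is the same as the paper's: build a process with transition kernels $Q_t$ via Kolmogorov extension, regularise it to a c\`adl\`ag solution of the $(\cL,\cD(\cL))$ martingale problem, and invoke the uniqueness in Proposition~\ref{tNirr}. However, there are two genuine gaps.

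First, your preliminary observation that $\cL f$ is bounded and supported in $\supp f\rvert_{(0,\infty)}$ for $f\in\cD(\cL)$ is false. For $x$ above the support of $f$, the term $\int_{(0,x)} f(y)\,k_h(x,\dd y)$ need not vanish (jumps from large $x$ can land inside $\supp f$), and it is controlled only by $\|f\|_\infty k_h(x,(0,x))$, which Assumption~\ref{assumption1} bounds merely locally, not uniformly on $(0,+\infty)$; likewise $q$ is only locally bounded, so the killing term $q(x)(f(\d)-f(x))$ is unbounded whenever $f(\d)\neq 0$. This is exactly why Lemma~\ref{lem:2bis} asserts only local boundedness, a global lower bound for non-negative $f$, and an upper bound on $(0,M)$ when $f(\d)=0$. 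Consequently the "uniform bound on $\cL f$" you use to justify Fubini, the boundedness of $M^{f_n}$, and the estimate $\|Q_sf-f\|_\infty\le s\|\cL f\|_\infty$ is unavailable. Parts of this are repairable (Fubini needs only the assumed $\int_0^t Q_u|\cL f|(x)\,\dd u<\infty$, and pointwise continuity of $s\mapsto Q_sf(x)$ suffices for stochastic continuity), but the argument as written rests on a wrong premise.

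Second, and more seriously, the c\`adl\`ag regularisation is where the real work lies and you have not supplied it. Existence of one-sided limits of $f_n(Y_s)$ along rationals for a separating family of compactly supported functions only yields limits of $Y_s$ in a compactification of $E$: the right limit $\lim_{s\downarrow t}Y_s$ may escape to $0$ or $+\infty$, in which case $\tilde Y$ does not take values in $E$ and is not a solution of the martingale problem there. The paper spends most of its proof on precisely this point: Lemmas~\ref{lem:QtW}, \ref{lem:defofnu}, \ref{lem:QtWbis} and~\ref{l:W2} construct monotone Lyapunov functions $W_1$ (blowing up at $+\infty$) and $W_2$ (blowing up at $0$, integrable against an auxiliary measure built from the jump kernel) satisfying supermartingale inequalities under $Q$, and these are used in step~(3) of Lemma~\ref{lem:continuity} to show that on any finite horizon the trajectories stay in a compact subset of $(0,+\infty)$ almost surely; only then can the limit process $Y$ (constructed in the one-point compactification $E\cup\{\Delta\}$ via an upcrossing argument) be shown never to reach $\Delta$. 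Your closing sentence acknowledges that the boundary behaviour requires "attentive treatment", but no argument is given, and without one the proof does not close.
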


With these results on the auxiliary semigroup $Q$ in place, we can conclude
the proof of Theorem~\ref{thm:semigroup} in section~\ref{sec:endProofTh1},
by representing $Q$ as an $h$-transform of the semigroup $T$ solving
the growth-fragmentation equation~\ref{eq:gfint}.

Representing $T$ in terms of $Q$ is very useful, and not only for existence
and uniqueness: our results on the spectral gap in section~\ref{sec:long-time}
also rely on this technique.

We conclude with a few useful properties of $T$.  First, we note a simple bound
on the support of $\delta_x T$, and observe that~\eqref{eq:gfint} holds true on
an extension of the domain of $\mathcal A$.  This result is proved in
section~\ref{sec:proofcor1}.

\begin{corollary}
  \label{cor:1}
  Assume that Assumption~\ref{assumption1} holds true. 
  Then, for all $x\in(0,+\infty)$ and all $t\geq 0$,
  the support of $\delta_x T_t$ is included in $[0,s^{-1}(s(x)+t)]$. 
  Let $f\in C^{(s)}_{\text{loc}}$ such that $|f|/|h|$ is bounded and such that $\inf \frac{\cA f}{h}>-\infty$ or 
  $\sup \frac{\cA f}{h}<+\infty$. Then  
  equality~\eqref{eq:gfint} holds true.
\end{corollary}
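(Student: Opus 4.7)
The plan is to leverage the $h$-transform representation
\[
T_t g(x) = e^{bt}\,h(x)\,Q_t\!\bigl((g/h)\,\1_{(0,\infty)}\bigr)(x),\qquad x\in(0,+\infty),
\]
established during the construction of $T$ in section~\ref{sec:endProofTh1}, where $Q$ is the sub-Markov semigroup of the process $X$ from Proposition~\ref{tNirr} and $g/h$ is extended by $0$ at the cemetery state $\d$. This representation is valid for any measurable $g$ with $|g|/h$ bounded; in particular $T_u h\le e^{bu}h$ because $Q_u\1\le\1$. Both conclusions of the corollary will be derived from it.

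The support assertion is a direct consequence of the pathwise behaviour of $X$: between jumps, $X$ follows the flow of $\d/\d s$, so that $s(X_t)=s(X_0)+t$ on inter-jump intervals, and the non-killing jumps of $X$ (driven by the kernel $(h(y)/h(x))\,k(x,\dd y)$) are strictly downward, since $k(x,\cdot)$ is supported in $(0,x)$. Hence, starting from $X_0=x$, we have $X_t\le s^{-1}(s(x)+t)$ almost surely on $\{X_t\ne\d\}$, and the $h$-transform representation immediately yields $\supp(\delta_x T_t)\subset (0, s^{-1}(s(x)+t)]\subset[0, s^{-1}(s(x)+t)]$.

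For the extension of the generator identity, I would proceed by truncation. Assume without loss of generality that $\sup\cA f/h\le C<+\infty$ (the reverse case is symmetric). Let $(\chi_n)_{n\ge 1}$ be a non-decreasing family in $C^s_c$ of $s$-smooth cutoffs with $0\le\chi_n\le 1$, $\chi_n\equiv 1$ on compact intervals exhausting $(0,+\infty)$, and with $\d\chi_n/\d s$ bounded uniformly in $n$; such a family can be built as $\chi_n=\psi(s/n)$ for a fixed smooth $\psi\colon\RR\to[0,1]$ of compact support equal to $1$ near the origin. The truncations $f_n:=f\chi_n$ lie in $C^s_c\subset\cD(\cA)$, and Theorem~\ref{thm:semigroup} gives
\[
T_t f_n(x)=f_n(x)+\int_0^t T_u\cA f_n(x)\,\dd u.
\]
Since $|f_n|\le\|f/h\|_\infty h$ and $T_u h\le e^{bu}h$, dominated convergence yields $T_t f_n(x)\to T_t f(x)$. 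A direct computation using the product rule for $s$-derivatives, together with the local integrability $\int_{(0,x)}h(y)\,k(x,\dd y)<+\infty$ from Assumption~\ref{assumption1}, shows that $\cA f_n(x)\to\cA f(x)$ pointwise on $(0,+\infty)$.

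The main obstacle is to swap limit and integral on the right-hand side. Thanks to the support inclusion just established, for every $u\in[0,t]$ the measure $\delta_x T_u$ is supported in the compact interval $[0, s^{-1}(s(x)+t)]$. On this interval, the one-sided bound $\cA f\le Ch$, the uniform control of $\d\chi_n/\d s$, and the estimate of Assumption~\ref{assumption1} combine to provide a one-sided bound $\cA f_n\le C'h$ uniform in $n$, for some $C'$ depending only on $x$ and $t$. Fatou's lemma applied to the non-negative quantities $C'h-\cA f_n$, together with the finite bound $\int_0^t T_u(C'h)(x)\,\dd u\le C'h(x)(e^{bt}-1)/b$, then simultaneously yields the required integrability $\int_0^t T_u|\cA f|(x)\,\dd u<+\infty$ and the identity~\eqref{eq:gfint}.
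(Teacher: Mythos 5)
Your overall strategy --- working at the semigroup level, truncating $f$ spatially, and using the support bound $\supp(\delta_x T_u)\subset(0,s^{-1}(s(x)+t)]$ to control the upper end of the state space --- is genuinely different from the paper's proof, which instead localises the underlying process $X$ with the stopping times $\tau_k$ and approximates $\varphi=f/h$ by functions $\varphi_m\in C_c^s$ that coincide with $\varphi$ on $(1/m,m)$, so that no cutoff-derivative term ever appears where the stopped process lives. Your version, however, has a genuine gap at the \emph{lower} end. The cutoff family you require --- $\chi_n\in C_c^s$, $\chi_n\equiv 1$ on compacts exhausting $(0,+\infty)$, and $\frac{\d\chi_n}{\d s}$ bounded uniformly in $n$ --- does not exist when $s(0+)>-\infty$, a case permitted by the standing assumptions and treated explicitly in section~\ref{sec:entrance}. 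Indeed, if $\chi_n$ vanishes on $(0,a_n]$ (compact support) and equals $1$ at some $y_n\to 0$, then
\[
1=\int_{a_n}^{y_n}\frac{\d\chi_n}{\d s}(z)\,s(\dd z)\le \Bigl\|\frac{\d\chi_n}{\d s}\Bigr\|_\infty\bigl(s(y_n)-s(0+)\bigr),
\]
and the last factor tends to $0$, forcing $\|\d\chi_n/\d s\|_\infty\to\infty$. Your explicit choice $\chi_n=\psi(s/n)$ works only when $s(0+)=-\infty$; otherwise it equals $1$ near $0$ and is not compactly supported in $(0,+\infty)$, so $f\chi_n\notin C_c^s$. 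Since $\delta_x T_u$ may charge every neighbourhood of $0$ (the jumps are downward and unbounded below), the term $f\,\frac{\d\chi_n}{\d s}$ then destroys the uniform one-sided bound $\cA f_n\le C'h$ on which your limiting argument rests. The paper's stopping-time localisation is precisely what sidesteps the boundary at $0$.

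A second, smaller issue: even granting the uniform bound, Fatou applied to $C'h-\cA f_n\ge 0$ yields the integrability $\int_0^t T_u|\cA f|(x)\,\dd u<\infty$ and the one-sided inequality $\int_0^t T_u\cA f(x)\,\dd u\ge T_tf(x)-f(x)$, but not the identity. You need a second pass: once integrability is known, split $\cA f_n=\chi_n\cA f+R_n$ with $|(1-\chi_n)\cA f|\le|\cA f|$ and $|R_n|\le C''h$ on the compact support, and apply dominated convergence against the finite measure $\delta_x T_u(\dd y)\,\dd u$. This part is repairable with the ingredients you already have; the cutoff construction is the step that needs a new idea.
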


Secondly, we observe that the solution to~\eqref{eq:gfint} does not depend on
the choice of~$h$.  This is proved in section~\ref{sec:cor2}, ensuring that 

\begin{corollary}
  \label{cor:consistence}
  Let $h_1$ and $h_2$ satisfy Assumption~\ref{assumption1}. 
  Then, the solution $T^1$ to~\eqref{eq:gfint} with $h_1$ instead of $h$,
  and the solution $T^2$ to~\eqref{eq:gfint} with $h_2$ instead of $h$,
  are identical.
\end{corollary}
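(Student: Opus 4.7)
The plan is to introduce the common dominating function $h := h_1 + h_2$, build the associated semigroup $T$ via Theorem~\ref{thm:semigroup}, and then invoke the uniqueness statement of that theorem with $h_1$ (respectively $h_2$) to identify $T$ with $T^1$ (respectively $T^2$). This forces the kernels $T^1$ and $T^2$ to coincide.

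First, I would verify Assumption~\ref{assumption1} for $h$. The elementary inequality $\frac{a+c}{b+d} \le \frac{a}{b} + \frac{c}{d}$ for positive reals reduces \eqref{as:1c_1} for $h$ to the corresponding bounds already known for $h_1$ and $h_2$. Setting $b_i := \sup_x \cA h_i(x)/h_i(x) < +\infty$, linearity of $\cA$ yields $\cA h \le b_1 h_1 + b_2 h_2 \le (b_1 \vee b_2)\, h$; local boundedness of $\cA h/h$ follows by sandwiching between the known local bounds on $\cA h_i/h_i$. Theorem~\ref{thm:semigroup} then produces a semigroup $T$ on $B := \{f : f/h \text{ bounded}\}$ satisfying \eqref{eq:gfint} on $\cD(\cA) = C_c^s \cup \{h\}$.

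The core step is to show that $T$ also satisfies the defining properties of $T^1$ on $B_1 := \{f : f/h_1 \text{ bounded}\}$. Since $h_1/h \le 1$ and $\cA h_1/h \le \max(b_1,0)$ is bounded above, Corollary~\ref{cor:1} applies to $f = h_1$ and gives the identity $T_t h_1(x) = h_1(x) + \int_0^t T_u \cA h_1(x)\,\dd u$. Combining $\cA h_1 \le b_1 h_1$ with the positivity of $T_u$ and a Gronwall argument produces the key pointwise estimate $T_t h_1 \le e^{\max(b_1,0)\, t} h_1$. For any $f \in B_1$ with $|f| \le C h_1$, the positivity of $T_t$ then yields $|T_t f| \le C\,e^{\max(b_1,0)\, t} h_1$, so $T_t f \in B_1$. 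Thus $T$ restricts to a semigroup on $B_1$ and satisfies \eqref{eq:gfint} on $C_c^s \cup \{h_1\}$; the uniqueness part of Theorem~\ref{thm:semigroup} applied with $h_1$ forces $T = T^1$ as operators on $B_1$. Swapping the roles of $h_1$ and $h_2$ gives $T = T^2$ on $B_2$.

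To promote this to equality of kernels, for any Borel set $A \subset (0, +\infty)$ I consider $A_n := A \cap \{h_1 > 1/n\}$: since $\1_{A_n}/h_1 \le n$, we have $\1_{A_n} \in B_1$ and hence $T^1_t(x, A_n) = T_t(x, A_n)$ for each $n$. Continuity of measures from below (as $A_n \uparrow A$) gives $T^1_t(x, A) = T_t(x, A)$; the analogous argument with $h_2$ yields $T^2 = T$ as kernels, and hence $T^1 = T^2$. The main obstacle is the invariance $T_t(B_1) \subset B_1$: because $h_1 \notin \cD(\cA) = C_c^s \cup \{h\}$ for the construction of $T$, one cannot apply \eqref{eq:gfint} to $h_1$ directly, so the identity must first be extended via Corollary~\ref{cor:1} before the Gronwall argument becomes available.
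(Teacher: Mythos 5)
Your proposal is correct and follows essentially the same route as the paper: set $h=h_1+h_2$, check Assumption~\ref{assumption1} for $h$, use Corollary~\ref{cor:1} to extend \eqref{eq:gfint} to $f=h_1$ (noting $\cA h_1/h$ is bounded above), and conclude $T=T^1$ (and symmetrically $T=T^2$) from the uniqueness part of Theorem~\ref{thm:semigroup}. The extra details you supply (the Gronwall bound $T_th_1\le e^{b_1^+t}h_1$ giving $T_t(B_1)\subset B_1$, and the truncation $A_n=A\cap\{h_1>1/n\}$ to pass to kernels) are implicit in the paper's appeal to the uniqueness argument of section~\ref{sec:endProofTh1}, so they are a sound elaboration rather than a different method.
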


\begin{remark}
  In this paper, we assume that sizes take values in $(0,+\infty)$.  However,
  when $0$ is an entrance boundary for the growth component, that is when
  $s(0+)>-\infty$, it is straightforward to adapt the method and results of
  this paper to the case where the space $(0,+\infty)$ is replaced by
  $[0,+\infty)$, with $k(x,\{0\})\geq 0$ for all $x\in [0,+\infty)$. 
\end{remark}

\begin{remark}\label{rem:Q}
  The primary difficulty in proving the results from this section is that we
  wish to characterise $T$ (and $Q$) in terms of analytic, rather than
  probabilistic conditions; that is, the semigroup condition and the
  equations \eqref{eq:gfint} and \eqref{eq:semigroupQ}. Moreover, we want
  to avoid assumptions such as the Feller property, which may not hold
  in the absence of similar conditions on the kernel $k$.

  Indeed, Proposition~\ref{tNirr}, concerning the probabilistic question of the
  well-posedness of the martingale problem in the space of càdlàg paths,
  is fairly straightforward to prove,
  and we rely primarily on the work of Davis~\cite{Davis} (slightly extended in
  Appendix~\ref{sec:appendix}) and standard techniques of Ethier and Kurtz~\cite{EK-mp}.

  Proposition~\ref{prop:uniqueQ} is substantially more involved, but the techinque
  is essentially to show that any semigroup solving \eqref{eq:semigroupQ}
  can be represented in terms of a càdlàg Markov process, which in turn is
  the unique solution of the martingale problem already addressed.
  The ideas in this part, such as the study of upcrossings and regularisation of
  paths, are familiar, but typical references (for instance, \cite{RW1}
  under Feller-type (\S III.7) or Ray (\S III.36) conditions) have sufficiently
  different hypotheses that we were not able to reduce the situation to
  one covered by these results.
\end{remark}

\subsection{An auxiliary Markov process}
\label{sec:markov}

This section is devoted to the proof of Proposition~\ref{tNirr}.

From now on, we set
\[
  k_h(x,\mathrm dy)=\frac{h(y)}{h(x)}k(x,\mathrm dy).
\]
so that, by Assumption~\ref{assumption1}, $x\mapsto k_h(x,(0,x))$ is bounded on $(0,M)$, for all $M>0$.
Before proving Proposition~\ref{tNirr}, we start with a useful technical lemma.
We define $f_-(x) = \max\{ -f(x),0\}$.

\begin{lemma}
  \label{lem:2bis}
  Assume $f \in \cD(\cL)$,
  meaning that $f\vert_{(0,+\infty)} \in C_c^{(s)}$,
  and that Assumption~\ref{assumption1} holds true. Then  
  \begin{enumerate}
    \item $\cL f$ is locally bounded;
    \item 
      \label{i:2bis:bdd-below}
      if $f$ is non-negative, then $\cL f$ is  bounded below;
    \item if $f$ is non-negative and $f(\d)=0$, then, for all $M> 0$, $\sup_{x\in (0,M)} \cL f(x)<+\infty$.
  \end{enumerate}
\end{lemma}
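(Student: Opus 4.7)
The plan is to decompose $\cL f(x)$ into the three pieces
\[
  D f(x) := \tfrac{\d f}{\d s}(x), \qquad
  I f(x) := \int_{(0,x)} \bigl( f(y) - f(x) \bigr)\, k_h(x,\dd y), \qquad
  Q f(x) := q(x)\bigl( f(\d) - f(x) \bigr),
\]
and estimate each one separately using the three available facts: that $f\vert_{(0,\infty)}\in C_c^s$ is bounded with bounded $s$-derivative and compact support in $(0,+\infty)$; that, by Assumption~\ref{assumption1}, $x\mapsto k_h(x,(0,x))$ is bounded on $(0,M)$ for every $M>0$; and that $q$ is locally bounded on $(0,+\infty)$ since $q = b - \cA h / h$ and $\cA h/h$ is locally bounded. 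Recall also that $\cL f(\d)=0$, so everything reduces to estimates on $(0,+\infty)$.

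For part (i), I would fix an arbitrary compact interval $[a,M]\subset (0,+\infty)$. The term $Df$ is globally bounded by the definition of $C_c^s$; the term $If$ satisfies $|If(x)|\leq 2\|f\|_\infty k_h(x,(0,x))$, which is bounded on $(0,M)$ by Assumption~\ref{assumption1}; and $|Qf(x)|\leq q(x)(|f(\d)|+\|f\|_\infty)$, which is bounded on $[a,M]$ since $q$ is locally bounded. Combining gives local boundedness of $\cL f$.

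For part (ii), assume $f\geq 0$, so in particular $f(\d)\geq 0$. The term $Df$ is globally bounded. For $If$ I use $f(y)-f(x)\geq -f(x)$ to get $If(x)\geq -f(x)k_h(x,(0,x))$; since $f$ vanishes outside a compact $K\subset (0,+\infty)$ and $k_h(x,(0,x))$ is bounded on $K$, this lower bound is globally finite. For $Qf$ I split: when $x\notin \mathrm{supp}(f)$ we have $Qf(x) = q(x) f(\d) \geq 0$ (this is where $f(\d)\geq 0$ is crucial); when $x\in\mathrm{supp}(f)$ we use that $q$ is bounded on the compact set $\mathrm{supp}(f)$ and that $|f(\d)-f(x)|$ is bounded, so $Qf$ is bounded on $\mathrm{supp}(f)$. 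Hence $\cL f$ is bounded below globally. This part where $f(\d)\geq 0$ is invoked to tame the possibly unbounded $q$ outside the support of $f$ is the only spot requiring any care.

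For part (iii), assume further that $f(\d)=0$, so $Qf(x) = -q(x)f(x)\leq 0$ (since $q\geq 0$, $f\geq 0$), giving a free upper bound of $0$ for this term. Then $Df$ is globally bounded above, and $If(x)\leq \int_{(0,x)}f(y)\,k_h(x,\dd y)\leq \|f\|_\infty k_h(x,(0,x))$, which is bounded on $(0,M)$ by Assumption~\ref{assumption1}. Adding these yields $\sup_{x\in(0,M)}\cL f(x)<+\infty$. No additional argument is needed, as every ingredient has been supplied.
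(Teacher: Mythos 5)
Your proof is correct and follows essentially the same route as the paper: the paper likewise bounds $\lvert\cL f\rvert$ by $\|\d f/\d s\|_\infty + 2\|f\|_\infty k_h(x,(0,x)) + 2\|f\|_\infty q(x)$ for (i), drops the non-negative terms $\int f(y)k_h(x,\dd y)$ and $q(x)f(\d)$ and uses the compact support of $f$ to control $-f(x)(k_h(x,(0,x))+q(x))$ for (ii), and discards the non-positive killing term for (iii). Your explicit three-term decomposition is just a slightly more itemised presentation of the same estimates.
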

\begin{proof}
  Since $f\in \cD(\cL)$, $f\rvert_{(0,\infty)} \in C_c^{(s)}$.
  Define $F = \supp f\rvert_{(0,\infty)}$, a compact subset of $(0,\infty)$.
  We first note the following: for all $x\in(0,+\infty)$,
  \[
    \left|\cL f(x)\right|
    \leq  \left\|\frac{\d f}{\d s}\right\|_\infty + 2\|f\|_\infty k_h(x,(0,x)) +2\|f\|_\infty q(x),
  \]
  where $q(x)= b-\frac{\mathcal A h(x)}{h(x)}\geq 0$ and $k_h(x,(0,x))$ are locally bounded by Assumption~\ref{assumption1}. This proves the first point.

  If $f$ is non-negative, then
  \begin{align}
    \cL f(x)
    &\geq -\left\|\frac{\d f}{\d s}\right\|_\infty-f(x)k_h(x,(0,x))-q(x)f(x) \notag \\
    & \geq -\left\|\frac{\d f}{\d s}\right\|_\infty-\1_{x\in F} \|f\|_\infty (k_h(x,(0,x))+q(x))\label{eq:lemma2point2}
  \end{align}
  which is bounded below since $F$ is compact and $q(x)$ and $k_h(x,(0,x))$ are locally bounded. This proves the second point of Lemma~\ref{lem:2bis}.

  If $f$ is non-negative and $f(\d)=0$, then 
  \begin{align*}
    \cL f(x)\leq \left\|\frac{\d f}{\d s}\right\|_\infty+\int_{(0,x)} f(y)\,k_h(x,\mathrm dy)\leq \left\|\frac{\d f}{\d s}\right\|_\infty+\|f\|_\infty k_h(x,(0,x))
  \end{align*}
  which is bounded over $x\in(0,M)$, for all $M>0$, according to Assumption~\ref{assumption1}.
\end{proof}

We can now proceed to the proof of Proposition~\ref{tNirr}.

\begin{proof}[Proof of Proposition~\ref{tNirr}]
  We first show that there exists a càdlàg solution of the $(\cL,\cD(\cL))$ martingale problem, and then prove that this solution is unique.

  \textbf{(1) There exists a càdlàg solution of the $(\cL,\cD(\cL))$ martingale problem.}

  Since $s$ is continuous and strictly increasing, there exists a unique flow $\phi:(0,+\infty)\times [0,+\infty)\to (0,+\infty)$ such that $\phi(x,0)=x$ and
  \begin{align}
    \label{eq:flow}
    \frac{\mathrm d}{\mathrm d t} s(\phi(x,t))=1,\ \forall x,t,
  \end{align}
  which is given by $\phi(x,t)=s^{-1}(s(x)+t)$ for all $x\in(0,+\infty)$ and $t\geq 0$. We also set $\phi(\d,t)=\d$ for all $t\geq 0$.
  We observe that $\phi$ is not explosive since it satisfies $s(\phi(x,t))= s(x)+t$ for all $t\geq 0$ and $x\in(0,+\infty)$, while $s(y)\to+\infty$ when $y\to+\infty$. Moreover, for all $f\in\cD(\cL)$, we have
  \begin{align}
    \frac{\mathrm d_+}{\mathrm dt}f(\phi(x,t))&:=\lim_{h\to 0, h>0} \frac{f(\phi(x,t+h))-f(\phi(x,t))}{h}\nonumber\\
    &=\lim_{h\to 0, h>0} \frac{f(s^{-1}(s(x)+t+h))-f(s^{-1}(s(x)+t))}{h}\nonumber\\
    &=\lim_{y\to \phi(x,t), y>\phi(x,t)} \frac{f(y)-f(\phi(x,t))}{s(y)-s(\phi(x,t))}=\frac{\d f}{\d s}(\phi(x,t)).\label{eq:starrevis}
  \end{align}

  Let us consider the 
  piecewise-deterministic Markov process
  (PDMP) $X$ directed by the flow $\phi$ between its jumps and with jump kernel $k_h$ and killing rate $q$, constructed jump after
  jump, similarly as in~\cite{Davis}, with values on
  $(0,+\infty)\cup\{\infty,\d\}$ and up to the time of explosion of
  the number of jumps. Here $\infty$ is the point to which the process
  is sent after explosion of the number of jumps and $\d$ is the
  cemetery point.

  We prove now that the process $X$ is non-explosive, so that it defines a c\`adl\`ag Markov process on $E$. For all $k\geq 2$, we set
  $\tau_k^+=\inf\{t\geq 0,\,X_t\geq k\text{ or }X_{t-}\geq k\}$ and
  $\tau^-_{\nicefrac1k}=\inf\{t\geq 0,\,X_t\leq \nicefrac1k\text{ or
  }X_{t-}\leq \nicefrac1k\}$.
  As pointed out above, we know that the flow $\phi$ does not explode. Since the process only admits negative jumps,
  $X_t\leq \phi(X_0,t)$ almost surely, so that, for
  all $x\in (0,+\infty)$ and all $t\geq 0$, there exists $k_{x,t} \geq 2$ such that
  \begin{align}
    \label{uBound}
    \P_x(\tau^+_{k_{x,t}}\leq t)=0,
  \end{align}
  where $\P_x$ denotes the law of $X$ with initial distribution $\delta_x$ (as usual, we extend this notation to initial distribution $\mu$ by $\P_\mu$ and denote $\E_x$ and $\E_\mu$ the associated expectations).

  According to~\eqref{as:1c_1}, the jump rate of $X$ from $(0,+\infty)$ to $(0,+\infty)$, that is $y\mapsto k_h(y,(0,y))$, is uniformly bounded on $(0,k_{x,t}]$. Since in addition $\d$ is an absorbing point, 
  the process does not undergo an infinity of negative jumps before
  time $t\wedge \tau^+_{k_{x,t}}$, $\mathbb P_x$-almost surely for all $x\in E$. Using the fact that the flow $\phi$ is increasing,
  we deduce that the process does not converge to $0$ before time
  $t\wedge \tau^+_{k_{x,t}}$ $\mathbb P_x$-almost surely for all $x\in E$, that is
  \begin{align}
    \label{lBound}
    \lim_{k\rightarrow+\infty} \P_x(\tau^-_{\nicefrac1k}\leq t\wedge \tau^+_{k_{x,t}} )=0,\ \forall x\in E.
  \end{align}
  Combining both~\eqref{uBound} and~\eqref{lBound}, we deduce that,
  for all initial distribution $\nu$ on $(0,+\infty)\cup\d$,
  \begin{align}
    \label{non-explosion}
    \lim_{k\rightarrow+\infty} \P_\nu(\tau^-_{\nicefrac1k}\wedge \tau^+_k\leq t )=0.
  \end{align}
  This concludes the proof that $X$ defines a non-explosive c\`adl\`ag Markov process on $E$.

  Let us now remark that it satisfies the $(\cL,\cD(\cL))$-local martingale problem. 
  Indeed, for all $f\in \cD(\cL)$, $f$ belongs to the domain of the 
  extended generator of $X$, as proved in Theorem~26.14 in~\cite{Davis}, 
  with the only difference being that, in our case, the flow $\phi$ is 
  not determined by a locally Lipschitz continuous vector field $\chi$, 
  but instead by $s$. The only adaptation to be made in the proof of Theorem~26.14 in~\cite{Davis} 
  to obtain that $f$ is an element of the domain of the extended generator of 
  $X$ is as follows. Denote by
  $J_{i-1}$ and $J_i$ the $i-1^{\text{th}}$ and $i^{\text{th}}$ jump times of $X$;
  then,
  $f(X_{J_i-})-f(X_{J_{i-1}}) = 0$ when $X_{J_{i-1}}=\partial$, 
  and otherwise,
  \begin{align*}
    f(X_{J_i-})-f(X_{J_{i-1}})
    &=
    \int_{0}^{J_i-J_{i-1}}\,\frac{\mathrm d_+}{\mathrm dt}
    f(\phi(X_{J_{i-1}},t))\mathrm d t \\
    &= \int_{0}^{J_i-J_{i-1}}\,\frac{\d f}{\d s}(\phi(X_{J_{i-1}},t))\,\mathrm dt
    = \int_{J_{i-1}}^{J_i} \frac{\d f}{\d s}(X_t)\,\mathrm dt.
  \end{align*}
  This replaces the expression $\int_{J_{i-1}}^{J_i} \mathcal X(X_t)\,\mathrm dt$ in~\cite{Davis}. The rest of the proof is identical.

  Let us now prove that $X$ satisfies the  $(\cL,\cD(\cL))$-martingale problem.
  We have that, for all $x\in E$ and under $\P_x$,
  $M^f_t:=f(X_t)-f(x)-\int_0^t \cL f(X_u)\,\dd u$ is a c\`adl\`ag local martingale. 
  Moreover, since $f$ and $\cL f$ are locally bounded by Lemma~\ref{lem:2bis} point~(i), 
  the sequence $\tau_k=\tau^-_{\nicefrac1k}\wedge \tau^+_k$ is a localization sequence.

  We initially focus on the case where
  $f \in \cD(\cL)$ is non-negative, and set $a=\inf_{E} \cL f$, which is finite
  by Lemma~\ref{lem:2bis} point~\ref{i:2bis:bdd-below}.
  We have, for any fixed $t>0$ and any $k\geq 2$,
  \begin{align*}
    |M^f_{t\wedge \tau_k}|
    \leq 2\|f\|_\infty+\int_0^t \left|\cL f(X_u)\right|\,\dd u
    &\leq  2\|f\|_\infty+|a| t+\int_0^t \left\lvert \cL f(X_u)-a \right\rvert\,\dd u
  \end{align*}
  where, by the monotone convergence theorem and the local martingale property for $M^f$,
  \begin{align*}
    \E_x\left(\int_0^t |\cL f(X_u)-a|\,\dd u\right)&=\E_x\left(\liminf_{k\to+\infty} \int_0^{t\wedge \tau_k} |\cL f(X_u)-a|\,\dd u \right)\\
    &= \liminf_{k\to+\infty}\ \E_x\left(\int_0^{t\wedge \tau_k} |\cL f(X_u)-a|\,\dd u \right)\\
    &= \liminf_{k\to+\infty}\ \E_x\left(\int_0^{t\wedge \tau_k} (\cL f(X_u)-a)\,\dd u \right)\\
    &= \liminf_{k\to+\infty}\ \E_x\left(f(X_{t\wedge \tau_k})-f(x)-M^f_{t\wedge\tau_k} \right)+|a| t.\\
    &\leq 2\|f\|_\infty+|a| t.
  \end{align*}
  Hence, for all $T\geq 0$, $\{|M^f_{t\wedge \tau_k}| : t \le T, k\ge 2\}$ 
  is dominated by an integrable random variable.
  We conclude by \cite[Theorem 51]{Pro-sc} that, for all $x \in E$, under
  $\P_x$, $M^f$ is a martingale.

  Next, we remove the assumption that $f$ is non-negative, and permit any $f\in\cD(\cL)$.
  Let $\varphi\in \cD(\cL)$ such that $\varphi\geq f_+$,
  where $f_+(x) = \max\{f(x),0\}$.
  Then, according to the above result, 
  $M^\varphi$ is a martingale.
  Setting $\psi=\varphi-f$, we have $\psi\geq 0$ and 
  $\psi\in \cD(\cL)$ and hence 
  $M^\psi$ is also a martingale.
  Since $M^f = M^\varphi - M^\psi$, we deduce that $M^f$ is a martingale.

  Finally, we conclude that $X$ defines a non-explosive 
  c\`adl\`ag Markov process on $E$, which satisfies the 
  $(\cL,\cD(\cL))$-martingale problem. 
  In particular, we have shown that 
  $\int_0^t \E_x(|\cL f|(X_u))\,\dd u<+\infty $,
  and we observe that the semigroup of $X$ satisfies~\eqref{eq:Q-gen-X}.

  \medskip

  \textbf{(2) $X$ is the unique càdlàg solution of the $(\cL,\cD(\cL))$ martingale problem.}
  For all $n\geq 2$, we consider the operator 
  $\cL_n$ on $\cD(\cL)$ defined, 
  for all $x \in E$ and $g\in\cD(\cL)$, by 
  \[
    \cL_n g(x) 
    = \1_{x\in (0,+\infty)}
    \left[\frac{\d g}{\d s_n}(x) + \int_{(0,x)\cup \d} [g(y)-g(x)] Q_n(x,\dd y)\right],
  \]
  where $s_n$ is a continuous increasing function on $(0,+\infty)$ and $Q_n$ a kernel such that
  \[
    \begin{cases}
      &s_n(x)=s(x),\quad  x>1/n, \\
      &\lim_{x\downarrow 0}s_n(x) = -\infty,\\
      &Q_n(x,\mathrm dy)
      =\1_{x\in (\nicefrac1n,n)}[k_h(x,\mathrm dy)+q(x)\delta_\d(\mathrm dy)].
    \end{cases}
  \]

  According to Proposition~\ref{prop:martProblem2} in the appendix,
  the solution
  of the martingale problem for $(\cL_n,\cD(\cL))$ is unique.
  In particular, any two solutions of the 
  $D_{E}[0,+\infty)$ martingale problem for
  $\cL_n$ have the same distribution on 
  $D_{E}[0,+\infty)$ (see Corollary~4.4.3 in~\cite{EK-mp}).
  This and Theorem~4.6.1 in~\cite{EK-mp} imply that, for each 
  $n\geq 2$ and all probability measures $\nu$ on $E$, 
  the stopped martingale problem for $(\cL_n,\nu,(\nicefrac1n,n)\cup\{\d\})$ 
  admits a unique solution with sample paths in 
  $D_{E}[0,+\infty)$. Since, for all $g\in\cD(\cL)$,  
  we have $\cL_n g(x)=\cL g(x)$ for all $x\in(\nicefrac1n,n)\cup\{\d\}$, 
  we deduce that the stopped martingale problem for 
  $(\cL,\nu,(\nicefrac1n,n)\cup\{\d\})$ also admits a unique solution 
  with sample paths in $D_{E}[0,+\infty)$. 
  Since $X$ stopped at time 
  $\tau_n\coloneqq\inf\{t\geq 0, X_t\text{ or }X_{t-}\notin (\nicefrac1n,n)\cup\{\d\}\}$ 
  is a c\`adl\`ag solution to this stopped martingale problem, 
  it gives its unique solution in $D_{E}[0,+\infty)$. 
  Since it satisfies in addition
  \[
    \lim_{n\rightarrow+\infty}\P_\nu(\tau_n\leq t)=0,
  \]
  we deduce from Theorem~4.6.3 in~\cite{EK-mp}
  that there is a unique solution to the 
  $D_{E}[0,+\infty)$-martingale problem associated to $\cL$ on $\cD(\cL)$.
\end{proof}

\subsection{Uniqueness of a Markov semigroup generated by \texorpdfstring{$\cL$}{L}}

\label{sec:uniqueness}

This section is devoted to the proof of Proposition~\ref{prop:uniqueQ}, 
that is to the uniqueness of a Markov semigroup $Q$ satisfying \eqref{eq:semigroupQ}. 

In order to do so, we first prove useful technical lemmas. Then, we show that, given such a Markov semigroup $Q$, one can construct a c\`adl\`ag solution to the $(\cL,\cD(\cL))$-martingale problem with semigroup $Q$. The uniqueness of the solution to~\eqref{eq:semigroupQ} then derives from the uniqueness of this martingale problem, proved in Proposition~\ref{tNirr}.

\subsubsection{Technical lemmas}

Let $Q$ be a Markov semigroup solution to~\eqref{eq:semigroupQ}.  The following
lemmas will be useful to prove the non-explosion of a process with semigroup
$Q$. 

Throughout this section, we will apply the operator $\cL$ to functions (such as
$W$ and $f^A_m$ in the following Lemma) which do not lie in $\cD(\cL)$. We note
that, under Assumption~\ref{assumption1},
$\cL f$ is well-defined by its integro-differential action for any
$f \in C^{(s)}$ with $f$ and $\partial f/\partial s$ locally bounded,
whereas $\cD(\cL)$ represents the space on which the equation \eqref{eq:semigroupQ},
defining $Q$, is valid.

\begin{lemma}
  \label{lem:QtW}
  Assume that Assumption~\ref{assumption1} holds true. 
  Let $W\colon(0,+\infty)\cup\{\partial\}\to[0,+\infty)$ be such that $W\rvert_{(0,+\infty)}$ is a non-decreasing 
  function in $C^{(s)}_{\text{loc}}$ 
  with $W(\d) = 0$ and
  such that $\sup_{z>0} \frac{\d W}{\d s}(z)<+\infty$.
  Then, $\int_0^t Q_u \lvert \cL W(x)\rvert \, \dd u < \infty$ and 
  \[
    Q_t W(x)
    \leq W(x) + \int_0^t Q_u \cL W(x)\,\mathrm du,
    \quad t\geq 0\text{ and }x\in(0,+\infty).
  \]
\end{lemma}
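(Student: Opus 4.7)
The plan is to approximate $W$ by a monotone sequence $W_n\in\cD(\cL)$, apply the identity \eqref{eq:semigroupQ} to each, and pass to the limit with monotone convergence, dominated convergence and Fatou's lemma.

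The pivotal observation is an a priori upper bound on $\cL W$. Because $W$ is non-decreasing on $(0,+\infty)$, $W(\partial)=0$ and $W\ge 0$, both the jump term $\int_{(0,x)}(W(y)-W(x))\,k_h(x,\dd y)$ and the killing term $-q(x)W(x)$ appearing in $\cL W(x)$ are non-positive, whence
\[
\cL W(x) \le \frac{\d W}{\d s}(x) \le C_W:=\sup_{z>0}\frac{\d W}{\d s}(z) <+\infty,
\]
and in particular $(\cL W)_+\le C_W$ uniformly on $E$.

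For the approximation, I would take cutoffs $\chi_n\in C^s_c$ with $0\le\chi_n\le 1$, $\chi_n\equiv 1$ on $[1/n,n]$, $\chi_n$ compactly supported slightly larger, $\chi_n\nearrow 1$ pointwise, and $\|\d\chi_n/\d s\|_\infty\to 0$; set $W_n=(W\wedge W(n))\chi_n$ on $(0,+\infty)$ and $W_n(\partial)=0$. Then $W_n\in\cD(\cL)$, is bounded, and $W_n\nearrow W$ pointwise on $E$. Applying \eqref{eq:semigroupQ} gives $Q_tW_n(x)=W_n(x)+\int_0^t Q_u\cL W_n(x)\,\dd u$ together with $\int_0^t Q_u|\cL W_n|(x)\,\dd u<+\infty$. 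Monotone convergence yields $Q_tW_n(x)\nearrow Q_tW(x)$ and $W_n(x)\to W(x)$, so $\int_0^t Q_u\cL W_n(x)\,\dd u \to Q_tW(x)-W(x)$. Using the product rule for $\d/\d s$, one verifies $\cL W_n\to\cL W$ pointwise and that the positive parts $(\cL W_n)_+$ are dominated by $C_W$ up to corrections vanishing in $n$. Bounded convergence (using that $Q_u$ is Markov) then gives $\int_0^t Q_u(\cL W_n)_+(x)\,\dd u \to \int_0^t Q_u(\cL W)_+(x)\,\dd u$, while Fatou's lemma applied to the non-negative sequence $(\cL W_n)_-\to(\cL W)_-$ gives $\int_0^t Q_u(\cL W)_-(x)\,\dd u\le \liminf_n \int_0^t Q_u(\cL W_n)_-(x)\,\dd u$. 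Subtracting these yields the desired inequality $Q_tW(x)-W(x)\le \int_0^t Q_u\cL W(x)\,\dd u$. The integrability $\int_0^t Q_u|\cL W|(x)\,\dd u<+\infty$ then follows from $(\cL W)_+\le C_W$ together with the established inequality, which forces $\int_0^t Q_u(\cL W)_-(x)\,\dd u\le C_W t + W(x)<+\infty$.

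The main obstacle is controlling the two cutoff error terms arising from the product-rule decomposition of $\cL W_n$, namely $W(x)\d\chi_n/\d s(x)$ and $\int W(y)(\chi_n(y)-\chi_n(x))\,k_h(x,\dd y)$, uniformly in $n$ and particularly outside the plateau $[1/n,n]$: there $W$ may be unbounded and $k_h(x,(0,x))$ is only locally bounded under Assumption~\ref{assumption1}. The two-step truncation $(W\wedge W(n))\chi_n$, combined with a sufficiently slow-varying cutoff $\chi_n$, is chosen precisely to make these error terms negligible in the limit so that the bounded convergence step above goes through; carrying out this estimate carefully is the technical heart of the proof.
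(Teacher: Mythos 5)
Your overall strategy (truncate $W$, apply \eqref{eq:semigroupQ} to the truncations, pass to the limit with a one-sided Fatou argument using the a priori bound $\cL W \le \d W/\d s \le C_W$) is the same as the paper's, and your key observation and the final integrability step are correct. However, the specific truncation $W_n=(W\wedge W(n))\chi_n$ has a genuine gap at the lower end, precisely at the point you defer as ``the technical heart''. A cutoff $\chi_n\in C^s_c$ with $\chi_n\equiv 1$ on $[1/n,n]$ must rise from $0$ to $1$ on some interval $(a_n,1/n)$, so $\sup \d\chi_n/\d s \ge \bigl(s(1/n)-s(a_n)\bigr)^{-1}\ge \bigl(s(1/n)-s(0+)\bigr)^{-1}$; when $s(0+)>-\infty$ this blows up, so cutoffs with $\|\d\chi_n/\d s\|_\infty\to 0$ do not exist. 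The resulting error term $(W\wedge W(n))\,\d\chi_n/\d s\ge 0$ is \emph{positive}, i.e.\ on the wrong side for the reverse-Fatou step: one only gets $Q_tW - W \ge \liminf_n\int_0^t Q_u(\cL W_n - E_n)$, and to recover the desired upper bound you would need $\int_0^t Q_u E_n(x)\,\dd u\to 0$. Each excursion of the process through the transition zone contributes $\int \d\chi_n/\d s\,\dd s\approx 1$ to this integral, so the quantity is comparable to $W(0+)$ times the expected number of jumps into $(0,1/n)$ before time $t$ — controlling this is essentially Lemma~\ref{lem:defofnu}, which in the paper is \emph{deduced from} Lemma~\ref{lem:QtW}, so you cannot invoke it here. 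A secondary issue of the same kind arises in the decay region of $\chi_n$ at infinity, where the jump term $\int (W_n(y)-W_n(x))k_h(x,\dd y)$ can be as large as $W(n)\,k_h(x,(0,x))$ because $W_n$ is no longer monotone there.

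The paper's proof is engineered so that every error term is either non-positive or uniformly bounded by a constant. Near zero it approximates $W$ \emph{from above} by the constant $W(A+1)$ on $(0,1/m]$, with a non-increasing transition on $(1/m,2/m)$; the transition derivative is then $\le 0$ and the jump error is bounded by $W(A+1)\sup_{y\le A+1}k_h(y,(0,y])$, finite by Assumption~\ref{assumption1}. The resulting $f_m^A$ is not compactly supported, so it is not in $\cD(\cL)$; the paper restores admissibility by applying \eqref{eq:semigroupQ} to $g_m^A=W(A+1)\1_E-f_m^A\in\cD(\cL)$ and using $Q_t\1_E=\1_E$. At infinity it truncates to the constant $W(A+1)$ (rather than decaying to $0$), so that after the first limit $W_A$ is again non-decreasing and $\cL W_A\le C_W$ uniformly in $A$, allowing the second Fatou step. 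If you want to salvage your argument you should adopt this ``approximate from above by a constant near $0$ and subtract from $W(A+1)\1_E$'' device; the multiplicative cutoff cannot be made to work as proposed.
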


\begin{proof}[Proof of Lemma~\ref{lem:QtW}]
  For any $A\geq 1$,
  we consider the $C^{(s)}_{\text{loc}}$ function $W_A:(0,+\infty)\to[0,+\infty)$ defined by
  \[
    W_A(x)=\begin{cases}
      W(x)&\text{ if }x\leq A+1,\\
      W(A+1)&\text{ if }x\geq A+1,
    \end{cases}
  \]  
  and also set $W_A(\d)=0$.
  For any $m\geq 3$, we consider a $C^{(s)}_{\text{loc}}$ function $f_m^A:(0,+\infty)\to[0,+\infty)$ such that
  \begin{align*}
    f_m^A(x)=\begin{cases}
      W(A+1)&\text{ if }x\leq 1/m,\\
      W_A(x)&\text{ if }x\geq 2/m,
    \end{cases}
  \end{align*}
  such that $f_m^A$ which is non-increasing on $(1/m,2/m)$. In particular, $\frac{\d f_m^A}{\d s}(x)\leq \frac{\d W_A}{\d s}(x)\leq \frac{\d W}{\d s}(x)$ for all $x\in(0,+\infty)$. We also set $f_m^A(\d)=0$.

  Since $g_m^A:=W(A+1)\1_E-f_m^A\in \cD(\cL)$
  and $\cL \1_E = 0$,
  we deduce from~\eqref{eq:semigroupQ} that, for all $t\geq 0$ and all $x\in(0,+\infty)$,
  \begin{align*}
    Q_t g_m^A(x)=g_m^A(x)+\int_0^t Q_u \cL g_m^A(x)\,\mathrm du=g_m^A(x)-\int_0^t Q_u \cL f_m^A(x)\mathrm du.
  \end{align*}
  But $Q_t\1_E=\1_E$, and hence, subtracting
  $W(A+1)\1_E(x)$ on both sides of the equation, we deduce that
  \begin{align}
    \label{eq:QtfmA}
    Q_t f_m^A(x)=f_m^A(x)+\int_0^t Q_u \cL f_m^A(x)\,\mathrm du.
  \end{align}
  Set $h_m^A(z)=\int_{(0,z)} [f_m^A(y)-f_m^A(z)]\,k_h(z,\mathrm dy)$ for all $z>0$. We observe that  $h_m^A(z)\leq 0$ for all $z\geq A+1$ and that $h_m^A(z)\leq W(A+1)\sup_{y\in(0,A+1)} k_h(y,(0,y])$ for all $z\leq A+1$, which is finite according to Assumption~\ref{assumption1}.   Using the fact that, for all $z>0$,
  \[
   \frac{\d f_m^A}{\d s}(z)+q(x)(f_m^A(\partial)-f_m^A(z))
   = \frac{\d f_m^A}{\d s}(z)-q(x)f_m^A(z)\leq C_W := \sup_{y>0} \frac{\d W}{\d s}(y),
  \]
  we deduce that
  \begin{align*}
    \sup_{m\geq 1}\sup_{z\in(0,+\infty)} \cL f_m^A(z)<+\infty.
  \end{align*}
  Hence, applying Fatou's Lemma in the integral part of~\eqref{eq:QtfmA}, we deduce that
  \begin{align}
    \label{eq:Fatou1}
    \limsup_{m\to+\infty} Q_t f_m^A(x)&\leq \limsup_{m\to+\infty} f_m^A(x)+  \int_0^t Q_u (\limsup_{m\to+\infty} \cL f_m^A) (x)\,\mathrm du.
  \end{align}
  We have $\limsup_{m\to+\infty} f_m^A(x)=W_A(x)$ and the left hand side is equal to $Q_t W_A(x)$ by dominated convergence (recall that $f_A\leq W(A+1)$). Moreover, for any fixed $z>0$, we deduce from Fatou's Lemma (recall that, when 
  $m\to+\infty$, $f_m^A(y)-f_m^A(z)$ is uniformly bounded from above in $y$ and converges pointwise to $W_A(y)-W_A(z)$, while $k_h(z,\mathrm dy)$ has finite mass) that
  \begin{align*}
    \limsup_{m\to+\infty} \int_{(0,z)} [f_m^A(y)-f_m^A(z)]\,k_h(z,\mathrm dy)&\leq \int_{(0,z)} [W_A(y)-W_A(z)]\,k_h(z,\mathrm dy),
  \end{align*}
  while $\partial f_m^A/\partial s(z)$ converges pointwisely toward $\partial W^A/\partial s(z)$, so that 
  \[
    \limsup_{m\to+\infty} \cL f_m^A(z)\leq \cL W_A(z).
  \]
  This and~\eqref{eq:Fatou1} thus entail that, for all $A\geq 2$,
  \begin{align*}
    Q_t W_A(x)\leq W_A(x)+\int_0^t Q_u \cL W_A(x)\,\mathrm du.
  \end{align*}
  Since $\cL W_A\leq C_W$, we can use again Fatou's Lemma, and deduce
  \begin{align*}
    \limsup_{A\to+\infty} Q_t W_A(x) 
    \leq  \limsup_{A\to+\infty} W_A(x) 
    +  \int_0^t Q_u (\limsup_{A\to+\infty} \cL W_A) (x)\,\mathrm du.
  \end{align*}
  On the one hand, $\limsup_{A\to+\infty} W_A(x)=W(x)$ and, by monotone convergence, we obtain that $\limsup_{A\to+\infty} Q_t W_A(x)=Q_tW(x)$. On the other hand, using the monotone convergence theorem (note that $W_A(y)$ is increasing in $A$, for any fixed $y$), we deduce that, for all $z>0$,
  \begin{align*}
    \limsup_{A\to+\infty} \int_{(0,z)} [W_A(y)-W_A(z)]\,k_h(z,\mathrm dy) = \int_{(0,z)} [W(y)-W(z)]\,k_h(z,\mathrm dy)
  \end{align*}
  and hence that $\limsup_{A\to+\infty} \cL W_A(z)= \cL W(z)$.
  This implies that
  \begin{align*}
    Q_t W(x)\leq W(x)+ \int_0^t Q_u \cL W(x)\,\mathrm du.
  \end{align*}

  Since $\cL W$ is bounded from above by $C_W$, this implies
  that $\int_0^t Q_u(\cL W)_-(x)\mathrm du<+\infty$ and hence that $\int_0^t
  Q_u|\cL W|(x)\mathrm du<+\infty$. This concludes the proof of
  Lemma~\ref{lem:QtW}.
\end{proof}

\begin{lemma}
  \label{lem:defofnu}
  We define the function $p\colon E\to[0,+\infty]$ by 
  $p(z)=k_h(z,(0,1))$ and $p(\d)=0$.
  If Assumption~\ref{assumption1} holds true, then $p(E)\subset [0,+\infty)$ and, for all $x\in(0,+\infty)$, 
  \[
    \int_0^t Q_u p(x)\,\mathrm du <+\infty.
  \]
\end{lemma}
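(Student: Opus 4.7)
The plan is to apply Lemma~\ref{lem:QtW} to a well-chosen non-decreasing step function $W$ whose generator $\cL W$ absorbs $p$ outside a compact set. The fact that $p(E)\subset[0,+\infty)$ is immediate: $p(\partial)=0$ by definition, while for $z\in(0,+\infty)$ one has $p(z)=k_h(z,(0,1))\leq k_h(z,(0,z))$, and the right-hand side is finite by Assumption~\ref{assumption1}.

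For the integrability assertion, I would introduce $W\colon E\to[0,1]$ defined by $W(\partial)=0$ and
\[
  W(z)=
  \begin{cases}
    0 & \text{if } z\in(0,1], \\
    s(z)/s(2) & \text{if } z\in(1,2], \\
    1 & \text{if } z\in(2,+\infty).
  \end{cases}
\]
Since $s$ is continuous, strictly increasing, with $s(1)=0$, this $W$ is non-decreasing, belongs to $C^s_{\text{loc}}$, has $\partial W/\partial s$ bounded above by $1/s(2)<+\infty$, and satisfies $W(\partial)=0$, so it fulfils the hypotheses of Lemma~\ref{lem:QtW}.

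The key step is a pointwise bound $\cL W(z)+p(z)\leq M$ for some finite constant $M$, verified piecewise. On $(0,1]$, both $\partial W/\partial s(z)$ and $W(y)$ for $y\in(0,z]$ vanish, so $\cL W\equiv 0$; moreover $p$ is bounded on $(0,1]$ by Assumption~\ref{assumption1}. On $(1,2]$, the integral term $\int(W(y)-W(z))k_h(z,\mathrm dy)$ and the killing term $-q(z)W(z)$ are non-positive by monotonicity of $W$ and non-negativity of $q$, so $\cL W(z)\leq 1/s(2)$; and $p$ is again bounded on this interval. Finally, for $z\in(2,+\infty)$ one has $\partial W/\partial s(z)=0$ and $W(z)=1$, hence
\[
  \cL W(z)=-\int_{(0,z)}(1-W(y))\,k_h(z,\mathrm dy)-q(z)\leq -k_h(z,(0,1])\leq -p(z),
\]
using $1-W(y)=1$ for $y\in(0,1]$. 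Combining the three regimes yields a finite $M$ with $p+\cL W\leq M$ on $E$, so $p\leq M+|\cL W|$ pointwise.

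Applying the positivity-preserving operator $Q_u$ and integrating in $u$ then gives
\[
  \int_0^t Q_u p(x)\,\mathrm du\leq Mt+\int_0^t Q_u|\cL W|(x)\,\mathrm du,
\]
which is finite thanks to the conclusion $\int_0^t Q_u|\cL W|(x)\,\mathrm du<+\infty$ of Lemma~\ref{lem:QtW}. The only real design issue is choosing $W$ so that it is monotone, has bounded $s$-derivative, and so that $\cL W$ dominates $-p$ on $(2,+\infty)$ without blowing up on the transition interval $(1,2]$; the affine interpolation $s/s(2)$ resolves all three constraints at once.
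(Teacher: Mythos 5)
Your proposal is correct and follows essentially the same route as the paper: both arguments build a bounded, non-decreasing $C^s_{\text{loc}}$ cut-off $W$ vanishing on $(0,1]$ and equal to $1$ on $[2,+\infty)$, observe that $\cL W(z)\leq -p(z)$ (up to a uniform constant) for large $z$, and then invoke Lemma~\ref{lem:QtW} to conclude $\int_0^t Q_u\lvert\cL W\rvert(x)\,\mathrm du<+\infty$ and hence the integrability of $p$. Your explicit affine-in-$s$ interpolation on $(1,2]$ is just a concrete instance of the generic $W$ the paper uses, so the two proofs coincide in substance.
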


\begin{proof}[Proof of Lemma~\ref{lem:defofnu}]
  We first observe that $p(z)<+\infty$ for all $z\in(0,+\infty)$ 
  according to \eqref{as:1c_1}. 
  Let $W$ be a $C^{(s)}_{\text{loc}}$ non-decreasing function 
  with $W(\d)=0$,
  such that 
  $C_W:=\sup_{z>0} \frac{\d W}{\d s}(z)<+\infty$ and
  \begin{align*}
    W(x)=\begin{cases}
      0&\text{ if }x\leq 1,\\
      1&\text{ if }x\geq 2.
    \end{cases}
  \end{align*}
  For all $z>0$, we have
  \begin{align*}
    \cL W(z)
    &\leq C_W + \int_{(0,z)} [W(y) - W(z)]\,k_h(z,dy) \\
    &\leq  C_W+
    \begin{cases}
      0&\text{ if }z\leq 2\\
      -\int_{(0,z)} \1_{y\leq 1} \,k_h(z,\mathrm dy)&\text{ if }z\geq 2.
    \end{cases}
  \end{align*}
  Hence 
  \begin{align*}
    (\cL W)_-(z)&\geq \1_{z\geq 2}\int_{(0,z)}\1_{y\leq 1}k_h(z,\mathrm dy)-C_W\\
    &= p(z)\1_{z\geq 2}-C_W\geq p(z)-\sup_{r\in(0,2)} k_h(r,(0,r))-C_W,
  \end{align*}
  where $\sup_{r\in(0,2)} k_h(r,(0,r))<+\infty$ by Assumption~\ref{assumption1}. Hence
  \[
    \int_0^t Q_u p(x)\,\mathrm du 
    \leq \int_0^t Q_u (\cL W)_-(x)\,\mathrm du+ t\, \left(\sup_{r\in(0,2)} k_h(r,(0,r))+C_W\right).
  \]
  According to Lemma~\ref{lem:QtW}, we have $\int_0^t Q_u(\cL W)_-(x)\,\mathrm du<+\infty$. Hence we obtain
  \begin{align*}
    \int_0^t Q_u p(x)\,\mathrm du < +\infty.
  \end{align*}
\end{proof}

\begin{lemma}
  \label{lem:QtWbis}Assume that Assumption~\ref{assumption1} holds true.
  Let $W\colon E\to[0,+\infty)$ be a $C^{(s)}_{\text{loc}}$
  non-increasing function 
  such that $W(x)=0$ for all $x\geq 1$ and $W(\d)=0$. 
  Assume that $p_W(x)<+\infty$ and $\int_0^t Q_u p_W(x)\,\mathrm du<+\infty$ for all $t>0$ and $x\in(0,+\infty)$, 
  where $p_W(x)=\int_{(0,x)} W(y) k_h(x,\mathrm dy)$. 
  Then $\int_0^t Q_u |\cL| W(x)\,\mathrm du<+\infty$ and
  \[
    Q_t W(x)
    \leq W(x)+\int_0^t Q_u \cL W(x)\,\mathrm du,\ \forall x\in(0,+\infty)\text{ and }t\geq 0.
  \]
\end{lemma}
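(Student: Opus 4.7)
The strategy parallels that of Lemma~\ref{lem:QtW}: approximate $W$ by a sequence of compactly supported functions $f_m\in\cD(\cL)$, apply the semigroup identity~\eqref{eq:semigroupQ} to each, and pass to the limit $m\to+\infty$ via Fatou's lemma. The distinctive features here are that $W$ is non-increasing and already vanishes on $[1,+\infty)$, so compactness of the support toward $+\infty$ is automatic and only the behaviour near $0$ requires approximation; and that the upper bound on $\cL f_m$ is controlled through the $Q$-integrable majorant $p_W$, playing the role that the bounded $\partial W/\partial s$ played in Lemma~\ref{lem:QtW}.

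For $m\geq 2$, I define $f_m\colon E\to[0,+\infty)$ by $f_m(x)=W(x)$ for $x\geq 1/m$, $f_m(x)=0$ for $x\leq a_m$ (with $a_m\in(0,1/m)$ to be specified), $f_m$ interpolating linearly in $s$ from $0$ to $W(1/m)$ on $[a_m,1/m]$, and $f_m(\partial)=0$. Choosing $a_m$ so that $s(1/m)-s(a_m)\geq W(1/m)$---which is always possible when $s(0+)=-\infty$, and otherwise requires an additional height truncation $W\wedge n$ followed by a further limit $n\to+\infty$---ensures $(\partial f_m/\partial s)_+\leq 1$ uniformly in $m$. Then $f_m\in\cD(\cL)$ has compact support in $[a_m,1]$, satisfies $0\leq f_m\leq W$, and the explicit form of $\cL$ together with $f_m\leq W$ and $f_m,q\geq 0$ yields the uniform upper bound
\begin{equation*}
  \cL f_m(x) \leq (\partial f_m/\partial s)_+(x) + \int_{(0,x)}f_m(y)\,k_h(x,\dd y)\leq 1 + p_W(x),
\end{equation*}
whose $Q_u$-integral on $[0,t]$ is finite by hypothesis.

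Applying~\eqref{eq:semigroupQ} and letting $m\to+\infty$: Fatou applied to the kernel $Q_t(x,\cdot)$ and the nonnegative functions $f_m\to W$ pointwise gives $\liminf_m Q_t f_m(x)\geq Q_t W(x)$; reverse Fatou on the right-hand side with dominating majorant $1+p_W$ gives $\limsup_m\int_0^t Q_u\cL f_m(x)\,\dd u\leq\int_0^t Q_u\cL W(x)\,\dd u$, where the pointwise convergence $\cL f_m(x)\to\cL W(x)$ for each fixed $x>0$ follows because $f_m=W$ near $x$ for $m$ large and $\int f_m(y)\,k_h(x,\dd y)\to p_W(x)$ by dominated convergence (dominated by $W(y)\,k_h(x,\dd y)$ of finite total mass). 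This yields the desired inequality $Q_t W(x)\leq W(x)+\int_0^t Q_u\cL W(x)\,\dd u$.

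Finally, direct inspection gives $\cL W\leq p_W$ (using $\partial W/\partial s\leq 0$ and $W,q\geq 0$), so $\int_0^t Q_u(\cL W)_+\,\dd u\leq\int_0^t Q_u p_W\,\dd u<+\infty$; rearranging the inequality as $\int_0^t Q_u\cL W\,\dd u\geq Q_tW(x)-W(x)\geq -W(x)$ bounds $\int_0^t Q_u(\cL W)_-\,\dd u\leq W(x)+\int_0^t Q_u p_W\,\dd u<+\infty$, whence $\int_0^t Q_u|\cL W|(x)\,\dd u<+\infty$. The main technical obstacle is enforcing the uniform slope bound $(\partial f_m/\partial s)_+\leq 1$ on the interpolation region: it hinges on the behaviour of $s$ near $0$, and when $s(0+)>-\infty$ a supplementary height truncation of $W$ is required to keep the slope under control.
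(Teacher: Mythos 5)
Your overall strategy (approximate, apply \eqref{eq:semigroupQ}, pass to the limit by Fatou with a $Q$-integrable majorant built from $p_W$) is the same as the paper's, and the Fatou steps, the pointwise convergence $\cL f_m(x)\to\cL W(x)$, and the final integrability argument are all fine. The gap is exactly where you flag it, and your proposed repair does not work. To make $f_m$ compactly supported you must bring it from $W(1/m)$ down to $0$ on an interval $[a_m,1/m]\subset(0,1/m)$, and the standing assumptions do \emph{not} include $s(0+)=-\infty$ (the paper explicitly treats the entrance-boundary case $s(0+)>-\infty$ in section~\ref{sec:entrance}, and the lemma is applied in Lemma~\ref{lem:continuity} to the function $W_2$ of Lemma~\ref{l:W2}, which blows up at $0$). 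When $s(0+)>-\infty$, the available $s$-length satisfies $s(1/m)-s(a_m)\le s(1/m)-s(0+)\to 0$, so the positive part of the slope is at least $\bigl(W(1/m)\wedge n\bigr)/\bigl(s(1/m)-s(0+)\bigr)\to+\infty$ for \emph{any} height truncation $W\wedge n$: the obstruction is the vanishing $s$-measure of neighbourhoods of $0$, not the size of $W$, so the extra limit in $n$ cannot rescue the uniform majorant needed for reverse Fatou. (Trying instead to show that the process spends negligible $Q$-weighted time on $[a_m,1/m]$ would be circular, since the lemma is precisely an ingredient in proving non-explosion towards $0$.)

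The paper avoids this by truncating in the opposite direction: it sets $W_A=W(1/A)$ on $(0,1/A]$ (constant, hence $s$-derivative $\le 0$ everywhere, no positive slope near $0$ at all) and then makes the approximant $f_m^A$ climb back up to the constant $W(1/A)$ on $[m,m']$ with $s(m')-s(m)=W(1/A)$; the standing assumption $s(+\infty)=+\infty$ always leaves room for this slope-$1$ ascent at infinity. The resulting $f_m^A$ is not compactly supported, so the identity \eqref{eq:QtfmAbis} is obtained by applying \eqref{eq:semigroupQ} to $W(1/A)\1_E-f_m^A\in\cD(\cL)$ and using $Q_t\1_E=\1_E$; the jump term is then majorised by $W(1/A)\,p$ with $p$ from Lemma~\ref{lem:defofnu} rather than by $p_W$. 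If you want to keep your construction, you need to import both of these devices: flatten near $0$ instead of descending to $0$, and use the $\1_E$-subtraction to handle the loss of compact support.
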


\begin{proof}
  For all $A\geq 2$, let $W_A:(0,+\infty)\to[0,+\infty)$ be the non-increasing 
  $C^{(s)}_{\text{loc}}$
  function defined as
  \begin{align*}
    W_A(x)=\begin{cases}
      W(1/A)&\text{ if }x\leq 1/A,\\
      W(x)&\text{ if }x\geq 1/A.
    \end{cases}
  \end{align*}
  We also set $W_A(\d)=0$.
  For all $m\geq 2$, let $m'>0$ be such that $s(m')=s(m)+W(1/A)$
  and let $f_m^A\colon (0,+\infty) \to[0,+\infty)$ be a $C^{(s)}_{\text{loc}}$ function such that 
  \begin{align*}
    f_m^A(x)=\begin{cases}
      W_A(x)&\text{ if }x\leq m,\\
      W(1/A)&\text{ if }x\geq m',
    \end{cases}
  \end{align*}
  such that $f_m^A$ is non-decreasing on $(1,+\infty)$ and such that $\frac{\d f_m^A}{\d s}(x)\leq 1$ for all $x\in(0,+\infty)$. We set $f_m^A(\d)=0$. Proceeding as in the proof of Lemma~\ref{lem:QtW}, we have
  \begin{align}
    \label{eq:QtfmAbis}
    Q_t f_m^A(x)=f_m^A(x)+\int_0^t Q_u \cL f_m^A(x)\,\mathrm du,\qquad\forall t\geq 0\text{ and }x\in(0,+\infty).
  \end{align}
  Set $h_m^A(z)=\int_{(0,z)} [f_m^A(y)-f_m^A(z)]\,k_h(z,\mathrm dy)$ for all $z>0$. We have, for all $0<y\leq z$, 
  \[
    f_m^A(y)-f_m^A(z) \leq W(1/A)\mathbf 1_{y<1}
  \]
  and hence
  \begin{align*}
    h_m^A(z)
    &\leq W(1/A)\int_{(0,z)}\1_{y<1}k_h(z,\mathrm dy)
    \leq W(1/A)p(z),
  \end{align*}
  where $p$ is defined in the previous lemma.
  Since $\frac{\d f_m^A}{\d s}(z)\leq 1$ for all $z>0$, we deduce that $\cL f_m^A(z)\leq 1+W(1/A)p(z)$. Since $\int_0^t Q_u (1+W(1/A)p)(x)\,\mathrm du <+\infty$ according to Lemma~\ref{lem:defofnu}, we deduce using Fatou's Lemma in~\eqref{eq:QtfmAbis}, that
  \begin{align*}
    \limsup_{m\to+\infty} Q_t f_m^A(x)&\leq \limsup_{m\to+\infty} f_m^A(x)+  \int_0^t Q_u (\limsup_{m\to+\infty} \cL f_m^A) (x)\,\mathrm du.
  \end{align*}
  As in the proof of Lemma~\ref{lem:QtW}, this entails that
  \[
    Q_t W_A(x)\leq W_A(x)+\int_0^t Q_u \cL W_A(x)\,\mathrm du.
  \]
  Now we observe that, for all $z>0$, for all $A\geq 2$,
  \[
    \cL W_A(z)\leq \int_{(0,z)} W(y)\,k_h(z,\mathrm dy) = p_W(z).
  \]
  Since $p_W$ is integrable by assumption, we can apply again Fatou's Lemma to deduce that
  \[
    \limsup_{A\to+\infty} Q_t W_A(x)\leq \limsup_{A\to+\infty}W_A(x)+\int_0^t Q_u (\limsup_{A\to+\infty}\cL W_A)(x)\,\mathrm du.
  \]
  As in the proof of Lemma~\ref{lem:QtW}, this entails that
  \[
    Q_t W(x)\leq W(x)+\int_0^t Q_u \cL W(x)\,\mathrm du.
  \]
  In addition, $\int_0^t Q_u(\cL W)_+(x)\,\mathrm du\leq \int_0^t Q_u p_W(x)\,\mathrm du<+\infty$, and hence $\int_0^t Q_u (\cL W)_-(x)\,\mathrm du<+\infty$, which concludes the proof.
\end{proof}

\subsubsection{Representation of the semigroup \texorpdfstring{$Q$}{Q} by a càdlàg Markov process}

In this section, $Q$ is a Markov semigroup satisfying~\eqref{eq:semigroupQ}.
In Lemma~\ref{lem:continuity}, we  prove the continuity and the non-explosion
of any process $(Z_t)_{t\in F}$ with semigroup $Q$, where $F\subset[0,+\infty)$
contains $\Q_+=[0,+\infty)\cap \Q$ and is countable.

\begin{lemma}
  \label{lem:continuity}
  Assume that Assumption~\ref{assumption1} holds true.  Let $F\supset \Q_+$ be
  a countable subset of $[0,+\infty)$ and let $(Z_t)_{t\in F}$ be a Markov
  process on $E$ with semigroup $Q$, defined on the probability space
  $\Omega=E^{F}$.  Then, almost surely (for any starting distribution), the
  process $(Z_t)_{t\in F}$ is continuous at any time $t\in F$ and, for all
  $T>0$, $\sup_{t\in F\cap [0,T]} \1_{Z_t\neq \partial}/Z_t<+\infty$ and
  $\sup_{t\in F\cap[0,T]} \1_{Z_t\neq \partial}Z_t<+\infty$.
\end{lemma}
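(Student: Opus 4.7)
The claim splits into non-explosion of $(Z_t)_{t\in F\cap[0,T]}$ and continuity at every $t\in F$, which I address in turn.

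\textbf{Non-explosion.} For the upper bound, pick a non-decreasing $W^+\in C^s_{\text{loc}}$ with $W^+(\partial)=0$, $\sup_{x>0}\partial W^+/\partial s<+\infty$ and $W^+(x)\to+\infty$ as $x\to+\infty$ (for example $W^+(x)=s(x)\vee 0$). By Lemma~\ref{lem:QtW}, $Q_tW^+(x)\leq W^+(x)+Ct$ for some $C>0$, so by the Markov property $(W^+(Z_t)+C(T-t))_{t\in F\cap[0,T]}$ is a non-negative supermartingale along any finite enumeration of $F\cap[0,T]$. Doob's maximal inequality combined with monotone convergence over such enumerations yields $\P_x(\sup_{t\in F\cap[0,T]}W^+(Z_t)\geq A)\leq(W^+(x)+CT)/A$, which tends to $0$ as $A\to+\infty$ and gives $\sup_{t\in F\cap[0,T]}\Ind_{Z_t\neq\partial}Z_t<+\infty$ almost surely. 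The non-accumulation at $0$ is symmetric, using Lemma~\ref{lem:QtWbis} with a family of non-increasing truncations $W_n^-\in C^s_{\text{loc}}$ supported in $(0,1)$, bounded by $n$, and tending pointwise to $+\infty$ at $0$ as $n\to+\infty$; the integrability condition $\int_0^tQ_up_{W_n^-}(x)\,\dd u<+\infty$ follows from $p_{W_n^-}\leq n\,p$ combined with Lemma~\ref{lem:defofnu}.

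\textbf{Continuity.} The strategy is to prove a tubular bound: for each compact $K\subset(0,+\infty)$, each $y\in K$ and each $\varepsilon>0$, there exists $C_{K,\varepsilon}>0$ such that $\P_y(\sup_{s\in F\cap[0,\delta]}d(Z_s,y)>\varepsilon)\leq C_{K,\varepsilon}\,\delta$. To obtain this, choose a bump $\psi_{y,\varepsilon}\in C^s_c$ valued in $[0,1]$, with $\psi_{y,\varepsilon}(y)=1$ and support in $B(y,\varepsilon)\subset(0,+\infty)$; by Assumption~\ref{assumption1} and Lemma~\ref{lem:2bis}, $\|\cL\psi_{y,\varepsilon}\|_\infty\leq C_{K,\varepsilon}$ uniformly in $y\in K$. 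The process $(1-\psi_{y,\varepsilon}(Z_s)+C_{K,\varepsilon}(\delta-s))_{s\in F\cap[0,\delta]}$ is then a non-negative supermartingale under $\P_y$, by a direct computation using~\eqref{eq:semigroupQ} and the Markov property, and Doob's maximal inequality yields the tubular bound. Conditioning on $Z_t$ via the Markov property then gives $\P_x(\sup_{s\in F\cap[t,t+\delta]}d(Z_s,Z_t)>\varepsilon,\,Z_t\in K)\leq C_{K,\varepsilon}\delta$; a symmetric argument applied at $t-\delta$ with $\varepsilon/2$ handles the left-side interval $[t-\delta,t]$. Combining with the non-explosion to ensure $(Z_s)_{s\in F\cap[0,T]}$ is localised in $K\cup\{\partial\}$ with probability arbitrarily close to $1$, and taking countable unions over $t\in F$ and $\varepsilon\in\Q_+^*$ and a localising sequence of compacts, concludes the almost sure continuity.

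\textbf{Main obstacle.} The central technical difficulty is the tubular bound: because $F$ is dense in $[0,T]$ (as $F\supset\Q_+$), pointwise stochastic continuity, which would follow readily from~\eqref{eq:semigroupQ} alone by an $L^2$ computation, is not sufficient and cannot be boosted by a union bound. One is forced to exhibit a genuine supermartingale built from a test function whose generator is \emph{uniformly} controlled on the localisation compact $K$. Obtaining the uniform bound $\|\cL\psi_{y,\varepsilon}\|_\infty\leq C_{K,\varepsilon}$ for $y$ ranging over $K$ relies crucially on the local boundedness of $k_h(\cdot,(0,\cdot))$, $q$ and $\partial/\partial s$ supplied by Assumption~\ref{assumption1} and Lemma~\ref{lem:2bis}.
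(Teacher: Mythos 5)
Your continuity argument is, in substance, the paper's own proof repackaged: the paper builds discrete-time martingales $M^{(n)}_k$ from bump functions $f_{x,\varepsilon}$ that are maximal at $x$, applies Doob's inequality, and bounds the resulting error by $h_{x,\varepsilon}(\delta)\le 4\int_0^\delta Q_u(\cL f_{x,\varepsilon})_-(x)\,\dd u$, which is $O(\delta)$ uniformly over $x$ in a compact thanks to the uniform bound on $(\cL f_{x,\varepsilon})_-$ from Lemma~\ref{lem:2bis}; your non-negative supermartingale $1-\psi_{y,\varepsilon}(Z_s)+C_{K,\varepsilon}(\delta-s)$ encodes exactly the same estimate, and you correctly isolate the point where uniformity in the conditioning point is indispensable (left limits via the Markov property at $t-\delta$). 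The upper non-explosion bound via a non-decreasing $W^+$ with bounded $s$-derivative and Lemma~\ref{lem:QtW} is also the paper's argument.

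There is, however, a genuine gap in the non-accumulation at $0$. Your truncations $W_n^-$ are bounded by $n$ and their drift constant is controlled only by $p_{W_n^-}\le n\,p$, so the supermartingale/optional-sampling bound reads
\[
\P^Z_x\Bigl(\inf_{t\in F\cap[0,T]}Z_t\le \tfrac1m\Bigr)\;\le\;\frac{W_n^-(x)+n\int_0^T Q_u p(x)\,\dd u}{W_n^-(1/m)},
\]
and since $W_n^-(1/m)\le n$, the right-hand side does not tend to $0$ as $n,m\to\infty$: it stabilises around $\int_0^T Q_u p(x)\,\dd u$, which is finite but not small. The truncation level appears both in the numerator (through the drift) and as a cap on the denominator, so the two cancel. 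What is needed is a \emph{single unbounded} non-increasing $W^-\in C^s_{\text{loc}}$ with $W^-(z)\to+\infty$ as $z\to 0$ that is integrable against the finite measure $\nu(A)=\int_0^T Q_u p_A(x)\,\dd u$ furnished by Lemma~\ref{lem:defofnu}; then the numerator is a fixed finite constant while the denominator $W^-(1/m)\to+\infty$. Such a $W^-$ must be constructed adapted to $\nu$ (this is the content of the paper's Lemma~\ref{l:W2}, which chooses levels growing like $2^n$ on sets of $\nu$-measure decaying like $3^{-n}$); it does not follow from the truncation bound you invoke. With that construction inserted, your argument closes and coincides with the paper's.
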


\begin{proof} 
  First note that the existence of $(Z_t)_{t\in F}$ is guaranteed
  by the Kolmogorov extension theorem.  In order to simplify the expressions,
  we consider the case $F=\Q_+$.  We denote by $\P^Z_x$ (resp.\ $\P^Z_\mu$) the
  law of $Z$ with initial measure $\delta_x$ (resp.\ $\mu$), with the
  associated expectations $\E^Z_x$ and $\E^Z_\mu$. We first prove that $Z$ is
  right-continuous almost surely, then that it is left-continuous almost
  surely, and conclude by proving that, on any finite time horizon, the
  trajectories of the process are almost surely bounded away from $0$ and
  $+\infty$.

  \textbf{(1) The process $(Z_t)_{t\in\Q_+}$ is right-continuous almost surely.}
  Let $x\in(0,+\infty)$ and $f:E\to[0,+\infty)$ such that $f\rvert_{(0,+\infty)} \in C_c^{(s)}$
  with $f(\d)=0$ and such that $f$ is maximal at $x$. 
  Fix $\delta>0$ a positive rational number. 
  For all $n\geq 1$, let $M^{(n)}_0=0$ and, for all $k\geq 0$,
  \[
    M^{(n)}_{k+1}-M^{(n)}_k
    = f(Z_{\delta (k+1)/n})
    -f(Z_{\delta k/n})
    -\int_{0}^{\delta /n} Q_u \cL f(Z_{\delta k/n})\,\dd u.
  \]
  The process $M^{(n)}$ is a discrete time martingale and, using Doob's inequality, we deduce that, for all $\varepsilon>0$, 
  \begin{align}
    \label{eq:maj1}
    \P^Z_x\left(\sup_{k\in\{0,\ldots,n\}} |M^{(n)}_k|>\varepsilon\right)\leq  \frac{\E^Z_x\big(|M^{(n)}_n|\big)}{\varepsilon}.
  \end{align}
  But $M^{(n)}_k=f(Z_{\delta k/n})-f(x)-\sum_{l=0}^{k-1}\int_0^{\delta /n} Q_u \cL f(Z_{\delta l/n})\,\dd u$, so that
  \[
    |M^{(n)}_n|
    \leq f(x)-f(Z_{\delta})+\sum_{l=0}^{n-1}\int_0^{\delta /n} Q_u |\cL f|(Z_{\delta l/n})\,\dd u,
  \]
  since the maximum of $f$ is attained at $x$.
  Taking the expectation on both sides of the inequality, we obtain
  \begin{align*}
    \E^Z_x(|M^{(n)}_n|)
    &\leq f(x)-Q_\delta f(x)+ \sum_{l=0}^{n-1}\int_0^{\delta /n} Q_{u+\delta l/n} |\cL f|(x)\,\dd u\\
    &\leq Q_0f(x)-Q_\delta f(x)+ \int_0^{\delta} Q_{u} |\cL f|(x)\,\dd u.
  \end{align*}
  We also obtain that
  \[
    |M^{(n)}_k|\geq |f(Z_{\delta k/n})-f(x)|-\sum_{l=0}^{n-1}\int_0^{\delta /n} Q_u |\cL f|(Z_{\delta l/n})\,\dd u,
  \]
  where 
  \begin{align*}
    \P\left(\sum_{l=0}^{n-1}\int_0^{\delta /n} Q_u |\cL f|(Z_{\delta l/n})\,\dd u>\varepsilon\right)&\leq \frac{1}{\varepsilon}\E\left(\sum_{l=0}^{n-1}\int_0^{\delta /n} Q_u |\cL f|(Z_{\delta l/n})\,\dd u\right)\\
    &\leq \frac{1}{\varepsilon}\int_0^\delta Q_{u} |\cL f|(x)\,\dd u.
  \end{align*}
  Hence~\eqref{eq:maj1} implies that
  \begin{align*}
    &\!\!\P^Z_x\left(\sup_{k\in\{0,\ldots,n\}} |f(Z_{\delta k/n})-f(x)|>2 \varepsilon\right) \\
    &\leq \P^Z_x\left(\sup_{k\in\{0,\ldots,n\}} |M^n_k|>\varepsilon\right)+\P^Z_x\left(\sum_{l=0}^{n-1}\int_0^{\delta /n} Q_u |\cL f|(Z_{\delta l/n})\,\dd u>\varepsilon\right)\\
    &\leq \frac{Q_0f(x)-Q_\delta f(x)+ 2 \int_0^{\delta} Q_{u} |\cL f|(x)\,\dd u}{\varepsilon}.
  \end{align*}
  Setting $h_{x}(\delta)=Q_0f(x)-Q_\delta f(x)+ 2 \int_0^{\delta} Q_{u} |\cL f|(x)\,\dd u$, this implies in particular that, for all $n\geq 1$, 
  \[
    \P^Z_x\left(\sup_{k\in\{0,\ldots,n!\}} |f(Z_{\delta k/n!})-f(x)|>2\varepsilon\right)\leq \frac{h_{x}(\delta)}{\varepsilon}.
  \]
  But, almost surely,
  \[
    \sup_{k\in\{0,\ldots,n!\}} |f(Z_{\delta k/n!})-f(x)|\leq \sup_{k\in\{0,\ldots,(n+1)!\}} |f(Z_{\delta k/(n+1)!})-f(x)|
  \]
  and
  hence we can take the limit when $n\to+\infty$ in the penultimate inequality, which leads to
  \begin{align*}
    \P^Z_x\!\!\left(
      \sup_{\substack{n\geq 1,\\k\in\{0,\ldots,n!\}}} |f(Z_{\delta k/n!})-f(x)| > 2\varepsilon
    \right)
    &= \P^Z_x\!\!\left(
      \bigcup_{n\geq 1}\{\sup_{k\in\{0,\ldots,n!\}} |f(Z_{\delta k/n!})-f(x)|>2\varepsilon \}
    \right)\\
    &\leq 1\wedge \frac{h_{x}(\delta)}{\varepsilon}.
  \end{align*}
  Since $\{k/n! : n\geq 1,\,0\leq k\leq n!\}=[0,1]\cap \Q$,
  we deduce that
  \begin{align}
    \label{eq:step1-0}
    \P^Z_x\left(\sup_{q\in[0,\delta]\cap \Q} |f(Z_{q})-f(x)|>2\varepsilon\right)\leq 1\wedge \frac{h_{x}(\delta)}{\varepsilon}.
  \end{align}
  Note that $h_x(\delta)\to 0$ when $\delta\to0$,
  since $Q_t f(x)$ is continuous in $t$ 
  by \eqref{eq:semigroupQ}
  and $Q_{u} |\cL f|(x)$ is integrable over $[0,t]$. We deduce that
  \begin{equation}
    \label{eq:step1}
    \P^Z_x\left(\sup_{q\in[0,\delta]\cap \Q} |f(Z_{q})-f(x)|>2\varepsilon\right) \xrightarrow[\delta\to 0]{} 0,
  \end{equation}
  Since this is true for all functions $f\in C_c^{(s)}$ such that $f$ is maximal at $x$, this implies that $(Z_t)_{t\in\Q}$ is (right)-continuous at time $t=0$, $\P_x$-almost surely. In particular
  \[
    \P^Z_x\left(\sup_{q\in[0,\delta]\cap \Q} |Z_{q}-x|>\varepsilon\right)\xrightarrow[\delta\to 0]{} 0,\ \forall x\in (0,+\infty).
  \]
  For $x=\d$, we have, for all $t\geq 0$, $Q_t \1_\d(x)=Q_0\1_\d(x)=1$, so that $Z_t=\d$ $\P_\d$-almost surely, which of course implies the right-continuity of $(Z)_{t\in\Q_+}$ $\P_\d$-almost surely. Hence the last convergence also holds true under $\P_\d$ (taking for instance $|y-\d|=+\infty$ for all $y\in(0,+\infty)$).

  Now, for any probability measure $\mu$ on $E$, integrating with respect to $\mu(\dd x)$ the last convergence and using the dominated convergence theorem, we deduce that
  \[
    \P^Z_\mu\left(\sup_{q\in[0,\delta]\cap \Q} |Z_{q}-Z_0|>\varepsilon\right)\xrightarrow[\delta\to 0]{} 0,
  \]
  which implies that $Z$ is continuous at time $0$, $\P_\mu$-almost surely.

  Finally, fixing $t\in \Q_+$ and using the Markov property at time $t$, we deduce that the process is right continuous at time $t\in\Q_+$ almost surely. This implies that $Z$ is right-continuous at any time $t\in\Q_+$, $\P^Z_x$-almost surely for all $x\in E$.

  \medskip
  \textbf{(2) The process $(Z_t)_{t\in\Q_+}$ is left-continuous almost surely.}

  Fix $\varepsilon>0$.
  For each $x\in(2\varepsilon,1/\varepsilon)$, let
  $f_{x,\varepsilon}\in C_c^{(s)}$
  be a function with support in $(\varepsilon/2,1/\varepsilon+2\varepsilon)$
  such that $f_{x,\varepsilon}(y)\leq \1_{|y-x|<\varepsilon}$
  and $0\leq f_{x,\varepsilon}(y)\leq f_{x,\varepsilon}(x)=1$
  for $y\in (0,+\infty)\cup\{\d\}$. 
  The collection of functions $f_{x,\varepsilon}$ can be chosen such that
  $f_{x,\varepsilon}$ and $\partial f_{x,\varepsilon}/\partial s$ are bounded
  uniformly in $x$.
  Define
  $h_{x,\varepsilon}(\delta)
  = Q_0f_{x,\varepsilon}(x)-Q_\delta f_{x,\varepsilon}(x)+ 2\int_0^{\delta} Q_{u} |\cL f_{x,\varepsilon}|(x)\,\dd u$.
  By applying~\eqref{eq:step1-0}, we obtain
  \begin{equation}
    \P^Z_x\left(\sup_{q\in[0,\delta]\cap \Q} |Z_{q}-x|>\varepsilon\right)\leq  
    \P^Z_x\left(\sup_{q\in[0,\delta]\cap \Q} |f_{x,\varepsilon}(Z_{q})-f_{x,\varepsilon}(x)|\geq 1\right)\leq 2\sup_{x\in (2\varepsilon,1/\varepsilon)} h_{x,\varepsilon}(\delta),\label{eq:lasteq}
  \end{equation}
  for all $x\in(2\varepsilon,1/\varepsilon)\cup\{\d\}$.
  (The case $x=\d$ is immediate, since we observed in step~1 that $\d$ is absorbing.)

  Using the fact that $f_{x,\varepsilon}$ is maximal at $x$, we deduce that $Q_0f_{x,\varepsilon}(x)-Q_\delta f_{x,\varepsilon}(x)$ is non-negative, hence
  \begin{align*}
    h_{x,\varepsilon}(\delta)
    &= Q_0f_{x,\varepsilon}(x)-Q_\delta f_{x,\varepsilon}(x)+ 2\int_0^{\delta} Q_{u} |\cL f_{x,\varepsilon}|(x)\,\dd u\\
    &\leq 2(Q_0f_{x,\varepsilon}(x)-Q_\delta f_{x,\varepsilon}(x))+ 2\int_0^{\delta} Q_{u} |\cL f_{x,\varepsilon}|(x)\,\dd u\\
    &=-2 \int_0^{\delta} Q_{u} \cL f_{x,\varepsilon}(x)\,\dd u+ 2\int_0^{\delta} Q_{u} |\cL f_{x,\varepsilon}|(x)\,\dd u=4\int_0^\delta Q_u(\cL f_{x,\varepsilon})_-(x)\,\dd u,
  \end{align*}
  where we used~\eqref{eq:semigroupQ} for the penultimate equality.
  We observe that $(\cL f_{x,\varepsilon})_-(z)$ is 
  bounded in $z\in (0,+\infty)\cup\{\d\}$ according to 
  Lemma~\ref{lem:2bis} point \ref{i:2bis:bdd-below}, uniformly in 
  $x\in (2\varepsilon,1/\varepsilon)$ 
  according to~\eqref{eq:lemma2point2} in its proof 
  (for this last claim, we simply observe that $\|f_{x,\varepsilon}\|_\infty$ 
  and $\|\d f_{x,\varepsilon}/\d s\|_\infty$ are bounded in $x$ by assumption and that the union of the supports of these functions is included in a compact subset of $(0,+\infty)$). Hence $C_\varepsilon(\delta):=2\sup_{x\in (2\varepsilon,1/\varepsilon)} h_{x,\varepsilon}(\delta)$ goes to $0$ when $\delta\to0$.

  Fix $x\in E$, a positive time $t\in \Q_+$ and $\delta\in [0,t]\cap\Q$.
  Note that for $q \in [0,\delta] \cap\Q$,
  $\lvert Z_t - Z_{t-q}\rvert \le \lvert Z_t - Z_{t-\delta} \rvert
  + \lvert Z_{t-\delta} - Z_{t-q}\rvert$.
  Taking we can conclude 
  (using the preceding remark in the first line, the Markov property
  in the second and \eqref{eq:lasteq} in the third) that,
  for $x\in (0,+\infty)$ and $\varepsilon'\in(0,\varepsilon/2]$,
  \begin{align}
    \P^Z_x\left(\sup_{q\in[0,\delta]\cap \Q} |Z_t-Z_{t-q}|>\varepsilon\right)
    &\leq \P^Z_x\left(\sup_{q\in[0,\delta]\cap \Q} |Z_{t-q}-Z_{t-\delta}|>\varepsilon'\right)\nonumber\\
    &=\E^Z_x\left(\P^Z_{Z_{t-\delta}}\left(\sup_{q\in[0,\delta]\cap \Q} |Z_{\delta -q}-Z_{0}|>\varepsilon'\right)\right)\nonumber\\
    &\leq C_{\varepsilon'}(\delta)+\P^Z_x(Z_{t-\delta}\notin (2\varepsilon',1/\varepsilon')\cup\{\d\})\label{eq:num}.
  \end{align}  
  But 
  \begin{align*}
    \P^Z_x(Z_{t-\delta}\notin (2\varepsilon',1/\varepsilon')\cup\{\d\})
    = 1-Q_{t-\delta}(\1_{(2\varepsilon',1/\varepsilon')\cup\{\d\}})(x)\leq 1-Q_{t-\delta}g_{\varepsilon'}(x),
  \end{align*}
  where $g_{\varepsilon'}$ is any non-negative function in $\cD(\cL)$ bounded by $1$, equal to $1$ on $(3\varepsilon',\nicefrac1{2\varepsilon'})\cup\{\d\}$ and vanishing outside $(2\varepsilon',1/\varepsilon')\cup\{\d\}$. 
  Now, for all $\eta>0$, there exists $\varepsilon'>0$ such that $1-Q_tg_{\varepsilon'}(x)\leq \eta/2$ (by dominated convergence theorem and the fact that $\1_E\geq g_{\varepsilon'}\to \1_E$ pointwisely, with $Q_t\1_E=\1_E$) and $\delta'>0$ such that, for all $\delta\in(0,\delta')$, $|Q_tg_{\varepsilon'}(x)-Q_{t-\delta}g_{\varepsilon'}(x)|\leq \eta/2$ (by continuity of $u\mapsto Q_ug_{\varepsilon'}$ at time $t$).
  In particular, for all  $\delta\in (0,\delta')$,
  \[
    \P^Z_x(Z_{t-\delta}\notin (2\varepsilon',1/\varepsilon')\cup\{\d\})
    \leq \eta,
  \]
  Hence, we deduce from~\eqref{eq:num} that
  \begin{align}
    \label{eq:maj3}
    \P^Z_x\left(\sup_{q\in[0,\delta]\cap \Q} |Z_t-Z_{t-q}|>\varepsilon\right)&\xrightarrow[\delta\to 0]{} 0,
  \end{align}
    so that $Z$ is $\P^Z_x$-almost surely left continuous at time $t$.

  The extension to non-Dirac initial distribution can be done as in Step~1, and this concludes the proof of the first part of Lemma~\ref{lem:continuity}.

  \medskip
  \textbf{(3) The trajectories of the process $(Z_t)_{t\in [0,T]\cap \Q_+}$ are bounded away from  $0$ and $+\infty$.}

  Fix $T>0$. We first show that, for all $x\in(0,+\infty)\cup\{\d\}$, $Z$ is $\P^Z_x$-almost surely bounded from above. 
  In order to do so, fix $x\in(0,+\infty)$ (the result is trivial for $x=\d$). 
  Let $W_1$  be a $C^{(s)}_{\text{loc}}$ non-decreasing function such that $C_1:=\sup_{z>0}\d W_1/\d s(z)<+\infty$ and $\lim_{m\to+\infty} W_1(m)=+\infty$ (such a function exists since $\lim_{z\to+\infty} s(z)=+\infty$ by assumption) and set $W_1(\partial)=0$.
  According to Lemma~\ref{lem:QtW} and using the fact that  $\cL W_1\leq C_1$, we obtain that, for all $n\geq 1$, 
  \[
    M^{(n)}_k
    = W_1(Z_{Tk/n})-C_1 \,Tk/n
  \]
  defines a super-martingale.  Hence, for any $m>0$, defining the stopping time $\sigma^n_m=\inf\{lT/n,\ l\in\Z_+,\ Z_{lT/n}> m\}$ and using the optional sampling theorem, we deduce that
  \begin{align*}
    \E^Z_x(W_1(Z_{\sigma^n_m\wedge T}))&\leq  W_1(x)+ C_1 T.
  \end{align*}
  Since $W_1(Z_{\sigma^n_m\wedge T})\geq W_1(m)$ on the event $\sigma^n_m\leq T$, we deduce that
  \begin{align*}
    \P^Z_x(\sigma^n_m\leq T)\leq \frac{W_1(x)+C_1T}{W_1(m)}.
  \end{align*}
  Since $(\sigma^{n!}_m)_n$ is almost surely non-increasing and converges toward $\sigma_m=\inf\{u\in \Q_+,\ Z_{u}>m\}$, we deduce that
  \begin{align*}
    \P^Z_x(\sigma_m\leq T)\leq \frac{W_1(x)+C_1 T}{W_1(m)}.
  \end{align*}
  Using now that $(\sigma_m)_m$ is almost surely non-decreasing, we deduce that
  \begin{align}
    \label{eq:Z-non-explo1}
    \P^Z_x\left(\sup_{u\in[0,T]\cap \Q_+}\1_{Z_u\neq \partial}Z_u=+\infty\right)=\P^Z_x\left(\lim_{m\to+\infty}\sigma_m\leq T\right)=0.
  \end{align}

  We prove now that $Z$ is almost surely bounded away from $0$, starting from $x\in(0,+\infty)$. We consider the non-negative measure $\nu$ on $(0,1)$ defined by 
  \[
    \nu(A):=\int_0^t Q_u p_A(x)\,\dd u.
  \]
  where $p_A(z)=\int_0^z \1_A(y)\,k_h(z,\dd y)$ for all measurable $A$ subset of $(0,1)$.
  This is a finite measure according to Lemma~\ref{lem:defofnu}. 
  Hence there exists a non-increasing $C^{(s)}_{\text{loc}}$ function 
  $W_2:(0,+\infty)\to(0,+\infty)$ such that $W_2(z)\to+\infty$ when $z\to 0$ 
  and $W_2(z)=0$ for all $z\geq 1$, and such that $\nu(W_2)<+\infty$;
  see Lemma~\ref{l:W2} below.

  According to Lemma~\ref{lem:QtWbis} and using the fact that 
  $\int_0^t Q_u \cL W_2\,\dd u \leq \nu(W_2)$ (with $W_2(\d):=0$),
  we have that, for all $n\geq 1$, 
  \[
    N^{(n)}_k
    = W_2(Z_{Tk/n})-\nu(W_2) \,Tk/n
  \]
  defines a super-martingale.
  Defining the stopping time 
  $\sigma^n_{1/m}=\inf\{lT/n: l\in\Z_+,\ Z_{lT/n}< 1/m\}$ 
  and using the same method used to obtain~\eqref{eq:Z-non-explo1}, we deduce that
  \begin{align*}
    \P^Z_x\left(\sup_{u\in[0,T]\cap \Q_+}\1_{Z_u\neq\partial} /Z_u=+\infty\right)=0.
  \end{align*}
  This and equation~\eqref{eq:Z-non-explo1} concludes the proof of Lemma~\ref{lem:continuity}.
\end{proof}

\begin{lemma}\label{l:W2}
  Let $\nu$ be a finite measure on $(0,1)$. Then, there exists a
  non-increasing $C^{(s)}_{\text{loc}}$ function $W_2$ such that $W_2(x) \to \infty$ when
  $x\to 0$, $W_2(x) = 0$ for $x>1$ and $\nu(W_2) <+\infty$.
\end{lemma}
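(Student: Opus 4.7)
The plan is to construct $W_2$ explicitly as a piecewise-linear-in-$s$ interpolation between well-chosen points. Since $\nu$ is a finite measure on $(0,1)$, we have $\nu((0,x]) \to 0$ as $x \to 0^+$. Using this, I would choose a strictly decreasing sequence $1 = x_0 > x_1 > x_2 > \cdots$ with $x_n \to 0$ and $\nu((0, x_n]) \le 2^{-n}$ for all $n \ge 1$.

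I would then set $W_2 \equiv 0$ on $[1, +\infty)$ and, on each interval $[x_{n+1}, x_n]$, define $W_2$ to be linear in $s$, taking the value $n$ at $x_n$ and $n+1$ at $x_{n+1}$. Explicitly, for $x \in [x_{n+1}, x_n]$,
\[
  W_2(x) = n + \frac{s(x_n) - s(x)}{s(x_n) - s(x_{n+1})}.
\]
By construction $W_2$ is continuous on $(0,+\infty)$, non-increasing, vanishes on $[1,+\infty)$, and satisfies $W_2(x_n) = n \to +\infty$, hence $W_2(x) \to +\infty$ as $x \to 0^+$ by monotonicity.

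To verify $W_2 \in C^s_{\text{loc}}$, I would check directly that the right $s$-derivative equals $-1/(s(x_n) - s(x_{n+1}))$ on each $[x_{n+1}, x_n)$ and $0$ on $[1,+\infty)$. Any compact $[a,b] \subset (0,+\infty)$ has $a > 0$ and therefore meets only finitely many intervals $[x_{n+1}, x_n]$ (since $x_n \downarrow 0$), so the $s$-derivative is bounded on $[a,b]$. For integrability, using $W_2 \le n+1$ on $(x_{n+1}, x_n]$, $W_2 \equiv 0$ on $(1,+\infty)$, and $\nu((x_{n+1},x_n]) \le \nu((0,x_n]) \le 2^{-n}$ for $n \ge 1$, one gets
\[
  \nu(W_2) \le \sum_{n\ge 0}(n+1)\nu((x_{n+1},x_n]) \le \nu((0,1)) + \sum_{n\ge 1}(n+1)\,2^{-n} < +\infty.
\]

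There is no real obstacle here: the construction is standard, and the only points requiring care are (i) picking $x_n$ decreasing fast enough that $\nu((0,x_n])$ is summable against the diverging sequence of interpolated values, and (ii) observing that piecewise linearity in $s$ automatically produces a right $s$-derivative which is locally bounded because the discontinuity points of this derivative accumulate only at $0$.
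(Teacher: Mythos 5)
Your construction is correct and is essentially the same as the paper's: both proofs build $W_2$ as a piecewise-linear-in-$s$ interpolation over a sequence $x_n\downarrow 0$ chosen so that $\nu$ of the tails decays fast enough to dominate the growing interpolated values (you use values $n$ with $\nu((0,x_n])\le 2^{-n}$, the paper uses $y_n=2^{n-1}-1$ with $\nu(0,x_n)<3^{-n}$). The verification of monotonicity, divergence at $0$, local boundedness of the right $s$-derivative, and $\nu(W_2)<\infty$ matches the paper's argument.
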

\begin{proof}
  Let $y_n = 2^{n-1} - 1$ for $n\ge 1$.
  Let $(x_n)$ be a decreasing sequence of numbers in $(0,1)$ such that
  $\nu(0,x_n) < 3^{-n}$ for $n\ge 1$,
  which exists because $\nu((0,1))<+\infty$.
  Then
  \[ A := \sum_{n\ge 1} y_{n+1} \nu[x_{n+1},x_n) 
    \le \sum_{n\ge 1} 2^{n}3^{-n} 
    < \infty. 
  \]
  Now let $W_2$ be defined
  by
  \[
    W_2(x) =y_{n+1} + \frac{s(x)-s(x_{n+1})}{s(x_n)-s(x_{n+1})}(y_n-y_{n+1}),
    \qquad x \in [x_{n+1},x_n),
  \]
  so that $W_2(x) \in (y_n,y_{n+1}]$ when $x\in [x_{n+1},x_n)$.
  Let $W_2(x) = 0$ for $x\ge 1$.
  Then $W_2$ is a positive, non-increasing, continuous, and admits a 
  right derivative with respect to $s$ given by
  \[
    \frac{\d W_2}{\d s}(x)=\frac{y_n-y_{n+1}}{s(x_n)-s(x_{n+1})}\leq 0,
    \qquad x \in [x_{n+1},x_n),
  \]
  and, for all $x\geq 1$, by $\frac{\d W_2}{\d s}(x)=0$.
  Moreover, we have
  \[
    \int_{(0,1)} W_2(x)\,\nu(\mathrm dx)\leq \sum_{n\in\N} y_{n+1}\nu\left(\left[x_{n+1},x_n\right)\right)<+\infty,
  \]
  which proves the lemma.
\end{proof}

We state now the uniqueness of the Markov semigroup, so that the proof of the following lemma concludes the proof of Proposition~\ref{prop:uniqueQ}. In order to do so, we show that $(Z_t)_{t\in\Q_+}$ (as in the proof of the preceeding lemma) can be extended to a c\`adl\`ag process $(Y_t)_{t\in[0,+\infty)}$ with values in $E$, which appears to be solution to the $(\cL,\cD(\cL))$-martingale problem. The conclusion is then obtained from Proposition~\ref{tNirr}.
\begin{lemma}
  \label{lem:uniqueness}
  Assume that Assumption~\ref{assumption1} holds true and that
  $Q$ is a semigroup satisfying \eqref{eq:semigroupQ}.
  Then $Q_t f(x)=\E_x(f(X_t))$ for all bounded measurable functions $f$ on $E$, 
  where $X$ is the unique c\`adl\`ag solution to the martingale 
  problem~$(\cL,\cD(\cL))$. 
  Moreover, 
  $Q_t \1_{(0,\infty)}(x) = \1_{(0,\infty)}(x) - \int_0^t Q_u q(x)\, \dd u$,
  for all $x \in E$.
\end{lemma}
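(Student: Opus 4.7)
My plan is to construct from the $\Q_+$-indexed process $(Z_t)_{t\in\Q_+}$ of Lemma~\ref{lem:continuity} a c\`adl\`ag Markov process $Y$ on $E$ with semigroup $Q$, to verify that $Y$ solves the $(\cL,\cD(\cL))$ martingale problem, and then to invoke the uniqueness part of Proposition~\ref{tNirr} to identify the law of $Y$ with that of the PDMP $X$. The second identity will then follow from the PDMP structure of $X$.

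For the c\`adl\`ag extension, I would fix a countable family $\{f_n\}\subset\cD(\cL)$ of non-negative functions with compact support in $(0,+\infty)$ and vanishing at $\partial$, chosen so as to separate the points of $E$. By Lemma~\ref{lem:2bis}(ii) each $\cL f_n$ is bounded below by some $-C_n$, so \eqref{eq:semigroupQ} gives $Q_u f_n(x)\ge f_n(x)-C_n u$, and the Markov property of $Z$ makes $(f_n(Z_q)+C_n q)_{q\in\Q_+\cap[0,T]}$ a bounded sub-martingale for every $T>0$. Doob's regularization theorem then provides, almost surely, right and left limits of $q\mapsto f_n(Z_q)$ at every $t\ge 0$. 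Combined over the countable family and with the local boundedness away from $0$ and $+\infty$ from Lemma~\ref{lem:continuity} (which keeps the paths of $Z$ in a random compact subset of $E$), the same regularity holds for $Z_q$ itself, and I set $Y_t:=\lim_{q\downarrow t,\,q\in\Q_+} Z_q$; continuity of $Z$ at rational times (Lemma~\ref{lem:continuity}) ensures $Y_q=Z_q$ for $q\in\Q_+$ almost surely.

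Next, I would show that $Y$ has semigroup $Q$: the continuity of $u\mapsto Q_u g(x)$ for $g\in\cD(\cL)$ follows from \eqref{eq:semigroupQ} and integrability of $Q_u|\cL g|(x)$, so passing to the limit along rationals $q\downarrow t$ in $\E_x[g(Y_q)]=Q_q g(x)$, combined with right-continuity of $Y$ and boundedness of $g$, yields the identity for all $t\ge 0$. Since $\cD(\cL)$ is measure-determining on $E$, it extends to bounded measurable $f$, and the Markov property of $Y$ is inherited from that of $Z$ on rationals. The martingale problem then follows by combining the Markov property of $Y$ with \eqref{eq:semigroupQ} and Fubini:
\[
  \E_x\!\left[f(Y_t)-f(Y_s)-\int_s^t \cL f(Y_u)\,\dd u\;\Big|\;\cF_s^Y\right]
  = Q_{t-s}f(Y_s)-f(Y_s)-\int_0^{t-s}Q_u\cL f(Y_s)\,\dd u = 0
\]
for $f\in\cD(\cL)$ and $0\le s\le t$. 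Proposition~\ref{tNirr} then yields $Y\stackrel{d}{=}X$ under $\P_x$, and hence $Q_t f(x)=\E_x[f(X_t)]$. For the second identity, the PDMP $X$ constructed in Proposition~\ref{tNirr} flows probability mass to the absorbing point $\partial$ at instantaneous rate $\E_x[q(X_u)\1_{X_u\in(0,+\infty)}]=Q_u q(x)$ (using $q(\partial)=0$), and integrating in $u$ gives $\P_x(X_t=\partial)=\int_0^t Q_u q(x)\,\dd u$ for $x\in(0,+\infty)$, which rearranges to the claimed identity, while both sides vanish at $x=\partial$.

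The main technical obstacle I foresee is the c\`adl\`ag extension step: because $E$ is not locally compact at $0$ or $+\infty$, Doob's sub-martingale regularization has to be coupled carefully with the local boundedness from Lemma~\ref{lem:continuity} so that left and right limits of $Z_q$ along rationals remain in $E$, and the countable separating family $\{f_n\}$ must be rich enough to force all subsequential limits to agree inside $E$.
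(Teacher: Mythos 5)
Your proposal is correct and follows essentially the same route as the paper: regularize $(Z_q)_{q\in\Q_+}$ through submartingale/upcrossing arguments (the paper uses the discrete upcrossing inequality together with Problem~9(a) of Ethier--Kurtz, which is exactly the content of the Doob regularization you invoke), use the boundedness from Lemma~\ref{lem:continuity} and a countable separating family to keep the one-sided limits in $E$, identify the semigroup of the resulting c\`adl\`ag process with $Q$, verify the $(\cL,\cD(\cL))$ martingale problem, and conclude by the uniqueness in Proposition~\ref{tNirr}. The only cosmetic difference is the final identity, which the paper reads off directly from $\1_\d\in\cD(\cL)$, $Q_t\1_E=\1_E$ and \eqref{eq:semigroupQ}, rather than from the killing-rate description of the PDMP.
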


\begin{proof}

  Let $(Z_t)_{t\in\Q_+}$ be as in the proof of Lemma~\ref{lem:continuity}.  In a first step, we show that, for any sufficiently regular function $f$, $(f(Z_t))_{t\in\Q_+}$ admits only finitely many upcrossings over non-empty open intervals. In a second step, we use this to deduce that $Z$ can be extended to a c\`adl\`ag Markov process $(Y_t)_{t\in[0,+\infty)}$ with semigroup $Q$ and taking its values in the one point compactification of $E$. Finally, we prove that $Y$ takes its values in $E$ and that it satisfies the $(\cL,\cD(\cL))$-martingale problem.

  \medskip
  \textbf{(1) Finiteness of the number of upcrossings.} 
  Let $x\in(0,+\infty)$ and $f$ be a non-negative function in $C_c^{(s)}$, extended to $\d$ with $f(\d)=0$. Our aim is to prove that, for any $a<b\in\R$, the number of upcrossings through $(a,b)$ of
  $(f(Z_t)-f(x))_{t\in\Q_+}$ is finite $\P^Z_x$-almost surely on any finite time horizon.

  Fix $a<b\in \R$ and $\delta\in(0,\frac{b-a}{1+4 c})\cap \Q$, where $c:=\sup (\cL f)_-$ is finite according to Lemma~\ref{lem:2bis} point (ii). For all $n\geq 1$, let $M^{(n)}_0=0$ and
  \[
    M^{(n)}_{k+1}-M^{(n)}_k= f(Z_{\delta (k+1)/n})-f(Z_{\delta k/n})-\int_{0}^{\delta/n} Q_u \cL f(Z_{\delta k/n})\,\dd u.
  \]
  The process $M^{(n)}$ is a discrete time martingale. Hence, setting $N^{(n)}_0=0$ and
  \[
    N^{(n)}_{k+1}-N^{(n)}_{k}=f(Z_{\delta (k+1)/n})-f(Z_{\delta k/n})+\frac{c\delta}{n}=M^{(n)}_{k+1}-M^{(n)}_k+\int_{0}^{\delta/n} Q_u \cL f(Z_{\delta k/n})\,\dd u+\frac{c\delta}{n}
  \]
  defines a sub-martingale. In particular, using Lemma~2.5 p.57 in~\cite{EK-mp}, we have (here $U^{(n)}(a,b)$ denotes the number of upcrossings through the interval $(a,b)$ during the $n$ first steps of the sub-martingale $N^{(n)}$):
  \[
    \E^Z_x(U^{(n)}(a,b))\leq \frac{\E^Z_x((N^{(n)}_n-a)_+)}{b-a}\leq \frac{ \|f\|_\infty+c\delta+|a|}{b-a},
  \]
  since $N^{(n)}_n=f(Z_{\delta})-f(x)+c\delta$.
  In addition, the number of up-crossing through $(a,b)$ of $(f(Z_{\delta k/n})-f(x))_{k\in\{0,\ldots,n\}}$, denoted by $V^{(n)}(a,b,\delta)$ from now on, is bounded from above by the number of up-crossing through $(a+c\delta,b-c\delta)$ of $(N^{(n)}_k)_{k\in\{0,\ldots,n\}}$. Hence
  \[
    \E^Z_x( V^{(n)}(a,b,\delta))\leq \frac{ \|f\|_\infty+2c\delta+|a|}{b-a-2c\delta}.
  \]
  Since, for all $n\geq1$, $(f(Z_{k/n!})-f(x))_{k\in\{0,n!\}}$ is a sub-process of $(f(Z_{k/(n+1)!})-f(x))_{k\in\{0,(n+1)!\}}$, we have $V^{(n!)}(a,b,\delta)\leq V^{((n+1)!)}(a,b,\delta)$ almost surely and hence
  \[
    \E^Z_x\left(\sup_{n\geq 1} V^{(n)}(a,b,\delta)\right)\leq \frac{ \|f\|_\infty+2c\delta+|a|}{b-a-2c\delta}.
  \]
  But $\sup_{n\geq 1} V^{(n)}(a,b,\delta)$ is exactly the number of upcrossings through $(a,b)$ of
  $(f(Z_t)-f(x))_{t\in\Q_+\cap[0,\delta]}$
  and hence, denoting by $V(a,b,\delta)$ this number, we have
  \[
    \E^Z_x\left(V(a,b,\delta)\right)\leq \frac{ \|f\|_\infty+2c\delta+|a|}{b-a-2c\delta}.
  \]
  Hence
  \[
    \E^Z_x\left(V(a-f(x),b-f(x),\delta)\right)\leq \frac{ \|f\|_\infty+2c\delta+|a-f(x)|}{b-a-2c\delta}\leq \frac{ 2\|f\|_\infty+2c\delta+|a|}{b-a-2c\delta},
  \]
  and, since the upcrossings through $(a-f(x),b-f(x))$ by $(f(Z_t)-f(x))_{t\in\Q_+\cap[0,\delta]}$ is exactly the number $V'(a,b,\delta)$ of upcrossings of $(a,b)$ by $(f(Z_t))_{t\in\Q_+\cap[0,\delta]}$, we deduce that
  \[
    \E^Z_x\left( V'(a,b,\delta)\right)\leq \frac{ 2\|f\|_\infty+2c\delta+|a|}{b-a-2c\delta}.
  \]
  We conclude that the number of upcrossings $V'(a,b,\delta)$ is finite $\P^Z_x$-almost surely. 
  Since this is true for all initial distribution, using the Markov property at times  $\delta$, $2\delta$, ..., we obtain that, for all $T\in\Q_+$, the number of upcrossings $V'(a,b,T)$ is finite almost surely. Since this is true for all $a<b\in\R$, this in turn implies that $V(a,b,T)$ is finite $\P^Z_x$-almost surely.

  \medskip
  \textbf{(2) Construction of a c\`adl\`ag representation of $(Q_t)_{t\in[0,+\infty)}$ in $E\cup\{\Delta\}$.}
  Now, using Problem~9(a), p.~90 in \cite{EK-mp}, we deduce that, for all non-negative functions $f\in C_c^{(s)}(0,+\infty)$ extended to $\d$ with $f(\d)=0$, $\P^Z_x$-almost surely, for all $t\in[0,+\infty)$,
  \begin{equation}
    \label{eq:Step2eq1}
    \lim_{u\in\Q_+,u>t,u\to t} f(Z_u)\quad \text{ and }\quad \lim_{u\in\Q_+,u<t,u\to t} f(Z_u)
  \end{equation}
  both exist. Moreover $\d$ is an absorbing point for $Z$, so that $(\1_\d(Z_t))_{t\in\Q_+}$ is increasing, taking its values in $\{0,1\}$, and hence the above limits also exist for  $f=\1_\d$.

  As a consequence, there exists a countable family $\mathcal{H}$ of continuous functions $f$ that separates points in $E$ and such that the above limits exist (recall that $\1_\d$ is continuous since $\d$ is an isolated point). We deduce that, $\P^Z_x$-almost surely, for all $t\in[0,+\infty)$,
  \[
    \lim_{u\in\Q_+,u>t, u\to t} Z_u\quad \text{ and }\quad \lim_{u\in\Q_+,u<t,u\to t} Z_u
  \]
  also exist in $(0,+\infty)\cup\{\d,\Delta\}$, where $\Delta$ is a compactification point for $(0,+\infty)$ (and hence for $(0,+\infty)\cup\{\d\}$).
  Indeed, let $Z_{t_+}$ and $Z_{t_+}'$ be two accumulation points in 
  $(0,+\infty)\cup\{\d,\Delta\}$ of $(Z_{u})_{u\in\Q_+,u\geq t}$ at $t\in[0,+\infty)$. 
  On the one hand, if $Z_{t_+}\in(0,+\infty)\cup\{\d\}$ and 
  $Z_{t_+}'\in (0,+\infty)\cup\{\d\}$ are different, 
  then there exists a function $f\in \mathcal{H}$ 
  such that $f(Z_{t_+})\neq f(Z_{t_+}')$. 
  Since $f$ is continuous, then this contradicts~\eqref{eq:Step2eq1}. 
  On the other hand, if $Z_{t_+}\in (0,+\infty)\cup\{\d\}$ and $Z_{t_+}'=\Delta$, 
  then one chooses any function $f\in\mathcal{H}$ 
  such that $f(Z_{t_+})>0$ with compact support, 
  and observe that $f$ extended by $0$ at $\Delta$ is continuous, 
  so that $f(Z_{t_+})\neq 0=f(Z_{t_+}')$ also contradicts~\eqref{eq:Step2eq1}. 
  This implies that, almost surely, for all $t\in(0,+\infty)$, 
  the accumulation point in $(0,+\infty)\cup\{\d,\Delta\}$ of 
  $(Z_{t+u})_{t+u\in\Q_+}$ at $t\in(0,+\infty)$ is unique, 
  which implies the existence of the first limit. 
  The existence of the second limit is proved similarly.

  We deduce that $Z$ satisfies almost surely the assumptions of Lemma~2.8, p.~58 in~\cite{EK-mp} and hence we can define the c\`adl\`ag random process $(Y_t)_{t\in\R_+}$ with values in $E\cup\{\Delta\}$ as
  \[
    Y_t:=\lim_{u\in \Q,u>t,u\to t} Z_u,\quad\ \P_x^Z\text{-almost surely}.
  \]
  Since $(Z_t)_{t\in\Q_+}$ is (right)-continuous according to Lemma~\ref{lem:continuity}, we deduce that $Y_t=Z_t$ for all $t\in\Q_+$ (in particular, $Y_t\in E$ $\PP_x^Z$-almost surely, for all $t\in\Q_+$).

  Let us now show that, for all $t\ge 0$, $\delta_x Q_t$ is the law of $Y_t$ 
  under $\P^Z_x$. We have, for all $f\in\cD(\cL)$ extended to $E\cup \{\Delta\}$ by $f(\Delta)=0$,
  \begin{align*}
    \E^Z_x(f(Y_t))
    & = \E^Z_x\left(\lim_{u>t,u\in\Q,u\to t} f(Z_u)\right) \\
    & = \lim_{u>t,u\in\Q,u\to t}\E^Z_x(f(Z_u))
    = \lim_{u>t,u\in\Q,u\to t} Q_u f(x)
    = Q_t f(x),
  \end{align*}
  since $Q_u f(x)$ is continuous in $u$ for all $f\in \cD(\cL)$
  by \eqref{eq:semigroupQ}.
  Since $C_c^{(s)}\subset {\cal D}({\cal L})$ and $\1_\d\in{\cal D}({\cal L})$, we deduce that $\P^Z_x(Y_t\in A)=\delta_xQ_t\1_A$ for all measurable $A\subset (0,+\infty)\cup\{\d\}$. Since $\delta_x Q_t\1_E=1$, we conclude that $\P^Z_x(Y_t\in E)=1$ and that $\delta_x Q_t$ is the law of $Y_t$ under $\P^Z_x$, for all $t\in[0,+\infty)$.

  Let us now prove that $Y$ is a Markov process with respect to its natural
  filtration $(\cF^0_t)_{t\geq 0}$. Fix $u_0\leq t_0\in[0,+\infty)$ and
  consider the Markov process $(Z'_t)_{t\in\Q_+\cup\{u_0,t_0\}}$ with semigroup
  $(Q_t)_{t\in\Q_+\cup\{u_0,t_0\}}$. Then $(Z'_t)_{t\in\Q_+}$ under $\P^{Z'}_x$
  has the same law as $(Z_t)_{t\in\Q_+}$ under $\P^{Z}_x$. Since $Z'$ and $Y$
  are right-continuous at times $u_0,t_0$ almost-surely (according to
  Lemma~\ref{lem:continuity} for $Z'$), we deduce that
  $(Z'_{u_0},Z'_{t_0},(Z'_t)_{t\in\Q_+})$ under $\P^{Z'}_x$ and
  $(Y_{u_0},Y_{t_0},(Z_t)_{t\in\Q_+})$ under $\P^Z_x$ have the same law, for
  all $x\in E$.
  Hence, 
  for all bounded measurable functions $f:E\to\R$ and $g:E\to\R$,
  \begin{align*}
    \E^Z_x(f(Y_{u_0})g(Y_{t_0}))&=\E^{Z'}_x(f(Z'_{u_0})g(Z'_{t_0}))\\
    &= \E^{Z'}_x(f(Z'_{u_0})Q_{t_0-u_0}g(Z'_{u_0}))\\
    &= \E^Z_x(f(Y_{u_0})Q_{t_0-u_0}g(Y_{u_0})).
  \end{align*}
  The same line of arguments applies for any finite family of times $u_1\leq\ldots\leq u_k\leq u_0\leq t_0$, which implies that, for all $0\leq u\leq t$,
  \begin{align*}
    \E^Z_x(f(Y_{t})\mid \sigma(Y_v,\,v\leq u))&=Q_{t-u} f(Y_u),\ \P^Z_x\text{-almost surely}.
  \end{align*}
  We conclude that $Y$ is indeed a Markov process, with values in $E\cup\{\Delta\}$.

  \medskip

  \textbf{(3) The càdlàg representation is a solution to the martingale problem in $E$.}
  We observe that, for all $t\geq u\geq 0$ and all $f\in\cD(\cL)$, and setting $\cL f(\Delta)=0$,
  \begin{multline*}
    \E^Z_x\left(f(Y_t)-\int_0^t \cL f(Y_v)\,dv\mid \cF^0_u\right)
    \\
    =Q_{t-u} f(Y_u)-\int_0^u \cL f(Y_v)\,\dd v - \E^Z_x\left(\int_u^t \cL f(Y_v)\,\dd v\mid\cF^0_u\right),
  \end{multline*}
  where $\mathcal F$ is the natural filtration of $Y$.
  But 
  \[
    Q_{t-u} f(Y_u)=f(Y_u)+\int_0^{t-u} Q_v\,\cL f(Y_u)\,\dd v
  \]
  and
  \[
    \E^Z_x\left(\int_u^t \cL f(Y_v)\,\dd v\mid\cF^0_u\right)=\int_u^t Q_{v-u}\,\cL f(Y_u)\,\dd v
  \]
  (using the fact that $\int_u^t Q_{v-u}\,|\cL f|(Y_u)\,\dd v$ is finite, which allows the use of Fubini's theorem). 
  Hence $f(Y_t)-\int_0^t \cL f(Y_v)\,\dd v$ defines a martingale. We deduce that $Y$ is a càdlàg solution to the martingale problem associated to $\cL$ on $E\cup\{\Delta\}$.

  But, according to Lemma~\ref{lem:continuity}, $Z$ is bounded away from $0$ and $+\infty$ almost surely, so that $Y$ (whose values are in the adherence of the values taken by $Z$ almost surely) is also bounded away from $0$ and $+\infty$ almost surely. This implies that $Y$ never reaches $\Delta$ and hence that $Y$ takes its values in $E$, $\P^Z_x$-almost surely for all $x\in E$. This entails that $Y$ is a càdlàg solution to the martingale problem in~$E$.

  \medskip

  We conclude the proof of the first part of Lemma~\ref{lem:uniqueness} by observing that Proposition~\ref{tNirr} states that the càdlàg solution to the martingale problem $(\cL,\cD(\cL))$ is unique.

  \medskip

  In order to obtain the last claim of Lemma~\ref{lem:uniqueness}, observe that $\1_\d\in\cD(\cL)$ and that $Q_t\1_E=\1_E$, so that
  \begin{align*}
    \delta_x Q_t \1_{(0,+\infty)}
    &= \delta_x Q_t \1_E-\delta_x Q_t\1_\d \\
    &= \1_E(x)-\1_\d(x)-\int_0^t Q_u\cL\1_\d(x)\,\dd u=\1_{(0,+\infty)}(x)-\int_0^t Q_u q(x)\,\dd u.
  \end{align*}
  This concludes the proof of Lemma~\ref{lem:uniqueness}.
\end{proof}

\subsection{Conclusion of the proof of Theorem~\ref{thm:semigroup}}
\label{sec:endProofTh1}
For the existence, we set $T_t f(x)= e^{bt} h(x) Q_t(f/h)(x)$ for all $f\in\cD(\cA)$ with the convention $f/h(\d):=0$, where $Q$ is the semigroup of Proposition~\ref{prop:uniqueQ}.
For all $f\in\cD(\cA)$, the function $g=f/h$ is in $\cD(\cL)$ or $g=\1_{(0,+\infty)}$, and hence, for all $x\in(0,+\infty)$, if $g\in\cD(\cL)$, then
\begin{align*}
  \partial_t T_t f(x)&=\partial_t [e^{bt} h(x) Q_tg(x)]= be^{bt}h(x)Q_tg(x)+e^{bt}h(x)Q_t \cL g(x)\\
  &=be^{bt}h(x)Q_tg(x)+e^{bt}h(x)Q_t\left(\frac{\cA (hg)}{h}-bg\right)(x)\\
  &=e^{bt}h(x)Q_t\left(\frac{\cA f}{h}\right)(x)=T_t\cA f(x),
\end{align*}
understanding differentiation here in the sense of density with respect
to Lebesgue measure; if $g=\1_{(0,+\infty)}$, then the same computation holds true according to the last property of Lemma~\ref{lem:uniqueness}.
The fact that $T_t B\subset B$ is a straightforward consequence of the fact that $Q_t \1_{(0,+\infty)}\leq \1_{(0,+\infty)}$. 

Let us now check the uniqueness.
Assume that $T$ is a semigroup which solves the above equation
for $f\in \cD(\cA)$.
Then $h\in\cD(\cA)$ and hence the semigroup defined by $\delta_x R_t:=\frac{e^{-bt} \delta_x T_t(\cdot\,h)}{h(x)}$ ($x\in(0,+\infty)$) satisfies, for all $x\in (0,+\infty)$,
\begin{align*}
  R_t \1_{(0,+\infty)}(x)&=\frac{e^{-bt} T_th(x)}{h(x)}=1-b\,\int_0^t \frac{e^{-bu} T_u h(x)}{h(x)}\,\dd u+\int_0^t e^{-b u} \frac{T_u\cA h(x)}{h(x)}\,\dd u\\
  &=1+\int_0^t R_u\cL \1_{(0,+\infty)}(x)\,\dd u\leq 1.
\end{align*}
Hence $(R_t)_{t\geq 0}$ is a sub-Markov semigroup on the set of bounded measurable functions on $(0,+\infty)$. As usual, we extend $R$ as a Markov semigroup on the set of bounded measurable functions on $E=(0,+\infty)\cup\{\d\}$, by setting $R_t \1_{\d}(x)= 1-R_t\1_{(0,+\infty)}(x)$ for all $x\in (0,+\infty)$ and $R_t f(\d)=f(\d)$ for all bounded measurable functions $f$ on $E$. For all $f \in C^{(s)}_c$, $fh\in\cD(\cA)$ and hence, for all $x\in (0,+\infty)$,
\begin{align*}
  R_t f(x)&=\frac{e^{-bt} T_t(f\,h)}{h(x)}=f(x)-b\,\int_0^t \frac{e^{-b u} T_u(f\,h)}{h(x)}\,\dd u+\int_0^t e^{-bu} \frac{T_u\cA(f\,h)(x)}{h(x)}\,\dd u\\
  &=f(x)+\int_0^t R_u\cL f(x)\,\dd u,
\end{align*}
while $R_t f(\d)=f(\d)=f(\d)+\int_0^t R_u\cL f(\d)\,\dd u$.
For all $x\in (0,+\infty)$, we have
\begin{align*}
  R_t \1_{(0,+\infty)}(x)&=1-\int_0^t R_u q(x)\,\dd u
\end{align*}
and hence
\[
  R_t \1_\d(x)=\int_0^t  R_u q(x)\,\dd u=\int_0^t  R_u \cL\1_\d(x)\,\dd u,
\]
while $R_t\1_\d(\d)=\1_\d(\d)=\1_\d(\d)+\int_0^t R_u\cL \1_\d(\d)\,\dd u$.
Using Lemma~\ref{lem:uniqueness}, we deduce that $R_t=Q_t$ and hence that $T_t f(x)=e^{bt} h(x) Q_t(f/h)(x)$. This concludes the proof of Theorem~\ref{thm:semigroup}.

\subsection{Proof of Corollary~\ref{cor:1}}
\label{sec:proofcor1}

Fix $x\in(0,+\infty)$. 
Assume first that $f\geq 0$ and set $\varphi=f/h$ and 
let $(\varphi_m)_{m\geq 0}$ be a non-decreasing sequence of functions in $C_c^{(s)}$ such that $\varphi_m(x)=\varphi(x)$ for all $x\in(1/m,m)$. We also set $\varphi_m(\partial)=\varphi(\partial)=0$. Then, for all $m>k\geq 1$, since $\varphi_m\in\cD(\cL)$ and $\tau_k$ (defined in the first step of the proof of Proposition~\ref{tNirr}) is a stopping time, for all $t\geq 0$, and all $x\in (1/k,k)$, we have
\begin{align*}
  \mathbb E_x(\varphi_m(X_{t\wedge\tau_k}))
  =\varphi_m(x)
  +\mathbb E_x\!\left(\int_0^{t\wedge\tau_k} \cL \varphi_m(X_u)\,\mathrm du\!\right)
  =\varphi(x)+\mathbb E_x\!\left(\int_0^{t\wedge\tau_k} \cL \varphi_m(X_u)\,\mathrm du\!\right).
\end{align*}
But, almost surely, for all $u<\tau_k$, we have $X_u\in (1/k,k)\subset (1/m,m)$ and hence
\begin{align*}
  \cL \varphi_m(X_u)
  &=\frac{\partial \varphi_m}{\partial s}(X_u) \\
  & \quad {} + \int_{(0,x)} \varphi_m(y)\,k_h(X_u,\mathrm dy)-\varphi_m(X_u)k_h(X_u,(0,X_u))-q(X_u)\varphi_m(X_u)\\
  &=\frac{\partial \varphi}{\partial s}(X_u)+\int_{(0,x)} \varphi_m(y)\,k_h(X_u,\mathrm dy)-\varphi(X_u)k_h(X_u,(0,X_u))-q(X_u)\varphi(X_u)\\
  &\nearrow \frac{\partial \varphi}{\partial s}(X_u)+\int_{(0,x)} \varphi(y)\,k_h(X_u,\mathrm dy)-\varphi(X_u)k_h(X_u,(0,X_u))-q(X_u)\varphi(X_u)\\
  &=\mathcal L\varphi(X_u)\text{ when $m\to+\infty$.}
\end{align*}
The monotone convergence theorem (taking into account $\mathbb E_x\left(\int_0^{t\wedge\tau_k} |\cL \varphi_m(X_u)|\,\mathrm du\right)<+\infty$ for all $m\geq 1$), we deduce that
\[
  \mathbb E_x\left(\int_0^{t\wedge\tau_k}\cL \varphi_m(X_u)\,\mathrm du\right)\xrightarrow[m\to+\infty]{} \mathbb E_x\left(\int_0^{t\wedge\tau_k} \cL \varphi(X_u)\,\mathrm du\right).
\]
Since $\varphi=f/h$ is bounded, by the dominated convergence theorem, we also deduce that
\[
  \mathbb E_x(\varphi_m(X_{t\wedge\tau_k}))\xrightarrow[m\to+\infty]{} \mathbb E_x(\varphi(X_{t\wedge\tau_k}))
\]
and hence 
\[
  \mathbb E_x(\varphi(X_{t\wedge\tau_k}))=\varphi(x)+\mathbb E_x\left(\int_0^{t\wedge\tau_k} \cL \varphi(X_u)\,\mathrm du\right).
\]

Assume first that $\mathcal A f/h$ is lower bounded by $-a$, where $a>0$. Then 
\[
  \mathbb E_x(\varphi(X_{t\wedge\tau_k}))+a\mathbb E_x(t\wedge \tau_k)=\varphi(x)+\mathbb E_x\left(\int_0^{t\wedge\tau_k} (\cL \varphi(X_u)+a)\,\mathrm du\right),
\]
where $\cL \varphi(X_u)+a=\mathcal A f(X_u)/h(X_u)+a\geq 0$, so that, by dominated convergence on the left hand side, and by monotone convergence in the right-hand side, we obtain by letting $k\to+\infty$
\[
  \mathbb E_x(\varphi(X_t))+a\mathbb E_x(t)=\varphi(x)+\mathbb E_x\left(\int_0^{t} (\cL \varphi(X_u)+a)\,\mathrm du\right)
\]
and hence that
\begin{align}
  \label{eq:eqforE}
  \mathbb E_x\left(\int_0^{t} |\cL \varphi(X_u)|\,\mathrm du\right)<+\infty\quad\text{ and }\quad\mathbb E_x(\varphi(X_t))=\varphi(x)+\mathbb E_x\left(\int_0^{t} \cL \varphi(X_u)\,\mathrm du\right).
\end{align}

Assume now instead that $\mathcal A f/h$ is upper bounded by $a>0$. Then 
\[
  \mathbb E_x(\varphi(X_{t\wedge\tau_k}))-a\mathbb E_x(t\wedge \tau_k)=\varphi(x)-\mathbb E_x\left(\int_0^{t\wedge\tau_k} (-\cL \varphi(X_u)+a)\,\mathrm du\right),
\]
where $-\cL \varphi(X_u)+a=-\mathcal A f(X_u)/h(X_u)+a\geq 0$. As above, this entails that~\eqref{eq:eqforE} holds true.

In both cases, we deduce from Fubini's theorem that
\begin{align*}
  \int_0^{t} Q_u|\cL \varphi|(x)\,\mathrm du<+\infty\quad\text{ and }\quad Q_t\varphi(x)=\varphi(x)+\int_0^{t} Q_u\cL \varphi(x)\,\mathrm du.
\end{align*}
Replacing $Q$, $\mathcal L$ and $\varphi$ by their respective expressions of $T$, $\mathcal A$ and $f$,
this concludes the proof of Corollary~\ref{cor:1}.

\subsection{Proof of Corollary~\ref{cor:consistence}}
\label{sec:cor2}

We observe that Assumption~\ref{assumption1} is clearly satisfied with $h=h_1+h_2$, and hence, according to Theorem~\ref{thm:semigroup}, there exists $T$ a solution to~\eqref{eq:gfint}. In addition, $\mathcal A h_1/h$ is upper bounded by $\mathcal A h_1/h_1$ and hence is upper bounded. By Corollary~\ref{cor:1}, we deduce that
\[
  \int_0^t T_u |\cA h_1|\,\mathrm du <+\infty \quad\text{ and }T_t h_1(x)=h_1(x)+\int_0^t T_u \cA h_1\,\mathrm du.
\]
Since in addition~\eqref{eq:gfint} holds true for all $f\in C_c^{(s)}$, we deduce from the uniqueness part of Theorem~\ref{thm:semigroup} that $T=T^1$. Similarly, $T=T^2$ which concludes the proof.

\section{Long time asymptotics of the solution to the growth-fragmentation equation}
\label{sec:long-time}

In this section, we focus on the existence of leading eigenelements and a spectral gap for the semigroup $T$ solution to~\eqref{eq:gfint} acting on the Banach space $B$. Our approach will be to leverage the representation of $T$ as the $h$-transform of the semigroup $Q$ of an absorbed Markov process evolving on $E=(0,+\infty)\cup\{\d\}$, as given in section~\ref{sec:ExistenceUniqueness}. More precisely, we will make use of the results developed in~\cite{CV-qsd} for the study of quasi-stationary distributions. 

At this stage, we require Assumptions~\ref{assumptionIrr} and~\ref{assumptionDoebSec},
which appeared in the introduction. These can be interpreted, respectively, as an irreducibility and
local Doeblin condition for the càdlàg Markov process with
semigroup $Q$ defined in Proposition~\ref{tNirr}.

Under Assumptions~\ref{assumption1} and~\ref{assumptionIrr}, the semigroup $T$
from Theorem~\ref{thm:semigroup}, the semigroup $Q$ from
Proposition~\ref{prop:uniqueQ} and the Markov process $X$ from
Proposition~\ref{prop:Markov} below are well defined, and we have the following irreducibility
result, which is
proved in section~\ref{sec:proofMarkov}. 

Denote by $\P_x$ the law of $X$ with
initial distribution $\delta_x$ for $x\in(0,+\infty)$, and 
$\PP_\mu = \int \PP_x \, \mu(\dd x)$ for a distribution $\mu$ on $(0,+\infty)$.  
Let $\cF = (\cF_t)_{t\ge 0}$ be the completion of the natural filtration with
respect to sets which are null for every $\PP_\mu$ (see \cite[\S 25]{Davis}).
Moreover, define $H_y=\inf\{t\geq 0,\ X_t=y\}$, the hitting time of $y$ by $X$.
\begin{proposition}
  \label{prop:Markov}
  Assume that Assumption~\ref{assumption1} holds true. Let $X$ be the unique càdlàg solution of the martingale
  problem $(\cL,\cD(\cL))$. Then  $X$ is a strong Markov process with respect to
  $\cF$.
  If in addition
  Assumption~\ref{assumptionIrr} holds, then $X$ is irreducible in
  $(0,+\infty)$, in the sense
  that, for all $l<r\in(0,+\infty)$, there exists $t_0>0$ such that
  \[
    \inf_{x,y\in[l,r]} \P_x(H_y\leq t_0)> 0.
  \]
\end{proposition}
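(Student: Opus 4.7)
The proposition contains two assertions, the strong Markov property and the irreducibility of $X$.

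For the strong Markov property, Proposition~\ref{tNirr} established that the martingale problem $(\cL,\cD(\cL))$ is well-posed (admits a unique c\`adl\`ag solution for every initial distribution). I would then invoke standard theory of martingale problems, e.g.\ Theorem~4.4.2 of~\cite{EK-mp}, which asserts that the unique c\`adl\`ag solution of a well-posed martingale problem is strong Markov with respect to its completed natural filtration. Alternatively, one may appeal to the classical strong Markov property of piecewise-deterministic Markov processes of Davis, since $X$ was constructed in the proof of Proposition~\ref{tNirr} as such a PDMP.

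For the irreducibility, fix $l<r$ in $(0,+\infty)$ and $x,y\in[l,r]$; the goal is a uniform lower bound on $\P_x(H_y\le t_0)$. The relevant structural features of $X$ are that, between jumps, $X$ moves along the strictly increasing flow $\phi$ and hence passes continuously through every value on its trajectory, while jumps are strictly downward and governed by $k_h$; and that the total jump-plus-killing intensity $z\mapsto k_h(z,(0,z))+q(z)$ is locally bounded by Assumption~\ref{assumption1}. When $x\le y$, the flow $\phi(x,\cdot)$ hits $y$ at time $s(y)-s(x)\le s(r)-s(l)$, so on the event of no jump or killing up to this time one has $H_y\le s(y)-s(x)$; this event has probability at least $\exp\bigl(-\Lambda_{[l,r]}(s(r)-s(l))\bigr)$, where $\Lambda_{[l,r]}:=\sup_{z\in[l,r]}[k_h(z,(0,z))+q(z)]<+\infty$, giving a uniform positive lower bound. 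For $x>y$, Assumption~\ref{assumptionIrr} at level $y$ yields a set of positive $s$-measure of points $z\in(y,+\infty)$ with $k_h(z,(0,y))>0$; by inner regularity one extracts constants $0<a<b<y<c<d$, $\eta>0$ and a Lebesgue-positive subset $B\subset[c,d]$ with $k_h(z,(a,b))\ge\eta$ for $z\in B$. When $x\le d$, I would build a flow-jump-flow path of duration at most $s(d)-s(a)$: flow from $x$ up through $B$ without jumping, execute a single jump from a point of $B$ landing in $(a,b)$, and then flow from $(a,b)$ up to $y$ without further jump; each leg carries a uniform positive probability depending only on $l,r$, $a,b,c,d$, $\eta$ and $\Lambda_{[l,r]}$.

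The remaining subcase $x>d$ is the main obstacle: since $\phi(x,\cdot)$ is increasing, the flow from $x$ does not enter $[c,d]$, so the preceding construction must be preceded by preparation stages that first lower $X$ below~$d$. This I would handle by prepending further flow-jump-flow blocks based on Assumption~\ref{assumptionIrr} applied at successively chosen thresholds above $d$; each block lowers the process below a prescribed value in uniformly bounded time with uniformly positive probability, thanks to the positive-measure firing regions furnished by the assumption. A finite-covering argument over $x\in[d,r]$ and the compactness of $[l,r]$ ensures that only finitely many such preparation blocks are needed uniformly in $x$, producing the desired uniform positive lower bound $\inf_{x,y\in[l,r]}\P_x(H_y\le t_0)>0$.
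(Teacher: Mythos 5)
Your argument for the strong Markov property is correct and essentially the paper's: the paper appeals to Davis's Theorem~25.5 for PDMPs, and the well-posedness route through \cite{EK-mp} is an acceptable alternative. Likewise, the case $x\le y$ of the irreducibility (follow the increasing flow with no jump) and your single flow--jump--flow block for $x$ lying below the firing region are exactly what the paper does in the final step of its proof.

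The gap is the case you yourself call ``the main obstacle'', namely $x$ above the firing region, and it is a genuine one. Assumption~\ref{assumptionIrr} provides, for each threshold $w$, a positive-$s$-measure set $F_w\subset(w,+\infty)$ of points from which a jump into $(0,w)$ has positive rate, but it says nothing about where $F_w$ is located: it may be contained in $(w,x]$, in which case the increasing flow started from $x$ never meets it and threshold $w$ is useless from $x$. You are then forced to apply the assumption at a higher threshold $w'\le x$, whose jumps are only guaranteed to land somewhere in $(0,w')$, with no control on how far down they go. Your successive preparation blocks therefore generate an increasing sequence of thresholds $c<c'<c''<\cdots$ with no reason to exceed $r$ after finitely many steps, and compactness does not rescue this: the natural open cover of $[y,r]$ by the sets on which each block applies is nested, so a finite subcover reduces to a single set and yields no descending chain. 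Nothing in the proposal excludes an accumulation of thresholds strictly inside $[y,r]$.

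This is precisely what the paper's connectedness argument is designed to handle. Fixing $x_0$, it sets $A=\{x:\P_x(H_{x_0}<+\infty)>0\}$ and proves that $A$ is both open and closed in $(0,+\infty)$, hence equal to $(0,+\infty)$. Closedness is the step that deals with accumulating thresholds: if $x_n\uparrow x$ with $x_n\in A$, the decomposition $\{z:k(z,(0,x))>0\}=\bigcup_{n,m,p}\{z\in(x,p):k(z,(0,x_n))>1/m\}$ produces a firing region above $x$ whose jumps land below some fixed $x_n\in A$. Openness from above is also obtained without constructing an explicit descending path: since $\P_x(H_y<H_{x_0})\to 1$ as $y\downarrow x$ by right-continuity of the flow, a positive fraction of the trajectories from $x$ that reach $x_0$ must pass through $y$ first, and the strong Markov property at $H_y$ then gives $\P_y(H_{x_0}<+\infty)>0$. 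The uniform bound over $x,y\in[l,r]$ with a common $t_0$ is recovered at the very end by chaining $x\to r\to l\to y$. You should replace your covering argument by such an open-and-closed argument (or supply a quantitative hypothesis on $k$ that your iteration actually needs but Assumption~\ref{assumptionIrr} does not give).
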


Some natural examples of growth-fragmentation models may not lead to a process irreducible in
$(0,+\infty)$. For instance, if a cell can grow from small sizes to some critical
$m>0$, and subsequent divisions always lead to cells of size at least $m$, then
one may expect to fruitfully study a version of the process $X$ above, restricted
either to $(m,+\infty)$ or to $[m,+\infty)$, and find a similar irreducibility result.
Practically, this can be ensured by replacing the role of $0$
with $m$ throughout our assumptions, but we will not discuss this in detail.

We turn next to Assumption~\ref{assumptionDoebSec}. This is sufficient to obtain a local
Doeblin condition for $X$.
Assumption~\ref{assumptionDoebBoth} is adapted from a general, multi-dimensional result developed in~\cite[Proposition~1]{PichorRudnicki2000}. Its application leverages a simple 
change of variable argument, as detailed in section~\ref{sec:proofprop:doeblineTer}.
Assumption~\ref{assumptionDoebF} places regularity conditions on a lower bound of the division kernel, inspired by the concept of $T$-chain, as introduced in~\cite[Chapter~6]{MT-book}. Its application use irreducibility arguments, as detailed in Section~\ref{sec:proofprop:doeblinFeller}.

Other approaches to the local Doeblin condition typically revolve around coupling;
for instance, see~\cite{CMP10} for an approach to the TCP process and~\cite{CGY-spectral} 
for results applied to the mitosis kernel  $k(x,\dd y) = 2K(x)\delta_{x/2}(\dd y)$.
The equal mitosis kernel is also considered in section~6.3.3 of~\cite{RudnickiTyran-Kaminska2017},
using a similar approach to us; see Remark~\ref{rem:equamitosi} in section~\ref{s:app} below.

\begin{proposition}
  \label{prop:doeblin} Assume that Assumptions~\ref{assumption1},~\ref{assumptionIrr} and~\ref{assumptionDoebSec} hold true.
  Then there exists a probability measure $\upsilon$ on $(0,+\infty)$ such that, for any
  compactly contained interval $L\subset(0,+\infty)$, there exists $t_L>0$ such that, for all $t\geq t_L$ and all $x\in L$,
  \begin{align}
    \label{eq:doeblininprop}
    \P_x(X_t\in \cdot)\geq c_{L,t} \upsilon(\cdot),
  \end{align}
  where $c_{L,t}>0$ only depends on $L$ and $t$ and is non-increasing in $t$.
\end{proposition}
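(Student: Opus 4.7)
The argument combines three ingredients: the irreducibility of $X$ from Proposition~\ref{prop:Markov}, a one-jump Doeblin-type minorization near $I$ coming from Assumption~\ref{assumptionDoeb} or~\ref{assumptionDoebBis}, and the deterministic flow $\phi$. First, pick a compact sub-interval $I'\Subset I$ and $\delta_0>0$ with $\phi(y,u)\in I$ for all $y\in I'$ and $u\in[0,\delta_0]$. Under Assumption~\ref{assumptionDoeb}, note that the measure $\mu$ must be supported in $(0,\inf I)$ since $k(x,\cdot)$ is supported in $(0,x)$; after restricting $\mu$ to a compact sub-interval $J\Subset (0,\inf I)$ and using the continuity and positivity of $h$ on compacts, we obtain a probability measure $\mu'$ on $J$ such that $k_h(x,\cdot)\geq a'\mu'$ for every $x\in I$.

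\textbf{One-jump minorization.} By Proposition~\ref{prop:Markov} applied to a compact interval containing $L\cup I'$, pick $y_0\in I'$ and obtain $t_1,p_1>0$ with $\inf_{x\in L}\P_x(H_{y_0}\leq t_1)\geq p_1$. Starting from $y_0$, the PDMP structure of $X$, the survival lower bound $e^{-\Lambda^*\delta_0}$ on $[0,\delta_0]$ (where $\Lambda^*:=\sup_{x\in I}(k_h(x,(0,x))+q(x))<+\infty$ by Assumption~\ref{assumption1}), and the minorization $k_h(\phi(y_0,u),\cdot)\geq a'\mu'$ combine to give
\[
\P_{y_0}(\tau_1\in\dd u,\,X_{\tau_1}\in\dd z)\;\geq\; c_0\,\1_{[0,\delta_0]}(u)\,\dd u\otimes\mu'(\dd z),\qquad c_0=a'e^{-\Lambda^*\delta_0}.
\]
Under Assumption~\ref{assumptionDoebBis}, the post-jump location is the deterministic point $T(\phi(y_0,\tau_1))$; the condition $\partial(s\circ T)/\partial s\neq 1$ on $I'$ then permits a change-of-variable argument in the spirit of~\cite[Proposition~1]{PichorRudnicki2000}, producing an analogous minorization of the joint law of $(\tau_1,X_{\tau_1})$ by a product of Lebesgue-type measures on compact sub-intervals.

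\textbf{Propagation and construction of $\upsilon$.} Define $\upsilon$ as the normalized pushforward of $\mu'(\dd z)\otimes\dd v|_{[0,T_0]}$ under the map $(z,v)\mapsto\phi(z,v)$, for a fixed $T_0>0$; this is a probability measure supported on a compact set $K\subset(0,+\infty)$ independent of $L$ and $t$. For a fixed $t\geq t_L$, the idea is to place the one-jump-plus-flow construction \emph{just before} time $t$ rather than just after time $0$: by the Markov property, condition on $X_{t-T_0-\delta_0}\in I$, then apply the minorization on the window $[t-T_0-\delta_0,t]$ so that a jump at time $\tau\in[t-T_0-\delta_0,t-T_0]$ landing at $z\sim\mu'$, followed by a jump-free flow of length $v=t-\tau\in[T_0,T_0+\delta_0]$, places $X_t=\phi(z,v)$ in the support of $\upsilon$. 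The survival and minorization estimates from the previous step then give
\[
\P_x(X_t\in A)\;\geq\; c_1\,\P_x\!\left(X_{t-T_0-\delta_0}\in I\right)\,\upsilon(A),
\]
with a survival factor that is multiplicative (hence non-increasing) in $t$.

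\textbf{Main obstacle.} The principal technical difficulty is proving the uniform lower bound $\inf_{x\in L}\P_x(X_{t-T_0-\delta_0}\in I)>0$ at every sufficiently large $t$, so that the same reference measure $\upsilon$ serves for all $t\geq t_L$. Proposition~\ref{prop:Markov} only ensures $X$ hits $I$ at some random time, whereas here one needs occupation of $I$ at a \emph{prescribed} time. The expected remedy is to combine the irreducibility with the bounded length of the flow-plus-jump excursion away from $I$, iterated via the Markov property, and to cover intermediate times by a continuity or interpolation argument. This recurrence-type estimate, together with the multiplicative decay of the survival probability, yields a constant $c_{L,t}$ that depends only on $L$ and $t$ and is non-increasing in $t$.
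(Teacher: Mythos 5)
Your proposal correctly assembles the local ingredients (the one‑jump minorization on $I$ under Assumption~\ref{assumptionDoeb}, the change‑of‑variable device under Assumption~\ref{assumptionDoebBis}, the smearing of the post‑jump law along the flow to build a candidate $\upsilon$, and the use of irreducibility to move from $L$ into $I$), but it does not actually prove the proposition: the step you yourself flag as the ``main obstacle'' --- a lower bound on $\P_x(X_{t-T_0-\delta_0}\in I)$ that is uniform in $x\in L$ and valid at \emph{every} prescribed time $t-T_0-\delta_0$ --- is essentially equivalent in difficulty to the statement being proved, and the ``continuity or interpolation argument'' you invoke to close it is not carried out. As written, your architecture (condition on occupation of $I$ at a fixed time just before $t$, then apply the minorization on the terminal window) cannot be completed without that missing estimate, so the proof has a genuine gap. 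There is also a small internal inconsistency: $\upsilon$ is built from flow durations $v\in[0,T_0]$ while the terminal window produces $v=t-\tau\in[T_0,T_0+\delta_0]$.

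The paper's proof avoids needing a pointwise‑in‑time occupation bound by inverting the order of operations and then iterating. Starting from a fixed point $\fa\in \overline{I}$, the one‑jump minorization yields $\E_\fa(f(X_t))\ge a_2\,\upsilon(f)$ for all $t$ in a whole interval $[t_1,2t_1]$ of width $t_1$, where $\upsilon$ is the normalized occupation measure $\upsilon(f)=\int_0^{t_1}\E_\mu(f(X_u))\,\dd u/\int_0^{t_1}\P_\mu(u<\zeta)\,\dd u$. Irreducibility gives a return from $\upsilon$ to $\fa$ localized in a time window $[t_2,t_2+t_1/2]$ of width $t_1/2$, so composing via the strong Markov property yields $\E_\upsilon(f(X_t))\ge(a_2a_3)^n\upsilon(f)$ for $t\in[n(t_2+3t_1/2),\,n(t_2+2t_1)]$; since these windows widen linearly in $n$, consecutive ones eventually overlap and their union covers a half‑line $[t_3,+\infty)$, producing a non‑increasing $\beta_t$ with $\E_\upsilon(f(X_t))\ge\beta_t\upsilon(f)$ for all $t\ge t_3$. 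Only then is $L$ brought in, via $\inf_{x\in L}\P_x(H_\fa\le t_4(L))>0$ and one more application of the strong Markov property. If you want to salvage your approach, the fix is precisely this widening‑windows iteration: exploit that the time at which your one‑jump‑plus‑flow minorization lands in $\supp\upsilon$ can be smeared over an interval of fixed positive width, and regenerate repeatedly rather than trying to pin down the occupation of $I$ at a single prescribed time.
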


If Assumptions~\ref{assumption1},~\ref{assumptionIrr} and the Doeblin condition~\eqref{eq:doeblininprop} hold true, we can introduce the growth coefficient of $T$, defined by
\[
  \lambda_0:=\inf\{\lambda\in\R,\ \liminf_{t\to+\infty}\,e^{\lambda t} T_t \1_L(x)=+\infty\},
\]
with arbitrary $x\in(0,+\infty)$ and non-empty, open, compactly contained interval $L\subset (0,+\infty)$.  One easily checks, using the relationship between $T$ and the semigroup of $X$, that $\lambda_0=\lambda_0^X-b$, where
\begin{align}
  \label{eq:lambda0X}
  \lambda_0^X:=\inf\{\lambda\in\R,\ \liminf_{t\to+\infty}\,e^{\lambda t} \P_x(X_t\in L)=+\infty\},
\end{align}
The fact that $\lambda_0^X$ (and hence $\lambda_0$) does not depend on $x$ nor $L$ is a well known consequence of the irreducibility property and the Doeblin condition~\eqref{eq:doeblininprop}.

Our aim is to apply Theorem~3.5 in~\cite{CV-qsd} to $X$. This requires a Foster-Lyapunov type condition, which will be obtained using the following assumption, where we recall that $C^{(s)}_{\text{loc}}$ denotes the set of functions with a locally bounded derivative with respect to $s$.
(In fact, one may consider situations where $\psi$ is only $s$-absolutely continuous, as defined in the appendix).

\begin{assumption}\label{assumptionLyapGeneral}
  There exist a positive function $\psi\in C^{(s)}_{\text{loc}}$,
  a constant $\lambda_1>\lambda_0$ and a compact interval $L\subset(0,+\infty)$ such that $\inf_{x\in (0,+\infty)} \psi/h>0$ and 
  \begin{equation*}
    \cA \psi(x) \leq  -\lambda_1\psi(x)+C\1_L(x),\quad\forall x\in (0,+\infty),
  \end{equation*}
  for some constant $C>0$.
\end{assumption}

We emphasis that, in most cases, taking $h=\psi$ is the most natural choice, in which case the requirement $\inf_{x\in (0,+\infty)} \psi/h>0$ of the last assumption is trivial.

\medskip We can now state the main result of this section.
It is proved in section~\ref{sec:proofspectgap}.

\begin{theorem}
  \label{thm:spectgap}
  Assume that Assumptions~\ref{assumption1},~\ref{assumptionIrr},~\ref{assumptionDoebSec} and~\ref{assumptionLyapGeneral} hold true.
  Then there exist a unique positive measure $m$ on $(0,+\infty)$ 
  and a unique function $\varphi:(0,+\infty)\to (0,+\infty)$ such that $m(\psi)=1$ and $\|\varphi/\psi\|_\infty<+\infty$ and such that, for all $t\geq 0$, $mT_t=e^{\lambda_0 t}m$ and $T_t\varphi=e^{\lambda_0 t}\varphi$. Moreover, for all $f:(0,+\infty)\to\R$ such that $|f|\leq \psi$, we have
  \begin{align*}
    \left|e^{\lambda_0 t} T_t f(x)-\varphi(x)m(f)\right|\leq Ce^{-\gamma t} \psi(x).
  \end{align*}
  for some constants $C,\gamma>0$.
\end{theorem}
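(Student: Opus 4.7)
The overall plan is to transfer the spectral-gap statement from the growth-fragmentation semigroup $T$ to a quasi-stationary convergence statement for the auxiliary sub-Markov process $X$, via the $h$-transform identity $T_t f(x) = e^{bt}h(x)Q_t(f/h)(x)$ proved in Section~\ref{sec:ExistenceUniqueness}, and then to invoke Theorem~3.5 of~\cite{CV-qsd}. Everything rests on converting Assumption~\ref{assumptionLyapGeneral} into a genuine Foster--Lyapunov inequality for $X$, and on undoing the $h$-transform in the convergence estimate.

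\textbf{Step 1 (Foster--Lyapunov inequality for $X$).} I set $\tilde\psi := \psi/h$, with $\tilde\psi(\partial)=0$. Because $\cL g = \cA(hg)/h - bg$ on $(0,+\infty)$ for any $g\in\cD(\cL)$ vanishing at $\partial$, Assumption~\ref{assumptionLyapGeneral} yields
\[
\cL\tilde\psi(x) \;=\; \frac{\cA\psi(x)}{h(x)} - b\,\tilde\psi(x) \;\le\; -(\lambda_1+b)\,\tilde\psi(x) + \frac{C}{\inf_L h}\,\mathbf{1}_L(x),
\]
where $\inf_L h>0$ since $h$ is positive and locally bounded away from zero. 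This is a Foster--Lyapunov inequality for $X$ with decay rate $\lambda_1+b$, which strictly exceeds $\lambda_0^X := \lambda_0+b$. The hypothesis $\inf_{(0,+\infty)}\psi/h>0$ ensures $\tilde\psi$ is uniformly bounded below by a positive constant, a condition typically required to apply weighted-TV convergence results.

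\textbf{Step 2 (Apply Theorem~3.5 of~\cite{CV-qsd}).} The three ingredients needed are now in place: (a) the Foster--Lyapunov inequality just established; (b) the local Doeblin condition on every compact subset of $(0,+\infty)$, given by Proposition~\ref{prop:doeblin}; and (c) the irreducibility of $X$ on $(0,+\infty)$, given by Proposition~\ref{prop:Markov}. The theorem then produces a unique quasi-stationary distribution $\nu_{\mathrm{QSD}}$ on $(0,+\infty)$, a unique (up to scaling) survival capacity $\eta\colon(0,+\infty)\to(0,+\infty)$ with $\|\eta/\tilde\psi\|_\infty<\infty$, the absorption rate $\lambda_0^X$, and constants $C',\gamma>0$ such that
\[
\bigl|\,e^{\lambda_0^X t}\,\E_x g(X_t) \;-\; \eta(x)\,\nu_{\mathrm{QSD}}(g)\,\bigr| \;\le\; C'e^{-\gamma t}\,\tilde\psi(x)
\]
uniformly in $g$ with $|g|\le\tilde\psi$, together with the eigenrelations $\nu_{\mathrm{QSD}}Q_t = e^{-\lambda_0^X t}\nu_{\mathrm{QSD}}$ and $Q_t\eta = e^{-\lambda_0^X t}\eta$.

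\textbf{Step 3 (Undo the $h$-transform).} For $f$ with $|f|\le\psi$, set $g=f/h$, so $|g|\le\tilde\psi$. Multiplying the previous display by $e^{bt}h(x)$ and using $T_t f(x)=e^{bt}h(x)Q_t g(x)$, recalling $\lambda_0=\lambda_0^X-b$, I obtain
\[
\bigl|\,e^{\lambda_0 t}T_t f(x) \;-\; h(x)\eta(x)\,\nu_{\mathrm{QSD}}(f/h)\,\bigr| \;\le\; C'e^{-\gamma t}\psi(x).
\]
Defining $\varphi(x):=\alpha\,h(x)\eta(x)$ and the measure $m(\mathrm dx):=\alpha^{-1}\nu_{\mathrm{QSD}}(\mathrm dx)/h(x)$, with $\alpha>0$ chosen to enforce $m(\psi)=1$, the estimate becomes the one in the theorem, and boundedness of $\|\varphi/\psi\|_\infty$ is inherited from $\|\eta/\tilde\psi\|_\infty<\infty$. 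The eigenidentities $T_t\varphi=e^{\lambda_0 t}\varphi$ and $mT_t=e^{\lambda_0 t}m$ follow directly from the corresponding identities for $Q$, and uniqueness of $(\varphi,m)$ is inherited from the uniqueness of $(\eta,\nu_{\mathrm{QSD}})$ in~\cite{CV-qsd}.

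\textbf{Main obstacle.} The bulk of the work is step~2: checking carefully that the form of the Foster--Lyapunov inequality matches exactly the one required by~\cite{CV-qsd}, and that the local Doeblin set of Proposition~\ref{prop:doeblin} can be taken to contain the compact interval~$L$ of Assumption~\ref{assumptionLyapGeneral} (which can always be arranged by enlarging~$L$, at the cost of enlarging $C$). The spectral gap $\gamma>0$ is quantified in~\cite{CV-qsd} and ultimately stems from the strict slack $\lambda_1+b-\lambda_0^X=\lambda_1-\lambda_0>0$ afforded by Assumption~\ref{assumptionLyapGeneral}.
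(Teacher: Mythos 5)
Your overall strategy coincides exactly with the paper's: pass to the sub-Markov semigroup $Q$ via the $h$-transform, apply Theorem~3.5 of~\cite{CV-qsd}, and transform back. Steps~1 and~3 match the paper's computations (including the identification $\varphi = \eta h$, $m = \nu_{\mathrm{QS}}(\cdot/h)$ and $\lambda_0=\lambda_0^X-b$). The genuine gap is in Step~2, which you flag as ``the bulk of the work'' but do not carry out, and which is where the paper spends most of its proof. Assumption~(F) of~\cite{CV-qsd} is not merely ``Foster--Lyapunov $+$ Doeblin $+$ irreducibility'': its condition~(F2) is a statement about the semigroup, namely $\E_x\bigl(\psi_1(X_{t_2})\1_{t_2<H_L\wedge\zeta}\bigr)\le \gamma_1^{t_2}\psi_1(x)$ together with $\sup_{x\in L,\, t\le t_2}\E_x\bigl(\psi_1(X_t)\1_{t<\zeta}\bigr)<\infty$, whereas your Step~1 only produces the infinitesimal inequality $\cL\tilde\psi\le -(\lambda_1+b)\tilde\psi+C'\1_L$. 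Passing from the generator inequality to the trajectory inequality is not automatic here: $\tilde\psi=\psi/h$ is unbounded and does not belong to $\cD(\cL)$ (which contains only compactly supported $C^s_c$ functions), so one cannot directly invoke the martingale problem for $\tilde\psi$. The paper's section~\ref{sec:F2} supplies the missing argument by approximating $\tilde\psi$ from below by $f_k\in C_c^s$, showing $\cL f_k\le -\lambda_1^X f_k+C\1_L$ on $(1/k,k)$, using boundedness of $\cL f_k$ (Lemma~\ref{lem:2bis}) to get true martingales, applying optional stopping at $t\wedge\zeta\wedge\tau_{k'}\wedge H_L$, and taking two monotone/Fatou limits.

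Two further hypotheses of~\cite{CV-qsd} are absent from your sketch. Condition~(F3), the local Harnack inequality $\sup_{t\ge0}\,\sup_{y\in L}\P_y(t<\zeta)/\inf_{y\in L}\P_y(t<\zeta)<\infty$, is nowhere addressed; the paper derives it from the irreducibility of Proposition~\ref{prop:Markov} via the strong Markov property at hitting times. And in Step~3 you use that $\eta>0$ on all of $(0,+\infty)$ (so that $\varphi$ is positive and the weighted convergence holds at every $x$); this is not part of the bare conclusion of~\cite{CV-qsd}, which only gives $\eta\ge0$ with $\eta$ bounded below on $L$, and the paper obtains strict positivity everywhere from the Doeblin estimate of Proposition~\ref{prop:doeblin}. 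None of this indicates a wrong approach --- these are precisely the steps the paper's proof fills in --- but as written your Step~2 asserts rather than verifies the hypotheses of the quasi-stationarity theorem, and the verification is the technical core of the result.
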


We call $\lambda_0$ the \emph{growth coefficient} of $T$.
Theorem~\ref{thm:spectgap} entails the existence of a spectral gap for the semigroup of $(T_t)_{t\geq 0}$ acting on the Banach space $L^\infty(\psi):=\{f:(0,+\infty)\to\R,\ \|f/\psi\|_\infty<+\infty\}$, endowed with the norm $f\mapsto \|f/\psi\|_\infty$. Conversely, if the convergence of Theorem~\ref{thm:spectgap} holds true, then $(T_t)_{t\geq 0}$ also satisfies Lyapunov type conditions and Doeblin type conditions (we refer the reader to~\cite{BCGM-Harris} and~\cite{ChampagnatVillemonais2020} for such converse properties) and it is thus expected that Theorem~\ref{thm:spectgap} covers most situations where a spectral gap exists in some $L^\infty(\psi)$.
However it is clear that our result does not apply in situations with no spectral gap. 
While a similar approach may be used in this situation, the main limitation is that the theory of quasi-stationary distributions for sub-Markov semigroup without spectral gap is limited and is still as of this day an active area of research. 

In practice, checking Assumption~\ref{assumptionLyapGeneral} requires to find a upper bound on $\lambda_0$ and to find a Lyapunov function $\psi$. We first relate $\lambda_0$ to an apparently lower quantity. This result 
 is proved in section~\ref{sec:proofprop:alterlambda}.

\begin{proposition}
  \label{prop:alterlambda}
  If Assumptions~\ref{assumption1},~\ref{assumptionIrr} and~\ref{assumptionDoebSec}
  hold true, then
  \begin{align*}
    \lambda_0=\inf\{\lambda\in\mathbb R,\ \int_0^\infty e^{\lambda t} T_t \1_L(x)\,\mathrm dt=+\infty\}
  \end{align*}
  for any $x\in (0,+\infty)$ and any non-empty open compactly embedded subset $L\subset(0,+\infty)$.
\end{proposition}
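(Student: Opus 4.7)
The plan is to establish the two inequalities separately, denoting the right-hand side infimum by $\lambda_0'$.

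The inequality $\lambda_0' \le \lambda_0$ is elementary. For any $\lambda > \lambda_0$, the definition of $\lambda_0$ gives $\liminf_{t\to\infty} e^{\lambda t} T_t \1_L(x) = +\infty$, so there is $t_0$ with $e^{\lambda t} T_t \1_L(x) \ge 1$ for all $t \ge t_0$, and hence $\int_0^\infty e^{\lambda t} T_t \1_L(x)\,\dd t = +\infty$. Letting $\lambda \downarrow \lambda_0$ yields $\lambda_0' \le \lambda_0$.

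For the reverse inequality $\lambda_0 \le \lambda_0'$, the plan is to transfer the problem to the auxiliary Markov process $X$ with semigroup $Q$, using the identity $T_t \1_L(x) = e^{bt} h(x) Q_t g(x)$ with $g = \1_L/h$ from section~\ref{sec:endProofTh1}. Setting $\mu = \lambda + b$, it suffices to show: if $\int_0^\infty e^{\mu t} Q_t g(x_0)\,\dd t = +\infty$, then $\liminf_{t \to \infty} e^{\mu t} Q_t g(x_0) = +\infty$. As a preliminary, we use that $\lambda_0$ is independent of $L$ to enlarge $L$ so that $\upsilon(L) > 0$, where $\upsilon$ is the minorizing measure from Proposition~\ref{prop:doeblin}. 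The Doeblin condition, combined with irreducibility (Proposition~\ref{prop:Markov}) via the Markov property, then yields, for some $\tau > 0$ and $c_{x_0} > 0$,
\[
Q_{t+\tau} g(x_0) \ge c_{x_0}\,\upsilon(Q_t g) \quad \text{for all } t \ge 0,
\]
and symmetric bounds tying the growth rate of $t \mapsto \upsilon(Q_t g)$ to that of $t \mapsto Q_t g(x_0)$.

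The main obstacle is turning these one-sided lower bounds, together with integral divergence, into a liminf conclusion. The plan is to exploit the Doeblin decomposition $Q_{s_0}(y, \cdot) = c\upsilon + R(y, \cdot)$ for $y \in L$, where $R$ is a sub-probability kernel on $E$, giving the renewal-type identity $Q_{t+s_0}g(y) = c\,\upsilon(Q_t g) + R(y, Q_t g)$. By iterating this identity along a grid of spacing $s_0$ and using the strict sub-probability of $R$ for geometric damping of the residual contributions, one should be able to argue by contradiction: if $\liminf_{t\to\infty} e^{\mu t} Q_t g(x_0) < +\infty$ along some sequence $t_n \uparrow \infty$, the bound $e^{\mu t_n} Q_{t_n} g(x_0) \le M$ can be propagated to all $t$, and the excess $\mu > \lambda_0'$ converted into a geometric-series estimate producing $\int_0^\infty e^{\mu t} Q_t g(x_0)\,\dd t < +\infty$, contradicting the hypothesis. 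The delicate point is the quantitative propagation from sparse times $t_n$ to all $t$: the Doeblin condition supplies only one-sided minorizations, so the bookkeeping must carefully balance the exponential gain coming from $\mu > \lambda_0'$ against the finite-time loss from the Doeblin constants over each window.
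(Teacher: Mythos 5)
The easy direction ($\lambda_0'\le\lambda_0$, in your notation) is fine and is exactly what the paper does. The problem is the converse, and here your strategy has a genuine gap. You propose to argue by contradiction: from $e^{\mu t_n}Q_{t_n}g(x_0)\le M$ along a sparse sequence $t_n\uparrow\infty$, propagate this \emph{upper} bound to all times and conclude that the integral converges. But the only quantitative input available under the hypotheses of this proposition is the minorization of Proposition~\ref{prop:doeblin}, which is a one-sided \emph{lower} bound, $\P_x(X_t\in\cdot)\ge c_{L,t}\,\upsilon$. From $Q_{t_n}g(x_0)=\E_{x_0}[Q_ug(X_{t_n-u})]\ge c_{L,t_n-u}\,\upsilon(Q_ug)$ you only control $\upsilon(Q_ug)$ on a window of bounded length just before $t_n$ (since $c_{L,t}$ is non-increasing and in general decays exponentially in $t$), and the gaps $t_{n+1}-t_n$ coming from a liminf assumption can be unbounded; moreover an upper bound on $\upsilon(Q_ug)$ does not yield an upper bound on $Q_ug(x_0)$ without a majorization of the law of $X_\tau$ by a multiple of $\upsilon$, which is not assumed. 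Likewise, the renewal identity $Q_{t+s_0}g(y)=c\,\upsilon(Q_tg)+R(y,Q_tg)$ holds only for $y\in L$ and cannot be iterated: the residual kernel $R(y,\cdot)$ charges all of $E$, and there is no Lyapunov function here (Assumption~\ref{assumptionLyapGeneral} is not among the hypotheses) to force the chain back into $L$ and damp the residual terms geometrically. So the "delicate point" you flag is not a bookkeeping issue but the place where the argument breaks.

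The paper avoids this by propagating \emph{lower} bounds forward, which is what the Doeblin condition is suited for, at the cost of an arbitrarily small loss in the exponent. Concretely: divergence of $\int_0^\infty e^{\lambda^Xt}\P_\upsilon(X_t\in L)\,\dd t$ (obtained from divergence at $x$ via irreducibility) yields a single time $T_\varepsilon$ with $e^{(\lambda^X+\varepsilon)T_\varepsilon}\P_x(X_{T_\varepsilon}\in L)\ge1$ uniformly in $x\in L$; one then forms $w_\varepsilon=\sum_{i=0}^{T_\varepsilon-1}e^{(\lambda^X+\varepsilon)i}Q_i\1_L$, checks $Q_1w_\varepsilon\ge e^{-(\lambda^X+\varepsilon)}w_\varepsilon$, iterates to all integer times, and finally interpolates to real times using the Markov property and the Doeblin bound over a window of fixed length. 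This gives $e^{(\lambda^X+2\varepsilon)u}\P_x(X_u\in L)\to+\infty$, hence $\lambda_0\le\lambda+2\varepsilon$ for every $\varepsilon>0$. If you want to salvage your write-up, replace the contradiction/upper-bound step by this subinvariant-function construction (or an equivalent forward propagation of lower bounds).
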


Making use of a second Lyapunov-type function $\xi$, the following result provides a criterion to find upper bounds for $\lambda_0$, proved in Section~\ref{sec:lambda0upperbound} (the proof adapts easily to situations where $\xi$ is only $s$-absolutely continuous).
Theorem~\ref{thm:spectgap} together with part \ref{i:lambda0upperbound:b}
of this result provides
Theorem~\ref{thm:spectgap-intro} in the introduction.

\begin{proposition}
  \label{prop:lambda0upperbound}
  Assume that Assumptions~\ref{assumption1}, \ref{assumptionIrr} 
  and~\ref{assumptionDoebSec}
  hold true, and that:
  \begin{enumerate}
    \item \relax \label{i:lambda0upperbound:1}
      There exist a positive function $\psi\in C^{(s)}_{\text{loc}}$, a constant $\lambda_1\in\R$ and a compact interval $L\subset(0,+\infty)$ such that $\inf_{x\in (0,+\infty)} \psi/h>0$ and 
      \begin{equation}\label{e:lambda0upperbound:Lyap}
        \cA \psi(x) \leq  -\lambda_1\psi(x)+C\1_L(x),\quad\forall x\in (0,+\infty),
      \end{equation}
      for some constant $C>0$.
    \item \relax \label{i:lambda0upperbound:2}
      There exists a positive function  $\xi\in C^{(s)}_{\text{loc}}$  such that $\|\xi/h\|_\infty<+\infty$,
      $
      \frac{\xi(x)}{\psi(x)}\xrightarrow[x\to 0,+\infty]{} 0,
      $
      and such that there exists $\lambda_2\in\mathbb R$ such that
      \begin{equation}
        \label{e:lambda0upperbound:Lyap2}
        \cA \xi(x)\geq -\lambda_2 \xi(x),\ \forall x\in (0,+\infty).
      \end{equation}
  \end{enumerate}
  The following hold:
  \begin{enumerate}[label=(\alph*)]
    \item 
      \relax \label{i:lambda0upperbound:a}
      if $\lambda_2<\lambda_1$, then $\lambda_0\leq \lambda_2$;
    \item 
      \relax \label{i:lambda0upperbound:b}
      if $\lambda_2\leq \lambda_1$ and $\sup_{x\in(0,M)} \int_{(0,x)} \frac{\xi(y)}{\xi(x)} k(x,\mathrm dy)<+\infty$ for all $M>0$, then $\lambda_0\leq  \lambda_2$, with strict inequality if $x\in(0,+\infty)\mapsto \frac{\cA \xi(x)}{\xi(x)}$ is not constant.
  \end{enumerate}
\end{proposition}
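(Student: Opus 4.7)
My approach is to derive matching Duhamel-type estimates on $T_t\psi'$ (lower) and on the survival of the auxiliary Markov process $X$ (upper), then combine them to force divergence of the Laplace transform $I(\mu):=\int_0^\infty e^{\mu t}T_t\mathbf 1_L(x)\,\dd t$ at the critical value and conclude via Proposition~\ref{prop:alterlambda}. Corollary~\ref{cor:1} applies directly to $\psi'$ since $\|\psi'/h\|_\infty<\infty$ by assumption and $\mathcal A\psi'/h\ge-\lambda_2\psi'/h$ gives $\inf\mathcal A\psi'/h>-\infty$; the resulting identity $T_t\psi'(x)=\psi'(x)+\int_0^t T_u\mathcal A\psi'(x)\,\dd u$ together with $T_u\mathcal A\psi'\ge-\lambda_2 T_u\psi'$ and a Gronwall argument on $e^{\lambda_2 t}T_t\psi'(x)$ yields the lower Duhamel bound $T_t\psi'(x)\ge e^{-\lambda_2 t}\psi'(x)$. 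The ratio $\psi/h$ is only bounded below, so Corollary~\ref{cor:1} does not apply to $\psi$; instead I would work on the Markov process $X$ of Proposition~\ref{tNirr} with $\psi_h=\psi/h$, for which the Lyapunov condition~\eqref{e:lambda0upperbound:Lyap} gives $\mathcal L\psi_h\le-(\lambda_1+b)\psi_h+C\mathbf 1_L/h$, and a Dynkin formula localised at $\tau_n=\inf\{t:X_t\notin(1/n,n)\}$, with Fatou passage justified by the non-explosion of $X$, produces
\[
  \E_x\!\left[e^{(\lambda_1+b)t}\psi_h(X_t)\right]\le\psi_h(x)+C\int_0^t e^{(\lambda_1+b)s}Q_s(\mathbf 1_L/h)(x)\,\dd s.
\]
Combining this with $\psi_h\ge c_1>0$ and the parallel lower Dynkin bound for $\psi'_h\le M:=\sup\psi'/\psi<\infty$ (finite since $\psi'/\psi\to 0$ at both endpoints) yields the key pointwise inequality
\[
  \frac{c_1\psi'(x)}{Mh(x)}\,e^{(\lambda_1-\lambda_2)t}\le\psi_h(x)+C\int_0^t e^{(\lambda_1+b)s}Q_s(\mathbf 1_L/h)(x)\,\dd s.
\]

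For part~\ref{i:lambda0upperbound:a}, the strict inequality $\lambda_2<\lambda_1$ makes the left-hand side diverge exponentially, so $J(t):=\int_0^t e^{(\lambda_1+b)s}Q_s(\mathbf 1_L/h)(x)\,\dd s$ grows at least like $e^{(\lambda_1-\lambda_2)t}$. Rewriting $I(\mu)/h(x)=\int_0^\infty e^{(\mu-\lambda_1)s}\,\dd J(s)$ and integrating by parts (valid for $\mu<\lambda_1$), the boundary term at infinity is bounded below by $e^{(\mu-\lambda_2)s}$ up to a multiplicative constant and therefore diverges whenever $\mu>\lambda_2$; the case $\mu\ge\lambda_1$ is immediate from $I(\mu)\ge I(\lambda_1)=\infty$. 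Hence $I(\mu)=\infty$ for every $\mu>\lambda_2$, and Proposition~\ref{prop:alterlambda} gives $\lambda_0\le\lambda_2$.

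For part~\ref{i:lambda0upperbound:b}, the case $\lambda_1>\lambda_2$ is already covered by~\ref{i:lambda0upperbound:a}. When $\lambda_1=\lambda_2$, the additional integrability on $\psi'$ supplies Assumption~\ref{assumption1}'s kernel condition with $\psi'$ in place of $h$, so Corollary~\ref{cor:consistence} lets me re-run the previous argument using $\psi'$ as the new reference function and obtain the non-strict bound $\lambda_0\le\lambda_2$. The strict-inequality refinement when $\mathcal A\psi'/\psi'$ is non-constant relies on the exact identity obtained from Corollary~\ref{cor:1} by writing $\mathcal A\psi'=-\lambda_2\psi'+g\psi'$ with $g:=\mathcal A\psi'/\psi'+\lambda_2\ge 0$:
\[
  T_t\psi'(x)=e^{-\lambda_2 t}\psi'(x)+\int_0^t e^{-\lambda_2(t-s)}T_s(g\psi')(x)\,\dd s.
\]
Non-constancy makes $g\psi'\not\equiv 0$, and the irreducibility and local Doeblin conditions (Propositions~\ref{prop:Markov} and~\ref{prop:doeblin}) keep $T_s(g\psi')(x)$ uniformly bounded below on compacts for $s$ large. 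Feeding this extra, uniformly positive contribution into the comparison used in part~\ref{i:lambda0upperbound:a} upgrades the divergence of $I(\mu)$ to some $\mu<\lambda_2$, forcing $\lambda_0<\lambda_2$.

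The main obstacle is the upper Duhamel bound for $\psi$: because $\psi/h$ may be unbounded one cannot cite Corollary~\ref{cor:1} directly and has to rely on the localised Dynkin/Fatou argument on $X$. The strict-inequality step in part~\ref{i:lambda0upperbound:b} is the other delicate point, where a quantitative use of the Doeblin condition is needed to guarantee that the contribution of $g\psi'$ produces a genuine improvement of the exponential rate rather than being absorbed into the leading $e^{-\lambda_2 t}$ asymptotic.
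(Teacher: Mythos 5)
Your part~\ref{i:lambda0upperbound:a} is correct and is essentially the paper's argument: the paper likewise combines a supermartingale estimate for $\psi/h$ along $X$ (obtained by the same truncation/optional-stopping/Fatou scheme, except that the $C\1_L$ term is absorbed into $C'\psi'/h$ by forming $F=\psi/h-\tfrac{C'}{\lambda_1-\lambda_2}\psi'/h$ rather than kept as an explicit Duhamel integral) with the Gronwall lower bound $\psi'(x)/h(x)\le e^{(\lambda_2+b)t}\E_x\bigl(\psi'(X_t)/h(X_t)\bigr)$, and then uses $\psi'/\psi\to0$ to pass to $\P_x(X_t\in L_M)$. Your repackaging through $J(t)$ and Proposition~\ref{prop:alterlambda} is harmless.

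Part~\ref{i:lambda0upperbound:b}, however, has two genuine gaps. First, the case $\lambda_1=\lambda_2$ is not actually covered: the comparison of part~\ref{i:lambda0upperbound:a} then produces the factor $e^{(\lambda_1-\lambda_2)t}=1$ on the left-hand side and forces no divergence of $J(t)$, no matter which reference function is used; and the proposed switch to $\psi'$ via Corollary~\ref{cor:consistence} is not available in any case, because Assumption~\ref{assumption1} for $\psi'$ would require $\cA\psi'/\psi'$ to be bounded \emph{above}, whereas hypothesis~\ref{i:lambda0upperbound:2} only provides the lower bound $\cA\psi'/\psi'\ge-\lambda_2$. Second, the strictness step does not go through: the claim that irreducibility and the local Doeblin condition keep $T_s(g\psi')(x)$ ``uniformly bounded below on compacts for $s$ large'' fails whenever $\lambda_0>0$ (then $e^{\lambda s}T_s\1_{L'}(x)$ has finite $\liminf$ for $\lambda<\lambda_0$, so $T_s\1_{L'}(x)$ is not bounded away from $0$), and the Doeblin constants $c_{L,t}$ of Proposition~\ref{prop:doeblin} decay in $t$ at an unknown rate; any correct lower bound on $T_s(g\psi')$ would have to involve precisely the growth rate you are trying to estimate. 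The mechanism the paper uses, and which is absent from your sketch, is to $\psi'$-transform $\cA$ into the generator of a \emph{conservative} PDMP $Y$ with jump kernel $\tfrac{\psi'(y)}{\psi'(x)}k(x,\dd y)$ (this is where the extra kernel-integrability hypothesis enters), to show that $V=\psi/\psi'$ is a Lyapunov function making $Y$ recurrent (this is where $\lambda_1\ge\lambda_2$ and $\psi'/\psi\to0$ enter), to identify $T$ with the Feynman--Kac semigroup of $Y$ with potential $d=\cA\psi'/\psi'$ via the truncations $d\wedge M$ and Theorem~\ref{thm:semigroup}, and to invoke the known result (Proposition~2.1 of~\cite{Cavalli2019}) that for a recurrent process the Feynman--Kac growth rate dominates $\inf d$, strictly when $d$ is not constant. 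Some substitute for this recurrence/excursion input is indispensable both for the equality case $\lambda_1=\lambda_2$ and for the strict inequality.
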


While finding Lyapunov functions can be tricky, we show in the next section that exponentials of $s$ or of $\int_1^\cdot K(y)s(\mathrm dy)$ cover several situations and allow to recover and improve on several results in the literature.

\begin{remark}
  We emphasize that $\lambda_0$ may be characterized by other means than its definition. 
  For instance, in \cite[Proposition~3.3]{BW-gfe} it is shown that 
  $\lambda_0=-\inf\{q\in\R,\ L_{x_0,x_0}(q)<1\}$, 
  where $L_{x_0,x_0}$ is 
  defined in terms of a multiplicative functional of an auxiliary Markov process
  evaluated at the return time to $x_0$.
  In particular, \cite[Proposition~3.4]{BW-gfe} provides a upper bound for $\lambda_0$. We also refer the reader to~\cite[Section~2.2]{Cavalli2019} for a situation where the mass conservation does not hold. 
\end{remark}

\subsection{Applications}

\label{s:app}

In this section, we apply the results of sections~\ref{sec:ExistenceUniqueness}
and~\ref{sec:long-time} to different situations, focusing on
Assumptions~\ref{assumption1} and~\ref{assumptionLyapGeneral}, since
Assumptions~\ref{assumptionIrr} and~\ref{assumptionDoebSec}
are already explicit (see also
Remark~\ref{rem:onDoeblin} below). In subsection~\ref{sec:entrance}, we
provide a sufficient criterion for Assumptions~\ref{assumption1}
and~\ref{assumptionLyapGeneral} in the situation where $s(0+)>-\infty$.  In
subsection~\ref{sec:pseudo-entrance}, we consider the situation where
$\int_{(0,1)} K(y)\,s(\mathrm dy) <+\infty$ and where mass conservation holds
true.  The last two subsections are dedicated to the study of  near-critical
cases, the critical case being when $K$ is constant and $s(x)=\ln x$, in which
case it is well known that the conclusions of Theorem~\ref{thm:spectgap} do not
hold true. In subsection~\ref{sec:criticallnx}, we study the case  $s(x) \approx \ln x$
and $K$ is not constant. In subsection~\ref{sec:criticalKcst}, we study the
case $s(x)\neq \ln x$ and $K$ approximately constant.

\begin{remark}
  \label{rem:onDoeblin}
  Assumption~\ref{assumptionDoebSec} holds in the following situations.
  \begin{enumerate}[label={(\roman*)},ref={\ref{rem:onDoeblin}(\roman*)}]
    \item  
      \label{assumptionDoeb}
      Assume that there exists a positive constant $a>0$, a non-empty open
      interval~$I\subset (0,+\infty)$
      and a probability measure $\kappa$
      on
      $(0,+\infty)$,
      such that
      \[
        k(x,\mathrm dy) \geq a\kappa(\mathrm dy) \qquad x \in I.
      \]
      Then Assumption~\ref{assumptionDoebBoth} and~\ref{assumptionDoebF} both hold true. The fact that~\ref{assumptionDoebF}  holds true is immediate. For~\ref{assumptionDoebBoth}, let $\mu$ be
      the Lebesgue measure on $[0,1]$ and $T(\theta,x)=F_\kappa^{-1}(\theta)$,
      where $F_\kappa^{-1}$ is the generalized inverse of the cumulative
      distribution function of $\kappa$. Then $\kappa(\mathrm dy)=\int_{[0,1]}
      \delta_{T(\theta,x)}(\mathrm dy)\,\mu(\mathrm d\theta)$, so
      that~\eqref{eq:conditionderivativethetaTer} holds true. Since
      $T(\theta,x)$ does not depend on $x$, the rest of the assumption holds
      true.
    \item
      \label{rem:equamitosi}
      \label{assumptionDoebBis}
      Assume that $k$ is locally lower bounded by the equal mitosis kernel, that is,
      there exists a non-empty open interval $I\subset (0,+\infty)$ and $a>0$
      such that
      \[
        k(x,\dd y) \geq  a\delta_{x/2}(\dd y), \qquad x\in I,
      \]
      and moreover assume that $s(x)=\int_1^x
      1/c(y)\,\mathrm dy$ for some  positive function $c\colon (0,+\infty)\to(0+\infty)$,
      continuous on $I$, such that $c(x) \ne 2c(x/2)$ for all $x\in I$.
      Then,
      Assumption~\ref{assumptionDoebBoth} holds by taking
      $T(x)=x/2$. Clearly, Assumption~\ref{assumptionDoebF} does not hold in this situation.
    \item \label{exa:generalauto}
      More generally,
      consider the situation where 
      $k(x,\cdot)=K(x) p\circ m_x^{-1}$ with $m_x(u)=xu$ and
      $p$ is a finite
      measure on $(0,1)$,  with $s(x)=\int_1^x
      1/c(y)\,\mathrm dy$ for some  positive function $c\colon (0,+\infty)\to(0+\infty)$
      continuous on some sub-interval~$I$.
      Assume in addition $K$ is lower bounded away from $0$ on $I$ by a constant $a>0$
      and that there exists measurable $A\subset (0,1)$ such that $p(A)>0$ and
      for all $\theta\in A$,
      \begin{align}
          \label{eq:ineq}
        \theta c(x) \neq c(\theta x), \qquad x\in I.
      \end{align}
      Then, Assumption~\ref{assumptionDoebSec} holds by setting $T(\theta,x) = \theta x$
      and $\mu(\dd\theta)=p(\dd\theta)/p((0,1))$. 
      
     \item \label{exa:absocontinuous} Consider the situation where 
     $k(x,\cdot)=K(x) p\circ m_x^{-1}$ with $m_x(u)=xu$ and
     $p$ is a finite non-zero
     measure on $(0,1)$,  with $s(x)=\int_1^x
     1/c(y)\,\mathrm dy$ for some  positive function $c\colon (0,+\infty)\to(0+\infty)$
     continuous on $(0,+\infty)$. Assume in addition that $K$ is locally lower bounded away from $0$.     
     If $s(0+)<+\infty$, then Assumption~\ref{assumptionDoebBoth} holds.
     Indeed, let $\theta_0\in[0,1]$ such that $p(U)>0$ for all neighborhood $U$ of $\theta_0$. Then, since $1/c$ is summable in a neighborhood of $0$, there exists $x_0\in (0,+\infty)$ such that $\theta_0c(x_0)\neq c(\theta_0x_0)$, which then holds true for all $(\theta,x)\in A\times I$, where $A$ and $I$ are neighborhoods $\theta_0$ and $x$ respectively. This shows that~\eqref{eq:ineq} holds true for all $\theta\in A$ and $x\in I$, with $p(A)>0$.
     
     \item Finally, assume that $k(x,\cdot)=K(x) p\circ m_x^{-1}$ with $m_x(u)=xu$ and 
     $p$ is a finite
     measure on $(0,1)$ which is not singular with respect to the Lebesgue measure. Assume in addition that $K$ is  lower bounded away from $0$ on an open subset of $(0,+\infty)$.  Then Assumption~\ref{assumptionDoebF} holds true. Indeed, let $\underline K$ be a non-zero continuous function such that $0\leq \underline K\leq K$. Then $k(x,\cdot)\geq \underline K(x) p_\lambda\circ m_x^{-1}$, where $p_\lambda$ is the absolutely continuous part of $p$, and it only remains to prove that $ p_\lambda\circ m_x^{-1}$ defines a lower semi-continuous kernel. Let $g$ be the density of $ p_\lambda$ with respect to the Lebesgue measure and let $g_n$ be a sequence of continuous functions which converge to $g$ in $L^1(\mathrm dx)$. Then, for all measurable $A\subset (0,+\infty)$ and all $x\in (0,+\infty)$,
     \begin{align*}
          p_\lambda\circ m_x^{-1}(A)& = \int_{(0,1)} \1_{ux\in A}\, g(u)\,\mathrm du\\
          & \geq  \int_{(0,1)} \1_{ux\in A}\, g_n(u)\,\mathrm du -\|g-g_n\|_{L_1}\\
          & = \int_{(0,x)} \1_{y\in A}\, g_n(y/x)\,\frac{\mathrm dy}{x} -\|g-g_n\|_{L_1}
     \end{align*}
    The first term in the last line is continuous in $x$, while the second term goes to $0$ when $n\to+\infty$, so that, for all $x\in(0,+\infty)$,
    \begin{align*}
        \liminf_{y\to x} p_\lambda\circ m_y^{-1}(A)\geq p_\lambda\circ m_{x}^{-1}(A).
    \end{align*}
    This shows that Assumption~\ref{assumptionDoebF} holds true. Note that in this particular case, $p_\lambda\circ m_y^{-1}(A)$ is actually continuous with respect to $x$.
  \end{enumerate}
\end{remark}

\subsubsection{Entrance boundary}
\label{sec:entrance}

In this section, we provide a simple criterion for processes with an entrance boundary at $0$ (i.e. $s(0+)>-\infty$) and with a locally bounded fragmentation rate, inspired by the main result of~\cite{Cavalli2019}. As in this reference, and contrarily to the following sections, the result depends on $\lambda_0$.

\begin{proposition}
  \label{prop:appli0}
  Assume that $s(0+)>-\infty$, that $\sup_{x\in (0,M)} k(x,(0,x))<+\infty$ for all $M>0$, that $K$ is non-negative and that
  \begin{align}
    \label{as:1appli1}
    \limsup_{x\to+\infty} k(x,(0,x))-K(x)<+\infty.
  \end{align}
  Then Assumption~1 holds true. 
  If 
in addition Assumptions~\ref{assumptionIrr} and~\ref{assumptionDoebSec}
  hold true, and if
  \begin{align}
    \label{as:2appli1}
    \limsup_{x\to+\infty} k(x,(0,x))-K(x)<-\lambda_0,
  \end{align}
  then Assumption~\ref{assumptionLyapGeneral} holds true.
\end{proposition}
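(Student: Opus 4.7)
The strategy is to verify both assumptions with the trivial choices $h\equiv 1$ and $\psi\equiv 1$, relying on the entrance-boundary hypothesis $s(0+)>-\infty$ to absorb the behaviour near $0$ into a compact Lyapunov set.

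Taking $h\equiv 1$, which is a positive $C^s_{\text{loc}}$ function with $\d h/\d s\equiv 0$, the ratio $h(y)/h(x)$ equals $1$, so condition~\eqref{as:1c_1} reduces to $\sup_{x\in(0,M)} k(x,(0,x))<+\infty$, which is a standing hypothesis of the proposition. Moreover
\[
  \frac{\cA h(x)}{h(x)} \;=\; k\bigl(x,(0,x)\bigr)-K(x),
\]
which is locally bounded on $(0,+\infty)$ (since $k(\cdot,(0,\cdot))$ is so by hypothesis and $K$ is so by the standing assumption of section~\ref{sec:ExistenceUniqueness}) and bounded from above, using $K\ge 0$ together with the hypothesis on $k(\cdot,(0,\cdot))$ on each $(0,M]$ and \eqref{as:1appli1} on $(M,+\infty)$ for $M$ large. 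This yields Assumption~\ref{assumption1}.

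For Assumption~\ref{assumptionLyapGeneral}, set $\psi\equiv 1$, so that $\inf_{x}\psi(x)/h(x)=1>0$ and $\cA\psi(x)=k(x,(0,x))-K(x)$. By \eqref{as:2appli1} the interval $(\lambda_0,\,-\limsup_{x\to+\infty}[k(x,(0,x))-K(x)])$ is non-empty; fix any $\lambda_1$ in it. Then there exists $x_0>0$ such that $\cA\psi(x)\le -\lambda_1$ on $[x_0,+\infty)$, while on $[0,x_0]$ we have $\cA\psi(x)\le C_0:=\sup_{(0,x_0)}k(\cdot,(0,\cdot))<+\infty$. Setting $C:=C_0+\lambda_1$ and $L:=[0,x_0]$ yields
\[
  \cA\psi(x)\le -\lambda_1\psi(x)+C\mathbf{1}_L(x),\qquad x\in (0,+\infty),
\]
as required.

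The only subtle point is that Assumption~\ref{assumptionLyapGeneral} asks for $L$ to be a compact subinterval of $(0,+\infty)$, whereas $[0,x_0]$ includes the boundary point~$0$. This is precisely where the hypothesis $s(0+)>-\infty$ enters: as noted in the remark following Theorem~\ref{thm:semigroup}, under this entrance-boundary condition the whole framework of the paper extends verbatim to the state space $[0,+\infty)$, in which $L=[0,x_0]$ is an admissible compact interval. Apart from this structural observation, everything reduces to a direct computation with the trivial Lyapunov choice $h\equiv\psi\equiv 1$.
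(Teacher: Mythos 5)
Your verification of Assumption~\ref{assumption1} with $h\equiv 1$ is correct, and in fact simpler than the paper's construction: with $h\equiv 1$ condition~\eqref{as:1c_1} is literally the hypothesis $\sup_{x\in(0,M)}k(x,(0,x))<+\infty$, and $\cA h/h=k(\cdot,(0,\cdot))-K$ is locally bounded and bounded above by $K\ge 0$, local boundedness and~\eqref{as:1appli1}. (By Corollary~\ref{cor:consistence} the resulting semigroup agrees with the one built from any other admissible $h$, so nothing is lost.)

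The second half has a genuine gap. Assumption~\ref{assumptionLyapGeneral} requires the Lyapunov inequality $\cA\psi\le-\lambda_1\psi+C\1_L$ to hold on all of $(0,+\infty)$ with $L$ a \emph{compact interval of $(0,+\infty)$}; hence on a punctured neighbourhood of $0$ one needs $\cA\psi(x)\le-\lambda_1\psi(x)$. With $\psi\equiv 1$ this reads $k(x,(0,x))-K(x)\le-\lambda_1$ for $x$ near $0$, and none of the hypotheses controls $k(x,(0,x))-K(x)$ from above by a quantity $<-\lambda_0$ near the origin (e.g.\ for binary fission $k(x,(0,x))=2K(x)$ this quantity is $K(x)\ge 0$, while $-\lambda_1$ may well be negative). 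Your appeal to the remark about replacing $(0,+\infty)$ by $[0,+\infty)$ does not close this: that remark describes a different framework in which every assumption and theorem would have to be restated and re-verified (in particular the Doeblin and irreducibility arguments on sets containing $0$), whereas the proposition asserts Assumption~\ref{assumptionLyapGeneral} \emph{as stated}. This is precisely where the paper uses $s(0+)>-\infty$ in a way your argument misses: it takes $\psi(x)=\exp\bigl(a(s(x)-s(0+))\bigr)$ near $0$ with $a<-\limsup_{x\to 0}k(x,(0,x))-\lambda_0$, $a\le 0$, so that the drift term contributes $\frac{1}{\psi}\frac{\partial\psi}{\partial s}=a$ and
\[
  \limsup_{x\to 0}\frac{\cA\psi(x)}{\psi(x)}\le a+\limsup_{x\to 0}k(x,(0,x))<-\lambda_0 ,
\]
while the finiteness of $s(0+)$ keeps this $\psi$ bounded and bounded away from $0$ near the origin, so that~\eqref{as:1c_1} and $\inf\psi/h>0$ survive. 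To repair your proof you must replace $\psi\equiv 1$ by such a function (matched to a constant, or to the paper's $h$, away from $0$); the computation at $+\infty$ can then proceed as you wrote it.
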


Before proceeding with the proof of Proposition~\ref{prop:appli0}, we remark on
the strong similarities with Theorem~1.1 of~\cite{Cavalli2019}.
There, the author reaches the conclusion of Theorem~\ref{thm:spectgap},
making some additional regularity and further assumptions on $s$, $k$ and $K$;
these ensure in particular that
$T$ is a strongly continuous Feller semigroup on the space of bounded
functions vanishing at infinity, which is not in general true for us.

\begin{proof}[Proof of Proposition~\ref{prop:appli0}]
  Let $a<-\limsup_{x \to 0} k(x,(0,x))-\lambda_0$ such that $a\leq 0$. Let $x_0\geq 1$ be such that $\exp(-as(0+))+s(x_0)=1$  and set, for all $x\in (0,+\infty)$,
  \begin{align*}
    h(x)=
    \exp\left(a\,(s(x)-s(0+))\right)\1_{x<1}+1\wedge\left(\exp(-as(0+))+s(x)\right)\1_{x\geq 1}.
  \end{align*}
  Then, for all $x\in(0,1)$,
  \begin{align*}
    \frac{\cA h(x)}{h(x)}&=a+\int_{(0,x)} \exp\left(a(s(y)-s(x))\right) k(x,\mathrm dy)-K(x)\\
    &\leq a+ \exp(a (s(0+)-s(x))) k(x,(0,x)),
  \end{align*}
  which is uniformly bounded from above on $x\in (0,1)$ by assumption. For $x\in[1,x_0)$, we have
  \begin{align*}
  \frac{\cA h(x)}{h(x)}
  &=\frac{1}{h(x)}+\int_{(0,1)}\frac{\exp\left(a\,(s(y)-s(0+))\right)}{\exp(-as(0+))+s(x)}\,k(x,\mathrm dy)
  \\
  & \quad {}
  + \int_{[1,x)}\frac{\exp(-as(0+))+s(y)}{\exp(-as(0+))+s(x)}k(x,\mathrm dy)-K(x)\\
  &\leq \exp(a s(0+))+\exp(as(0+))k(x,(0,1))+k(x,[1,x)),
  \end{align*}
  which is uniformly bounded from above on $x\in [1,x_0)$ by assumption.
   For $x\geq x_0$, we have
  \begin{align*}
    \frac{\cA h(x)}{h(x)}&=\int_{(0,1)} \exp\left(a\,(s(y)-s(0+))\right)k(x,\mathrm dy)\\
                         &\qquad   +\int_{[1,x_0)} \left(\exp(-as(0+))+s(y)\right) k(x,\mathrm dy)
                         +k(x,[x_0,x))-K(x)\\
                         &\leq  k(x,(0,x))-K(x),
  \end{align*}
  which is locally bounded from above and is bounded when $x\to+\infty$ 
  by~\eqref{as:1appli1}. 
  This entails that $\frac{\cA h(x)}{h(x)}$ is bounded from above on $(0,+\infty)$. 
  It is clearly locally bounded, and, in addition, for all $M>0$,
  \begin{align*}
    \sup_{x\in (0,M)} k_h(x,(0,x))
    \leq \sup_{x\in(0,M)}\int_{(0,x)} \exp\left(a s(0+)\right) k(x,\mathrm dy)
  \end{align*}
  which is finite by assumption. We conclude that Assumption~\ref{assumption1} holds true.

  We now work under the additional assumptions and set $\psi=h$. We have $s(x)\to s(0+)$ when $x\to0$, and hence
  \begin{align*}
    \limsup_{x\to0} \frac{\cA \psi(x)}{\psi(x)}
    &\leq  \limsup_{x\to 0} \bigl[ a+ \exp\left(a (s(0+)-s(x))\right) k(x,(0,x)) \bigr]
    \\
    & = a+ \limsup_{x\to 0} k(x,(0,x)) 
    < -\lambda_0.
  \end{align*}
  Using~\eqref{as:2appli1}, we also obtain
  \begin{align*}
    \limsup_{x\to+\infty} \frac{\cA \psi(x)}{\psi(x)}
    &\leq \limsup_{x\to+\infty} \bigl[k(x,(0,x))-K(x) \bigr]
    < -\lambda_0.
  \end{align*}
  This concludes the proof of Proposition~\ref{prop:appli0}.
\end{proof}

\subsubsection{Pseudo-entrance boundary and mass conservation}

\label{sec:pseudo-entrance}

In this section, we consider the situation where $\int_{(0,1)} K(x)\,s(\mathrm dx)<+\infty$. Informally, this means that a PDMP with drift determined by $s$ and jump rate $K$ has a positive, lower bounded probability to reach $1$ before its first jump when starting from any $x\in (0,1)$.

For simplicity, we consider the situation 
$k(x,\cdot)=K(x) p\circ m_x^{-1}$ where $m_x(u)=xu$, 
$p$ is a measure on $(0,1)$ such that $\int_{(0,1)}up(\mathrm du)=1$.
We also assume that $K$ is right-continuous.

\begin{proposition}
  \label{prop:appli1}
  Assume  that $\sup_{x\in (0,M)} K(x)<+\infty$ for each $M>0$. 
  Assume in addition that Assumptions~\ref{assumptionIrr} 
  and~\ref{assumptionDoebSec}
  hold true, that
  $p$ is a finite measure, that
  \begin{align*}
    \int_{(0,1)} K(x)\,s(\mathrm dx)<+\infty
  \end{align*}
  and that there exists $\alpha>1$  such that, for all $u\in(0,1)$,
  \begin{align}
    \label{as:2appli1bis}
    \liminf_{x\to+\infty}\  \int_{ux}^x K(y)\,s(\mathrm dy)> \frac{-\alpha\ln u}{1-\int_{(0,1)} v^\alpha p(\mathrm dv)}.
  \end{align}
  Then,
  Assumptions~\ref{assumption1} and~\ref{assumptionLyapGeneral} hold,
  $\lambda_0 < 0$ and the conclusions of Theorem~\ref{thm:spectgap} hold true.
\end{proposition}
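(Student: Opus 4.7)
The plan is to verify Assumption~\ref{assumption1} and Assumption~\ref{assumptionLyapGeneral}, to exploit Proposition~\ref{prop:lambda0upperbound} to obtain $\lambda_0 < 0$, and then to conclude by Theorem~\ref{thm:spectgap} (whose irreducibility and Doeblin hypotheses are given directly in the statement). Begin by setting $\eta := 1 - \int_{(0,1)} v^\alpha\,p(\dd v)$, which lies in $(0,1)$ because $\alpha > 1$ and $v < 1$ force $v^\alpha < v$ on the support of $p$, and $F(x) := \int_1^x K(y)/c(y)\,\dd y$. The pseudo-entrance hypothesis gives $F(0+) > -\infty$, while applying \eqref{as:2appli1bis} with any fixed $u \in (0,1)$ yields $F(x) \to +\infty$.

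For Assumption~\ref{assumption1}, take $h(x) := e^{\eta F(x)}$. Since $c$ and $K$ are right-continuous and locally bounded, $h \in C^s_{\text{loc}}$ with $\d h/\d s = \eta K h$, and a direct computation gives
\[
  \frac{\cA h(x)}{h(x)}
  = K(x)\int_{(0,1)}\bigl[e^{\eta(F(ux)-F(x))}-u^\alpha\bigr]\,p(\dd u).
\]
Since $F$ is nondecreasing, $e^{\eta(F(ux)-F(x))} \le 1$, so $\int h(y)/h(x)\,k(x,\dd y) \le K(x)\,p((0,1))$ is locally bounded, verifying \eqref{as:1c_1}. The same bound gives $\cA h/h \le K(x)(p((0,1))-1+\eta)$, locally bounded; near $x=\infty$, the strict inequality in \eqref{as:2appli1bis} together with dominated convergence against the finite measure $p$ forces the integral to be asymptotically strictly negative, and hence $\cA h/h$ is globally bounded above.

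To obtain $\lambda_0 < 0$, I apply Proposition~\ref{prop:lambda0upperbound}(b) with $\psi := h$ and $\psi'(x) := x$. Mass conservation gives $\cA\psi'(x) = c(x) \ge 0$, so $\lambda_2 = 0$ is admissible, and the side conditions $\psi'/\psi \to 0$ at both ends (using $F(0+)>-\infty$ and $F(x)\gtrsim (\alpha/\eta)\ln x$ at infinity), $\|\psi'/h\|_\infty<\infty$ after adjusting near infinity, and $\sup_{(0,M)}\int\psi'(y)/\psi'(x)\,k(x,\dd y)<\infty$ follow from $\int u\,p(\dd u)=1$. Part~(b) gives $\lambda_0 \le 0$ with strict inequality as soon as $c(x)/x$ is non-constant. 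In the exceptional linear case $c(x)=cx$, the pseudo-entrance condition $\int_0^1 K(y)/y\,\dd y < \infty$ rules out constant $K$, and one instead takes $\psi'(x)=x^\beta$ with $\beta\in(0,1)$: then $\cA\psi'/\psi'=\beta c+K(x)(\int u^\beta p(\dd u)-1)$ is non-constant through the dependence on $K$, again yielding the strict inequality $\lambda_0<0$.

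Assumption~\ref{assumptionLyapGeneral} is then verified by taking $\psi := h$: $\inf\psi/h=1$, and the previous steps give $\cA\psi/\psi\le-\lambda_1$ outside a suitable compact $L\subset(0,\infty)$. The main obstacle is ensuring $\lambda_1>\lambda_0$: because $\cA h(x)/h(x)\to K(0+)(p((0,1))-1+\eta)\ge 0$ as $x\to 0$, the Lyapunov constant cannot exceed $-\sup_{x\in(0,\varepsilon)}K(x)(p((0,1))-1+\eta)$, so obtaining the margin $\lambda_1>\lambda_0$ requires the upper bound on $\lambda_0$ to be sharp enough. This is where the delicate work lies: a careful choice of $\psi'$ in Proposition~\ref{prop:lambda0upperbound}(b) (for instance optimising over $\beta$ in the family $\psi'(x)=x^\beta$, using the strict liminf in \eqref{as:2appli1bis}) provides the required sharper upper bound on $\lambda_0$. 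Once $\lambda_1>\lambda_0$ is secured with $\cA\psi\le-\lambda_1\psi+C\mathbf{1}_L$, Theorem~\ref{thm:spectgap} delivers the full conclusion.
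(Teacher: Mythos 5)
Your overall strategy (verify Assumptions~\ref{assumption1} and~\ref{assumptionLyapGeneral}, obtain $\lambda_0<0$ from Proposition~\ref{prop:lambda0upperbound} with $\psi'(x)=x$, conclude by Theorem~\ref{thm:spectgap}) is exactly the paper's, and your treatment at infinity is essentially sound: with exponent $\eta=1-p_\alpha$ (writing $p_a=\int_{(0,1)}v^a\,p(\dd v)$) the reverse Fatou argument does give $\limsup_{x\to\infty}\cA h(x)/h(x)\le 0$, and $h(x)\gtrsim x^{\alpha}$ so that $\psi'/\psi\to0$ there. The genuine gap is at $x\to 0$. With your single-formula $h(x)=e^{\eta F(x)}$ you have, as you yourself note, $\cA h(x)/h(x)=K(x)\bigl[\int_{(0,1)} e^{\eta(F(ux)-F(x))}p(\dd u)-p_\alpha\bigr]\to K(0+)\bigl(p((0,1))-p_\alpha\bigr)$, and since $p((0,1))>1>p_\alpha$ this limit is \emph{strictly positive} whenever $K$ does not vanish at $0$. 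Your proposed repair --- sharpen the upper bound on $\lambda_0$ by optimising over $\psi'$ --- cannot succeed, because $\lambda_0$ is a fixed quantity that is bounded \emph{below} in concrete instances covered by the proposition. Take $c\equiv1$, $K\equiv\kappa>3+2\sqrt2$, $p(\dd u)=2\,\dd u$: all hypotheses hold, $\cA\1=\kappa(p((0,1))-1)=\kappa$, hence $T_t\1=e^{\kappa t}$ and $T_t\1_L\le e^{\kappa t}$, so $\lambda_0\ge-\kappa$ by its very definition. But your $\psi$ forces $\lambda_1\le-\lim_{x\to0}\cA\psi(x)/\psi(x)=-2\kappa\alpha/(1+\alpha)<-\kappa\le\lambda_0$, contradicting the requirement $\lambda_1>\lambda_0$ of Assumption~\ref{assumptionLyapGeneral}. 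No choice of $\psi'$ can rescue this.

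The correct fix is to flip the sign of the exponent near $0$, which is what the paper does: take $\psi(x)=\exp\bigl(a_0\int_x^1K(y)\,s(\dd y)\bigr)$ for $x\le1$ with $a_0>p((0,1))-1$, keeping an exponent $a_\infty$ slightly below $1-p_\alpha$ for $x\ge1$. Then for $x<1$ one gets $\cA\psi(x)/\psi(x)=K(x)\bigl[\int_{(0,1)} e^{a_0\int_{ux}^xK(y)\,s(\dd y)}p(\dd u)-1-a_0\bigr]\to K(0+)\bigl(p((0,1))-1-a_0\bigr)\le0$ by the pseudo-entrance hypothesis and dominated convergence, so $\cA\psi/\psi\le0$ outside a compact set and $\lambda_1=0>\lambda_0$ is admissible. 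Two minor further points: your derivation of $\lambda_0<0$ is more convoluted than necessary (if $c(x)/x$ is constant it equals some $\delta>0$ and Proposition~\ref{prop:lambda0upperbound}\ref{i:lambda0upperbound:a} with $\lambda_2=-\delta<0=\lambda_1$ already gives $\lambda_0\le-\delta$; otherwise part \ref{i:lambda0upperbound:b} applies directly), and the lower bound $\psi(x)\gtrsim x^\alpha$ at infinity needs the chaining argument via \eqref{as:2appli1bis} at $u=1/2$ that the paper spells out, not just the pointwise liminf.
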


Suppose moreover that, for all $x\in(0,+\infty)$, 
$s(x)=\int_1^x \frac1{c(y)}\mathrm dy$ where  
$c:(0,+\infty)\to (0,+\infty)$ is a right-continuous and locally bounded function.
In the case of uniform
mass repartition, where $p(\dd u) = 2\dd u$, 
the right hand term in~\eqref{as:2appli1bis} reaches its minimal value 
$(-\ln u)(3+2\sqrt{2})$ at $\alpha= 1+\sqrt 2$. 
In particular,~\eqref{as:2appli1bis} holds true if
\begin{align*}
  \liminf_{y\to+\infty} \frac{yK(y)}{c(y)}>3+2\sqrt{2}.
\end{align*}

Before turning to the proof of this proposition, it is interesting to compare it
with the findings of~\cite{CGY-spectral}. 
In this recent paper, the authors use advanced methods from functional analysis 
to derive the existence of an eigenfunction $h$.
This gives them access to a (conservative) Markov process using an $h$-transform
(see also~\cite{Miura2014,Ocafrain2020}, where similar approaches were used 
to study non-conservative semigroups). 
This allows them to study 
the growth fragmentation equation under mild conditions.
The main drawback of this approach is that it requires the preliminary 
proof of the existence  and fine properties of a positive right eigenfunction $h$, 
which typically requires additional assumptions on regularity
and asymptotic behaviour of the coefficients.
On the contrary, our approach, 
based on the study of non-conservative Markov processes, 
only requires the existence of a Lyapunov function $h$,
and the existence of an eigenfunction is then a consequence of our theorem, 
instead of a preliminary step in the proof.
This lets us consider more general situations.

More precisely, in the case  where $p$ is the uniform measure over 
$(0,1)$ (where Assumption~\ref{assumptionDoebSec} is clearly satisfied
by Remark~\ref{assumptionDoeb}), 
Theorem~1.3 in~\cite{CGY-spectral} states that the conclusions of 
our Theorem~\ref{thm:spectgap} hold true, 
assuming in addition (compared to Proposition~\ref{prop:appli1}) that  
$c$ is locally Lipschitz, 
that $\limsup_{x\to+\infty} \frac{c(x)}{x}<+\infty$, 
that $c(x)=o(x^{-p})$ when $x\to0$ for some $p\geq 0$, 
that $K$ is continuous on $[0,+\infty)$, 
that $xK(x)/c(x)\to 0$ when $x\to 0$ and 
that $xK(x)/c(x)\to +\infty$ when $x\to +\infty$. 
Similarly, the mitosis kernel case considered 
in~\cite{CGY-spectral} is a special case of Proposition~\ref{prop:appli1} 
(using this time Remark~\ref{assumptionDoebBis}
instead of Remark~\ref{assumptionDoeb}). 

Our result also covers and extends the setting considered
in~\cite{BernardGabriel2020}, where the authors assume either (1) that $c(x)=x$
and $p$ is absolutely continuous with respect to the Lebesgue measure (this is
a particular case of Remark~\ref{exa:absocontinuous}, our result shows in
particular that it is sufficient for $p$ to have a non-zero absolute
continuous part with respect to the Lebesgue measure) or (2) that $c\equiv 1$
and $p$ has compact support in $(0,1)$ (this is a particular case of
Remark~\ref{exa:generalauto}, and we show that the condition on $p$ can be
dispensed of entirely).

We can also compare Proposition~\ref{prop:appli1} with Theorem~4.3 in the
recent paper~\cite{BCGM-Harris}, where the authors consider the special case
where $c\equiv 1$ (which means that $s(x)=x-1$) and $K$ is a continuously
differentiable increasing function, and under the additional assumption that
$p$ is lower bounded by a uniform measure over a subinterval of $[0,1]$ or by a
Dirac measure (these situations clearly satisfy
Assumption~\ref{assumptionDoebBoth} via Remarks~\ref{exa:absocontinuous}
and~\ref{assumptionDoebBis} respectively). In this situation, both assumptions
of Proposition~\ref{prop:appli1} are clearly satisfied, with
$\liminf_{x\to+\infty}\  \int_{ux}^x K(y)\,s(\mathrm dy)=+\infty$ for all
$u\in(0,1)$ and Theorem~4.3 in~\cite{BCGM-Harris} is thus a special case of
Proposition~\ref{prop:appli1}.

\begin{remark}
  \label{rem:pseudo}
  In the proof, we make use of the functions $\psi$ and $h$ defined by 
  \[
    \psi(x)
    =h(x)
    = \exp\left(\int_{(x,1)} a_0 K(y)\,s(\mathrm dy)\right)\1_{x<1}
    +\exp\left(\int_{(1,x)} a_\infty K(y)\,s(\mathrm dy)\right)\1_{x\geq 1},
  \]
  where $a_0,a_\infty\in\mathbb R$, so that, for all $x<1$,
  \[
    \frac{\cA \psi(x)}{\psi(x)}
    = K(x)
    \left( \int_{(0,1)} \exp\left(a_0\int_{(ux,x)}\,K(y)\,s(\mathrm dy)\right) \,p(\dd u)
    - 1 - a_0 \right)
  \]
  and similarly for $x\geq 1$ (see \eqref{e:xge1} for the exact expression). 
  Our assumptions are then used to derive
  asymptotics on  $\frac{\cA \psi(x)}{\psi(x)}$ when $x\to 0$ and
  $x\to+\infty$. In this situation, the main point of the mass conservation
  assumption is to ensure that $x\in(0,+\infty)\mapsto x$ is a natural
  candidate for $\xi$ in Proposition~\ref{prop:lambda0upperbound}, and is
  thus used to derive a lower bound for $\lambda_0$. The strategy developed in
  the proof, and in particular the use of $\psi$ with this form, is relevant
  even outside of the structure for $k$ assumed in Proposition~\ref{prop:appli1}.

 For example, let us assume that Assumptions~\ref{assumptionIrr} and~\ref{assumptionDoebSec} hold, and let us make the following assumptions:
$K$ is locally bounded, $\sup_{x>0}\frac{k(x,(0,x)}{K(x)}<+\infty$,
$\int \frac{y}{x} k(x,\mathrm{d}y) = K(x)$ (conservation of mass), 
there exists $\alpha>1$ such that 
$m_\alpha:=\limsup_{x\to\infty} \int (y/x)^\alpha k(x,\mathrm{d}y)/K(x)<1$, $c$ is right continuous, $\int_{(0,1)} \frac{K(x)}{c(x)}\,\mathrm dx<+\infty$ (pseudo-entrance boundary),
and there exists $a_\infty\in(0,m_\alpha)$ such that, for all $x\geq y\geq 1$ with $y$ large enough,
\begin{align}
    \label{eq:new-condition}
\exp\left(-a_\infty \int_{y}^x K(z)\,s(\mathrm dz)\right)\leq \left(\frac{y}{x}\right)^{\alpha}.
\end{align}
We emphasize that this implies that $\frac{1}{\ln x}\int_{1}^x K(z)\,s(\mathrm dz)$ goes to $+\infty$ when $x\to+\infty$.
Then, using the same $\psi$ as in the proof, we obtain for all $x\geq 1$, 
\begin{align*}
    \frac{\mathcal A \psi(x)}{\psi(x)}&= K(x)\Bigg(\int_{(0,1)} \exp\left(a_0 \int_{y}^1 K(z)\,s(\mathrm dz)-a_\infty \int_1^x  K(z)\,s(\mathrm dz)\right)\frac{1}{K(x)}k(x,\mathrm dy) \nonumber \\
    &\qquad\qquad+\int_{[1,x)} \exp\left(-a_\infty \int_{y}^x K(z)\,s(\mathrm dz)\right)\frac{1}{K(x)}k(x,\mathrm dy)-1+a_\infty\Bigg).
\end{align*}
The first term in the parenthesis goes to $0$ when $x\to+\infty$ since $\int_0^1 K(z)\,s(\mathrm dz)<+\infty$, $\int_1^x  K(z)\,s(\mathrm dz)\to+\infty$ when $x\to+\infty$ and the total mass of $\frac{1}{K(x)}k(x,\cdot)$ is uniformly bounded. For the other terms, we obtain using~\eqref{eq:new-condition} that, for all $x\geq y\geq 1$ with $y$ large enough,
\[
\limsup_{x\to+\infty} \int_{[1,x)} \exp\left(-a_\infty \int_{y}^x K(z)\,s(\mathrm dz)\right)\frac{1}{K(x)}k(x,\mathrm dy)-1+a_\infty\leq m_\alpha-1+a_\infty<0.
\]
This entails that
\[
\lim_{x\to+\infty} \frac{\mathcal A \psi(x)}{\psi(x)}<0.
\]
Choosing $a_0>\sup_{x>0}\frac{k(x,(0,x)}{K(x)}$ and proceeding as in the proof, we deduce that
$\lambda_0 < 0$ and the conclusions of Theorem~\ref{thm:spectgap} hold true.

In particular, this recovers and improve the convergence of the model
in~\cite{DebiecDoumicEtAl2018}, where the authors prove the (a priori
non-geometric) convergence for a more specific model under stronger assumptions
inherited from~\cite{DoumGab} (in order to obtain the leading eigenelements)
and additional regularity assumptions (see
(2.5)--(2.7) in~\cite{DebiecDoumicEtAl2018} and (2.1)--(2.9)
in~\cite{DoumGab}). We emphasize that, although we proved that there is
actually a spectral gap under the assumptions of~\cite{DebiecDoumicEtAl2018}
and~\cite{DoumGab},  the authors of the former use the latter to obtain the
existence, uniqueness and additional properties on the eigenelements of the
adjoint operator, so their methods may apply to situations where there is no
spectral gap and thus where our result does not hold true.
Similarly, we also cover the setting of~\cite{CCM11}, where the authors prove a convergence with geometric rate for two specific models under additional regularity assumptions (see Hypotheses 1.1 to 1.7 therein), some of which are also derived from their use of~\cite{DoumGab}.

To conclude this remark, we consider the sharp and quite explicit result
of~\cite{MaillardPaquette2020},
where the authors consider the special case where $s(x)=\ln x$, $K(x)=x R(x)$
and $k(x,\mathrm du)= x\,R(x) \frac{2u}{x^2}\,\mathrm du$. There is no mass
conservation in this case, but instead conservation of the number of fragments
($\mathcal A 1\equiv 0$). The authors show that the condition 
\[
\int_0^\infty y\,\exp\left(-\int_1^x R(y)\,\mathrm dy\right)\mathrm dx <+\infty
\]
holds true if and only if $\left\|\delta_x T_t -m\right\|_{TV}$ goes to $0$
when $t\to+\infty$ (without a geometric rate). In order to ensure that
Theorem~\ref{thm:spectgap} applies to this case (and thus to ensure geometric
convergence), and using the same Lyapunov function as above (observing that
$\lambda_0=0$), we require that there exists $a_\infty\in(0,1)$ such that
\begin{align*}
    \int_{(0,1)} R(y)\,\mathrm dy<+\infty\text{ and }\limsup_{x\to+\infty} \int_{(1,x)} \exp\left(-a_\infty\int_y^x R(z)\,\mathrm dz\right)\frac{2y\mathrm dy}{x^2}-1+a_\infty <0.
\end{align*}
The last property holds true for instance if 
$\int_y^x R(z)\,\mathrm dz\geq (2+\varepsilon)\ln\frac{y}{x}$ 
for some $\varepsilon>0$ and all $x>y$ with $y$ large enough. In this case, it
is clear that the condition of~\cite{MaillardPaquette2020} holds true. It is
also the case that their condition (and of course ours) does not hold true if
$\int_y^x R(z)\,\mathrm dz\leq  2\ln\frac{y}{x}$ for all $x>y$ with $y$ large
enough.

\end{remark}

\begin{proof}[Proof of Proposition~\ref{prop:appli1}]
  For all $a\in \mathbb R$, we set $p_a:=\int_{(0,1)} u^a \,p(\mathrm du)$.

  \medskip \textbf{(1) Identification of $h=\psi$.} For all $u\in(0,1)$, we define
  \[
  \varepsilon_u:=\liminf_{x\to+\infty}\  \frac{\int_{ux}^x K(y)\,s(\mathrm dy)}{-\ln u}-\frac{\alpha}{1-p_\alpha}
  \]
  and set
  \[
  \ell:=\frac{\alpha}{1-p_\alpha}+\frac{\varepsilon_{1/2}}{2}.
  \]
  Note that $\varepsilon_u>0$ by assumption and hence  $\alpha/\ell<1-p_\alpha$ and
  \[
  \lim_{a\to 1-p_\alpha} \int_{(0,1)} u^{a(\varepsilon_u+\alpha/(1-p_\alpha))}\,p(\mathrm du)=\int_{(0,1)} u^{(1-p_\alpha)\varepsilon_u+\alpha}\,p(\mathrm du)<p_\alpha.
  \]
  In particular, there exists $a_\infty\in \left(\frac{\alpha}{\ell},1-p_\alpha\right)$ such that
  \begin{equation}
  \label{eq:defainfty}
  \int_{(0,1)}  u^{a_\infty(\varepsilon_u+\alpha/(1-p_\alpha))}\,p(\mathrm du)<p_\alpha.
  \end{equation}
  We also fix $a_0>p_0-1$ and define the function
  \begin{align*}
    \psi(x)=\begin{cases}
      \exp\left(-a_0\int_1^x K(y)\,\mathrm s(\mathrm dy)\right)&\text{ if }x\leq 1\\
      \exp\left(a_\infty\int_1^x K(y)\,s(\mathrm dy)\right)&\text{ if }x\geq 1.
    \end{cases}
  \end{align*}
  We have, for all $x < 1$,
  \begin{align*}
    \frac{\mathcal A \psi(x)}{\psi(x)}= K(x)\left(\int_{(0,1)} \exp\left(a_0 \int_{ux}^x K(y)\,s(\mathrm dy)\right)p(\mathrm du)-1-a_0\right).
  \end{align*}
  Since $\exp\left(a_0 \int_{ux}^x K(y)\,s(\mathrm dy)\right)\leq  \exp\left(a_0 \int_{0}^1 K(y)\,s(\mathrm dy)\right)$, with 
  \[
    \int_{(0,1)} \exp\left(a_0 \int_{0}^1 K(y)\,s(\mathrm dy)\right)p(\mathrm du)
    = \exp\left(a_0 \int_{0}^1 K(y)\,s(\mathrm dy)\right) p_0<+\infty
  \]
  and since $\int_{ux}^x K(y)\,s(\mathrm dy)\to 0$
  as $x\to 0$,
  we deduce from the dominated convergence theorem that
  \[
    \lim_{x\to 0} \int_{(0,1)} \exp\left(a_0 \int_{ux}^x K(y)\,s(\mathrm dy)\right)p(\mathrm du)-1-a_0=p_0-1-a_0<0.
  \]
  Hence there exists $x_0>0$ such that
  \begin{align}
    \label{eq:Apsipsi1}
    \frac{\mathcal A \psi(x)}{\psi(x)}\leq 0\text{,  for all $x\in(0,x_0)$.}
  \end{align}
  For all $x\geq 1$, we have
  \begin{align}
    \frac{\mathcal A \psi(x)}{\psi(x)}&= K(x)\Bigg(\int_{(0,1/x)} \exp\left(a_0 \int_{ux}^1 K(y)\,s(\mathrm dy)-a_\infty \int_1^x  K(y)\,s(\mathrm dy)\right)p(\mathrm du) \nonumber \\
    &\qquad\qquad+\int_{[1/x,1)} \exp\left(-a_\infty \int_{ux}^x K(y)\,s(\mathrm dy)\right)p(\mathrm du)-1+a_\infty\Bigg). \label{e:xge1}
  \end{align}
  On the one hand, we have (noting that $a_\infty>0$)
  \begin{multline*}
    \int_{(0,1/x)} \exp\left(a_0 \int_{ux}^1 K(y)\,s(\mathrm dy)-a_\infty \int_1^x  K(y)\,s(\mathrm dy)\right)p(\mathrm du) \\
    \leq \exp\left(a_0 \int_{0}^1 K(y)\,s(\mathrm dy)\right)\,p((0,1/x))
    \xrightarrow[x\to+\infty]{}0.
  \end{multline*}
  On the other hand, for all $u\in(0,1)$,
  \begin{align*}
    \limsup_{x\to+\infty} \,\mathbf 1_{u\in[1/x,1)}\,\exp\left(-a_\infty \int_{ux}^x K(y)\,s(\mathrm dy)\right)=u^{a_\infty(\varepsilon_u+\alpha/(1-p_\alpha))}
  \end{align*}
  and hence, by Fatou's Lemma and using \eqref{eq:defainfty},
  \begin{align*}
    \limsup_{x\to+\infty} \int_{[1/x,1)} \exp\left(-a_\infty \int_{ux}^x K(y)\,s(\mathrm dy)\right)p(\mathrm du)-1+a_\infty
    \le p_\alpha-1+a_\infty
    < 0.
  \end{align*}
  We deduce that there exists $x_\infty\geq 1$ such that, for all $x\geq x_\infty$,
  \begin{align}
    \label{eq:Apsipsi2}
    \frac{\mathcal A \psi(x)}{\psi(x)}\leq 0.
  \end{align}
  Taking $h=\psi$, we observe that $x\in(0,+\infty)\mapsto \frac{\mathcal A h(x)}{h(x)}$ is locally bounded, and we deduce from~\eqref{eq:Apsipsi1} and~\eqref{eq:Apsipsi2} that it is bounded from above. Moreover, the above calculations show that, for all $M>0$,
  \[
    \sup_{x\in (0,M)} \int_{(0,x)} \frac{h(y)}{h(x)} k(x,\mathrm dy)<+\infty.
  \]
  We conclude that Assumption~1 holds true.

  \medskip \textbf{(2) Identification of $\xi$ and conclusion.} We choose $\xi(x):=x$ for all $x\in(0,+\infty)$.  
  We first prove that $\xi(x)=x=o(\psi(x))$ close to $0$ and $+\infty$. Since $\psi$ is bounded away from $0$ in a vicinity of $0$, this is immediate for $x$ close to $0$.  Now, according to our assumptions and the definition of $\ell$, there exists $x_1\geq 1$ (which is fixed from now on) such that, for all $x\geq x_1$,
  \[
    \int_{x/2}^x K(y)\,s(\mathrm dy)\geq \ell \ln 2.
  \]
  For any  $x>x_1$, let $n\geq 0$ such that $2^{-n}x\geq x_1\geq 2^{-(n+1)}x$ (in particular $n\ln 2\geq \ln x-\ln x_1 -\ln 2$). Then
  \begin{align*}
    \int_1^x K(y)\,s(\mathrm dy)
    & \geq \int_1^{2^{-n}x} K(y)\,s(\mathrm dy)
    + \int_{2^{-n}x}^{2^{-(n-1)}x} K(y)\,s(\mathrm dy)
    + \cdots 
    + \int_{2^{-1}x}^x K(y)\,s(\mathrm dy) \\
    & \geq n \ell \ln 2
    \geq \ell\ln x - \ell \ln (2x_1).
  \end{align*}
  Since $a_\infty> \alpha/\ell$, we deduce that, for all $x>x_1$,
  \begin{align*}
    a_\infty\int_1^x K(y) \, s(\mathrm dy)
    & \geq \alpha\ln x - a_\infty\ell\ln (2x_1).
  \end{align*}
  This shows that 
  $\liminf_{x\to +\infty} \psi(x)/x^\alpha > 0$ and hence, 
  since $\alpha >1$ by assumption, that $x=o(\psi(x))$ when $x\to+\infty$.

  We also observe that, for all $M>0$, 
  $\sup_{x\in (0,M)} \int_{(0,x)}\frac{\xi(y)}{\xi(x)}k(x,\mathrm dy)
  =\sup_{x\in (0,M)} K(x)<+\infty$,
  by assumption.
  Finally, for all $x\in (0,+\infty)$,
  \begin{align*}
    \frac{\mathcal A \xi(x)}{\xi(x)}=\frac{1}{x}\frac{\partial \xi}{\partial s}(x)=\frac{c(x)}{x}.
  \end{align*}
  Since $c(x)/x$ is not zero, we deduce that it is either lower bounded 
  by a positive constant or that it is not constant. Using 
  Proposition~\ref{prop:lambda0upperbound} together with~\eqref{eq:Apsipsi1} 
  and~\eqref{eq:Apsipsi2}, we deduce that $\lambda_0 < 0$.
  This also entails that Assumption~\ref{assumptionLyapGeneral} holds true, 
  which concludes the proof.
\end{proof}

\subsubsection{Critical case, \texorpdfstring{$s$}{s} comparable to \texorpdfstring{$\ln x$}{ln x}}
\label{sec:criticallnx}

It is well known that, when $K$ is constant and $s(x)=\ln x$, 
the results of Theorem~\ref{thm:spectgap} do not hold true in general 
(see, for instance,~\cite[end of \S 2]{DoumGab}).
In this section, we consider first the situation where 
$s(x)=\ln x$ and $K$ is not constant, 
and then the situation where $s(x)/\ln x$ has positive limit inferior
when $x\to 0$ and $x\to+\infty$
and finite limit superior when  $x\to+\infty$.

As in the previous section, we consider for simplicity the situation 
where $k(x,\cdot)=K(x)p\circ m_x^{-1}$, with $p$ a positive measure on $(0,1)$ 
such that $\int_{(0,1)} u\,p(\mathrm du)=1$;
we do not assume that $p$ is a finite measure. 
We assume that $K$ is right-continuous and non-negative.

\begin{proposition}
  \label{prop:lnx}
  Assume that Assumptions~\ref{assumptionIrr}
 and~\ref{assumptionDoebSec}
  hold true.
  Assume in addition that $s(x)=\ln x$ for all $x\in(0,+\infty)$ 
  and that there exist $\alpha<1<\beta$ such that 
  $\int_{(0,1)} u^\alpha\,p(\mathrm du)<\infty$ and
  \begin{equation}
    \label{as:1appli2}
    \limsup_{x\to 0} K(x) < \frac{1-\alpha}{\int_{(0,1)} u^\alpha\,p(\mathrm du)-1}
    \text{ and }
    \liminf_{x\to\infty} K(x) > \frac{\beta-1}{1-\int_{(0,1)} u^\beta\,p(\mathrm du)}.
  \end{equation}
  Then Assumptions~\ref{assumption1} and~\ref{assumptionLyapGeneral} hold true.
\end{proposition}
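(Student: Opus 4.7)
The plan is to apply Proposition~\ref{prop:lambda0upperbound}\ref{i:lambda0upperbound:a} with the choices $h=\psi(x)=x^\alpha+x^\beta$ and $\psi'(x)=x$, in the spirit of the proof of Proposition~\ref{prop:appli1}. The simplification in the present setting is that $s(x)=\ln x$ gives the simple derivative $\partial x^\gamma/\partial s=\gamma x^\gamma$, while scale-invariance of $k$ gives the exact identity $\int_{(0,1)}\psi(ux)\,p(\dd u)=x^\alpha\phi_\alpha+x^\beta\phi_\beta$ with $\phi_\gamma:=\int_{(0,1)}u^\gamma\,p(\dd u)$. Mass conservation $\int u\,p(\dd u)=1$ together with $\alpha<1<\beta$ gives $\phi_\alpha>1$ and $\phi_\beta<1$, and the comparison $u^\beta\leq u^\alpha$ on $(0,1)$ yields $\phi_\beta\leq\phi_\alpha<+\infty$. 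A direct calculation then gives
\[
  \frac{\cA\psi(x)}{\psi(x)}=\frac{x^\alpha[\alpha+K(x)(\phi_\alpha-1)]+x^\beta[\beta-K(x)(1-\phi_\beta)]}{x^\alpha+x^\beta}.
\]

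To verify Assumption~\ref{assumption1} I show that this ratio is bounded above and locally bounded on $(0,+\infty)$. As $x\to 0$, the weight concentrates on the first bracket, so $\cA\psi/\psi$ is asymptotic to $\alpha+K(x)(\phi_\alpha-1)$, whose limsup is strictly less than $1$ by the first inequality of~\eqref{as:1appli2}. As $x\to+\infty$, a careful rearrangement (addressed in the last paragraph) shows that $\cA\psi/\psi$ is asymptotic to $\beta-K(x)(1-\phi_\beta)$, whose limsup is strictly less than $1$ by the second inequality of~\eqref{as:1appli2}. Right-continuity of $K$ together with $\limsup_{x\to 0}K(x)<+\infty$ gives local boundedness of $K$ and hence of $\cA\psi/\psi$. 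The integrability bound~\eqref{as:1c_1} follows from the pointwise estimate $h(ux)/h(x)\leq u^\alpha$, obtained by cross-multiplying and using $u^\beta\leq u^\alpha$, so that $k_h(x,(0,x))\leq K(x)\phi_\alpha$ is bounded on $(0,M)$.

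For $\psi'(x)=x$, mass conservation gives $\cA\psi'(x)=x+K(x)x\int u\,p(\dd u)-K(x)x=x=\psi'(x)$, hence $\cA\psi'\geq-\lambda_2\psi'$ with $\lambda_2=-1$. The remaining hypotheses of Proposition~\ref{prop:lambda0upperbound} are routine: $\inf\psi/h=1$; $\psi'/h=1/(x^{\alpha-1}+x^{\beta-1})\leq 1$, and $\psi'/\psi$ tends to $0$ as $x\to 0$ and as $x\to+\infty$ because $\alpha<1<\beta$. Choosing $\varepsilon>0$ small, the above limsup bounds give $\cA\psi\leq-(1-\varepsilon)\psi+C\mathbf 1_L$ for some compact $L$ and constant $C$, so $\lambda_1:=-(1-\varepsilon)$ satisfies $\lambda_1>\lambda_2=-1$. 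Proposition~\ref{prop:lambda0upperbound}\ref{i:lambda0upperbound:a} then yields $\lambda_0\leq\lambda_2=-1<\lambda_1$, verifying Assumption~\ref{assumptionLyapGeneral}.

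The principal technical point is the asymptotic analysis as $x\to+\infty$: a naive bound of the ratio by the maximum of the two brackets fails, because the first bracket $\alpha+K(x)(\phi_\alpha-1)$ grows with $K$, which is only bounded below (not above) at infinity. The correct manipulation is to write
\[
  \frac{\cA\psi(x)}{\psi(x)}=\frac{\alpha+\beta x^{\beta-\alpha}}{1+x^{\beta-\alpha}}+K(x)\cdot\frac{(\phi_\alpha-1)-(1-\phi_\beta)x^{\beta-\alpha}}{1+x^{\beta-\alpha}},
\]
observing that the first summand converges to $\beta$ while the coefficient of $K(x)$ converges to $-(1-\phi_\beta)<0$. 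Hence the negative term $-K(x)(1-\phi_\beta)$ dominates for $x$ large, regardless of the rate of growth of $K$, giving $\limsup_{x\to+\infty}\cA\psi/\psi\leq\beta-(\liminf_{x\to+\infty}K(x))(1-\phi_\beta)<1$.
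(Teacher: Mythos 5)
Your proof is correct, and it follows essentially the same strategy as the paper: the paper obtains Proposition~\ref{prop:lnx} as the special case $c(x)=x$ of Proposition~\ref{prop:lnxmodified}, whose proof uses the piecewise function $h(x)=e^{\alpha s(x)}\1_{x<1}+e^{\beta s(x)}\1_{x\geq 1}$ (i.e.\ $x^\alpha\1_{x<1}+x^\beta\1_{x\geq1}$ here) together with $\psi'(x)=x$ and Proposition~\ref{prop:lambda0upperbound}, exactly as you do. Your only real deviation is cosmetic but pleasant: taking the sum $h=x^\alpha+x^\beta$ instead of the piecewise version exploits the homogeneity $s(ux)-s(x)=\ln u$ to give an exact global identity for $\cA\psi/\psi$, which removes the case split over $ux<1<x$ and the Fatou/dominated-convergence arguments of the paper's proof. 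The point you flag as the "principal technical point" -- that one cannot bound the ratio by the larger bracket because $K$ may be unbounded at infinity, so one must isolate the coefficient of $K(x)$ and check that it converges to $-(1-\phi_\beta)<0$ -- is precisely the same issue the paper handles (there, by showing the $\int_{(0,1/x)}$ contribution vanishes and applying Fatou to the remaining term), and your treatment of it is sound. All the hypotheses of Proposition~\ref{prop:lambda0upperbound}\ref{i:lambda0upperbound:a} are correctly verified, including $\psi'/\psi\to0$ at both ends from $\alpha<1<\beta$ and the bound $h(ux)/h(x)\le u^\alpha$ giving \eqref{as:1c_1} without requiring $p$ to be finite.
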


We note that
in the case of uniform
mass repartition, i.e.\ $p(\dd u) = 2\dd u$, condition \eqref{as:1appli2}
reduces to
\[
  \limsup_{x\to 0} K(x) < 2 < \liminf_{x\to\infty} K(x).
\]
This may be compared with the conditions in section 6 of \cite{BW-lln},
whose effectiveness in this setting relies on \cite[Theorem~1.2]{Ber-fk}.
Similar conditions can be found in \cite[section~3.6]{Ber-fk}.
We leave as an open problem to check whether this condition is sharp;
one natural approach to this question would be to follow the strategy 
developed in~\cite{Cav-refr}.

Proposition~\ref{prop:lnx} is actually a particular case of the following result, 
which applies when the drift $c(x)$ is only approximately linear in $x$. 
We assume here that, for all $x\in(0,+\infty)$, 
$s(x)=\int_1^x \frac1{c(y)}\mathrm dy$, where  
$c:(0,+\infty)\to (0,+\infty)$ is a right-continuous and locally bounded function.

\begin{proposition}
  \label{prop:lnxmodified}
 Assume that Assumptiosn~\ref{assumptionIrr}
  and~\ref{assumptionDoebSec}
  hold true. 
  Assume in addition that there exist $\alpha,\beta\geq 0$ such that
  \begin{align}
    \label{as:appli3as1}
    \alpha<\inf_{x> 0}\frac{c(x)}{x}
    \quad\text{ and }\quad 
    \int_{(0,1)} u^{\alpha\,\inf_{x<1}\frac{x}{c(x)}}\,p(\mathrm du)<+\infty
  \end{align}
  and
  \begin{align}
    \label{as:appli3as2}
    \beta>\limsup_{x\to+\infty}\frac{c(x)}{x}
    \quad\text{ and }\quad 
    \int_{(0,1)} u^{\beta\,\inf_{x\geq 1}\frac{x}{c(x)}}\,p(\mathrm du)<+\infty.
  \end{align}
  If 
  \begin{align}
    \label{as:appli3as3}
    \limsup_{x\to 0} K(x)<\frac{\inf_{x}\frac{c(x)}{x}-\alpha}{\int_{(0,1)} u^{\alpha\,\liminf_{x\to 0}\frac{x}{c(x)}}\,p(\mathrm du)-1}
  \end{align}
  and
  \begin{align}
    \label{as:appli3as4}
    \liminf_{x\to +\infty} K(x)>\frac{\beta-\inf_{x}\frac{c(x)}{x}}{1-\int_{(0,1)} u^{\beta\,\liminf_{x\to +\infty}\frac{x}{c(x)}}\,p(\mathrm du)},
  \end{align}
  then Assumptions~\ref{assumption1} and~\ref{assumptionLyapGeneral} hold true.
\end{proposition}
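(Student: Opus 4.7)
My plan mirrors the proof of Proposition~\ref{prop:appli1}, but with Lyapunov functions adapted to the fact that $c(x)$ is only approximately linear rather than equal to the identity.  Set
\[
  m := \liminf_{x\to 0}\frac{x}{c(x)} = \frac{1}{\limsup_{x\to 0}c(x)/x},
  \qquad
  M := \liminf_{x\to +\infty}\frac{x}{c(x)} = \frac{1}{\limsup_{x\to +\infty}c(x)/x},
\]
and introduce the exponents $\tilde\alpha := \alpha m$ and $\tilde\beta := \beta M$.  The hypotheses \eqref{as:appli3as1}--\eqref{as:appli3as2} translate, respectively, to $0 \le \tilde\alpha < 1$ and $\tilde\beta > 1$, together with $p_{\tilde\alpha}, p_{\tilde\beta} < +\infty$ where $p_\gamma := \int_{(0,1)} u^\gamma\,p(\mathrm du)$ (this follows by comparing $\tilde\alpha, \tilde\beta$ with the possibly smaller exponents $\alpha\inf_{x<1}x/c(x)$ and $\beta\inf_{x\ge 1}x/c(x)$ appearing in the moment hypotheses).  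My candidate Lyapunov function, playing the role both of $h$ in Assumption~\ref{assumption1} and of $\psi$ in Assumption~\ref{assumptionLyapGeneral}, is
\[
  \psi(x) = x^{\tilde\alpha}\,\mathbf 1_{x\le 1} + x^{\tilde\beta}\,\mathbf 1_{x>1},
\]
together with the auxiliary function $\psi'(x) = x$ for the lower bound on $\lambda_0$.

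To verify Assumption~\ref{assumption1} with $h = \psi$, I would compute $\mathcal A\psi/\psi$ in closed form.  For $x \in (0,1)$ every child $ux$ stays in $(0,1)$, giving
\[
  \frac{\mathcal A\psi(x)}{\psi(x)} = \tilde\alpha\,\frac{c(x)}{x} + K(x)\bigl(p_{\tilde\alpha}-1\bigr);
\]
for $x > 1$, the fragmentation integral splits at $u = 1/x$, producing the analogous dominant terms $\tilde\beta c(x)/x + K(x)(p_{\tilde\beta}-1)$ plus a non-negative correction $K(x)\cdot x^{\tilde\alpha-\tilde\beta}\int_{u<1/x} u^{\tilde\alpha}\,p(\mathrm du)$ which vanishes as $x \to +\infty$.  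Using the key identities $\limsup_{x\to 0}\tilde\alpha c(x)/x = \alpha$ and $\limsup_{x\to+\infty}\tilde\beta c(x)/x = \beta$ (from the definitions of $m, M$), and noting $p_{\tilde\alpha}-1 > 0 > p_{\tilde\beta}-1$, the strict inequalities \eqref{as:appli3as3}--\eqref{as:appli3as4} will give
\[
  \limsup_{x\to 0}\frac{\mathcal A\psi(x)}{\psi(x)} < \inf_{x>0}\frac{c(x)}{x}
  \quad\text{and}\quad
  \limsup_{x\to+\infty}\frac{\mathcal A\psi(x)}{\psi(x)} < \inf_{x>0}\frac{c(x)}{x}.
\]
Local boundedness of $c$ and $K$ and finiteness of the $p$-moments then yield local boundedness of $\mathcal A\psi/\psi$ throughout $(0,+\infty)$, and likewise $\sup_{x\in(0,M')}\int h(y)/h(x)\,k(x,\mathrm dy) < +\infty$ for all $M' > 0$, completing the verification of Assumption~\ref{assumption1}.

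For Assumption~\ref{assumptionLyapGeneral} I would apply Proposition~\ref{prop:lambda0upperbound} to $\psi$ and $\psi'(x) = x$.  Mass conservation $p_1 = 1$ collapses the fragmentation part, giving $\mathcal A\psi'(x) = c(x)$ and hence $\mathcal A\psi'/\psi' = c(x)/x \ge \inf_x c(x)/x$, so that $\lambda_2 := -\inf_x c(x)/x$ satisfies hypothesis~\ref{i:lambda0upperbound:2}.  The remaining requirements---$\psi'/\psi \to 0$ at both endpoints (from $\tilde\alpha < 1 < \tilde\beta$), $\|\psi'/\psi\|_\infty \le 1$, and $\sup_{x\in(0,M')}\int \psi'(y)/\psi'(x)\,k(x,\mathrm dy) = \sup_{x\in(0,M')} K(x) < \infty$---are immediate.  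Combining with the strict limsup bounds from the previous paragraph, one picks $\lambda_1 > \lambda_2$ and a compact $L$ so that $\mathcal A\psi \le -\lambda_1 \psi + C\mathbf 1_L$; part~\ref{i:lambda0upperbound:a} of Proposition~\ref{prop:lambda0upperbound} then gives $\lambda_0 \le \lambda_2 < \lambda_1$, yielding Assumption~\ref{assumptionLyapGeneral}.

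The main technical subtlety is the analysis of $\mathcal A\psi/\psi$ as $x \to +\infty$ when $K(x)$ may be unbounded.  The bracket multiplying $K(x)$ has the form $(p_{\tilde\beta}-1)+\varepsilon(x)$ with $\varepsilon(x) \ge 0$ and $\varepsilon(x) \to 0$, so that since $p_{\tilde\beta}-1<0$ the bracket is eventually bounded above by $p_{\tilde\beta}-1+\eta$ for any small $\eta>0$.  Taking limsups of the resulting product of a non-negative function with a negative quantity gives $\limsup K(x)\cdot[\mathrm{bracket}] \le (p_{\tilde\beta}-1+\eta)\liminf_{x\to+\infty} K(x)$; sending $\eta \to 0$ (with the convention $(p_{\tilde\beta}-1)\cdot(+\infty) = -\infty$ to handle unbounded $K$) and combining with $\limsup \tilde\beta c(x)/x = \beta$ recovers precisely the threshold expressed in \eqref{as:appli3as4}.
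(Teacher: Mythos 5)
Your argument is correct and reaches the conclusion, but via a different Lyapunov function from the paper's. The paper takes $\psi=h=\exp(\alpha s(x))\mathbf 1_{x<1}+\exp(\beta s(x))\mathbf 1_{x\ge 1}$ (with the same $\psi'(x)=x$), so that the drift term $\partial\psi/\partial s$ contributes the exact constants $\alpha$ and $\beta$, while the fragmentation integral $\int_{(0,1)}\exp(\gamma(s(ux)-s(x)))\,p(\mathrm du)$ must be controlled by bounding $s(ux)-s(x)$ in terms of $\ln u$ via $\inf x/c(x)$ and $\liminf x/c(x)$, together with domination and Fatou's lemma. Your choice $\psi(x)=x^{\tilde\alpha}\mathbf 1_{x\le 1}+x^{\tilde\beta}\mathbf 1_{x>1}$ is exactly dual: the fragmentation integral collapses to the constant $p_{\tilde\gamma}$ and the estimation is transferred to the drift term $\tilde\gamma\,c(x)/x$, whose limsups are read off from the definitions of $\tilde\alpha,\tilde\beta$. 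Both routes then invoke Proposition~\ref{prop:lambda0upperbound}\ref{i:lambda0upperbound:a} with $\psi'(x)=x$ in the same way. Your version buys somewhat cleaner limit computations (no Fatou argument, and $\psi'/\psi=x^{1-\tilde\gamma}\to 0$ is immediate), and it only needs the $p$-moments at the exponents $\tilde\alpha\ge\alpha\inf_{x<1}x/c(x)$ and $\tilde\beta\ge\beta\inf_{x\ge 1}x/c(x)$, which is formally weaker than what is assumed. One caveat you should address: the proposition permits $\limsup_{x\to+\infty}c(x)/x=0$, in which case $\tilde\beta=\beta\liminf_{x\to+\infty}x/c(x)=+\infty$ and $x^{\tilde\beta}$ is undefined, whereas $\exp(\beta s(x))$ is not; the fix is to replace $\tilde\beta$ by any sufficiently large finite exponent, using that $p_\gamma\to 0$ as $\gamma\to+\infty$. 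The analogous degeneracy at $0$ is harmless: taking $\tilde\alpha=0$ there, the drift contribution is $0\le\alpha$ and your inequalities still close, so the "key identity" $\limsup_{x\to 0}\tilde\alpha c(x)/x=\alpha$ should really be stated as an inequality $\le\alpha$.
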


\begin{proof} 
  For all $a\in \mathbb R$, we set $p_a:=\int_{(0,1)} u^a \,p(\mathrm du)$.

  Note that $\limsup_{x\to 0}K(x)<+\infty$ and hence, since $K$ is locally bounded, $K$ is bounded on $(0,M)$, for all $M>0$.
  We define, for all $x\in (0,+\infty)$,
  \[
    \psi(x)=h(x)=\exp\left(\alpha s(x)\right)\1_{x<1}+\exp\left(\beta s(x)\right)\1_{x\geq 1}\text{ and }\xi(x)=x.
  \]
  In particular, for all $x\in (0,+\infty)$, 
  \begin{align*}
    \frac{\mathcal A \xi(x)}{\xi(x)}=\frac{c(x)}{x}.
  \end{align*}

  We first prove that $\psi/\xi\to+\infty$ when $x\to 0$ and $+\infty$. 
  According to~\eqref{as:appli3as1}, there exists 
  $x_0\in(0,1)$ and $\varepsilon>0$ such that, 
  for all $y\in (0,x_0)$, $\alpha/c(y)\leq (1-\varepsilon)/y$, 
  so that, for all $x\in (0,x_0)$,
  \begin{equation*}
    \alpha s(x)-\ln x
    =\int_{(x,1)} \left(\frac{-\alpha}{c(y)}+\frac{1}{y}\right) \,\mathrm dy
    \geq \varepsilon \int_{(x,x_0)} \frac{1}{y} \,\mathrm dy
    +\int_{(x_0,1)} \left(\frac{-\alpha}{c(y)} +\frac{1}{y}\right) \,\mathrm dy
    \xrightarrow[x\to 0]{}+\infty.
  \end{equation*}
  This shows that $\psi/\xi\to+\infty$ when $x\to 0$. 
  Similarly,~\eqref{as:appli3as2} implies that there exists $x_\infty\geq 1$ 
  and $\varepsilon>0$ such that, for all $y> x_\infty$, 
  $\beta/c(y)\geq (1+\varepsilon)/y$, so that, for all $x>x_\infty$,
  \begin{align}
    \label{eq:betasx}
    \beta s(x)-\ln x
    =\int_{(1,x)}  \left(\frac{\beta }{c(y)}-\frac{1}{y}\right) \,\mathrm dy
    \geq  \int_{(1,x_\infty)}  \left(\frac{\beta }{c(y)}-\frac{1}{y}\right) 
    +\varepsilon \int_{(x_\infty,x)} \frac{1}{y}\,\mathrm dy
    \xrightarrow[x\to +\infty]{}+\infty.
  \end{align}
  This shows that $\psi/\xi\to+\infty$ when $x\to +\infty$.

  We observe that, for all $x\in (0,1)$,
  \begin{align*}
    \frac{\cA h(x)}{h(x)}
    &=\alpha+ \int_{(0,x)} \frac{h(y)}{h(x)}\,k(x,\mathrm dy)-K(x) \\
    &= \alpha+K(x)\left(\int_{(0,1)} \exp(\alpha(s(ux)-s(x)))\,p(\mathrm du)-1\right).
  \end{align*}
  We have, for all $u\in(0,1)$ and $x\in(0,1)$,
  \begin{align*}
    s(ux)-s(x)\leq -\left(\inf_{y\in(0,1)}\frac{y}{c(y)} \right)\int_{ux}^x \frac{1}{y} \,\mathrm dy = \left(\inf_{y\in(0,1)}\frac{y}{c(y)} \right)\ln u,
  \end{align*}
  so that $ \exp(\alpha(s(ux)-s(x)))\leq u^{\alpha\inf_{y\in(0,1)}\frac{y}{c(y)}}$, which does not depend on $x$ and is integrable with respect to $p(\mathrm du)$ by Assumption~\eqref{as:appli3as1}. We conclude that 
  \begin{align}
    \label{eq:appli3assumption1leq1}
    \sup_{x\in(0,1)} \int_{(0,x)} \frac{h(y)}{h(x)}\,k(x,\mathrm dy)<+\infty.
  \end{align}
  In addition, for all $u\in(0,1)$,
  \begin{align*}
    \limsup_{x\to 0} \bigl(s(ux)-s(x)  \bigr)
    & \leq \limsup_{x\to 0} \left(\inf_{y\in(0,x)}\frac{y}{c(y)}\right) \ln u 
    = \liminf_{x\to0} \frac{x}{c(x)} \ln u.
  \end{align*}
  Using Fatou's Lemma, we deduce that
  \begin{align*}
    \limsup_{x\to 0}  \int_{(0,x)} \frac{h(y)}{h(x)}\,k(x,\mathrm dy)-K(x)
    \leq \limsup_{x\to 0} K(x)\,\left(
      \int_{(0,1)} u^{\alpha\,\liminf_{y\to 0}\frac{y}{c(y)}}\,p(\mathrm du)-1
    \right).
  \end{align*}
  We conclude, using in addition~\eqref{as:appli3as3} and the fact that $\alpha\liminf_{x\to 0}\frac{x}{c(x)}<1$, that
  \begin{align}
    \label{eq:ahhxto0}
    \limsup_{x\to0} \frac{\cA h(x)}{h(x)}
    \leq \alpha+\limsup_{x\to 0} K(x)\left(
      \int_{(0,1)} u^{\alpha\,\liminf_{y\to 0}\frac{y}{c(y)}}\,p(\mathrm du)-1
    \right)
    <\inf_x\frac{\mathcal A \xi(x)}{\xi(x)}.
  \end{align}

  For all $x\geq 1$, we have
  \begin{align}
    \frac{\cA h(x)}{h(x)}&=\beta+ \int_{(0,x)} \frac{h(y)}{h(x)}\,k(x,\mathrm dy)-K(x)\notag\\
    &=\beta+K(x)\left(\int_{(0,1/x)} \exp(\alpha s(ux)-\beta s(x))\,p(\mathrm du) \right.
    \notag
    \\
    &\hphantom{{} =\beta+K(x)\left(\int_{(0,1/x)} \right.}
    \left.
    {} +\int_{(1/x,1)} \exp(\beta(s(ux)-s(x)))\,p(\mathrm du)-1\right).\label{eq:ahhinfty}
  \end{align}
  According to~\eqref{eq:betasx}, there exists $x'_\infty\geq 1$ such that, for all $x\in (x'_\infty,+\infty)$, $\beta s(x)\geq \ln x$, so that, for all  $x\in(x'_\infty,+\infty)$ and $u\in(0,1/x)$,
  \begin{align*}
    \alpha s(ux)-\beta s(x)\leq \alpha \left(\inf_{y\in(0,1)}\frac{y}{c(y)} \right) \left(\ln u+\ln x\right)-\ln x\leq \alpha\left(\inf_{y\in(0,1)}\frac{y}{c(y)} \right) \ln u,
  \end{align*}
  since $\alpha \inf_{y\in(0,1)}\frac{y}{c(y)}< 1$ by~\eqref{as:appli3as1}. 
  Since, by~\eqref{as:appli3as1}, 
  $u^{\alpha \inf_{y\in (0,1)}\frac{y}{c(y)}}$
  is integrable with respect to $p(\mathrm du)$, we deduce by dominated convergence that
  \begin{align*}
    \int_{(0,1/x)} \exp(\alpha s(ux)-\beta s(x))\,p(\mathrm du)\xrightarrow[x\to +\infty]{} 0.
  \end{align*}
  For all $x> 1$ and $u\in(1/x,1)$, we have
  \begin{align*}
    s(ux)-s(x)\leq \left(\inf_{y\geq 1} \frac{y}{c(y)}\right)\ln u,
  \end{align*}
  so that $ \exp(\beta(s(ux)-s(x)))\leq u^{\beta \inf_{y\geq 1} \frac{y}{c(y)}}$, which does not depend on $x$ and is integrable with respect to $p(\mathrm du)$ by~\eqref{as:appli3as2}. We conclude that
  \begin{align}
    \label{eq:appli3assumption1geq1}
    \sup_{x\in[1,M)} \int_{(0,x)} \frac{h(y)}{h(x)}\,k(x,\mathrm dy)<+\infty,\ \forall M>1.
  \end{align}
  Similarly as above, we have in addition, for all $u\in(0,1)$,
  \begin{align*}
    \limsup_{x\to +\infty} \bigl(s(ux)-s(x)\bigr)
    = \liminf_{x\to+\infty} \frac{x}{c(x)} \ln u.
  \end{align*}
  Using again Fatou's Lemma, we obtain
  \begin{align*}
    \limsup_{x\to +\infty} \int_{(1/x,1)} \exp(\beta(s(ux)-s(x)))\,p(\mathrm du)
    \leq \int_{(0,1)} u^{\beta \liminf_{x\to+\infty} \frac{x}{c(x)}}\,p(\mathrm du).
  \end{align*}
  Using~\eqref{eq:ahhinfty}, we deduce that
  \begin{align}
    \label{eq:ahhxtoinfty}
    \limsup_{x\to+\infty} \frac{\cA h(x)}{h(x)}
    \leq \beta
    + \limsup_{x\to+\infty} K(x)
    \left(
      \int_{(0,1)} u^{\beta \liminf_{x\to+\infty} \frac{x}{c(x)}} \,p(\mathrm du) 
      - 1
    \right)
    < \inf_x\frac{\mathcal A \xi(x)}{\xi(x)},
  \end{align}
  where we used~\eqref{as:appli3as4} and the fact that 
  $\beta \liminf_{x\to+\infty} \frac{x}{c(x)}>1$ for the last inequality.

  By~\eqref{eq:appli3assumption1leq1} and~\eqref{eq:appli3assumption1geq1}, we deduce that the first part of Assumption~\ref{assumption1} holds true. Since $\cA h/h$ is locally bounded, we deduce from~\eqref{eq:ahhxto0} and~\eqref{eq:ahhxtoinfty} that it is bounded from above. We conclude that Assumption~\ref{assumption1} holds true.

  Finally, using Proposition~\ref{prop:lambda0upperbound}, 
  we deduce from~\eqref{eq:ahhxto0} and~\eqref{eq:ahhxtoinfty} that 
  Assumption~\ref{assumptionLyapGeneral} holds true.
\end{proof}

To once again give an explicit example, we offer:

\begin{corollary}
Assume that Assumptions~\ref{assumptionIrr} 
    and~\ref{assumptionDoebSec}
  hold true.
  Let  $p(\dd u) = 2\dd u$ and
  \[
    c(x) =
    \begin{cases}
      c_0x, & x \le x_c, \\
      c_\infty x, & x > x_c,
    \end{cases}
  \]
  for some $x_c>0$ and $0<c_\infty < c_0 < \infty$.
  Assume that 
  \begin{equation}\label{e:reggen-ex-0}
    \limsup_{x \to 0} K(x)
    <
    3c_0 - c_\infty-2\sqrt{2c_0(c_0-c_\infty)}
  \end{equation}
  and $\liminf_{x\to \infty} K(x) > 2c_\infty$.
  Then, the conditions of Proposition~\ref{prop:lnxmodified} are satisfied.
\end{corollary}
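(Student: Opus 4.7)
My plan is to check that under the stated hypotheses, the four conditions \eqref{as:appli3as1}--\eqref{as:appli3as4} of Proposition~\ref{prop:lnxmodified} can be verified for a suitable choice of $\alpha$ and $\beta$. For the given piecewise linear drift we have $\inf_{x>0} c(x)/x = c_\infty$ and $\limsup_{x\to\infty} c(x)/x = c_\infty$ (since $c_\infty < c_0$). A short check on the ratios gives $\inf_{x<1} x/c(x) = 1/c_0$, $\liminf_{x\to 0} x/c(x) = 1/c_0$ and $\liminf_{x\to\infty} x/c(x) = 1/c_\infty$, while $\inf_{x\ge 1} x/c(x)$ is either $1/c_0$ or $1/c_\infty$ depending on whether $x_c \ge 1$; in either case it is positive so that the integrability part of \eqref{as:appli3as2} is automatic as soon as $\beta > 0$.

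With $p(\dd u) = 2\, \dd u$, the moments take the closed form $\int_{(0,1)} u^a p(\dd u) = 2/(a+1)$ for $a > -1$. The key reduction is the following: condition~\eqref{as:appli3as4} unfolds to $\liminf_{x\to\infty} K(x) > (\beta - c_\infty)/(1 - 2c_\infty/(\beta + c_\infty)) = \beta + c_\infty$, which by letting $\beta \downarrow c_\infty$ amounts exactly to $\liminf_{x\to\infty}K(x) > 2c_\infty$, i.e.\ the second hypothesis of the corollary. Similarly, condition~\eqref{as:appli3as3} becomes
\[
  \limsup_{x\to 0} K(x) \;<\; f(\alpha)
  \;:=\; \frac{(c_\infty - \alpha)(c_0 + \alpha)}{c_0 - \alpha},
  \qquad \alpha \in (-c_0, c_\infty),
\]
after simplifying $\int_{(0,1)} u^{\alpha/c_0}\, p(\dd u) - 1 = (c_0 - \alpha)/(c_0 + \alpha)$. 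The remaining work is the optimisation of $f$ on $(-c_0, c_\infty)$.

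Setting $f'(\alpha) = 0$ yields the quadratic $\alpha^2 - 2c_0\alpha + 2c_0 c_\infty - c_0^2 = 0$, whose root lying in $(-c_0, c_\infty)$ is $\alpha^* = c_0 - \sqrt{2c_0(c_0 - c_\infty)}$; a quick sign check (using that $c_0 - c_\infty < 2c_0$) confirms both $\alpha^* > -c_0$ and $\alpha^* < c_\infty$. Substituting $\alpha^*$, using $d := \sqrt{2c_0(c_0-c_\infty)}$ and the identity $d^2 = 2c_0^2 - 2c_0 c_\infty$ to simplify, one obtains
\[
  \sup_{\alpha \in (-c_0, c_\infty)} f(\alpha) \;=\; f(\alpha^*) \;=\; 3c_0 - c_\infty - 2\sqrt{2c_0(c_0 - c_\infty)},
\]
which is precisely the right-hand side of \eqref{e:reggen-ex-0}. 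I expect this optimisation calculation to be the main technical step; the rest is bookkeeping.

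To conclude, hypothesis \eqref{e:reggen-ex-0} gives $\limsup_{x\to 0} K(x) < f(\alpha^*)$, so by continuity of $f$ we may fix $\alpha \in (-c_0, c_\infty)$ close enough to $\alpha^*$ so that $\limsup_{x\to 0} K(x) < f(\alpha)$, verifying \eqref{as:appli3as3}. The hypothesis $\liminf_{x\to\infty}K(x) > 2c_\infty$ allows us to fix $\beta > c_\infty$ close enough to $c_\infty$ so that $\liminf_{x\to\infty}K(x) > \beta + c_\infty$, verifying \eqref{as:appli3as4}. Conditions \eqref{as:appli3as1} and \eqref{as:appli3as2} then reduce to $\alpha \in (-c_0, c_\infty)$ and $\beta > c_\infty$ together with the automatic integrability noted above, and hence hold by construction. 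Proposition~\ref{prop:lnxmodified} now applies and the corollary follows.
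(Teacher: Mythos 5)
Your reduction of the four conditions of Proposition~\ref{prop:lnxmodified} to the two scalar inequalities, the closed form $\int_{(0,1)}u^a\,2\,\dd u=2/(a+1)$, the simplification of \eqref{as:appli3as4} to $\liminf_{x\to\infty}K(x)>\beta+c_\infty$, and the optimisation of $f(\alpha)=(c_\infty-\alpha)(c_0+\alpha)/(c_0-\alpha)$ via the quadratic $\alpha^2-2c_0\alpha+2c_0c_\infty-c_0^2=0$ with value $f(\alpha^*)=3c_0-c_\infty-2\sqrt{2c_0(c_0-c_\infty)}$ are all correct and are exactly the computations in the paper's proof.

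The one point where you deviate is the domain of the optimisation: you maximise $f$ over $\alpha\in(-c_0,c_\infty)$ and assert that conditions \eqref{as:appli3as1}--\eqref{as:appli3as2} ``reduce to $\alpha\in(-c_0,c_\infty)$ and $\beta>c_\infty$''. But Proposition~\ref{prop:lnxmodified} is stated for $\alpha,\beta\geq 0$ (and its proof uses the sign of $\alpha$, e.g.\ when bounding $\exp(\alpha(s(ux)-s(x)))\leq u^{\alpha\inf y/c(y)}$), so the admissible range is $[0,c_\infty)$, which is where the paper carries out the maximisation. You therefore need to check that $\alpha^*=c_0-\sqrt{2c_0(c_0-c_\infty)}\geq 0$, which holds if and only if $c_0\leq 2c_\infty$. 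In that regime your argument (and the paper's) closes. When $c_0>2c_\infty$ the maximiser is negative, $f$ is decreasing on $[0,c_\infty)$, and $\sup_{[0,c_\infty)}f=f(0)=c_\infty$, which is strictly smaller than $3c_0-c_\infty-2\sqrt{2c_0(c_0-c_\infty)}$; the stated hypothesis \eqref{e:reggen-ex-0} then no longer yields an admissible $\alpha$, and the argument does not go through without extending the proposition to negative $\alpha$. To be fair, the paper's own proof asserts without comment that the maximum over $[0,c_\infty)$ equals the right-hand side of \eqref{e:reggen-ex-0}, so it contains the same unflagged restriction; but since you explicitly invoke negative values of $\alpha$, you should either verify $\alpha^*\geq 0$ or restrict the parameter range.
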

\begin{proof}
  With this particular choice of $c$, the conditions of Proposition~\ref{prop:lnxmodified} are that there exist $\alpha\in[0,c_\infty)$ and $\beta>c_\infty$, such that
  \begin{align}
    \label{as:appli3as3incor}
    \limsup_{x\to 0} K(x)<\frac{c_\infty-\alpha}{\int_{(0,1)} u^{\alpha/c_0} 2\mathrm du-1}=\frac{(\alpha+c_0)(c_\infty-\alpha)}{c_0-\alpha}
  \end{align}
  and
  \begin{align}
    \label{as:appli3as4incor}
    \liminf_{x\to +\infty} K(x)>\frac{\beta-c_\infty}{1-\int_{(0,1)} u^{\beta/c_\infty}\,2\,\mathrm du}=\beta+c_\infty.
  \end{align}

  The maximum of $\alpha\mapsto \frac{(\alpha+c_0)(c_\infty-\alpha)}{c_0-\alpha}$
  on the domain $\alpha \in[0, c_\infty)$
  is given by the right-hand side of \eqref{e:reggen-ex-0}, which shows
  that the condition given suffices for~\eqref{as:appli3as3incor} to hold.

  Since
  $\liminf_{x\to\infty}K(x) > 2c_\infty$, one can find $\beta>c_\infty$ 
  such that~\eqref{as:appli3as4incor} holds, which  concludes the proof.
\end{proof}

\subsubsection{Critical case, \texorpdfstring{$K$}{K} comparable to a constant}

\label{sec:criticalKcst}

We consider now the situation where $K$ is the constant function $1$,
and then the situation when $K$ is bounded away from $0$ 
and bounded from above by $1$.

As in the previous section, we consider for simplicity the situation where $k(x,\cdot)=K(x)p\circ m_x^{-1}$, with $p$ a positive measure on $(0,1)$ such that $\int_{(0,1)} u\,p(\mathrm du)=1$. We also assume  that, for all $x\in(0,+\infty)$, $s(x)=\int_1^x \frac1{c(y)}\mathrm dy$ where  $c:(0,+\infty)\to (0,+\infty)$ is a right-continuous and locally bounded function.

\begin{proposition}
 Assume that  Assumptions~\ref{assumptionIrr} 
  and~\ref{assumptionDoebSec}
  hold true, and that $K \equiv 1$.
  Assume in addition that there exists $\delta>0$ such that $\int_{(0,1)} u^{-\delta}\,p(\mathrm du)<+\infty$. If $s(0+)=-\infty$ and  
  \begin{align}
    \label{as:appli4as1}
    \limsup_{x\to+\infty} \frac{c(x)}{x} < -\int_{(0,1)}\ln u\,p(\mathrm du) < \liminf_{x\to0} \frac{c(x)}{x},
  \end{align}
  then Assumptions~\ref{assumption1} and~\ref{assumptionLyapGeneral} hold true.
\end{proposition}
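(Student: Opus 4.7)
The plan is to follow the template of Proposition~\ref{prop:lnxmodified}, but combining a piecewise exponential-in-$s$ Lyapunov function $\psi=h$ with the trivial choice $\psi'\equiv 1$ in Proposition~\ref{prop:lambda0upperbound}. The reason for this alternative choice of $\psi'$ is algebraic: since $K\equiv 1$, direct computation gives $\mathcal A\psi'(x)=K(x)(p((0,1))-1)=p((0,1))-1$, so $\mathcal A\psi'/\psi'$ is the \emph{constant} $-\lambda_2$ with $\lambda_2:=1-p((0,1))$. The task is then to construct $h$ such that $\mathcal A h/h<-\lambda_2$ asymptotically, strictly beating this constant value; this is exactly what the strict inequalities $\limsup_{x\to\infty} c(x)/x<\bar\ell<\liminf_{x\to 0} c(x)/x$ (with $\bar\ell:=-\int_{(0,1)}\ln u\,p(\mathrm du)$) allow.

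Pick $\bar\ell_0'\in(\bar\ell,\liminf_{x\to 0} c(x)/x)$ and $\bar\ell_\infty'\in(\limsup_{x\to\infty} c(x)/x,\bar\ell)$, and set
\[
h(x)=e^{-\alpha_0 s(x)}\1_{\{x\le 1\}}+e^{\alpha_\infty s(x)}\1_{\{x>1\}},
\]
with $\alpha_0,\alpha_\infty>0$ to be chosen small; since $s(0+)=-\infty$ and $s(\infty)=+\infty$, $h\to+\infty$ at both ends, and $h$ is continuous at $x=1$ as $s(1)=0$. Exactly as in the proof of Proposition~\ref{prop:lnxmodified}, using $c(y)/y\ge\bar\ell_0'$ near $0$ one bounds $s(ux)-s(x)\ge(\ln u)/\bar\ell_0'$, whence $h(ux)/h(x)\le u^{-\alpha_0/\bar\ell_0'}$ for $x$ small, which is $p$-integrable as soon as $\alpha_0\le\delta\bar\ell_0'$ (this is where $\int_{(0,1)} u^{-\delta}\,p(\mathrm du)<\infty$ is used); symmetrically near $\infty$ one has $h(ux)/h(x)\le u^{\alpha_\infty/\bar\ell_\infty'}$, trivially integrable against the finite measure $p$. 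These estimates give the integrability part of Assumption~\ref{assumption1}, together with the asymptotic upper bounds
\[
\limsup_{x\to 0}\frac{\mathcal A h(x)}{h(x)}\le G_0(\alpha_0):=-\alpha_0+\int_{(0,1)} u^{-\alpha_0/\bar\ell_0'}\,p(\mathrm du)-1,
\]
\[
\limsup_{x\to\infty}\frac{\mathcal A h(x)}{h(x)}\le G_\infty(\alpha_\infty):=\alpha_\infty+\int_{(0,1)} u^{\alpha_\infty/\bar\ell_\infty'}\,p(\mathrm du)-1;
\]
combined with the local boundedness of $\mathcal A h/h$, this completes the verification of Assumption~\ref{assumption1}.

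To apply Proposition~\ref{prop:lambda0upperbound}\ref{i:lambda0upperbound:a} to the pair $(\psi,\psi')=(h,1)$, note that the boundary conditions on $\psi'$ are immediate ($1/h$ is bounded on $(0,\infty)$ since $h$ is continuous, positive and tends to $+\infty$ at both ends, and $\psi'/\psi=1/h\to 0$ at $0$ and $\infty$). One computes $G_0(0)=G_\infty(0)=p((0,1))-1=-\lambda_2$, while
\[
G_0'(0)=-1+\frac{\bar\ell}{\bar\ell_0'}<0\qquad\text{and}\qquad G_\infty'(0)=1-\frac{\bar\ell}{\bar\ell_\infty'}<0,
\]
thanks to $\bar\ell_0'>\bar\ell>\bar\ell_\infty'$. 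Hence, for $\alpha_0,\alpha_\infty>0$ chosen small enough, $\max(G_0(\alpha_0),G_\infty(\alpha_\infty))<-\lambda_2$; picking $\lambda_1\in\bigl(\lambda_2,\,-\max(G_0(\alpha_0),G_\infty(\alpha_\infty))\bigr)$ and a compact interval $L$ large enough that $\mathcal A h/h\le-\lambda_1$ outside $L$, one obtains $\mathcal A h\le-\lambda_1 h+C\1_L$ for some $C>0$. Proposition~\ref{prop:lambda0upperbound}\ref{i:lambda0upperbound:a} then yields $\lambda_0\le\lambda_2<\lambda_1$, which is Assumption~\ref{assumptionLyapGeneral}.

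The main technical obstacle will be the uniform-in-$x$ control needed for the asymptotic bounds on $\mathcal A h/h$: the integral $\int_{(0,1)} h(ux)/h(x)\,p(\mathrm du)$ has to be split according to whether $ux$ lies above or below the interface at $x=1$, and the cross-contribution (in particular the range $u<1/x$ when $x$ is large) must be shown to vanish in the limit by dominated convergence. These computations closely follow the corresponding steps in the proof of Proposition~\ref{prop:lnxmodified}, and are the same place where the integrability hypothesis $\int_{(0,1)} u^{-\delta}\,p(\mathrm du)<\infty$ plays an essential role.
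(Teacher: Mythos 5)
Your proposal is correct and follows essentially the same route as the paper, which proves this statement as the special case $K\equiv 1$ of Proposition~\ref{prop:appli4}: the same piecewise function $h=\psi=e^{-\alpha_0 s}\1_{\{x\le 1\}}+e^{\alpha_\infty s}\1_{\{x>1\}}$, the same $\psi'\equiv 1$, and the same appeal to Proposition~\ref{prop:lambda0upperbound}. Your computation of $G_0'(0)$ and $G_\infty'(0)$ to justify the choice of small exponents is exactly the content of the paper's Lemma~\ref{lem:appli4}, whose proof is omitted there as standard.
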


In~\cite{Cav-refr}, the author considers the case where $p(\mathrm du)$ 
is absolutely continuous with respect to the Lebesgue measure and where 
there exist positive constants $a_-$ and $a_+$ such that
\begin{align*}
  c(x)=\begin{cases}
    a_- x&\text{ if }x<1,\\
    a_+ x&\text{ if }x\geq 1.
  \end{cases}
\end{align*}
In this case, our assumption reads
\begin{align*}
  a_+ < -\int_{(0,1)}\ln u\,p(\mathrm du) < a_-,
\end{align*}
which is sharp, according to~\cite{Cav-refr}, 
in the sense that, if one of the inequalities fails, 
then $e^{\lambda_0 t} T_t f$ does not converge 
(for some bounded, compactly supported function $f$). 
Additional properties, and in particular fine estimates 
on the limiting profile of $e^{\lambda_0 t} T_t$, can be found in the above reference.

The previous result is a particular case of the following proposition, 
where we do not assume any more that $K$ is constant. 
Here $K$ is a locally bounded right-continuous function.

\begin{proposition}
  \label{prop:appli4}
Assume that Assumptions~\ref{assumptionIrr} 
    and~\ref{assumptionDoebSec}
  holds true.
  Assume in addition that there exists $\delta>0$ such that $\int_{(0,1)} u^{-\delta}\,p(\mathrm du)<+\infty$, and $0<\inf K\leq 1$. If $s(0+)=-\infty$,
  \begin{align}
    \inf K=\limsup_{x\to 0} K(x)=\limsup_{x\to+\infty} K(x)
  \end{align}
  and
  \begin{align}
    \label{as:appli4as1bis}
    \limsup_{x\to+\infty} \frac{c(x)}{x}
    < -\int_{(0,1)}\ln u\,p(\mathrm du) 
    < \liminf_{x\to 0} \frac{c(x)}{x},
  \end{align}
  then Assumptions~\ref{assumption1} and~\ref{assumptionLyapGeneral} hold true.
\end{proposition}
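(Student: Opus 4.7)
The plan is to adapt the strategy of the proof of Proposition~\ref{prop:lnxmodified}. Set $K_0:=\inf K$, $a_-:=\liminf_{x\to 0} c(x)/x$, $a_+:=\limsup_{x\to+\infty} c(x)/x$ and $\mu:=-\int_{(0,1)}\ln u\,p(\mathrm du)$, so that $a_+<\mu<a_-$; since $K\geq K_0$ with $\limsup_{x\to 0}K(x)=\limsup_{x\to+\infty}K(x)=K_0$, we also have $K(x)\to K_0\leq 1$ at both boundaries. I would take
\[
  h(x)=\psi(x)=\exp\!\bigl(\alpha\,s(x)\bigr)\1_{x<1}+\exp\!\bigl(\beta\,s(x)\bigr)\1_{x\geq 1},
\]
with $\alpha$ strictly negative of small modulus (precisely $|\alpha|<\delta\,a_-$, so that $\int u^{\alpha/a_-}p(\mathrm du)<+\infty$ thanks to the negative moment hypothesis) and $\beta>a_+$ close to $a_+$, together with the auxiliary function $\psi'(x):=x$ for Proposition~\ref{prop:lambda0upperbound}. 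The required decay $\psi'/\psi\to 0$ at both boundaries follows because $\psi(x)\sim x^{\alpha/a_-}$ as $x\to 0$ with $\alpha/a_-<0$ (so $\psi\to+\infty$) and $\psi(x)\sim x^{\beta/a_+}$ as $x\to+\infty$ with $\beta/a_+>1$.

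To verify Assumption~\ref{assumption1}, compute for $x<1$
\[
  \frac{\cA h(x)}{h(x)} = \alpha + K(x)\!\left[\int_{(0,1)}\exp\!\bigl(\alpha\bigl(s(ux)-s(x)\bigr)\bigr)\,p(\mathrm du)-1\right],
\]
and split the analogous expression for $x\geq 1$ over $u\in(0,1/x)$ and $u\in[1/x,1)$. Using the asymptotics $s(ux)-s(x)\to \ln u/a_-$ as $x\to 0$ and $\to \ln u/a_+$ as $x\to+\infty$ for each fixed $u\in(0,1)$, Fatou's lemma and dominated convergence (with the dominants $u^{\alpha/a_-}$ and $u^{\beta/a_+}$), and $K(x)\to K_0$, one obtains
\[
  \lim_{x\to 0}\frac{\cA h(x)}{h(x)} = \alpha + K_0\!\left[\int_{(0,1)} u^{\alpha/a_-}p(\mathrm du)-1\right],
\]
\[
  \lim_{x\to+\infty}\frac{\cA h(x)}{h(x)} = \beta + K_0\!\left[\int_{(0,1)} u^{\beta/a_+}p(\mathrm du)-1\right].
\]
Both are finite; the local boundedness of $\cA h/h$ elsewhere follows from that of $c/x$ and $K$ together with the finiteness of the integrals, and the bound $\int_{(0,x)} h(y)/h(x)\,k(x,\mathrm dy)\leq K(x)\int_{(0,1)} u^{\alpha/a_-}p(\mathrm du)$ on $(0,M)$ yields the first part of Assumption~\ref{assumption1}.

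For Assumption~\ref{assumptionLyapGeneral}, invoke Proposition~\ref{prop:lambda0upperbound}(b) with the pair $(\psi,\psi')$: the integrability condition $\sup_{x\in(0,M)}\int (y/x)\,k(x,\mathrm dy)=\sup_{x\in(0,M)}K(x)<+\infty$ is automatic, and $\cA\psi'/\psi'(x)=c(x)/x$ is not constant (since $a_+<a_-$), so part~(b) gives $\lambda_0<\lambda_2:=-\inf_{x>0}c(x)/x$ strictly. Using Taylor expansions around $\alpha=0$ and $\beta=a_+$ in the boundary limits above, together with the strict inequalities $a_+<\mu<a_-$ and $K_0\leq 1$, one shows that by choosing $\alpha<0$ of sufficiently small modulus and $\beta>a_+$ sufficiently close to $a_+$, both boundary limits of $\cA\psi/\psi$ become strictly smaller than $-\lambda_0$; hence $\cA\psi\leq -\lambda_1\psi+C\1_L$ for some $\lambda_1\in(\lambda_0,\lambda_2]$ and a compact interval $L$, establishing Assumption~\ref{assumptionLyapGeneral}. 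The main obstacle is the precise tracking of those Taylor expansions to guarantee both boundary limits are strictly below $-\lambda_0$; this relies crucially on the strict inequality $\lambda_0<\lambda_2$ from Proposition~\ref{prop:lambda0upperbound}(b) to create the slack around the critical thresholds, which is why the hypothesis $a_+\neq a_-$ (implicit in the strict chain $a_+<\mu<a_-$) cannot be relaxed.
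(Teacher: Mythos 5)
Your overall template (an exponential-of-$s$ Lyapunov pair $(\psi,\psi')$ fed into Proposition~\ref{prop:lambda0upperbound}~\ref{i:lambda0upperbound:b}, with exponents tuned near critical values) matches the paper's, but the choice $\psi'(x)=x$ breaks the argument in this critical regime; the paper takes $\psi'\equiv 1$. With $\psi'\equiv 1$ one gets $\cA\psi'/\psi'=K(x)\bigl(p((0,1))-1\bigr)$, so the threshold that the boundary values of $\cA\psi/\psi$ must beat is $\inf K\cdot\bigl(p((0,1))-1\bigr)$; this is exactly the value approached by $\alpha+K_0\bigl[\int u^{\alpha/a_-}p(\mathrm du)-1\bigr]$ and its analogue at $+\infty$ as $\alpha,\beta\to 0$, condition~\eqref{as:appli4as1bis} (via Lemma~\ref{lem:appli4}) is precisely what makes them dip strictly below it, and the hypothesis $\inf K=\limsup_{x\to 0}K=\limsup_{x\to+\infty}K$ is what identifies the boundary limsups with the global infimum of $\cA\psi'/\psi'$. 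With your $\psi'(x)=x$ the threshold becomes $\inf_{x}c(x)/x\leq a_+$, which bears no relation to $K_0\bigl(p((0,1))-1\bigr)$ and is in general strictly smaller (take $p(\mathrm du)=2\,\mathrm du$, $K\equiv 1$, so $K_0(p((0,1))-1)=1$ and $\mu=2$, with $a_+=1/2$): then no choice of $\alpha,\beta$ brings the boundary limits of $\cA\psi/\psi$ below $\inf_x c(x)/x$, and hypothesis~\ref{i:lambda0upperbound:1} of Proposition~\ref{prop:lambda0upperbound} with $\lambda_1\geq\lambda_2$ cannot be met.

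There is a second, structural obstruction to $\psi'=x$ here: the requirement $\psi'/\psi\to 0$ at $+\infty$ forces $\beta>a_+$, but the boundary value $\beta+K_0\bigl(\int u^{\beta/a_+}p(\mathrm du)-1\bigr)$ equals $a_+$ at $\beta=a_+$ (by mass conservation) and need not decrease as $\beta$ increases past $a_+$ (for $p=2\,\mathrm du$, $K_0=1$, $a_+=1$ its derivative there is $1-K_0/(2a_+)=1/2>0$); so you cannot simultaneously have $\psi\gg x$ and a decay rate below $\inf_x c(x)/x\le a_+$ --- this tension is the very reason the case is critical, and it is resolved by replacing $x$ with the constant function, for which $\psi\gg\psi'$ holds for every $\alpha,\beta>0$. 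Finally, your last step is circular: you invoke the conclusion $\lambda_0<\lambda_2$ of Proposition~\ref{prop:lambda0upperbound}~\ref{i:lambda0upperbound:b} to create the slack needed for the Lyapunov inequality, but that inequality (with $\lambda_1\ge\lambda_2$) is a hypothesis of the proposition, not a consequence of it.
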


We start with a simple technical lemma, whose proof is standard and thus omitted.

\begin{lemma}
  \label{lem:appli4}
  If there exists $\delta>0$ such that $\int_{(0,1)} u^{-\delta}\,p(\mathrm du)<+\infty$ and constants $a_0,a_1$ such that
  \begin{align*}
    a_0 < -\int_{(0,1)}\ln u\,p(\mathrm du)  < a_1,
  \end{align*}
  then there exists $\varepsilon_0>0$ such that, for all $\varepsilon\in(0,\varepsilon_0)$,
  \begin{align*}
    \varepsilon+\int_{(0,1)}(u^{\varepsilon/a_0}-1)\,p(\mathrm du)<0 \text{ and } \int_{(0,1)}u^{-\varepsilon/a_1}\,p(\mathrm du)<\varepsilon+p((0,1)).
  \end{align*}
\end{lemma}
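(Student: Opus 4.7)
The plan is to reduce both inequalities to a limit computation as $\varepsilon \downarrow 0$ via dominated convergence, and then invoke the strict comparisons with $-\int \ln u\, p(\dd u)$. A preliminary observation is that the hypothesis $\int_{(0,1)} u^{-\delta}\, p(\dd u) < \infty$, combined with $\int u\, p(\dd u) = 1$, forces $p$ to be a finite measure (splitting at $u = 1/2$: on $(0,1/2)$ one bounds $p$ by $2^\delta \int u^{-\delta}\, p(\dd u)$, and on $(1/2,1)$ by $2\int u\, p(\dd u) = 2$), and moreover $\int |\ln u|\, u^{-\eta}\, p(\dd u) < \infty$ for every $\eta \in [0,\delta)$, since $|\ln u| = O(u^{-(\delta-\eta)})$ as $u \downarrow 0$.

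For the first inequality, I would divide by $\varepsilon > 0$ to get
\[
  \frac{1}{\varepsilon}\Bigl(\varepsilon + \int_{(0,1)} (u^{\varepsilon/a_0}-1)\, p(\dd u)\Bigr)
  = 1 + \int_{(0,1)} \frac{u^{\varepsilon/a_0}-1}{\varepsilon}\, p(\dd u).
\]
The mean value theorem applied to $t \mapsto u^t$ between $t = 0$ and $t = \varepsilon/a_0$ gives $(u^{\varepsilon/a_0} - 1)/\varepsilon = (\ln u/a_0)\, u^{\xi}$ for some $\xi \in (0, \varepsilon/a_0)$, which on $(0,1)$ is bounded in absolute value by $|\ln u|/a_0$, an integrable majorant. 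Dominated convergence then yields the limit $1 + (1/a_0)\int \ln u\, p(\dd u)$, which is strictly negative by the hypothesis $a_0 < -\int \ln u\, p(\dd u)$; hence the original expression is negative for all sufficiently small~$\varepsilon$.

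For the second inequality, I would rewrite it as
\[
  \int_{(0,1)} \frac{u^{-\varepsilon/a_1}-1}{\varepsilon}\, p(\dd u) < 1,
\]
using the finiteness of~$p$. The mean value theorem now produces a factor $u^{-\xi/a_1}$ with $\xi \in (0, \varepsilon)$; restricting attention to $\varepsilon \le a_1 \delta/2$ gives $u^{-\xi/a_1} \le u^{-\delta/2}$, so the integrand is dominated by $(|\ln u|/a_1)\, u^{-\delta/2}$, which is integrable by the preliminary observation. Dominated convergence yields the limit $-(1/a_1) \int \ln u\, p(\dd u)$, strictly less than $1$ by the hypothesis $a_1 > -\int \ln u\, p(\dd u)$, which completes the argument.

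The only mild subtlety is in the second step: because the exponent $-\varepsilon/a_1$ is negative, one cannot dominate the integrand by $|\ln u|$ alone, and the safety margin $u^{-\delta/2}$ (together with the integrability of $u^{-\delta}$) is exactly what restores a uniform $L^1$ bound as $\varepsilon$ shrinks to $0$. Everything else is a routine application of dominated convergence.
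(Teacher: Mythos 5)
Your proof is correct; the paper explicitly omits this proof as ``standard'', and your argument (dividing by $\varepsilon$, applying the mean value theorem to $t\mapsto u^{t}$, and passing to the limit by dominated convergence with the majorants $|\ln u|/a_0$ and $(|\ln u|/a_1)u^{-\delta/2}$) is precisely the standard one, with the integrability of the second majorant correctly secured by the hypothesis $\int u^{-\delta}\,p(\dd u)<\infty$. The only point worth flagging is that you implicitly use $a_0>0$ (both for the domination $|u^{\xi}\ln u/a_0|\le|\ln u|/a_0$ and to convert $a_0<-\int\ln u\,p(\dd u)$ into $1+\frac{1}{a_0}\int\ln u\,p(\dd u)<0$); this is not stated in the lemma but is forced by the claim itself and holds in the application, so it is a harmless implicit hypothesis rather than a gap.
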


\begin{proof}[Proof of Proposition~\ref{prop:appli4}]
  Let $\xi(x)=1$ for all $x\in (0,+\infty)$ and set
  \begin{align*}
    \psi(x)=h(x)=\exp(-\alpha s(x))\1_{x<1}+\exp(\beta s(x))\1_{x\geq 1},
  \end{align*}
  where $\alpha>0$ and $\beta>0$ are (small enough) constant which will be chosen later. We already observe that, by assumption, $\psi(x)/\xi(x)\to+\infty$ when $x\to 0$ and when $x\to +\infty$. In addition,
  \begin{align}
    \label{eq:appli4Apsiprime}
    \frac{\cA \xi(x)}{\xi(x)}=K(x)\left(\int_{(0,1)} p(\mathrm du)-1\right)\geq \inf K\,\int_{(0,1)} (1-u)\, p(\mathrm du).
  \end{align}

  For all $x<1$, we have
  \begin{align*}
    \int_{(0,x)} \frac{h(y)}{h(x)}\,k(x,\mathrm dy)=K(x)\int_{(0,1)} \exp\left(-\alpha(s(ux)-s(x))\right)\,p(\mathrm du),
  \end{align*}
  where
  \begin{align*}
    \exp\left(-\alpha(s(ux)-s(x))\right)\leq u^{-\alpha \sup_{y\in (0,x)} \frac{y}{c(y)}}.
  \end{align*}
  On the one hand, choosing $\alpha<\delta/\sup_{y\in (0,1)} \frac{y}{c(y)}$, we deduce that
  \begin{align}
    \label{eq:appli4assumption1a}
    \int_{(0,x)} \frac{h(y)}{h(x)}\,k(x,\mathrm dy)\leq \sup_{y\in(0,1)} K(y) \int_{(0,1)} u^{-\delta} \,p(\mathrm du),
  \end{align}
  and, on the other hand, letting $x\to 0$ and using Fatou's lemma, we deduce that
  \begin{align*}
    \limsup_{x\to 0} \frac{\cA h(x)}{h(x)}&=-\alpha +\limsup_{x \to 0} \int_{(0,x)} \frac{h(y)}{h(x)}\,k(x,\mathrm dy)-K(x)\notag\\
    &\leq -\alpha + \limsup_{x\to 0} K(x)\left(\int_{(0,1)} u^{-\alpha \limsup_{x\to 0} \frac{y}{c(y)} }\,p(\mathrm du)-1\right)\\
    &=-\alpha + \inf K\,\left(\int_{(0,1)} u^{-\alpha \limsup_{x\to 0} \frac{y}{c(y)} }\,p(\mathrm du)-1\right).
  \end{align*}
  According to Lemma~\ref{lem:appli4} and the second inequality in~\eqref{as:appli4as1bis}, there exists $\alpha_0>0$ such that, for all $\alpha<\alpha_0$, 
  \begin{align*}
    \int_{(0,1)} u^{-\alpha \limsup_{x\to 0} \frac{y}{c(y)} }\,p(\mathrm du)<\alpha+p((0,1)).
  \end{align*}
  This implies, choosing $\alpha <\alpha_0\wedge (\delta/\sup_{y\in (0,1)} \frac{y}{c(y)})$ (which we will assume from now on) and using in addition~\eqref{eq:appli4Apsiprime}, that
  \begin{align}
    \label{eq:appli4lyap0}
    \limsup_{x\to 0} \frac{\cA h(x)}{h(x)} < -\alpha (1-\inf K)+\inf K(p(0,1)-1)<\inf_x\frac{\cA \xi(x)}{\xi(x)}.
  \end{align}

  For all $x\geq 1$, we have
  \begin{multline*}
    \int_{(0,x)} \frac{h(y)}{h(x)}\,k(x,\mathrm dy)=K(x)\int_{(0,1/x)} \exp\left(-\alpha s(ux)-\beta s(x)\right)\,p(\mathrm du)\\
    +K(x) \int_{(1/x,1)} \exp\left(\beta(s(ux)-s(x))\right)\,p(\mathrm du)-K(x),
  \end{multline*}
  where 
  \begin{align*}
    \exp(-\alpha s(ux)-\beta s(x))\leq (ux)^{-\alpha \sup_{y\in (0,ux)} \frac{y}{c(y)}} \leq u^{-\delta}
  \end{align*}
  and
  \begin{align*}
    \exp\left(\beta(s(ux)-s(x))\right)\leq u^{\beta \inf_{y\in (ux,x)}  \frac{y}{c(y)}}.
  \end{align*}
  On the one hand, we deduce that
  \begin{align}
    \label{eq:appli4assumption1b}
    \int_{(0,x)} \frac{h(y)}{h(x)}\,k(x,\mathrm dy)\leq \sup_{y\in(0,M)} K(y) \int_{(0,1)} u^{-\delta} \,p(\mathrm du)
  \end{align}
  and, on the other hand, choosing $\beta<1/\inf_{y\geq 1} \frac{y}{c(y)}$, letting $x\to +\infty$ and using Fatou's Lemma, we deduce that
  \begin{align*}
    \limsup_{x\to +\infty} \frac{\cA h(x)}{h(x)}&=\beta+\limsup_{x\to +\infty} \int_{(0,x)} \frac{h(y)}{h(x)}\,k(x,\mathrm dy)-K(x)\\
    &\leq \beta +\limsup_{x\to+\infty} K(x)\left(\int_{(0,1)} u^{\beta \liminf_{y\to+\infty} \frac{y}{c(y)}}\,p(\mathrm du)-1\right)\\
    &=\beta +\inf K\left(\int_{(0,1)} u^{\beta \liminf_{y\to+\infty} \frac{y}{c(y)}}\,p(\mathrm du)-1\right).
  \end{align*}
  According to Lemma~\ref{lem:appli4} and the first inequality in~\eqref{as:appli4as1bis}, there exists $\beta_0>0$ such that, for all $\beta<\beta_0$, 
  \begin{align*}
    \int_{(0,1)} u^{\beta \liminf_{y\to +\infty} \frac{y}{c(y)} }\,p(\mathrm du)<-\beta+p((0,1)).
  \end{align*}
  Choosing $\beta < \beta_0\wedge(1/\inf_{y\geq 1} \frac{y}{c(y)})$, we deduce that
  \begin{align*}
    \limsup_{x\to+\infty}\frac{\cA h(x)}{h(x)}\leq \beta(1-\inf K) + \inf K (p(0,1)-1)
  \end{align*}
  and hence, choosing $\beta$ small enough and using~\eqref{eq:appli4Apsiprime},
  \begin{align}
    \label{eq:appli4lyapinfty}
    \limsup_{x\to+\infty}\frac{\cA h(x)}{h(x)}<\inf_x\frac{\cA \xi(x)}{\xi(x)}.
  \end{align}
  By~\eqref{eq:appli4assumption1a} and~\eqref{eq:appli4assumption1b}, and observing that our assumptions imply that $K$ is uniformly bounded, we deduce that the first part of Assumption~\ref{assumption1} holds true. In addition, $\cA h/h$ is locally bounded and, by~\eqref{eq:appli4lyap0} and~\eqref{eq:appli4lyapinfty}, it is thus bounded from above. We conclude that Assumption~\ref{assumption1} is verified.

  Finally,~\eqref{eq:appli4lyap0} and~\eqref{eq:appli4lyapinfty} in combination with Proposition~\ref{prop:lambda0upperbound} entail that Assumption~4 holds true. This concludes the proof of Proposition~\ref{prop:appli4}.
\end{proof}

\subsection{Proof of Proposition~\ref{prop:Markov}}
\label{sec:proofMarkov}

Since the process $X$ is a PDMP, it is a strong Markov process with respect to its completed natural filtration according to Theorem~25.5 in~\cite{Davis} (its proof remains correct under our assumptions).

Let us now prove the irreduciblity of $X$.  Fix $x_0\in(0,+\infty)$ and set
\[
  A:=\{x\in(0,+\infty),\ \P_x(H_{x_0}<+\infty)>0\}.
\]
We first note that $A$ is non-empty since $x_0\in A$.  Our strategy is to prove that $A$ is open and closed in $(0,+\infty)$, so that $A=(0,+\infty)$ since $(0,+\infty)$ is connected.

\textbf{(1) $A\cap(0,x_0)$ is open.} For all $x<x_0\in(0,+\infty)$,  $m_{x}:=\sup_{z\in [x,x_0]} k_h(z,(0,z))$ is finite according to Assumption~\ref{assumption1}. Setting $t_{x}=s(x_0)-s(x)$, we deduce from the construction of the process  (see Step~1 in the proof of Proposition~\ref{tNirr}) that
\begin{align*}
  \P_x(H_{x_0}\leq t_{x})
  &\geq \P_x(\text{the process $X$ does not jump during the time interval $[0,t_{x}]$}) \\
  & \geq e^{-m_{x} t_{x}}>0.
\end{align*}
In particular, $(0,x_0)\subset A$ so that $A\cap(0,x_0)$ is open.

\textbf{(2) $A$ contains a neighbourhood of $x_0$.} According to the previous step, for all $\varepsilon\in(0,x_0)$, $(x_0-\varepsilon,x_0]\subset A$. It remains to prove that there exists $\varepsilon>0$ such that $(x_0,x_0+\varepsilon)\subset A$. According to Assumption~\ref{assumptionIrr}, the Lebesgue measure of $s(\{y\in(x_0,+\infty),\ k(y,(0,x_0))>0\})$ is positive. Since
\[
  \{y\in(x_0,+\infty),\ k(y,(0,x_0))>0\}=\bigcup_{n\geq 1,\ m\geq 1} \{y\in(x_0,n),\ k(y,(0,x_0))>1/m\},
\]
we deduce that there exists a bounded $I_0\subset (x_0,+\infty)$ such that
\[
  \lambda_1(s(I_0))>0\text{ and }\inf_{y\in I_0} k(y,(0,x_0))>0.
\]
Choosing $\varepsilon>0$ small enough, we deduce that, for all $x\in(x_0,x_0+\varepsilon)$, $\lambda_1(s(I_0\cap (x,+\infty)))>0$.

We also have, denoting by $\sigma$ the first jump time of $X$ and using the strong Markov property at time $\sigma$,
\begin{align}
  \label{eq:useful1}
  \P_x(H_{x_0}<+\infty)\geq \E_x\left(\1_{\sigma<+\infty}\P_{X_\sigma}(H_{x_0}<+\infty)\right).
\end{align}
Since $\P_y(H_{x_0}<+\infty)>0$ for all $y\in(0,x_0)$, it is sufficient to prove that $\P(\sigma<+\infty\text{ and }X_\sigma\in (0,x_0))>0$ to conclude that $\P_x(H_{x_0}<+\infty)>0$. By construction of the process $X$, we have
\begin{align}
  \P_x(\sigma<+\infty\text{ and }X_\sigma\in (0,x_0))&\geq \P_x(\sigma<+\infty\text{ and }X_{\sigma-}\in I_0\text{ and }X_\sigma\in (0,x_0))\nonumber\\
  &\geq  \P_x(s^{-1}(s(x)+\sigma)\in I_0)\,\frac{\inf_{y\in I_0} k_h(y,(0,x_0))}{\sup_{y\in I_0} k_h(y,(0,y))+q(y)}  \label{eq:useful2}
\end{align}
since $s^{-1}(s(x)+t)$ is the position of the process $X_{t-}$ under $\P_x$, conditionally to $t\leq\sigma$. We also have
\begin{multline}
  \label{eq:useful3}
  \P_x(s^{-1}(s(x)+\sigma)\in I_0)=\P_x(\sigma\in s(I_0)-s(x))\\
  = \int_{s(I_0)-s(x)}\frac{1}{[k_h+q](s^{-1}(s(x)+t))} \exp\left(-\int_0^t [k_h+q](s^{-1}(s(x)+u))\,\mathrm du\right) \mathrm dt>0.
\end{multline}
Using~\eqref{eq:useful1}, \eqref{eq:useful2} and~\eqref{eq:useful3}, we deduce that, for all $x\in (x_0,x_0+\varepsilon)$,
\[
  \P_x(H_{x_0}<+\infty)>0.
\]
This concludes the second step of the proof.

\textbf{(3) $A\cap(x_0,+\infty)$ is open.} Fix $x\in A\cap(x_0,+\infty)$. Then, for all $\varepsilon\in(0,x)$, for all $y\in (x-\varepsilon,x)$, we have using the strong Markov property at time $H_x$, 
\[
  \P_y(H_{x_0}<+\infty)\geq \P_y(H_x<+\infty)\P_x(H_{x_0}<+\infty)>0,
\]
since $y<x$ and $x\in A$. In particular, $(x-\varepsilon,x)\subset A$. Moreover, since $X$ is right-continuous,
\[
  \lim_{y\to x,y>x}\P_x(H_y<H_{x_0})=1.
\]
Hence there exists $\varepsilon>0$ such that, for all $y\in (x,x+\varepsilon)$, 
\[
  \P_x(H_y<H_{x_0})\geq 1-\P_x(H_{x_0}<+\infty)/2
\]
This implies that
\[
  \P_x(H_y<H_{x_0}\text{ and }H_{x_0}<+\infty)>0.
\]
Since, by the strong Markov property applied at time $H_y$, we have $\P_x(H_y<H_{x_0}\text{ and }H_{x_0}<+\infty)=\P_x(H_y<H_{x_0})\P_y(H_{x_0}<+\infty)$, we deduce that, for all $y\in (x,x+\varepsilon)$, 
\[
  \P_y(H_{x_0}<+\infty)>0.
\]
This concludes the third step of the proof.

\textbf{(4) $A$ is closed in $(0,+\infty)$.} We prove that $A$ is sequentially closed in $(0,+\infty)$. Let $(x_n)_{n\in\N}\in A^{\Z_+}$ be a sequence converging to a point $x\in (0,+\infty)$. 

If there exists $n\in\Z_+$ such that $x\leq x_n$, then $\P_x(H_{x_n}<+\infty)>0$ and hence, using the Markov property at time $H_{x_n}$, we deduce that $\P_x(H_{x_0}<+\infty)>0$ and hence that $x\in A$.  

Assume now that $x_n<x$ for all $n\in\Z_+$. Without loss of generality, we assume that $(x_n)_{n\in\Z_+}$ is non-decreasing. According to Assumption~\ref{assumptionIrr}, the Lebesgue measure of $s(\{y\in (x,+\infty),\ k_h(y,(0,x))>0\})$ is positive. Since
\[
  \{y\in (x,+\infty),\ k_h(y,(0,x))>0\}=\bigcup_{n\geq 1,\ m\geq 1,p\geq 1} \{y\in(x,p),\ k_h(y,(0,x_n))>1/m\},
\]
we deduce that there exists a bounded $I_1\subset(x,+\infty)$ and $n\in\Z_+$ such that
\[
  \lambda_1(s(I_1))\text{ and }\inf_{y\in I_1} k_h(y,(0,x_n))>0.
\]
Using the same procedure as in Step~2 above, we deduce that $\P_x(H_{x_n}<+\infty)>0$. Using the strong Markov property at time $H_{x_n}$ and the fact that $x_n\in A$, we deduce that $\P_x(H_{x_0}<+\infty)>0$, so that $x\in A$.

\textbf{(5) Conclusion.} Steps~1, 2, 3 and 4 above imply that $A$ is both open and closed in the connected set $(0,+\infty)$, so that $A=(0,+\infty)$ and, for all $x,y\in(0,+\infty)$
\[
  \P_x(H_y<+\infty)>0.
\]
Now let $l<r\in(0,+\infty)$ and set $t_{l,r}=s(r)-s(l)$ . Then, for all $x\leq y\in [l,r]$,
\[
  \P_x(H_y< t_{l,r})\geq \P_l(\sigma\geq t_{l,r})>0.
\]
Moreover, since $\P_r(H_l<+\infty)>0$, we deduce that there exists $t'_{l,r}>0$ such that $\P_r(H_l<t'_{l,r})>0$. Using the strong Markov property, we deduce that, for all $x>y\in[l,r]$,
\begin{align*}
  \P_x(H_y<t_{l,r}+t'_{l,r}+t_{l,r})&\geq \P_x(H_r<t_{l,r})\P_r(H_l<t'_{l,r})\P_l(H_y<t_{l,r})\\
  &\geq \P_l(\sigma\geq t_{l,r})\,\P_r(H_l<t'_{l,r})\,\P_l(\sigma\geq t_{l,r})>0.
\end{align*}
Setting $t_0=t_{l,r}+t'_{l,r}+t_{l,r}$, this concludes the proof of Proposition~\ref{prop:Markov}.

\subsection{Proof of Proposition~\ref{prop:doeblin} under Assumption~\ref{assumptionDoebBoth}}
\label{sec:proofprop:doeblineTer}

This proof is a direct adaptation of the proof of Proposition~1 in~\cite{PichorRudnicki2000}, where the problem is already solved when $s$ is of the form $\int_1^x1/c(y)\mathrm dy$, with $c$ continuous and positive, and the measure $\mu$ is a Dirac measure.

Let $I=(\fa,\fb)$ and fix $t_1>0$ small enough so that $\phi(\fa,t)\in I$ for
all $t\in (0,t_1)$. Let us denote by 
$r(x)=k_h(x,(0,x))+ q(x) = b+K(x)- \frac{1}{h(x)}\frac{\d h}{\d s}(x)$
the jump rate of $X$ at position $x\in(0,+\infty)$; recall the definition
of $b$ in \eqref{e:b}.
Restricting to the event where the process jumps only one
time in the time interval $(0,t_1)$, we deduce that, for all $t\in(0,t_1)$ and
all positive measurable function $f$ vanishing on the cemetery point
$\partial$,
\begin{align*}
    Q_t f(\fa)
    &\geq \int_0^t \int_{[0,1]} f(\phi(T(\theta,\phi(\fa,u)),t-u))\,a\,e^{-\int_0^u r(\phi(\fa,u))\,\mathrm dv}\,e^{-\int_0^u r(\phi(T(\phi(\fa,u)),t-u))\,\mathrm dv}
    \\
    & \hphantom{{} \geq \int_0^t \int_{[0,1]}}
    \ \frac{h(T(\theta,\phi(\fa,u)))}{h(\phi(\fa,u))}\mu(\mathrm d\theta)\, \mathrm du\\
    & =\int_{[0,1]}\int_0^t  f(\phi(T(\theta,\phi(\fa,u)),t-u))\,a\,e^{-\int_0^u r(\phi(\fa,u))\,\mathrm dv}\,e^{-\int_0^u r(\phi(T(\phi(\fa,u)),t-u))\,\mathrm dv}
    \\
    & \hphantom{{} =\int_{[0,1]}\int_0^t}
    \ \frac{h(T(\theta,\phi(\fa,u)))}{h(\phi(\fa,u))} \mathrm du \,\mu(\mathrm d\theta).
\end{align*}
By assumption, $h$ is upper bounded on $I$ and $r$ is uniformly bounded away from $\infty$ on compact subsets of $(0,+\infty)$, so that there exists a constant $a_1>0$ such that
\begin{align*}
    Q_t f(\fa)&\geq a_1\,\int_{[0,1]}\int_0^t  f(\phi(T(\theta,\phi(\fa,u)),t-u))\,h(T(\theta,\phi(\fa,u)))\, \mathrm du \,\mu(\mathrm d\theta)\\
    &\geq a_1\,\int_{[0,1]}\int_0^t f\circ s^{-1}(s\circ\phi(T(\theta,\phi(\fa,u)),t-u))\,\mathrm du\,m_\theta\mu(\mathrm d\theta),
\end{align*}
where $m_\theta=\inf_{u\in[0,t_1]} h(T(\theta,\phi(\fa,u)))>0$, where we used the fact that, for any fixed $\theta$, $u\mapsto h(T(\theta,\phi(\fa,u)))$ is positive and continuous.
We observe that, for all $\theta\in[0,1]$,  $s\circ\phi(T(\theta,\phi(\fa,u)),t-u)=s(T(\theta,\phi(\fa,u)))+t-u$, so that (recall~\eqref{eq:starrevis}) for all $\theta\in[0,1]$, 
\begin{align*}
    \frac{\mathrm d s\circ\phi(T(\theta,\phi(\fa,u)),t-u)}{\mathrm du}=\frac{\mathrm d s\circ T(\theta,\phi(\fa,u))}{\mathrm d u}-1=\frac{\partial s\circ T(\theta,\cdot)}{\partial s}(\phi(\fa,u))-1\neq 0,
\end{align*}
with the left hand side continuous in $u$ and in particular  bounded away from
$0$ and $\infty$ for $u\in(0,t_1)$. We are now in position to use the change of
variable $y= s(\phi(T(\theta,\phi(\fa,u)),t-u))$,
and deduce that, for all
$\theta\in[0,1]$, there
exists a positive constant $a_2(\theta)>0$ such that, for all $t\in(0,t_1)$,
\begin{align*}
    \int_0^t f\circ s^{-1}(s\circ\phi(T(\theta,\phi(\fa,u)),t-u))\,\mathrm du
    \geq
    a_2(\theta)\,\left|\int_{s(\phi(T(\theta,\fa),t))}^{s(T(\theta,\phi(\fa,t)))}
    f\circ s^{-1}(y)\,\mathrm dy\right|.
\end{align*}
We have $\phi(T(\theta,\fa),t_1)\neq T(\theta,\phi(\fa,t_1))$ and hence, by
continuity of both term at $t_1$, there exist
two values $s_1(\theta)<s_2(\theta)$
and a fixed time
$t'_1(\theta)\in(0,t_1)$ such that, for all $t\in(t'_1(\theta),t_1)$,
\begin{align*}
    \int_0^t f\circ s^{-1}(s\circ\phi(T(\theta,\phi(\fa,u)),t-u))\,\mathrm du
    \geq a_2(\theta)\,m_\theta\,\int_{s_1(\theta)}^{s_2(\theta)} f\circ
    s^{-1}(y)\,\mathrm dy
\end{align*}
and hence
\begin{align*}
    Q_t f(\fa)\geq
    \int_{[0,1]}\,a_2(\theta)\,m_\theta 
    \,\1_{t\in(t'_1(\theta),t_1)}\int_{s_1(\theta)}^{s_2(\theta)}
    f\circ s^{-1}(y)\,\mathrm dy\,\mu(\mathrm d\theta).
\end{align*}
Let $t'_1\in(0,t_1)$ such that
\[
  \int_{[0,1]}\,a_2(\theta)\,m_\theta
  \,\1_{t'_1\in(t'_1(\theta),t_1)} \int_{s_1(\theta)}^{s_2(\theta)}
  f\circ s^{-1}(y)\,\mathrm dy\,\mu(\mathrm d\theta)>0
\]
for all positive continuous $f$,
and define the positive measure  $\upsilon'$ on $(0,+\infty)$ by
\[
  \upsilon'(f)=\int_{[0,1]}\,a_2(\theta)\,m_\theta
  \,\1_{t'_1\in(t'_1(\theta),t_1)}\int_{s_1(\theta)}^{s_2(\theta)} 
  f\circ s^{-1}(y)\,\mathrm dy\,\mu(\mathrm d\theta)
\]
so that, for all $t\in(t'_1,t_1)$, setting $a_2=\upsilon'((0,+\infty))$ and
$\upsilon=\nicefrac{1}{a_2}\upsilon'$,
\begin{align}
    \label{eq:Step1inF0Ter}
    \E_{\fa}(f(X_t))=Q_tf(\fa)\geq a_2\upsilon(f).
\end{align}

Now, since $\upsilon$
is a non-zero measure on $(0,\infty)$, we have, by the irreducibility property proved in Proposition~\ref{prop:Markov},
\[
\P_\upsilon(H_\fa<+\infty)= \int_{(0,+\infty)} \upsilon(\mathrm dy)\P_y(H_\fa<\infty)>0,
\]
where $H_\fa = \inf\{t \geq  0: X_t = \fa\}$. In particular, there exists $t_2>0$ such that
\[
a_3:=\P_\upsilon(H_\fa\in [t_2,t_2+(t_1-t'_1)/2])>0.
\]

Hence, using the strong Markov property at time $H_\fa$, we deduce that
for all $t\in [t_2+t'_1,t_2+(t'_1+t_1)/2]$,
\begin{align}
    \E_\upsilon(f(X_t))&\geq
    \E_\upsilon\left[\1_{H_\fa\in[t_2,t_2+(t_1-t'_1)/2]}
    \bigl.\E_\fa[f(X_{t-u})]\bigr\rvert_{H_\fa}\right]\geq a_3a_2\,\upsilon(f).
    \label{e:upsilon-iter}
\end{align}
Iterating the above inequality (i.e., applying the Markov property successively
at times $tk/n$, $k=1,\dotsc,n-1$) we deduce that
\begin{align}
    \label{eqNineq}
    \E_\upsilon(f(X_t)) \ge (a_3a_2)^n \upsilon(f), \qquad
    t \in [n(t_2+t'_1),n(t_2+(t'_1+t_1)/2)].
\end{align}
We set $n_1=\left\lfloor\frac{2t_2+2t'_1}{t_1-t'_1}\right\rfloor+1$ (so that $(n+1)(t_2+t'_1)\leq n(t_2+(t'_1+t_1)/2)$ for all $n\geq n_1$), and define $t_3=n_1(t_2+t'_1)$. For any $t\geq t_3$, the integer $n=\left\lfloor\frac{t}{t_2+t'_1}\right\rfloor$ satisfies $t\in [n(t_2+t'_1),(n+1)(t_2+t'_1)]$ and $n\geq n_1$, so that $t\in [n(t_2+3t_1/2),n(t_2+(t'_1+t_1)/2)]$.
Hence, setting
\[
  \beta_t=(a_3a_2)^{\left\lfloor\frac{t}{t_2+t'_1}\right\rfloor}>0, \qquad t\geq t_3,
\]
we deduce from~\eqref{eqNineq} that 
\begin{align*}
    \E_\upsilon(f(X_t))&\geq  \beta_t \upsilon(f), \qquad t\geq t_3.
\end{align*}

Using again the irreducibility property stated in Proposition~\ref{prop:Markov}, we know that, 
for any
compactly contained interval $L\subset (0,+\infty)$
containing $\fa$,
there exists a
constant $t_4(L)>0$ such that 
\[
  a_4(L):=\inf_{x\in L} \P_x(H_\fa\leq
  t_4(L))>0.
\]
Hence Markov's property applied at time $H_\fa$ and the above inequalities gives, for $t\geq  t_1+t_3+t_4(L)$ and $x\in L$,
\begin{align*}
  \E_x\left(f(X_t)\right)
  &\geq \E_x\left[\1_{H_\fa\leq t_4(L)}\bigl.\E_\fa[f(X_{t-u})]\bigr\rvert_{u=H_\fa}\right]\\
  &\geq \E_x\left[\1_{H_\fa\leq t_4(L)}a_2\bigl.\E_\upsilon[f(X_{t-t_1-u})]\bigr\rvert_{u=H_\fa}\right]\\
  &\geq \E_x\left[\1_{H_\fa\leq t_4(L)}a_2\beta_{t-t_1-H_\fa}\right] \upsilon(f)\\
  &\geq c_{L,t}\, \upsilon(f),
\end{align*}
where $c_{L,t}:=a_4(L)\,a_2\beta_{t-t_1-t_4(L)}$. This concludes the proof of Proposition~\ref{prop:doeblin} under Assumption~\ref{assumptionDoebBoth}.

\subsection{Proof of Proposition~\ref{prop:doeblin} under Assumption~\ref{assumptionDoebF}}
\label{sec:proofprop:doeblinFeller}

\textbf{(1) We find a lower bound for $Q$ by a simpler semigroup $S$.}

Let $I=(\fa,\fb)$ such that $\beta(x,(0,x))$ is positive for all  $x\in I$ and fix $t_1>0$ small enough so that $\phi(\fa,t)\in I$ for
all $t\in (0,t_1)$. Without loss of generality, we assume that there exists a compact set $A\subset (0,\fa)$ such that $\beta(x,A^c)=0$ for all $x\in(0,+\infty)$, and that $\beta(x,\cdot)=0$ for all $x\geq 2\fb$.
Let $(S_t)_{t\geq 0}$ be the semigroup of a PDMP on $[\min A,+\infty)$, with jump kernel $\beta(x,\mathrm dy)$ and flow directed by $\phi$. Then, using the fact that $h$, $q$ and $k_h(x,(0,x))$ are locally bounded, we deduce that, for all $t\geq 0$, there exists a constant $a_1(t)>0$ such that, for all non-negative function $f$ with support in $(\min A,2\fb)$,
\begin{align*}
    Q_tf(x)\geq a_1(t) S_tf(x)\,\forall x\in (\fa,\fb).
\end{align*}

\textbf{(2) We prove a Feller-type property for $S$.}

Let $t_0>0$ be such that $\phi(\min A,t_0)>2\fb$, so that the process with semi-group $S$ has jumped at least once before time $t_0$ or remains outside $[\min A,2\fb]$. Then for all $t\geq t_0$, all measurable $B\subset [\min A,2\fb]$ and all $x\in [\min A,+\infty)$,
\begin{align*}
   S_t \1_B(x)=\int_0^t \int_{(0,\phi(x,u))} S_{t-u}\1_B(\phi(y,t-u))\,e^{-\int_0^u r_\beta(\phi(x,u))\,\mathrm dv}\beta(\phi(x,u),\mathrm dy)\, \mathrm du
\end{align*}
with $r_\beta(z):=\beta(z,(0,+\infty))$.
Using the property~\eqref{eq:semi-cont-pos-assumption} for $\beta$, one deduces that it also holds for $S_t$: for all $t\geq t_1$, for all $x_0\in [\min A,2\fb]$
\begin{align}
    \label{eq:semi-cont-pos}
    S_t \1_B(x_0) > 0
    \text{ implies that }
    \liminf_{x\to x_0, \, x<x_0} S_1\1_B(x) > 0
\end{align}

\textbf{(3) We find a recurrent set.}
To be more specific, in this part, we obtain a set $C$ of arbitrarily small radius
with the property that $S_t\1_C(x) > 0$ for all $t$ sufficiently large and $x\in C$.
We begin with a useful inequality, before making a case distinction.

Restricting to the event where the process jumps exactly once, and
does so in the time interval $[0,t_1]$, we deduce that, for 
all positive measurable function $f$ and all $x\in I$, for all $t\geq t_1$, there exists a constant $a_2(t)>0$ such that
\begin{align*}
    S_{t} f(x)&\geq  a_2(t)\,\delta_x R^{t} f,\text{ where }\delta_x R^t f:=\int_0^{t_1} \int_{(0,+\infty)} f(\phi(y,t-u))\,\beta(\phi(x,u),\mathrm dy)\, \mathrm du.
\end{align*}
We set $\fc=\frac{\fa+\fb}{2}$. The term $\delta_\fc R^{t_1}$ defines a  measure such that $\delta_\fc R^{t_1}((0,\fa))>0$, and hence there exists $t_2> t_1$ such that $\delta_\fc R^{t_2}((\fc-\varepsilon,\fc])>0$ for all $\varepsilon>0$.  Let $\fc'\in(\fa,\fc)$ be such that $\phi(\fc',t_2-t_1)>\fc$. Then there exists a constant $a_3>0$ such that, for all $x\in (\fc',\fc)$ and all positive function $f:(0,+\infty)\to[0,+\infty)$,
 \begin{align}
     \label{eq:minoration}
     \delta_x R^{t_2}f\geq a_3 \delta_\fc R^{t_2-t(x)}f,
 \end{align}
where $t(x)\in(0,t_2-t_1)$ is such that $\phi(x,t(x))=\fc$.
In particular, for all $x\in (\fc',\fc)$, noting that $\phi(\fc',t(x))<\phi(x,t(x))=\fc$, we have
\begin{align}
    \delta_x S_{t_2}((\fc',\fc])&\geq a_2(t_2)  \delta_x R^{t_2}((\fc',\fc])\nonumber\\
                                & \geq a_2(t_2) a_3 \delta_\fc R^{t_2-t(x)}((\fc',\fc])\nonumber\\
                                &= a_2(t_2) a_3 \delta_\fc R^{t_2}((\phi(\fc',t(x)),\phi(\fc,t(x))])\nonumber\\
                                &\geq a_2(t_2) a_3 \delta_\fc R^{t_2}((\phi(\fc',t(x)),\fc])>0,\label{eq:gooduse}
\end{align}
where (here and later) we define the measure $\delta_x S_t(A) = S_t\1_A(x)$.

\textbf{Case (a): $\delta_\fc S_{t_2}(\{\fc\})>0$.} When this is true, we can prove
Proposition~\ref{prop:doeblin} in a straightforward way.
If this holds then, using~\eqref{eq:semi-cont-pos}, we deduce that there exists $\fc''\in(\fc',\fc)$ such that $\inf_{x\in (\fc'',\fc]} \delta_x S_{t_2}(\{\fc\})>0$. Fixing $t_3\in(t_1,t_2)$ such that
\[
\inf_{t\in[t_3,t_2]} \delta_\fc S_{t}((\fc'',\fc])>0,
\]
we deduce that there exists a constant $a_4>0$ such that, for all $t\in [t_3+t_2,2t_2]$,
\[
\delta_\fc S_{t}\geq a_4 \delta_{\fc}.
\]
We can now follow the same strategy as in the previous section, since the equation
above is essentially \eqref{e:upsilon-iter} with $\upsilon = \delta_\fc$.
This allows us deduce that $Q$ satisfies~\eqref{eq:doeblininprop}, which completes the proof of Proposition~\ref{prop:doeblin}
in this case.

\textbf{Case (b): $\delta_\fc S_{t_2}(\{c\})=0$.} 
Take  $\fc''\in(\fc',\fc)$ such that $\phi(\fc',t(\fc''))= \fc''$. Then there exists $\varepsilon>0$ such that  $\delta_\fc R^{t_2}((\fc'',\fc-\varepsilon))>0$.
According to~\eqref{eq:gooduse}, we have
\begin{align*}
    \delta_x R^{t_2}((\fc',\fc-\varepsilon))&\geq a_3 \delta_\fc R^{t_2-t(x)}((\fc',\fc-\varepsilon))\\
    &= a_3 \delta_\fc R^{t_2}((\phi(\fc',t(x)),\phi(\fc-\varepsilon,t(x))]).
\end{align*}
If $x\in(\fc',\fc'']$, then $\phi(\fc',t(x))< \fc$ and $\phi(\fc-\varepsilon,t(x))> \phi(\fc'',t(x)) \geq \fc$ so that
\begin{align*}
\delta_x R^{t_2}((\fc',\fc-\varepsilon))&\geq  a_3 \delta_\fc R^{t_2}((\phi(\fc',t(x)),\fc))>0.
\end{align*}
If $x\in( \fc'',\fc)$, then $\phi(\fc',t(x))\leq \phi(\fc',t(\fc''))= \fc''$ and $\phi(\fc-\varepsilon,t(x))>\fc-\varepsilon$, hence  
\begin{equation*}
    \delta_x R^{t_2}((\fc',\fc-\varepsilon))
    \geq  a_3 \delta_\fc R^{t_2}((\fc'',\fc-\varepsilon))>0.
\end{equation*}
 Choosing $t_4>t_2$ such that $\phi(\fc-\varepsilon,t_4-t_2)=\fc$, we deduce that 
\begin{align*}
    \delta_x S_{t}((\fc',\fc])\geq a_2(t) \delta_x R^{t}((\fc',\fc])\geq a_2(t) \delta_x R^{t_2}((\fc',\fc-\varepsilon])>0,\ \forall x\in (\fc',\fc],\ \forall t\in[t_2,t_4].
\end{align*}
Hence there exists $t_5>0$ such that, for all $t\geq t_5$, 
\begin{align*}
   S_t ((\fc',\fc])(x)>0,\ \forall x\in (\fc',\fc].
\end{align*}
Since $\fc'$ can be replaced in this argument by any point arbitrarily close to $\fc$ and since $S$ is irreducible on $(\fc',\fc)$, we deduce that, for all $\varepsilon>0$, there exists $t(\varepsilon)$ such that, for all $t\geq t(\varepsilon)$, 
\begin{align}
    \label{eq:open-irr}
    S_t ((\fc-\varepsilon,\fc])(x)>0,\ \forall x\in (\fc',\fc].
\end{align}

\textbf{(4) Conclusion.} In this part, we give an adaptation of
\cite[Proposition~6.2.1]{MT-book} in order to conclude.

Define the measures 
$\delta_x\bar S_n \coloneqq \1_{(\fc',\fc]}(x)\delta_x S_{nt_1}(\cdot \cap (\fc',\fc])$ for $x\in(0,+\infty)$ and $n\in\mathbb{N}$.
For all measurable $B\subset (\fc',\fc]$ such that $\delta_\fc\bar S_1(B)>0$, we deduce from~\eqref{eq:semi-cont-pos} that there exists $\varepsilon>0$  such that 
$\inf_{x\in (\fc-\varepsilon,\fc]} \delta_x\bar S_1(B)>0$ and hence from~\eqref{eq:open-irr} that, for any $n_0\geq 1$ such that $n_0 t_1\geq t(\varepsilon)$, 
\[
\delta_x \bar S_{n_0+1} (B)>0.
\]
This shows that
$(\bar S_n)_{n\in \mathbb N}$ is $\psi$-irreducible with $\psi=\delta_{\fc}\bar S_1$.
In particular, by~\cite[Theorem~5.2.2]{MT-book} and its proof, $(\bar S_n)_{n\in\mathbb N}$ admits a `small set' $C\subset (\fc',\fc]$ such that $\delta_{\fc}\bar S_1(C)>0$; this means that there exists $m\geq 1$, a constant $\eta>0$ and a probability measure $\upsilon$ on $(\fc',\fc]$ such that
$\delta_x \bar S_m\geq \eta \upsilon$ for all $x\in C$. Since $\delta_{\fc}\bar S_1(C)>0$, we deduce from~\eqref{eq:semi-cont-pos} that there exists a neighborhood $U$ of $\fc$ in $(\fc,',\fc]$ such that $\inf_{x\in U}\delta_x \bar S_1(C)>0$. In particular,
\[
\delta_x Q_{(m+1)t_1}\geq a_1((m+1)t)\delta_x S_{(m+1)t_1}\geq a_1((m+1)t)\delta_x \bar S_{m+1} \geq \eta\,\inf_{x\in U}\delta_x \bar S_1(C)\, \upsilon,\ \forall x\in U.
\]
Since there exists $t_6>t_7$ such that $\inf_{t\in[t_6,t_7]} \delta_\fc Q_t U>0$, we deduce as above that~\eqref{eq:doeblininprop} holds true. This concludes the proof of Proposition~\ref{prop:doeblin}.

\subsection{Proof of Theorem~\ref{thm:spectgap}}
\label{sec:proofspectgap}

Our aim is to prove that Assumptions~\ref{assumption1},~\ref{assumptionIrr},~\ref{assumptionDoebSec} and~\ref{assumptionLyapGeneral} 
together imply that
Assumption~F of~\cite{CV-qsd} is satisfied for the Markov semigroup $(Q_t)_{t\in[0,+\infty)}$. Let us recall this assumption.

\medskip\noindent\textbf{Assumption (F).} There exist positive real constants $\gamma_1,\gamma_2,c_1,c_2$ and $c_3$, $t_1,t_2\in [0,+\infty)$,
a measurable function $\psi_1:(0,+\infty)\rightarrow [1,+\infty)$, and a probability measure $\nu$ on a measurable subset $L\subset (0,+\infty)$ such
that
\begin{itemize}
  \item[(F0)] \textit{(A strong Markov property).} 
    Defining
    \begin{align}
      \label{eq:def-T_L}
      H_L:=\inf\{t\geq 0,\ X_t\in L\},
    \end{align}
    assume that for all $x\in (0,+\infty)$, $X_{H_L}\in L$, $\PP_x$-almost surely on the event $\{H_L<\infty\}$ and for all $t>0$ and
    all measurable $f:(0,+\infty)\cup\{\d\}\rightarrow\RR_+$,
    \begin{align*}
      \EE_x\left[f(X_t)\1_{H_L\leq t<\zeta}\right]=\EE_x\left[\1_{H_L\leq
      t\wedge\zeta}\EE_{X_{H_L}}\left[f(X_{t-u})\1_{t-u<\zeta}\right]\rvert_{ u=H_L}\right].
    \end{align*}
  \item[(F1)] \textit{(Local Dobrushin coefficient).} $\forall x\in L$, 
    \label{i:F1}
    \begin{align*}
      \P_x(X_{t_1}\in\cdot)\geq c_1 \nu(\cdot\cap L).
    \end{align*}
  \item[(F2)] \textit{(Global Lyapunov criterion).} We have $\gamma_1<\gamma_2$ and
    \begin{align*}
      &\E_x(\psi_1(X_{t_2})\1_{t_2<H_L\wedge\zeta})\leq \gamma_1^{t_2}\psi_1(x),\ \forall x\in (0,+\infty)\\
      &\E_x(\psi_1(X_t)\1_{t<\zeta})\leq c_2,\ \forall x\in L,\ \forall t\in[0,t_2],\\
      &\gamma_2^{-t}\P_x(X_t\in L)\xrightarrow[t\rightarrow+\infty]{} +\infty,\ \forall x\in L.
    \end{align*}
  \item[(F3)] \textit{(Local Harnack inequality).} We have
    \begin{align*}
      \sup_{t\geq 0}\frac{\sup_{y\in L} \P_y(t<\zeta)}{\inf_{y\in L} \P_y(t<\zeta)}\leq c_3
    \end{align*}
\end{itemize}

We prove in the following subsections that F0, F1, F2 and~F3
are satisfied, in this order, with the aim to apply the following result, which is Theorem~3.5 in~\cite{CV-qsd} combined with the continuous time adaptation of Theorem~1.7 in~\cite{CV-qsd}.

\begin{theorem}[\cite{CV-qsd}]
  \label{thm:E-F}
  Under Assumption (F), $(X_t)_{t\in[0,+\infty)}$ admits a quasi-stationary distribution $\nu_{\text{QS}}$ on $(0,+\infty)$, which is the unique one satisfying
  $\nu_{\text{QS}}(\psi_1)<\infty$ and $\PP_{\nu_{\text{QS}}}(X_{t}\in L)>0$ for some $t\in [0,+\infty)$. 
In addition, there exists a
  constant $\lambda_0^X\geq 0$ such that $\lambda_0^X\leq\log(1/\gamma_2)<\log(1/\gamma_1)$ and $\PP_{\nu_{\text{QS}}}(t<\zeta)=e^{-\lambda^X_0 t}$
  for all $t\geq 0$, and there exists a function $\eta:(0,+\infty)\to[0,+\infty)$ lower bounded away from $0$ on $L$ and  such that
  \begin{equation}
  \label{eq:eta-prop-E-F}
  \left|\eta(x)-e^{\lambda^X_0 t}\P_x(t<\zeta)\right|\leq C e^{-\gamma t}\,\psi_1(x),\quad\forall x\in (0,+\infty)
  \end{equation}
  and such that
  $\E_x(\eta(X_t)\1_{t<\zeta})=e^{-\lambda_0^X t}\eta(x)$ for all $x\in (0,+\infty)$ and $t\geq 0$. 
  Finally, setting $E'=\{x\in(0,+\infty),\ \eta(x)>0\}$,  we have, for all $f:E'\to\R$ such that $\|f\eta/\psi_1\|_\infty<+\infty$,
\begin{equation}
\label{eq:Q-proc}
\left|\frac{e^{\lambda^X_0 t}}{\eta(x)}\E_x(\eta(X_t)f(X_t)\1_{t<\zeta})-\nu_{QS}(\eta f)\right|\leq C e^{-\gamma t}\,\frac{\psi_1(x)}{\eta(x)}\,\|f\eta/\psi_1\|_\infty,\quad\forall x\in E',
\end{equation}
  for some constants $\gamma>0$
and $C>0$.
\end{theorem}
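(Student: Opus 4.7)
The plan is to derive all five conclusions of the theorem from a single contraction property of the normalised conditional semigroup
\[
  \Phi_t(\mu) := \frac{\mu Q_t(\cdot \cap (0,+\infty))}{\mu Q_t \1_{(0,+\infty)}},
\]
acting on probability measures $\mu$ on $(0,+\infty)$ with $\mu(\psi_1)<\infty$, and then reading off the spectral information from the unique fixed point.

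The core analytic step is to show that, for some $t_\star$ large enough, $\Phi_{t_\star}$ is a strict contraction in a total variation distance weighted by $\psi_1$ on the space of probability measures with finite $\psi_1$-mean. Three ingredients intervene in turn. First, F2 forces the bulk of the conditional mass to concentrate on $L$: the Lyapunov estimate $\E_x[\psi_1(X_t)\1_{t<H_L\wedge\zeta}]\leq \gamma_1^{t}\psi_1(x)$ controls the contribution from trajectories avoiding $L$, while the lower bound $\gamma_2^{-t}\P_x(X_t\in L)\to\infty$ ensures the denominator $\mu Q_t\1_{(0,+\infty)}$ does not decay too fast, the two being compatible precisely because $\gamma_1<\gamma_2$. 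Second, F1 provides a Doeblin minorisation on $L$ which couples a fixed fraction of mass via $\nu$. Third, F3 is what makes the denominators of $\Phi_t$ at different starting points in $L$ comparable, so that coupling in the numerator translates into contraction of the normalised laws. Iterating over multiples of $t_\star$ yields geometric contraction, and Banach's fixed point theorem produces a unique invariant probability $\nu_{\text{QS}}$ with $\nu_{\text{QS}}(\psi_1)<\infty$; this is the quasi-stationary distribution. Standard semigroup considerations then give $\lambda_0^X\geq 0$ with $\P_{\nu_{\text{QS}}}(t<\zeta)=e^{-\lambda_0^X t}$, and the bound $\lambda_0^X\leq \log(1/\gamma_2)$ drops out of the last display of F2 applied to $\nu_{\text{QS}}$.

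The eigenfunction $\eta$ is then built as the pointwise limit $\eta(x):=\lim_{t\to\infty} e^{\lambda_0^X t}\P_x(t<\zeta)$; the exponential convergence rate \eqref{eq:eta-prop-E-F} with the bound $C\psi_1(x)$ follows from applying the established contraction to the pair $(\delta_x,\nu_{\text{QS}})$ tested against $\1_{(0,+\infty)}$, with positivity of $\eta$ on $L$ inherited from F1 and F3. The eigenrelation $\E_x[\eta(X_t)\1_{t<\zeta}]=e^{-\lambda_0^X t}\eta(x)$ is then a direct consequence of the Markov property combined with the limit definition, and the $Q$-process convergence \eqref{eq:Q-proc} follows by rewriting the left-hand side as a Doob $\eta$-transform on $E'$ and transferring the original contraction to this bona fide Markov semigroup. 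The main obstacle is the interplay between the Lyapunov weight $\psi_1$ and the ratio of denominators $\mu Q_t\1 / \mu' Q_t\1$ appearing in the normalisation: one must track carefully how mass leaves compact sets both in the numerator and in the denominator, and it is the strict gap $\gamma_1<\gamma_2$ that guarantees the contraction on the numerator survives the division, which is the point where F2 and F3 must be used in tandem rather than separately.
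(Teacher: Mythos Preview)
The paper does not prove this theorem: it is quoted verbatim from~\cite{CV-qsd} (Theorem~3.5 combined with the continuous-time adaptation of Theorem~1.7 there) and used as a black box. There is therefore no ``paper's own proof'' to compare against.

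That said, your sketch is a faithful high-level summary of the strategy in~\cite{CV-qsd}: contraction of the normalised conditional semigroup in $\psi_1$-weighted total variation, fixed point giving $\nu_{\text{QS}}$, extraction of $\lambda_0^X$ and $\eta$ from the survival asymptotics, and transfer to the $Q$-process via the Doob $\eta$-transform. The interplay you highlight between F2 and F3 in controlling the ratio of normalising denominators is indeed the delicate point. However, what you have written is an outline, not a proof: the actual argument in~\cite{CV-qsd} requires careful bookkeeping of how F0--F3 combine (in particular, the strong Markov property at $H_L$ is used repeatedly to reduce to starting points in $L$, and the contraction is established not for $\Phi_{t_\star}$ directly but through a more elaborate two-step argument involving both the Lyapunov drift outside $L$ and the Doeblin coupling inside $L$). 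If you intend this as a self-contained proof rather than a pointer to the literature, substantial detail would need to be filled in; as a summary of the cited result it is accurate.
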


Note that, in the above result, it is clear that $\lambda_0^X$ is the same as the one defined in~\eqref{eq:lambda0X}. We conclude by proving that the property obtained from this result entails Theorem~\ref{thm:spectgap}.

In
what follows, we only consider functions
$f$ vanishing on the cemetery point, so that
$Q_t f(x)=\E_x(f(X_t)\1_{t<\zeta})=\E_x(f(X_t))$ for all $x\in E$, where $\zeta$ is the first hitting time of $\d$. Moreover, $b$ and $h$ are the objects defined in section~\ref{sec:markov}.

\subsubsection{Proof of F0 and F1}

The completed natural filtration of $X$ is right continuous (see Theorem~25.3 in~\cite{Davis}). Hence the Début Theorem (see for instance Lemma~75.1 in~\cite{RW1}) implies that $H_L$ is a stopping time with respect to this filtration. By Proposition~\ref{prop:Markov}, we deduce that F0 holds true for any compact interval $L\subset (0,+\infty)$ (this set shall be chosen in subsection~\ref{sec:F2}).

According to Proposition~\ref{prop:doeblin}, the condition F1 holds true for $L$, assuming in addition (and without loss of generality) that $L$ large enough so that $\upsilon(L)\geq 1/2$.

\subsubsection{Proof of F2}
\label{sec:F2}

Take $\psi_1=\psi/h$ (we assume without loss of generality that $\psi\geq h$), extended to $\d$ by the value $0$. We deduce from Assumption~\ref{assumptionLyapGeneral} that there exists $\lambda^X_1>\lambda_0^X$ and a compact interval $L\subset (0,+\infty)$ such that 
\[
  \cL \psi_1(x) \leq  -\lambda^X_1\psi_1(x)+C\1_L(x),\quad\forall x\in(0,+\infty).
\]
Let $(f_k)_{k\geq 2}$ be a non-decreasing sequence of non-negative functions in $C_c^{(s)}$ such that, for all $k \geq 2$, 
$f_k(x)=\psi_1(x)$ for all $x\in(1/k,k)$. We deduce that, for all $x\in (1/k,k)$,
\begin{align*}
  \cL f_k(x)&=\frac{\d f}{\d s}(x)+k_h(x,f_k)-f_k(x)k_h(x,(0,x))-q(x)f_k(x)\\
  &= \frac{\d \psi_1}{\d s}(x)+k_h(x,f_k)-\psi_1(x)k_h(x,(0,x))-q(x)\psi_1(x)\\
  &\leq \cL\psi_1(x)\leq -\lambda^X_1\psi_1(x)+C\1_L(x)= -\lambda^X_1 f_k(x)+C\1_L(x).
\end{align*}
Since $f_k$, extended by the value $0$ on $\d$, belongs to the domain of the extended infinitesimal generator of $X$, we deduce that
\[
  M_t^k:= e^{\lambda_1^X t}f_k(X_t)-f_k(x)-\int_0^t e^{\lambda_1^X u}(\lambda_1^X+\cL f_k(X_u))\,\mathrm du
\]
is a local martingale. Since $ e^{\lambda_1^X u}(\lambda_1^X+\cL f_k(X_u))$ is uniformly bounded on $[0,t]$ (where we used Lemma~\ref{lem:2bis} (iii)), we deduce that it is a martingale. 
In particular, for any $2\leq k'\leq k$, denoting by $\tau_{k'}=\inf\{t\geq 0, X_t\text{ or }X_{t-}\notin (1/k',k')\}$, we have, using the optional stopping theorem,
\[
  \E\left(e^{\lambda_1^X t\wedge \zeta\wedge\tau_{k'}\wedge H_L}f_k(X_{t\wedge \zeta\wedge \tau_{k'}\wedge H_L})\right)\leq f_k(x),\quad\forall x\in(1/{k'},k').
\]
Letting $k\to+\infty$, we deduce that
\[
  \E\left(e^{\lambda_1^X t\wedge \zeta\wedge \tau_{k'}\wedge H_L}\psi_1(X_{t\wedge \zeta\wedge \tau_{k'}\wedge H_L})\right)\leq \psi_1(x),\quad\forall x\in(1/{k'},k').
\]
Using Fatou's Lemma and the non-explosion of the process $X$, we conclude by letting $k'\to+\infty$ that
\[
  \E\left(e^{\lambda_1^X t\wedge \zeta\wedge H_L}\psi_1(X_{t\wedge \zeta\wedge H_L})\right)\leq \psi_1(x),\quad\forall x\in(0,+\infty).
\]
This entails that
\[
  \E\left(e^{\lambda_1^X t}\psi_1(X_{t})\1_{t<\zeta\wedge H_L}\right)\leq \psi_1(x),\quad\forall x\in(0,+\infty),
\]
which implies the first line of F2 for any $t_2>0$ and $\gamma_1=e^{-\lambda_1^X}$.

The same procedure, but replacing $\lambda_1^X$ by $-C$, stopping the process at time $t\wedge \zeta\wedge \tau_{k'}$ instead of $t\wedge \zeta\wedge \tau_{k'}\wedge H_L$ and using the fact that $\cL f_k\leq C$ for all $x\in(1/k,k)$, one deduces that, for all $t\geq 0$.
\[
  \E\left(\psi_1(X_t)\1_{t< \zeta}\right)\leq e^{Ct}\psi_1(x),\quad\forall x\in(0,+\infty).
\]
This implies the second line of F2.

Finally, choosing any $\gamma_2\in(e^{-\lambda_1^X},e^{-\lambda_0^X})$, the last line of F2 is a direct consequence of the definition of $\lambda_0^X$.

\subsubsection{Proof of F3}

The irreducibility property of Proposition~\ref{prop:Markov} implies that there
exists $t_L>0$ such that $\inf_{x,y\in L}\P_x(H_y<t_L)>0$. Moreover, for any fixed $x_0\in L$, $\P_{x_0}(t_L<\zeta)>0$, hence
\[
  c_3:= \inf_{x,y\in L}\P_x(H_y<t_L)\P_{x_0}(t_L<\zeta)>0.
\]
For
all all $t\geq t_L$ and all $x,y\in L$, we obtain, using the fact that
$\P_x(t<\zeta)$ is decreasing with respect to $t$ and the strong
Markov property at time $H_y$,
\begin{align*}
  \P_x(t<\zeta)
  &\geq \E_x\left(\1_{H_y\leq t}\P_y(t-u<\zeta)\rvert_{u=H_y}\right)\\
  &\geq \P_x(H_y\leq t_L)\P_y(t <\zeta)\geq c_3\,\P_y(t<\zeta).
\end{align*}
For $t<t_L$,  we observe that, for all $x,y\in L$, using the strong Markov property at time $H_{x_0}$,
\[
  \P_x(t<\zeta)\geq \P_x(H_{x_0}<+\infty)\P_{x_0}(t_L<H_{x_0})\geq c_3\geq c_3\P_y(t<\zeta).
\]		
Hence,
\begin{equation}\label{e:localH-1}
  \sup_{t\geq 0} \frac{\sup_{y\in L}\PP_y(t<\zeta)}{\inf_{x\in L}\PP_x(t<\zeta)} \le \frac{1}{c_3}< \infty.
\end{equation}
This concludes the proof of F3.

\subsubsection{Conclusion of the proof of Theorem~\ref{thm:spectgap}}

We proved in the above subsections that the semigroup $Q$ satisfies the
conditions of Theorem~\ref{thm:E-F}. The Doeblin property obtained in
Proposition~\ref{prop:doeblin} entails that $\eta$ is positive on $(0,+\infty)$
(and in particular $E'=(0,+\infty))$. Hence, for all 
$f\in L^\infty(\psi_1)$ and all $t\geq 0$, applying~\eqref{eq:Q-proc} to $f/\eta$, we
deduce that
\[
  \left| \frac{e^{\lambda_0^X t}}{\eta(x)}\mu Q_t f - \nu_{\text{QS}}(f)\right|\leq C\,e^{-\gamma t}\,\frac{\psi_1(x)}{\eta(x)}\,\|f/\psi_1\|_\infty,\ \forall x\in E'.
\]
Since $\delta_x Q_t f=e^{-bt}\frac{1}{h(x)} \delta_x T_t(fh)$,  we obtain, 
taking $f=g/h$ with $g\in L^\infty(\psi)=L^\infty(\psi_1 h)$,
\begin{align*}
  \left| e^{(\lambda_0^X-b) t}\delta_x T_t g - \nu_{\text{QS}}(g/h)\,\eta(x)h(x)\right|
  &\leq C\,e^{-\gamma t}\,\psi_1(x)h(x)\|g/(h\psi_1)\|_\infty \\
  & =C\,e^{-\gamma t}\,\psi(x)\,\|g/\psi\|_\infty.
\end{align*}
Finally, using that $\lambda_0=\lambda_0^X-b$ and setting $m(g):=\nu_{\text{QS}}(g/h)$ and $\varphi(x)=\eta(x)h(x)$ we deduce that, for all $g\in L^\infty(\psi)$,
\begin{align*}
  \left| e^{\lambda_0 t}\delta_x T_t g - m(g)\,\varphi(x)\right|\leq C\,e^{-\gamma t}\,\psi(x).
\end{align*}
Integrating with respect to $\mu$ such that $\mu(\psi)<+\infty$ concludes the proof.

\subsection{Proof of Proposition~\ref{prop:alterlambda}}
\label{sec:proofprop:alterlambda}

Let $\lambda_0'=\inf\{\lambda\in\mathbb R,\ \int_0^\infty e^{\lambda t} T_t \1_L(x)\,\mathrm dt=+\infty\}$, where $x\in(0,+\infty)$ is fixed and $L\subset (0,+\infty)$ is a non-empty, compactly embedded open interval.
We clearly have  $\lambda_0\geq \lambda'_0$. Let us prove the converse inequality.

Fix $\lambda>\lambda'_0$, so that $ \int_0^\infty e^{\lambda t} T_t \1_L(x)\,\mathrm dt=+\infty$ for some $x\in (0,+\infty)$ and some compactly embedded non-empty interval $L\subset (0,+\infty)$. In particular, setting $\lambda^X=\lambda+b$, we have $ \int_0^\infty e^{\lambda^X t} \P_x(X_t\in L)\,\mathrm dt=+\infty$. For any $y\in (0,+\infty)$, there exists, according to Proposition~\ref{prop:Markov}, $u_0>0$ such that $\P_y(H_x\leq u_0)>0$, and hence, using the strong Markov property at time $H_x$,
\begin{align*}
  \int_{u_0}^\infty e^{\lambda^X t} \mathbb P_y(X_t\in L)\,\mathrm dt
  &\geq  \int_{u_0}^\infty e^{\lambda^X t} \mathbb E_y\left(\1_{H_x\leq u_0}\P_x(X_{t-u}\in L)\rvert_{{u=H_x}}\right)\,\mathrm dt\\
  &= \mathbb E_y\left(\1_{H_x\leq u_0} \int_{u_0}^\infty e^{\lambda^X t} \P_x(X_{t-u}\in L)\rvert_{{u=H_x}} \,\mathrm dt\right)\\
  &\geq \mathbb E_y\left(\1_{H_x\leq u_0} \int_{u_0}^\infty e^{\lambda^X v} \P_x(X_{v}\in L) \,\mathrm dt\right)=+\infty.
\end{align*}
In particular, $ \int_0^\infty e^{\lambda^X t}\P_y(X_t\in L)\,\mathrm dt=+\infty$ for all $y\in(0,+\infty)$. This implies that the probability measure $\upsilon$ from Proposition~\ref{prop:doeblin} satisfies
\begin{align}
  \label{eq:prooprostep1}
  \int_0^\infty e^{\lambda^X t} \P_\upsilon(X_t\in L)\,\mathrm dt=+\infty.
\end{align}

Consider $t_L$, $\upsilon$ and $c_{L,t}$ from Proposition~\ref{prop:doeblin}. Then, for all $T\geq t_L+1$ and all $x\in L$, we have, applying the Markov property at time $t_L+u$ for all $u\in[0,1]$,
\begin{align*}
  \P_x(X_T\in L)\geq c_{L,t_L+u} \P_\upsilon(X_{T-t_L-u}\in L)\geq c_{L,t_L+1} \P_\upsilon(X_{T-t_L-u}\in L)
\end{align*}
and hence
\begin{align*}
  e^{\lambda^X T}\P_x(X_T\in L)&\geq c_{L,t_L+1} e^{\lambda^X T} \int_0^1 \P_\upsilon(X_{T-t_L-u}\in L)\,\mathrm du\\
  &\geq  c_{L,t_L+1}  \int_0^1 e^{\lambda^X (T-t_L-u)}\,\P_\upsilon(X_{T-t_L-u}\in L)\,\mathrm du\\
  & = c_{L,t_L+1} \int_{T-t_L-1}^{T-t_L} e^{\lambda^X t}\,\P_\upsilon(X_{t}\in L)\,\mathrm dt.
\end{align*}
Now, according to~\eqref{eq:prooprostep1}, for any fixed $\varepsilon>0$, there exists $T_\varepsilon\in\{0,1,\ldots\}$ such that $T_\varepsilon\geq t_L+1$ and
\[
  \int_{T_\varepsilon-t_L-1}^{T_\varepsilon-t_L} e^{(\lambda^X+\varepsilon) t}\,\P_\upsilon(X_{t}\in L)\,\mathrm dt\geq \frac{1}{c_{L,t_L+1}}
\]
and hence such that
\begin{align}
  \label{eq:prooprostep2}
  e^{(\lambda^X +\varepsilon)T_\varepsilon}\P_x(X_{T_\varepsilon}\in L)&\geq 1.
\end{align}
We define the function $w_\varepsilon:(0,+\infty)\to [0,+\infty)$ by
\begin{align*}
  w_{\varepsilon}(x)=\sum_{i=0}^{T_\varepsilon-1} e^{(\lambda^X +\varepsilon)i}\P_x(X_i\in L)=\sum_{i=0}^{T_\varepsilon-1} e^{(\lambda^X +\varepsilon)i}\,Q_i\1_L(x),
\end{align*}
where we recall that $Q$ is the semigroup associated to the Markov process $X$. We thus have
\begin{align*}
  Q_1w_\varepsilon(x)&=\sum_{i=0}^{T_\varepsilon-1} e^{(\lambda^X +\varepsilon)i}\,Q_{i+1}\1_L(x)\\
  &=e^{-(\lambda^X +\varepsilon)} \sum_{i=1}^{T_\varepsilon} e^{(\lambda^X +\varepsilon)i}\,Q_{i}\1_L(x)\\
  &=e^{-(\lambda^X +\varepsilon)} \left(w_\varepsilon(x)+e^{(\lambda^X +\varepsilon)T_\varepsilon}Q_{T_\varepsilon}\1_L(x)-\1_L(x)\right).
\end{align*}
But, by~\eqref{eq:prooprostep2}, $e^{(\lambda^X +\varepsilon)T_\varepsilon}Q_{T_\varepsilon}\1_L(x)\geq 1$ for all $x\in L$, and hence we obtain, for all $x\in (0,+\infty)$,
\[
  Q_1w_\varepsilon(x)\geq  e^{-(\lambda^X +\varepsilon)} w_\varepsilon(x)
\]
and hence, by iteration,
\[
  Q_n w_\varepsilon(x)\geq e^{-(\lambda^X +\varepsilon)n} w_\varepsilon(x), \ \forall n\in\{0,1,\ldots\}.
\]
Since $w_\varepsilon(x)>0$ for all $x\in L$, we deduce that
\begin{align}
  \label{eq:prooprostep3}
  e^{-(\lambda^X +\varepsilon)n}  Q_n w_\varepsilon(x)&\xrightarrow[n\to+\infty]{} +\infty,\quad\forall x\in L.
\end{align}

Proposition~\ref{prop:Markov} and~\ref{prop:doeblin} entail that there exists $t_0>0$ such that
\[
  c_0:=\P_\upsilon(X_{t_0}\in L)>0.
\]
We can assume without loss of generality that $T_\varepsilon>t_L+t_0$.
Hence, for all $y\in L$, we have according to the Markov property and by Proposition~\ref{prop:doeblin}, for all $u\geq t$ such that $u-t\geq t_0+t_L$,
\begin{align*}
  \P_{y}(X_{u-t}\in L)\geq \E_y\left(\P_{X_{u-t-t_0}}(X_{t_0}\in L)\right)\geq c_{L,u-t-t_0}  \P_{\upsilon}(X_{t_0}\in L)\geq c_{L,u-t-t_0}  c_0.
\end{align*}
Using again the Markov property, we thus observe that, for all $u>t_0+t_L$, all $x\in(0,+\infty)$ and all $t\in[0,u-t_0-t_L]$,
\begin{align*}
  \P_x(X_u\in L)&\geq \E_x\left(\1_{X_t\in L}\P_{X_t}(X_{u-t}\in L)\right)\\
  &\geq \P_x(X_t\in L)\,c_{L,u-t-t_0} c_0.
\end{align*}
In particular, for all $u>t_0+t_L+T_\varepsilon$ and all $k\in\{0,1,\ldots,T_\varepsilon-1\}$,
\begin{align*}
  \P_x(X_u\in L)&\geq  \P_x(X_{\lfloor u\rfloor -T_\varepsilon+k}\in L)\,c_{L,u-\lfloor u\rfloor +T_\varepsilon-k-t_0} c_0\geq \P_x(X_{\lfloor u\rfloor-T_\varepsilon+k}\in L)\,c_{L,1 +T_\varepsilon-t_0} c_0.
\end{align*}
Hence, setting $\delta_\varepsilon= \sum_{k=0}^{T_\varepsilon-1} e^{(\lambda^X +\varepsilon)k}$, we have
\begin{align*}
  e^{(\lambda^X+2\varepsilon) u}\P_x(X_u\in L)& = \frac{e^{(\lambda^X+2\varepsilon) u}}{\delta_\varepsilon}  \sum_{k=0}^{T_\varepsilon-1} e^{(\lambda^X +\varepsilon)k} \P_x(X_u\in L)\\
  & \geq \frac{e^{(\lambda^X+2\varepsilon) u}}{\delta_\varepsilon} c_{L,1 +T_\varepsilon-t_0} c_0\sum_{k=0}^{T_\varepsilon-1} e^{(\lambda^X +\varepsilon)k}  \P_x(X_{\lfloor u\rfloor-T_\varepsilon+k}\in L)\\
  & = \frac{e^{(\lambda^X+2\varepsilon) u}}{\delta_\varepsilon} c_{L,1 +T_\varepsilon-t_0} c_0\sum_{k=0}^{T_\varepsilon-1} e^{(\lambda^X +\varepsilon)k}  Q_{\lfloor u\rfloor-T_\varepsilon+k}\1_L(x)\\
  & \geq  \frac{e^{(\lambda^X+2\varepsilon) (\lfloor u\rfloor-T_\varepsilon)}}{\delta_\varepsilon} c_{L,1 +T_\varepsilon-t_0} c_0 Q_{\lfloor u\rfloor-T_\varepsilon}w_\varepsilon(x).
\end{align*}
By~\eqref{eq:prooprostep3}, this shows that $e^{(\lambda^X+2\varepsilon) u}\P_x(X_u\in L)$ goes to infinity when $u\to+\infty$. In particular, $\lambda^X+2\varepsilon\geq \lambda_0^X$. Since this is true for all $\varepsilon>0$, we deduce that $\lambda^X\geq \lambda_0^X$ and hence $\lambda \geq \lambda_0$. Since this is true for all $\lambda>\lambda'_0$, we deduce that $\lambda'_0\geq \lambda_0$, which concludes the proof of the proposition.

\subsection{Proof of Proposition~\ref{prop:lambda0upperbound}}
\label{sec:lambda0upperbound}

\textbf{(1) Proof of \ref{i:lambda0upperbound:a}}
We set $\psi_1=\psi/h$ and $\psi_2=\xi(x)/h$, both extended by the value $0$ at point $\d$. We observe that (up to a change in the constant $C>0$)
\[
  \cL \psi_1\leq -(\lambda_1+b)\psi_1+C\1_L\text{ and }\cL \psi_2\geq -(\lambda_2+b)\psi_2.
\]
Since $\psi_2$ is continuous and positive, it is lower bounded on the compact interval $L$, and hence we have $\cL\psi_1\leq -(\lambda_1+b)\psi_1+C'\psi_2$, for some constant $C'>0$. Hence setting $F=\psi_1-\frac{C'}{\lambda_1-\lambda_2}\psi_2$, we obtain
\[
  \cL F\leq -(\lambda_1+b) \psi_1+C'\psi_2+\frac{(\lambda_2+b) C'}{\lambda_1-\lambda_2} \psi_2=-(\lambda_1+b) F.
\]
Fix $x\in (0,+\infty)$. Using the same approach as in section~\ref{sec:F2}
(note that $F$ is lower bounded on $(0,+\infty)$ and positive in a
neighbourhood of $\{0,+\infty\}$), we deduce that, for all $t\geq 0$,
\[
  \E_x[e^{(\lambda_1+b) t}F(X_t)] \leq F(x).
\]
In particular, for all $t\geq 0$,
\[
  \E_x(\psi_1(X_t))\leq \frac{C'}{\lambda_1-\lambda_2}\E_x(\psi_2(X_t))+e^{-(\lambda_1+b) t}F(x).
\]
For all $M>0$, there exists a compact interval $L_M\subset (0,+\infty)$ such that $\psi_1\geq M\psi_2$ on the set $E\setminus L_M$. Hence, for all $t\geq 0$,
\[
  M\E_x(\psi_2(X_t)\1_{X_t\notin L_M})\leq \frac{C'}{\lambda_1-\lambda_2}\E_x(\psi_2(X_t))+e^{-(\lambda_1+b) t}F(x),
\]
so that, choosing $M=\frac{C'}{\lambda_1-\lambda_2}+1$,
\[
  \E_x(\psi_2(X_t)\1_{X_t\notin L_M})\leq \frac{C'}{\lambda_1-\lambda_2}\E_x(\psi_2(X_t)\1_{X_t\in L_M})+e^{-(\lambda_1+b) t}F(x),
\]
which entails
\begin{align}
  \E_x(\psi_2(X_t))&\leq \left(1+\frac{C'}{\lambda_1-b}\right)\E_x(\psi_2(X_t)\1_{X_t\in L_M})+e^{-(\lambda_1+b) t} F(x)\notag\\
  &\leq \left(1+\frac{C'}{\lambda_1-b}\right)\P_x(X_t\in L_M)+e^{-(\lambda_1+b) t} F(x)\label{eq:psi2bound}.
\end{align}
In addition, by Corollary~\ref{cor:1} (and more precisely its proof), we have, for all $t\geq 0$,
\[
  \mathbb E_x(\psi_2(X_{t}))= \psi_2(x)+\int_0^t \mathbb E_x(\mathcal L\psi_2(X_u))\,\mathrm du\geq \psi_2(x)-(\lambda_2+b)\int_0^t \E_x(\psi_2(X_u))\,\mathrm du
\]
and hence, by Grownwall's Lemma,
\[
  \psi_2(x)\leq e^{(\lambda_2+b) t}\E_x(\psi_2(X_t)).
\]
The last inequality and~\eqref{eq:psi2bound} and the fact that $\lambda_2+b<\lambda_1+b$ imply that, for any fixed $\lambda'\in(\lambda_2,\lambda_1)$,
\[
  e^{(\lambda'+b) t}\P_x(X_t\in L_M)\xrightarrow[t\to+\infty]{} +\infty.
\]
In particular $\lambda_0^X\leq \lambda'+b$, so that $\lambda_0\leq \lambda'$ for any $\lambda'\in(\lambda_2,\lambda_1)$, which concludes the proof of Proposition~\ref{prop:lambda0upperbound}~(a).

\textbf{(2) Proof of \ref{i:lambda0upperbound:b}}
The intuition for this part is that we wish to consider a semigroup generated by
the operator $f \mapsto \frac{\cA(f\xi)}{\xi}$, represent this in terms of a
Markov process  $Y$ together with a potential term $e^{\int_0^t d(Y_s)\, \dd s}$, 
and use \cite{BW-spec} to bound its growth coefficient. However, we must be
cautious: $\xi$ cannot be used in place of $h$ in Assumption~\ref{assumption1},
so we cannot use our established existence and uniqueness results,
and moreover, the potential term we would get from the calculation above is not bounded.
Instead, we first define a process $Y$ with the desired properties, and then
consider introducing a truncated potential ($d^M$ below) to allow us to apply
\cite{BW-spec}. Once this is done, we relate this back to the original semigroup $T$
by applying Theorem~\ref{thm:semigroup} to a truncated version of $\cA$, and this allows
us to bound $\lambda_0$.

We consider the right-continuous PDMP $Y$ with drift $s$ and jump kernel $\bar k(x,\mathrm dy)=\frac{\xi(y)}{\xi(x)}k(x,\mathrm dy)$. The function $V=\psi/\xi$ satisfies
\begin{align*}
  &\!\! \frac{\partial V}{\partial s}(x)+\int_{(0,x)} (V(y)-V(x))\bar k(x,\mathrm dy)
  \\
  &=V(x)\left(\frac{1}{\psi(x)}\frac{\partial \psi}{\partial s}(x)-\frac{1}{\xi(x)}\frac{\partial \xi}{\partial s}(x)
  +\int_{(0,x)} \frac{\psi(y)}{\psi(x)} k(x,\mathrm dy)-\bar k(x,(0,x))\right)\\
  &=V(x)\left(\frac{\cA \psi}{\psi}-\frac{\cA \xi}{\xi}\right)\leq V(x)(-\lambda_1+C\1_L(x)+\lambda_2)\\
  &\leq -\lambda V(x)+C\,\max_L V\,\1_L(x),
\end{align*}
where $\lambda=\lambda_1-\lambda_2\geq 0$. Since in addition $V(x)\to+\infty$ when $x\to0$ or $x\to+\infty$, and since the jump rate $\bar k(x,(0,x))$ is locally bounded, this entails that $Y$ is non explosive and recurrent. Its extended infinitesimal generator, denoted by $\cL^Y$, satisfies, for all $f\in C_c^{(s)}$,
\begin{align*}
  \cL^Y f(x)=\frac{\partial f}{\partial s}(x)+\int_{(0,x)} (f(y)-f(x))\,\bar k(x,\mathrm dy),\ \forall x\in(0,+\infty).
\end{align*}

Let $M > \inf_{x>0} \cA\xi(x)/\xi(x)$ and define
$d^M:x\in(0,+\infty)\mapsto d(x)\wedge M$. Consider the semigroup
\begin{align*}
  S^M_tf(x):=\mathbb E_x\left(\exp\left(\int_0^t d^M(Y_s)\,\mathrm ds\right)f(Y_t)\right),\forall t\geq 0,x\in(0,+\infty).
\end{align*}
According to Proposition~2.1 in~\cite{Cavalli2019} (see also Proposition~3.4
in~\cite{BW-spec}), if $Y$ is recurrent, then $-\lambda_0(S^M)\geq \inf_{x>0}
d(x)$, with strict inequality if $d$ is not constant, where $\lambda_0(S^M)$ is
the growth coefficient of $S^M$ (beware of the difference of sign convention in
the definition of the growth coefficient in the cited works).

We therefore need to prove that $\lambda_0 \le \lambda_0(S^M)$.
For all $f\in C_c^{(s)}$ and for $f\equiv 1$, we have
\begin{align*}
  S^M_tf(x) 
  &= f(x)+\mathbb E_x\left(\int_0^{t} \exp\left(\int_0^u d^M(Y_s)\,\mathrm ds\right)\left(d^M(Y_u)  f(Y_u)+\cL^Y f(Y_u)\right)\,\mathrm du\right)\\
  &= f(x)+\int_0^t S_u^M(d^M\,f+\cL^Y f)(x)\,\mathrm du.
\end{align*}
Let
\[
  K^M(x)=\bar k(x,(0,x)) -d^M(x)
\]
and
\[
  \mathcal B^M f(x)=d^M(x)\,f(x)+\cL^Y f(x)=\frac{\partial f}{\partial s}(x)+\int_{(0,x)} f(y)\,\bar k(x,\mathrm dy)- K^M(x) f(x).
\]
The operator $\mathcal{B}^M$ is a growth-fragmentation operator just like
$\cA$, and indeed, it satisfies Assumption~\ref{assumption1} with $h'\equiv 1$
instead of $h$.  In particular, according to Theorem~\ref{thm:semigroup}, $S^M$
is the unique semigroup such that, for all $f\in C_c^{(s)}$ and for $f\equiv
1$, for all $t\geq 0$ and all $x\in(0,+\infty)$,
\[
  S^M_tf(x)=f(x)+\int_0^t S^M_u(\mathcal B^M f)(x)\,\mathrm du.
\]
We now define, for all $f\in\cD(\cA)$ and all $x\in(0,+\infty)$,
\begin{align*}
  \cA^M f(x)=\cA f(x)-(d(x)-d^M(x))f(x).
\end{align*}
Then
\[
\frac{\cA h(x)}{h(x)}-d(x)\leq \frac{\cA^M h(x)}{h(x)}\leq \frac{\cA h(x)}{h(x)}
\]
 with $d$ locally bounded, since $\xi$ is locally lower bounded away from 0 and since  $\frac{\partial \xi}{\partial s}$ and $\int_{(0,x)} \xi(y)\,k(x,\mathrm dy)$ are locally bounded by assumption.
Hence one easily
checks that  $\cA^M$ satisfies Assumption~\ref{assumption1}. 
Let $T^M$ be the
associated semigroup (whose existence and uniqueness  is ensured by
Theorem~\ref{thm:semigroup}). Since $\xi\in L^\infty(h)$ and 
$\mathcal A \xi/h\geq -\lambda_2\xi/h$ is lower bounded,
we deduce from
Corollary~\ref{cor:1} that
\begin{align*}
  T^M_t \xi(x)=\xi(x)+\int_0^t T^M_u(\cA^M \xi)(x)\,\mathrm du.
\end{align*}
In particular, the semigroup $\widetilde T^M$ defined, for all $f\in C_c^{(s)}$ and for $f\equiv 1$, by 
\[
  \widetilde T^M_t f(x)=\frac{1}{\xi(x)} T^M_t (\xi f)(x), \ \forall t\geq 0,\forall x\in(0,+\infty),
\]
satisfies, for all such $f$, $x$ and $t$,
\begin{align*}
  \widetilde T^M_t f(x)=f(x)+\int_0^t \frac{1}{\xi (x)} T^M_u(\cA^M (\xi f))(x)\,\mathrm du=f(x)+\int_0^t  \widetilde T^M_u(\widetilde \cA^M f)(x)\,\mathrm du
\end{align*}
where
\begin{align*}
  \widetilde \cA^M f(x)&=\frac{\cA^M (f\xi )(x)}{\xi (x)}\\
  &=\frac{\partial f}{\partial s}(x)+\int_{(0,x)} f(y)\,\bar k(x,\mathrm dy)-K(x)f(x)+\frac{1}{\xi (x)}\frac{\partial \xi }{\partial s}(x)f(x)
  \\
  & \quad {} 
  - (d(x)-d^M(x))f(x)\\
  &=\frac{\partial f}{\partial s}(x)+\int_{(0,x)} f(y)\,\bar k(x,\mathrm dy)+\left(\frac{\cA \xi (x)}{\xi (x)}-\bar k(x,(0,x)) -d(x)+d^M(x)\right)f(x)\\
  &=\mathcal B^M f(x).
\end{align*}
This entails that, for all non-negative measurable function $f:(0,+\infty)\to[0,+\infty)$, all $x\in(0,+\infty)$ and all $t\geq 0$,
\begin{align*}
  S_t^M f(x)=\widetilde T^M_t f(x)=\frac{1}{\xi (x)} T^M (\xi  f)(x).
\end{align*}
But, according to the representation of $T^M$ as the $1/h$ transform of a sub-Markov process (see Proposition~\ref{tNirr} and the conclusion of the proof of Theorem~\ref{thm:semigroup} in section~\ref{sec:endProofTh1}), we have
\begin{align*}
  \frac{1}{\xi (x)} T^M (\xi  f)(x)=\frac{h(x)e^{b^M t}}{\xi (x)}\mathbb E_x\left(f(X^M_t)\frac{\xi (X^M_t)}{h(X^M_t)}\1_{X^M_t\neq\partial}\right),
\end{align*}
where $X^M$ is a $(0,+\infty)\cup\{\partial\}$-valued PDMP with drift determined by $s$, jump kernel $\frac{h(y)}{h(x)}k(x,\mathrm dy)$ and killing rate (that is jump rate toward $\partial$) 
\[
  q^M(x)=b^M-\frac{\cA^M h(x)}{h(x)},\quad\text{with }
b^M=\sup_{x\in(0,+\infty)} \frac{\cA^M h(x)}{h(x)}\leq b.
\]
Moreover,
$b^M-q^M(x) = \frac{\cA^M h(x)}{h(x)} \le \frac{\cA h(x)}{h(x)} = b - q(x)$, so
\begin{align*}
  S_t^M f(x)
  = \frac{1}{\xi (x)} T^M_t (\xi  f)(x)
  &= \frac{h(x)e^{b^M t}}{\xi (x)}\mathbb E_x\left(\exp\left(-\int_0^t q^M(Z_u)\,\mathrm du \right)f(Z_t)\frac{\xi (Z_t)}{h(Z_t)}\right) \\
  &\le \frac{h(x)e^{b t}}{\xi (x)}\mathbb E_x\left(\exp\left(-\int_0^t q(Z_u)\,\mathrm du \right)f(Z_t)\frac{\xi (Z_t)}{h(Z_t)}\right)
  \\
  &= \frac{1}{\xi(x)} T_t(\xi f)(x)
\end{align*}
where $Z$ is a (conservative) PDMP with drift determined by $s$ and jump kernel $\frac{h(y)}{h(x)}k(x,\mathrm dy)$. 

We hence obtain, immediately from the definition, that $\lambda_0(S^M) \ge \lambda_0$,
which is what we needed to prove.

\begin{appendix}

\section{Appendix}
\label{sec:appendix}

Let $s$ be continuous (strictly) increasing function from $(0,+\infty)$ to $\R$ such that $s(+\infty)=+\infty$, let $Q$ be a non-negative kernel from $(0,+\infty)\cup\{\d\}$ to $(0,+\infty)\cup\{\d\}$ such that $Q(\d,(0,+\infty)\cup\{\partial\})=0$ and $Q(x,[x,+\infty))=0$ for all $x>0$, where $\d\notin (0,+\infty)$ is an isolated point. From now on, we set $E=(0,+\infty)\cup\{\d\}$. We consider the PDMP $X$ with state space $E$, directed by the flow $\phi$ defined by~\eqref{eq:flow} (with $\phi(\d,t)=\d$ for all $t\geq 0$) between its jumps and with jump kernel $Q$ (note that $\d$ is an absorption point for $X$).

In the following results, $C_b(E)$ denotes the set of bounded real valued continuous functions on~$E$ and $C_0(E)$ the set of bounded continuous function vanishing at infinity. We emphasize that its statement and proof can be easily adapted to the case where $X$ takes its values in $[0,+\infty)$ or $\R$.

The first part of the following proposition is proved by Davis in \cite[Theorem 27.6]{Davis}, when $\phi$ is generated by a Lipschitz vector field and $x\mapsto Q(x,(0,+\infty)\cup\{\d\})$ is continuous and bounded. In our case, we do not assume this regularity, but use instead  the fact that our state space is one dimensional.

\begin{proposition}
  \label{prop:feller}
  Assume that $\sup_{x\in (0,M)} Q(x,E)<+\infty$ for all $M>0$. Then the semigroup $T$ of $X$ maps $C_b(E)$ to itself.

  If in addition $s(0+)=-\infty$, $\sup_{x\in E}Q(x,E)<+\infty$ and, for all $M>0$, 
  we also have $\limsup_{x\to+\infty} Q(x,(0,M)\cup\{\d\})=\limsup_{x\to 0} Q(x,\{\d\})=0$, then the semigroup of $X$ is Feller, meaning that it maps $C_0(E)$ to itself and is strongly continuous on $C_0(E)$.
\end{proposition}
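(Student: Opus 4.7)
The plan is to decompose $T_t f$ by the number $N_t$ of jumps of $X$ before time $t$, writing $T_t f = \sum_{n\geq 0} T^{(n)}_t f$ with $T^{(n)}_t f(x) = \mathbb{E}_x[f(X_t)\1_{\{N_t=n\}}]$.  The change of variable $y=\phi(x,u)$ — under which $s(\mathrm dy)=\mathrm du$ since $s(\phi(x,u))=s(x)+u$ — turns the $n=0$ term into
\[
T^{(0)}_t f(x)=f(\phi(x,t))\,\exp\!\Bigl(-\int_x^{\phi(x,t)} Q(y,E)\,s(\mathrm dy)\Bigr),
\]
and the recursion into
\[
T^{(n+1)}_t f(x)=\int_x^{\phi(x,t)} e^{-\int_x^{y}Q(z,E)\,s(\mathrm dz)} \int_{(0,y)\cup\{\partial\}}\! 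Q(y,\mathrm dz')\,T^{(n)}_{s(x)+t-s(y)}f(z')\,s(\mathrm dy).
\]
The crucial point is that the inner kernel $Q(y,\cdot)$ no longer depends on $x$; the $x$-dependence of the integrand remains only through the endpoints and through the time $s(x)+t-s(y)$. This allows one to prove $T^{(n)}_t f\in C_b(E)$ by induction on $n$ (jointly with continuity in $t$), using continuity of $\phi$ and $s$ and local boundedness of $y\mapsto Q(y,E)$. For $x$ in a compact $[a,b]$, the path stays in $(0,\phi(b,t)]\cup\{\partial\}$ before time $t$, so the jump rate is bounded by some $\bar Q_{b,t}<\infty$ and $\|T^{(n)}_tf\|_{L^\infty([a,b])}\leq \|f\|_\infty(\bar Q_{b,t}t)^n/n!$, whence the series converges uniformly on compacts and $T_tf\in C_b(E)$.

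Under the additional hypotheses we prove $T_tf\in C_0(E)$ by the same induction. Since $s(0^+)=-\infty$ and $s(+\infty)=+\infty$, one has $\phi(x,u)\to 0$ (resp.\ $+\infty$) uniformly in $u\in[0,t]$ as $x\to 0^+$ (resp.\ $x\to+\infty$), so the $n=0$ case follows. For the induction step, we split $\int Q(\phi(x,u),\mathrm dz')\,T^{(n)}_{t-u}f(z')$ according to $z'\in(0,M)$, $z'\in[M,+\infty)$ or $z'=\partial$. As $x\to+\infty$, the first and third pieces vanish by $\limsup_{x\to+\infty}Q(x,(0,M)\cup\{\partial\})=0$, while the second is dominated by $\bar Q\,\sup_{z'\geq M}|T^{(n)}_{t-u}f(z')|$, which is made small by choosing $M$ large (inductive hypothesis). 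As $x\to 0^+$, jumps can only land in $(0,\phi(x,u))\cup\{\partial\}$, and the first contribution is controlled by $\sup_{z'<\phi(x,u)}|T^{(n)}_{t-u}f(z')|\to 0$ (by induction), the second by $Q(\phi(x,u),\{\partial\})\to 0$. Uniform summability of $\sum_n T^{(n)}_t f$ now follows from $\sup_E Q(\cdot,E)<\infty$, so $T_tf\in C_0(E)$.

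For strong continuity, we split
\begin{align*}
T_tf(x)-f(x) &= e^{-\int_0^t Q(\phi(x,u),E)\,\mathrm du}\bigl(f(\phi(x,t))-f(x)\bigr)\\
&\quad+ \bigl(e^{-\int_0^t Q(\phi(x,u),E)\,\mathrm du}-1\bigr)f(x) + \mathbb{E}_x\!\bigl[(f(X_t)-f(x))\1_{\sigma_1\leq t}\bigr].
\end{align*}
The last two terms are bounded by $3\|f\|_\infty(1-e^{-\bar Q t})$, which vanishes uniformly in $x$ as $t\to 0$. For the first term, $f\in C_0(E)$ extends continuously to the one-point compactification $\hat E$ (the added point being the common limit of $x\to 0^+$ and $x\to+\infty$), and is therefore uniformly continuous on $\hat E$; since $\phi(x,t)\to x$ in $\hat E$ uniformly in $x$ as $t\to 0$, the first term is uniformly small.

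The main obstacle is that $Q(y,\cdot)$ is not assumed to be continuous in $y$, so standard PDMP Feller arguments (as in Davis's Theorem~27.6) do not apply directly. The one-dimensional structure together with the monotonicity of jumps is used precisely to reduce each iterated integral — via the change of variable $y=\phi(x,u)$ — to an integration against $s(\mathrm dy)$ whose $x$-dependence is confined to the endpoints and to the time argument of $T^{(n)}$, thereby restoring continuity without any smoothness of $Q$.
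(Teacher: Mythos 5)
Your proof is correct in substance and rests on the same key device as the paper's, namely the one-dimensional change of variable $y=\phi(x,u)$ (equivalently, the identity $\phi(x+\varepsilon,u)=\phi(x,u+s(x+\varepsilon)-s(x))$), which confines the $x$-dependence to the endpoints, the exponential weight and the time argument, and thereby removes any need for continuity of $Q(\cdot,\dd y)$. The organisation differs: for the $C_b$ part you expand $T_t$ as the Neumann series over jump counts and induct, where the paper follows Davis's fixed-point iteration and only has to check that the single operator $G$ preserves $C_b(\R_+\times E)$; for the $C_0$ part you again induct on the jump count, where the paper truncates the jump count probabilistically and uses that on $\{T_n\le t<T_{n+1}\}$ the law of $X_t$ stochastically dominates the iterate $Q^n(x,\cdot)$, pushing the hypothesis $\limsup_{x\to+\infty}Q(x,(0,M)\cup\{\d\})=0$ through the iterates. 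Your version is more explicit but requires one strengthening you gloss over: in the inductive step the quantity $\sup_{z'\ge M}\lvert T^{(n)}_{t-u}f(z')\rvert$ must be made small \emph{uniformly in $u\in[0,t]$}, so the inductive hypothesis should be that $\sup_{s\in[0,t]}\sup_{z'\ge M}\lvert T^{(n)}_{s}f(z')\rvert\to 0$ as $M\to\infty$ (and symmetrically at $0$); this holds for $n=0$ since $\phi(z',s)\ge z'$, and the recursion propagates it, but as written the step is incomplete. Two further small points: your strong-continuity decomposition is off by the term $f(x)(1-e^{-\int_0^tQ(\phi(x,u),E)\,\dd u})$ (the correct identity is $T_tf(x)-f(x)=e^{-\Lambda}(f(\phi(x,t))-f(x))+\E_x[(f(X_t)-f(x))\1_{\sigma_1\le t}]$), which does not affect the bound; and your use of uniform continuity on the one-point compactification is a clean repackaging of the paper's three-region splitting of $\lvert f(\phi(x,t))-f(x)\rvert$, valid here because $s(0+)=-\infty$ keeps $\phi(x,t)$ near $0$ when $x$ is, and $\phi(x,t)\ge x$ keeps it near infinity when $x$ is.
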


\begin{proof}
  We start by showing the first part, and then the second part of Proposition~\ref{prop:feller}.

  \textbf{(1) $T$ maps $C_b(E)$ to itself.} Our proof is a simple adaptation of the proof of~\cite[Theorem 27.6]{Davis} to our particular one-dimensional setting.  Since $s(+\infty)=+\infty$, the explosion time of $\phi(x,\cdot)$ (denoted by $t_*(x)$ in the cited reference) is equal to infinity for all $x\in E$. Moreover, since $\sup_{x\in (0,M)} Q(x,E)<+\infty$, the process $X$ is non-explosive (as detailed in the first step of the proof of Proposition~\ref{tNirr}) and well defined for all time $t\geq 0$, for any initial distribution. Finally, 
  $Q(x,E)$ is uniformly bounded over $x\in E$.

  The only  difference with the proof of~\cite[Theorem 27.6]{Davis} is that, in our case, it is not immediate that, for any $\psi\in C_b(\R_+\times E)$ and $f\in C_b(E)$, the term
  \begin{align*}
    G\psi(x,t):=f(\phi(x,t)) e^{-\Lambda(t,x)}+\int_0^t \int_E \psi(t-u,y) Q(\phi(x,u),\mathrm dy)e^{-\Lambda(x,u)}\,\mathrm du
  \end{align*}
  where
  \[
    \Lambda(x,t):=\int_0^t Q(\phi(x,u),E)\mathrm du,
  \]
  is continuous in $(t,x)\in[0,+\infty)\times E$ and bounded. The rest of the proof is identical to the one of~\cite[Theorem 27.6]{Davis} and we thus only need to prove that $G\psi\in C_b(\R_+\times E)$ to conclude. 

  First note that $\|G\psi\|_\infty\leq \|f\|_\infty+\|\psi\|_{\infty}$, so that it is bounded. It only remains to prove that $G\psi$ is continuous. Since $Q(\d,\mathrm dy)=0$ and since $\phi(\d,t)=\d$ for all $t\geq 0$, we have $G\psi(\d,t)=f(\d)$ for all $t\geq 0$ and hence $G\psi$ is continuous on $\{\d\}\times [0,+\infty)$. Now let $(x,t)\in(0,+\infty)\times[0,+\infty)$ and $(\varepsilon,h)\in\R\times \R$ such that $(x+\varepsilon,t+h)\in(0,+\infty)\times [0,+\infty)$. We have, for all $u\geq 0$, denoting $\delta_{x,\varepsilon}:=s(x+\varepsilon)-s(x)$
  \begin{align}
    \label{eq:AppUseful1}
    \phi(x+\varepsilon,u)=s^{-1}(s(x+\varepsilon)+u)=s^{-1}(s(x)+(u+s(x+\varepsilon)-s(x)))=\phi(x,u+\delta_{x,\varepsilon}).
  \end{align}
  In particular, 
  \begin{align*}
    \Lambda(x+\varepsilon,t+h)&=\int_0^{t+h}Q(\phi(x+\varepsilon,u),E)\mathrm du\\
    &=\int_0^{t+h}Q(\phi(x,u+\delta_{x,\varepsilon}),E)\mathrm du\\
    &=\int_{\delta_{x,\varepsilon}}^{t+h+\delta_{x,\varepsilon}} Q(\phi(x,u),E)\mathrm du,
  \end{align*}
  so that $\Lambda$ is continuous and more precisely
  \begin{align}
    \label{eq:AppUseful2}
    \left|\Lambda(x+\varepsilon,t+h)-\Lambda(x,t)\right|\leq (2\delta_{x,\varepsilon}+h)\,\sup_{y\in(0,\phi(x,t+h+\delta_{x,\varepsilon}))}Q(y,E) .
  \end{align}
  Using again~\eqref{eq:AppUseful1}, we also obtain
  \begin{multline*}
    \int_0^{t+h} \int_E \psi(t-u,y) Q(\phi(x+\varepsilon,u),\mathrm dy)e^{-\Lambda(x+\varepsilon,u)}\,\mathrm du\\
    \begin{aligned}
      &=\int_0^{t+h} \int_E \psi(t-u,y) Q(\phi(x,u+\delta_{x,\varepsilon}),\mathrm dy)e^{-\Lambda(x+\varepsilon,u)}\,\mathrm du\\
      &=\int_{\delta_{x,\varepsilon}}^{t+h+\delta_{x,\varepsilon}} \int_E \psi(t-u-\delta_{x,\varepsilon},y) Q(\phi(x,u),\mathrm dy)e^{-\Lambda(x+\varepsilon,u-\delta_{x,\varepsilon})}\,\mathrm du.
    \end{aligned}
  \end{multline*}
  By dominated convergence, continuity of $\psi$ and of $\Lambda$, we deduce that the last term converges to $\int_0^{t} \int_E \psi(t-u,y) Q(\phi(x,u),\mathrm dy)e^{-\Lambda(x,u)}\,\mathrm du$ when $(\varepsilon,h)\to 0$. In particular, using this and the continuity of $\phi$, of $f$ and of $\Lambda$, we deduce that $G\psi(x,t)$ is indeed continuous in $(x,t)$, which concludes the proof of the first part of Proposition~\ref{prop:feller}.

  \textbf{(2) $T$ maps $C_0(E)$ to itself.} We assume that $s(0+)=-\infty$, that $\sup_{x\in E}Q(x,E)<+\infty$ and that, for all $M>0$, $\limsup_{x\to+\infty} Q(x,(0,M)\cup\{\d\})=\limsup_{x\to0} Q(x,\{\d\})=0$. 

  Let $f\in C_0(E)$, fix $\varepsilon>0$, and let $n_0$ be large enough that $\sup_{x\in(0,1/n_0)\cup(n_0,+\infty)} f(x)\leq \varepsilon$.

  Denoting by $T_1<T_2<\cdots$ the successive jump times of $X$, we deduce from the boundedness of $Q(\cdot,E)$, that, for all $t\geq 0$,
  \[
    \sup_{x\in E} \P(T_n\leq t\,\mid X_0=x)\xrightarrow[n\to+\infty]{} 0.
  \]
  Fix $n_1$ such that $\sup_{x\in E} \P(T_{n_1}\leq t\mid X_0=x)\leq \varepsilon$. 
  Since the process $X$ is almost-surely non-decreasing between the jumps, its law at time $t$ on the event $T_n \leq  t < T_{n+1}$ stochastically dominates the $n^{th}$ iterate of $Q$, denoted by $Q^n$ (consider that $\d$ is below $0$). By assumption we have $\limsup_{x\to+\infty} Q(x,(0,n_0)\cup\{\d\})=0$, so that, for all $n\geq 0$, $\limsup_{x\to+\infty} Q^n(x,(0,n_0)\cup\{\d\})=0$, and hence there exists $n_2\geq 1$ such that, for all $n\in\{0,\ldots,n_1\}$,
  \[
    \sup_{x\geq n_2} \P(X_t< n_0\,T_n\leq t\leq T_{n+1}\mid\,X_0=x)\leq \varepsilon/(n_1+1).
  \]
  In particular,
  \begin{align*}
    \sup_{x\geq n_2} \P(X_t< n_0\,\mid\,X_0=x)
    & \leq \sup_{x\geq n_2} \sum_{n=0}^{n_1} \P(X_t< n_0\,T_n\leq t<T_{n+1}\mid\,X_0=x)
    \\
    & \quad {} +\sup_{x\geq n_2} \P(T_{n_1}\leq t\mid X_0=x) \leq 2\,\varepsilon.
  \end{align*}
  As a consequence,
  \[
    \sup_{x\geq n_2} \E(f(X_t)\,\mid \, X_0=x)\leq 2\varepsilon\|f\|_\infty+\varepsilon.
  \]
  Since the existence of $n_2$ is true for any fixed $\varepsilon>0$, we deduce that
  \[
    \E(f(X_t)\mid X_0=x)\xrightarrow[x\to+\infty]{} 0.
  \]
  Now, since $s(x)\xrightarrow[x\to 0]{}-\infty$, we deduce that $\phi(x,t)\to 0$ when $x\to 0$. Since $X_t\leq \phi(x,t)$ or $X_t=\d$ almost surely when it starts from $x$ at time $0$, we deduce that, if $f$ vanishes at $0$, then so does $T_tf(x)=\E(f(X_t)\1_{X_t\neq \d}\mid X_0=x)$ when $x\to0$. Moreover, the jumping rate from $y$ to $\d$ goes to $0$ when $y\to0$, so that $\P(X_t=\d\mid X_0=x)\to 0$ when $x\to0$. Finally, we deduce that $T_tf(x)\to 0$ when $x\to 0$ or $x\to+\infty$.

  We conclude that $T_t$ maps the space of continuous functions vanishing at $0$ and infinity to itself.

  \textbf{(3) $T$ is strongly continuous.} We proceed under the same assumptions as in step~(2).  Let $f$ be in the  space of continuous functions vanishing at $0$ and infinity. 
  Fix $\varepsilon>0$. Since $Q(\cdot,E)$ is uniformly bounded, say by a constant $C$, then the probability that the process has no jumps between times $0$ and $t$ is larger than $e^{-tC}$, for any $t\geq 0$. Hence
  \begin{align}
    \label{eq:feller1}
    \left| T_t f(x)-f(\phi(x,t))\right|\leq 1-e^{-tC}\|f\|_\infty.
  \end{align}
  Since $f$ vanishes at infinity, there exists $n_3$ large enough so that $f(x)\leq \varepsilon$ for all $x\geq n_3$ or $x<1/n_3$. Since we have $\phi(x,t)\geq x$ for all starting position $x\geq n_3$ and $t\geq 0$, we deduce that 
  \begin{align}
    \label{eq:feller2}
    \sup_{x\geq n'}\left|f(\phi(x,t))-f(x)\right|\leq 2\,\sup_{x\geq n'}|f(x)|\leq 2\varepsilon.
  \end{align}
  Similarly, $\phi(x,t)\leq \phi(1/(n_3+1),t)$ for all starting position $x\leq 1/(n_3+1)$. Since $\phi(1/(n_3+1),t)\to 1/(n_3+1)$ when $t\to0$, there exists $t_0>0$ such that $\phi(x,t)\leq 1/n_3$ for all $x\leq 1/(n_3+1)$ and $t\in[0,t_0]$. We deduce that
  \begin{align}
    \label{eq:feller3}
    \sup_{x\leq 1/(n_3+1)}\left|f(\phi(x,t))-f(x)\right|\leq 2\varepsilon.
  \end{align}
  Finally, $\phi(x,t)$ converges to $x$ when $t\to 0$, uniformly on compact sets and $f$ is uniformly continuous on $[1/(n_3+1),n_3]$, so that there exists $t_1>0$ such that
  \[
    \sup_{x\in [1/(n_3+1),n_3]} \left| f(\phi(x,t))-f(x)\right|\leq \varepsilon,\ \forall t\leq t_1.
  \]
  Using the last equation and inequalities~\eqref{eq:feller1}, \eqref{eq:feller2} and~\eqref{eq:feller3}, we deduce that there exists $t_2>0$ such that, for all $t\leq t_2$, (note that the case $x=\d$ is trivial)
  \[
    \sup_{x\in E} \left|T_t f(x)-f(x)\right|\leq 3\varepsilon.
  \]
  Since this is true for any $\varepsilon>0$, we deduce that $T_t f$ converges to $f$ in the uniform topology. This means that $T$ is a strongly continuous semigroup on $C_0(E)$ and concludes the proof of Proposition~\ref{prop:feller}.

\end{proof}

In the following result, we characterize the infinitesimal generator of $X$ when its semigroup is Feller. We recall that, if $s^{-1}:(-\infty,+\infty)\to(0,+\infty)$, then, given a function $f:(0,+\infty)\to\R$ such that $f\circ s^{-1}$ is absolutely continuous, the function $f\circ s^{-1}$ is $\lambda_1$-almost everywhere differentiable and that, for any function $g$ equals to this derivative $\lambda_1$-almost everywhere, we have
\[
  f\circ s^{-1}(t)-f\circ s^{-1}(u) = \int_u^t g(v)\,\mathrm dv,\quad\forall u\leq t\in(-\infty,+\infty).
\]
One easily checks that, as a consequence, $f$ is differentiable with respect to $s$, $\lambda_1\circ s$-almost everywhere, with derivative $h=g\circ s$, and that
\[
  f(y)-f(x) = \int_x^y h(z)\,\mathrm ds(z),\quad\forall x\leq y\in(0,+\infty).
\]
In this case, we will say that $f$ is $s$-absolutely continuous and that $h$ is a $s$-derivative of $f$.
We consider the domain $\mathcal D(\mathcal U)$, defined as the set functions $f:E\to\RR$ such that $f\rvert_{(0,+\infty)}$ is an $s$-absolutely continuous function  admitting  a $s$-derivative $h$ such that $x\mapsto h(x)+\int_{(0,x)} (f(y)-f(x)) Q(x,\mathrm dy)$ is an element of $C_0(E)$, where we set $\frac{\partial f}{\partial s}(\partial)=0$. We also define the operator $\mathcal U:\cD(\mathcal U)\to C_0(E)$ by
\[
  \mathcal U f(x)=\frac{\d f}{\d s}(x)+\int_{(0,x)} (f(y)-f(x))\,Q(x,\mathrm dy),\ \forall x\in E,
\]
where $\frac{\d f}{\d s}$ is the $s$-derivative of $f$ extended with $\frac{\partial f}{\partial s}(\partial)=0$ and such that $x\in E\mapsto \frac{\d f}{\d s}(x)+\int_{(0,x)} (f(y)-f(x)) Q(x,\mathrm dy)\in C_0(E)$.

\begin{proposition}
  Assumes that $s(0+)=-\infty$, that $\sup_{x\in E}Q(x,E)<+\infty$ and that, for all $M>0$, we have $\limsup_{x\to+\infty} Q(x,(0,M)\cup\{\d\})=\limsup_{x\to 0} Q(x,\{\d\})=0$. Then the infinitesimal generator of the semigroup $T$ of $X$ acting on $C_0(E)$ is given by  $(\mathcal U,\cD(\mathcal U))$. Moreover, for all bounded $f:E\to\RR$ such that $f\rvert_{ (0,+\infty)}$ is $s$-absolutely continuous, denoting by  $\d f/\d s$ any $s$-derivative of $f\rvert_{ (0,+\infty)}$ extended with $\frac{\partial f}{\partial s}(\d)=0$,
  \[
    M_t^f:=f(X_t)-f(x)-\int_0^t Q_u\widetilde{\mathcal U} f(X_s)\,\mathrm ds,
  \]
  with
  \[
    \widetilde{\mathcal U}f(y):=\frac{\d f}{\d s}(y)+\int_{(0,x)} (f(y)-f(x))\,Q(x,\mathrm dy),
  \]
  defines a local martingale under $\mathbb P_x$, for any $x\in E$.
\end{proposition}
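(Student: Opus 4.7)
The plan has two parts: identifying the generator on $C_0(E)$ with $(\mathcal{U}, \cD(\mathcal{U}))$, then verifying the local martingale property.

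For the inclusion $\cD(\mathcal{U}) \subset \cD(A)$, where $A$ denotes the Feller generator of $T$, I would fix $f \in \cD(\mathcal{U})$ and compute $T_t f$ by decomposing the PDMP trajectory along its jump times $0 = J_0 < J_1 < \cdots$. Between jumps the process follows the flow $\phi$ with $s$-derivative one, and by $s$-absolute continuity of $f$ we have
\[
f(X_{J_i-}) - f(X_{J_{i-1}}) = \int_{J_{i-1}}^{J_i} \frac{\partial f}{\partial s}(X_u)\,\mathrm du.
\]
Summing these continuous increments with the compensated jump contributions $f(X_{J_i}) - f(X_{J_i-})$ yields $T_t f(x) - f(x) = \int_0^t T_u \mathcal{U} f(x)\,\mathrm du$. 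Since $\mathcal{U} f \in C_0(E)$ by definition of $\cD(\mathcal{U})$ and $T$ is strongly continuous on $C_0(E)$ by Proposition~\ref{prop:feller}, dividing by $t$ and letting $t \downarrow 0$ in sup-norm shows $f \in \cD(A)$ with $Af = \mathcal{U} f$.

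For the reverse inclusion $\cD(A) \subset \cD(\mathcal{U})$, I would invoke Hille--Yosida. For $\lambda > 0$, the resolvent $R_\lambda g(x) := \int_0^\infty e^{-\lambda u} T_u g(x)\,\mathrm du$ is a bijection from $C_0(E)$ onto $\cD(A)$ with inverse $\lambda I - A$, so it suffices to show that $R_\lambda g \in \cD(\mathcal{U})$ with $(\lambda I - \mathcal{U}) R_\lambda g = g$ for every $g \in C_0(E)$. Expanding $R_\lambda g$ as an infinite series indexed by the number of jumps of $X$ via the standard PDMP iteration, each term is an integral of $e^{-\lambda u}(g \circ \phi)$ weighted by exponentials of the jump compensator; these are $s$-absolutely continuous in $x$ because $\phi$ has $s$-derivative one, and uniform convergence of the series under $\sup_x Q(x,E) < \infty$ transfers $s$-absolute continuity to $R_\lambda g$ itself. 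A direct substitution then gives $(\lambda I - \mathcal{U}) R_\lambda g = g \in C_0(E)$.

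For the local martingale claim, I would replicate the argument used in the proof of Proposition~\ref{tNirr} to obtain the extended generator of $X$. The key identity $f(X_{J_i-}) - f(X_{J_{i-1}}) = \int_{J_{i-1}}^{J_i} (\partial f/\partial s)(X_u)\,\mathrm du$ remains valid for any $s$-absolutely continuous $f$ and any chosen $s$-derivative, since $X_u = \phi(X_{J_{i-1}}, u - J_{i-1})$ on $(J_{i-1}, J_i)$, and $s$-absolute continuity is exactly the Newton--Leibniz formula along the $s$-scale. Davis's Theorem 26.14, modified as in Proposition~\ref{tNirr} to accommodate the flow driven by $s$ rather than a Lipschitz vector field, then places $f$ in the domain of the extended generator, yielding that $M^f$ is a local martingale with localising sequence $\tau_k = \inf\{t : X_t \notin (1/k,k) \cup \{\partial\}\}$ from the non-explosion established there; boundedness of $f$ together with local boundedness of $\widetilde{\mathcal{U}} f$ makes $M^f$ bounded on each $[0,t \wedge \tau_k]$, confirming the stopped process is a true martingale. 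The main obstacle is the reverse inclusion $\cD(A) \subset \cD(\mathcal{U})$: rigorously transferring $s$-absolute continuity through the infinite PDMP series for $R_\lambda g$ requires careful bookkeeping to interchange the absolute-continuity property with the sum, even though the bounded jump-rate assumption keeps the series under uniform control.
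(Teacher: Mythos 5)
Your treatment of the forward inclusion $\cD(\mathcal U)\subset\cD(A)$ and of the local martingale claim follows essentially the paper's route: both rest on the adaptation of Davis's Theorem~26.14 to the flow generated by $s$, the identity $f(X_{J_i-})-f(X_{J_{i-1}})=\int_{J_{i-1}}^{J_i}\frac{\d f}{\d s}(X_u)\,\mathrm du$ for $s$-absolutely continuous $f$, and then Dynkin's formula combined with the strong continuity of $T$ on $C_0(E)$ from Proposition~\ref{prop:feller}. That part is fine. (One caveat: your remark that local boundedness of $\widetilde{\mathcal U}f$ makes the stopped process a true martingale is not justified, since an arbitrary $s$-derivative of an $s$-absolutely continuous $f$ is only locally integrable, not locally bounded; but this is harmless because the statement only asks for a local martingale, which Davis's extended-generator characterisation already delivers.)

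The reverse inclusion $\cD(A)\subset\cD(\mathcal U)$ is where you genuinely diverge from the paper, and where your argument has a gap. You expand the resolvent $R_\lambda g$ as the PDMP jump series and claim that ``uniform convergence of the series under $\sup_xQ(x,E)<\infty$ transfers $s$-absolute continuity to $R_\lambda g$ itself.'' This is false as stated: absolute continuity is not preserved under uniform limits (the Cantor function is a uniform limit of absolutely continuous, even piecewise linear, functions). What you actually need is convergence of the $s$-derivatives of the partial sums in $L^1_{\mathrm{loc}}(\lambda_1\circ s)$, or a geometric bound on those derivatives; this is obtainable here (differentiating the $n$-jump term along the flow produces a factor controlled by $\lambda$ and $\sup_xQ(x,E)$, giving geometric decay for $\lambda$ large, and one value of $\lambda$ suffices for the Hille--Yosida identification of the domain), but it is precisely the step you wave at, and as written the argument does not close. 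The paper avoids this entirely: it observes that any $f\in\cD(A)$ automatically lies in the domain of the \emph{extended} generator, so the converse direction of Davis's Theorem~26.14 already forces $f\rvert_{(0,+\infty)}$ to be $s$-absolutely continuous; it then compares the two local martingales $M_t=f(X_t)-f(x)-\int_0^t\widetilde{\mathcal U}f(X_u)\,\mathrm du$ and $M_t'=f(X_t)-f(x)-\int_0^t Af(X_u)\,\mathrm du$, whose difference is a continuous local martingale of bounded variation, hence constant, so $\widetilde{\mathcal U}f(X_u)=Af(X_u)$ a.e.\ along trajectories; finally, the positive probability of following the flow without jumping upgrades this to $\widetilde{\mathcal U}f=Af$ $\lambda_1\circ s$-a.e., so that after modifying the chosen $s$-derivative on a null set one gets $\mathcal U f=Af\in C_0(E)$. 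If you want to keep your resolvent route, you must supply the derivative estimates for the series; otherwise the paper's extended-generator comparison is the cleaner repair.
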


\begin{proof}
  Let us denote by $\mathcal G$ the infinitesimal generator of $T$ and by $\cD(\mathcal G)$ its domain. Our aim is to prove that $(\mathcal G,\cD(\mathcal G))=(\mathcal U,\cD(\mathcal U))$.

  We make use of the fact that the proof of Theorem~26.14 in~\cite{Davis} adapts  directly to our situation where the flow $\phi$ is generated by $s$ on $(0,+\infty)$ and by $0$ on $\d$, instead of a Lipschitz flow $\mathcal X$ on $\R^ d$. The only adaptation lies in the fact that, between two successive jumps, say at times $T_{i-1}$ and $T_i$, and for any function $f$ such that $f\rvert_{(0,+\infty)}$ is $s$-absolutely continuous, we have (for the second equality, recall that $\frac{\mathrm d f}{\mathrm du}(\phi(y,u))=\frac{\d f}{\d s}(\phi(y,u))$ as soon as the derivative is well defined) if $X_{T_{i-1}}\in (0,+\infty)$
  \begin{align*}
    f(X_{T_i-})-f(X_{T_{i-1}})&=\int_{0}^{T_i-T_{i-1}} \frac{\mathrm d f}{\mathrm du}(\phi(X_{T_{i-1}},u))\,\mathrm du\\
    &=\int_{0}^{T_i-T_{i-1}} \frac{\d f}{\d s}(\phi(X_{T_{i-1}},u))\,\mathrm du\\
    &=\int_{T_{i-1}}^{T_i}  \frac{\d f}{\d s}(X_v)\,\mathrm dv,
  \end{align*}
  instead of $f(X_{T_i-})-f(X_{T_{i-1}})=\int_{T_{i-1}}^{T_i}  \mathcal X f(X_v)\,\mathrm dv$ in~\cite{Davis}, while $f(X_t)-f(X_{T_{i-1}})=0$ for all $t\geq T_{i-1}$  if $X_{T_{i-1}}=\partial$.

  In particular, this result implies that any bounded $f:E\to\RR$ such that $f\rvert_{ (0,+\infty)}$ is $s$-absolutely continuous is in the domain of the extended infinitesimal generator of $X$, say   $\mathcal U'$, and that $\mathcal U' f(x)=\frac{\d f}{\d s}(x)+\int_{(0,x)} (f(y)-f(x))\,Q(x,\mathrm dy)$, for any $s$-derivative $\d f/\d s$ of $s$ (note that conditions 2. and 3. of Theorem~26.14 in~\cite{Davis} are trivially satisfied in our case, respectively because the boundary of the domain is not reached and because the number of jumps is finite in any finite time horizon almost surely). This proves that $M^f$ is a local  martingale under $\mathbb P_x$, for all $x\in E$.

  In particular, given $f\in\cD(\mathcal U)$, the stochastic process defined, for all $t\geq 0$, by
  \[
    M^f_t=f(X_t)-f(x)-\int_0^t \mathcal U f(X_u)\,\mathrm du
  \]
  is a local martingale under $\mathbb P_x$, for all $x\in E$. Since $f$ and $\mathcal U f$ are bounded and $M^f$ is c\`adl\`ag, we deduce that it is a martingale and thus, taking the expectation, we obtain
  \[
    \frac{T_t f(x)-f(x)}{t}=\frac{1}{t}\int_0^t T_u \mathcal U f(x)\,\mathrm du,\ \forall x\in E.
  \]
  Moreover, since $T$ is strongly continuous on $C_0(E)$ by Proposition~\ref{prop:feller} and since $\mathcal U f\in C_0(E)$ by assumption, for all $x\in E$,
  \[
    \left| \frac{1}{t}\int_0^t T_u \mathcal U f(x)\,\mathrm du-\mathcal U f(x)\right|\leq  \frac{1}{t}\int_0^t \left\|T_u \mathcal U f-\mathcal Uf\right\|_\infty\,\mathrm du\xrightarrow[t\to 0]{}0.
  \]
  We conclude that, for any $f\in\cD(\mathcal U)$, we have $f\in\cD(\mathcal G)$ and $\mathcal G f=\mathcal U f$.

  Reciprocally, assume that $f\in\cD(\mathcal G)$. Then $f$ is in the domain of the extended infinitesimal generator of $X$, so that, according to Theorem~26.14 in~\cite{Davis}, $f\rvert_{ (0,+\infty)}$ is $s$-absolutely continuous and
  \[
    M_t=f(X_t)-f(x)-\int_0^t \left[\frac{\d f}{\d s}(X_u)+\int_{(0,X_u)} (f(y)-f(X_u)) Q(X_u,\mathrm dy)\right]\,\mathrm du
  \]
  is a local martingale under $\mathbb P_x$ for all $x\in E$, where $\frac{\d f}{\d s}$ is an $s$-derivative of $f\rvert_{ (0,+\infty)}$ extended by $\frac{\d f}{\d s}(\d)=0$. Moreover, denoting by $\mathcal G$ the infinitesimal generator of $X$, we have that
  \[
    M'_t=f(X_t)-f(x)-\int_0^t \mathcal G f(X_u)\,\mathrm du
  \]
  is a martingale under $\mathbb P_x$. In particular, $M_t-M'_t$ is a continuous local martingale with bounded total variation and hence it is constant $\P_x$-almost surely. We deduce that, $\mathbb P_x$-almost surely, 
  \begin{align*}
    \frac{\d f}{\d s}(X_u)+\int_{(0,X_u)} (f(y)-f(X_u)) Q(X_u,\mathrm dy)=\mathcal G f(X_u),\quad\lambda_1(\mathrm du)-\text{almost everywhere}.
  \end{align*}
  Since the jump rate $Q$ is bounded, we know that, for any $t>0$, with positive probability, $X_u=\phi(x,u)$ for all $u\in[0,t]$. This and the previous equality entails that $\frac{\d f}{\d s}(z)+\int_{(0,z)} (f(y)-f(z)) Q(z,\mathrm dy)$ equals $\mathcal G f(z)$ for $\lambda_1\circ s$-almost every $z\geq x$. Since this is true for all $x>0$, we deduce that, up to a modification of $\d f/\d s$ on a $\lambda_1\circ s$-negligible set, $z\mapsto \frac{\d f}{\d s}(z)+\int_{(0,z)} (f(y)-f(z)) Q(z,\mathrm dy)=\mathcal Gf(z)\in C_0(E)$, so that $f\in \cD(\mathcal U)$ and $\mathcal G f=\mathcal U f$.
\end{proof}

We conclude this appendix by two results on the uniqueness of the martingale problem for compactly supported and/or regular functions. Here uniqueness refers to the uniqueness of the finite dimensional distributions. In particular, it entails that two c\`adl\`ag solutions to the martingale problem are indistinguishable.

\begin{proposition}
  \label{prop:martProblem}
  Take the assumptions of the previous proposition.    Let $D$ be the space of  compactly supported functions $f:E\to\R$ such that $f\rvert_{ (0,+\infty)}$ is  $s$-absolutely continuous  and such that $\d f/\d s$ is bounded, with the extension $\frac{\d f}{\d s}(\d)=0$. Then the $(\mathcal U,D)$ martingale problem is well posed, and its unique solution is the Markov process $X$.
\end{proposition}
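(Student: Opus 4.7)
The argument splits into existence and uniqueness.

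For \emph{existence}, every $f \in D$ is bounded (compact support) with bounded $\partial f/\partial s$, and the standing assumption $\sup_{x \in E} Q(x, E) < \infty$ controls $\int_{(0,x)}(f(y)-f(x))\,Q(x,\mathrm{d}y)$ by $2\|f\|_\infty \sup_x Q(x,E)$, so $\widetilde{\mathcal{U}} f$ is bounded. The preceding proposition established that $M^f_t := f(X_t) - f(X_0) - \int_0^t \widetilde{\mathcal{U}} f(X_u)\,\mathrm{d}u$ is a local martingale under $\mathbb{P}_x$; boundedness then upgrades $M^f$ to a true martingale on each compact time interval, so $X$ solves the $(\mathcal{U}, D)$-MP.

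For \emph{uniqueness}, we leverage the Feller structure. The preceding proposition identifies $(\mathcal{U}, \cD(\mathcal{U}))$ as the generator of the strongly continuous semigroup $T$ on $C_0(E)$. By the classical theory (e.g.\ Theorem~4.4.1 of~\cite{EK-mp}), the $(\mathcal{U}, \cD(\mathcal{U}))$-martingale problem is well posed with $X$ as its unique càdlàg solution. It therefore suffices to show that any càdlàg solution $Y$ of the $(\mathcal{U}, D)$-MP also solves the $(\mathcal{U}, \cD(\mathcal{U}))$-MP. Given $g \in \cD(\mathcal{U})$, note that $\partial g/\partial s = \mathcal{U} g - \int(g(y)-g(\cdot))\,Q(\cdot,\mathrm{d}y)$ is bounded, since both terms on the right are (the first by $\mathcal{U}g \in C_0(E)$, the second by $\|g\|_\infty$ and $\sup_x Q(x,E) < \infty$). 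Approximate $g$ by $g_n \in D$ defined as $g_n = g\chi_n$ on $(0,+\infty)$ and $g_n(\partial) = g(\partial)$, where $\chi_n$ is a smooth cutoff with $0 \le \chi_n \le 1$, $\chi_n \equiv 1$ on $[1/n, n]$, supported in $[1/(n+1), n+1]$, and $\|\partial\chi_n/\partial s\|_\infty$ uniformly bounded in $n$. Pointwise convergence $g_n \to g$ and $\mathcal{U} g_n \to \mathcal{U} g$ with uniform boundedness then permits dominated convergence in the martingale identity $\mathbb{E}[g_n(Y_t)-g_n(Y_s) - \int_s^t \mathcal{U}g_n(Y_u)\,\mathrm{d}u \mid \mathcal{F}_s] = 0$, extending it from $g_n$ to $g$.

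The \emph{main obstacle} is the uniform control of the two error terms introduced by the cutoff: the derivative piece $g \cdot \partial\chi_n/\partial s$ (globally bounded by $\|g\|_\infty \sup_n\|\partial\chi_n/\partial s\|_\infty$ and vanishing pointwise) and the jump-integral piece $\int g(y)(\chi_n(y)-1)\,Q(x,\mathrm{d}y) - g(x)(\chi_n(x)-1)\,Q(x,(0,x))$. For the latter, the Feller vanishing hypotheses $\limsup_{x\to\infty} Q(x,(0,M) \cup \{\partial\}) = 0$ and $\limsup_{x\to 0} Q(x,\{\partial\}) = 0$ ensure that the $Q$-mass supported where $\chi_n \neq 1$ becomes arbitrarily small as $x$ approaches the boundary of $(0,+\infty)$, which combined with $\sup_x Q(x,E) < \infty$ in the interior closes the dominated convergence argument and identifies the law of $Y$ with that of $X$.
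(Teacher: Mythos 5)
Your strategy is essentially the paper's: show $X$ solves the $(\mathcal U,D)$-MP, then show that any solution of the $(\mathcal U,D)$-MP also solves the $(\mathcal U,\cD(\mathcal U))$-MP by approximating each $g\in\cD(\mathcal U)$ by elements of $D$ in the bounded pointwise sense, and conclude from the fact that $(\mathcal U,\cD(\mathcal U))$ generates a strongly continuous semigroup on $C_0(E)$ (Hille--Yosida plus Theorem~4.4.1 of~\cite{EK-mp}) that the latter problem is well posed. The only real difference is the form of the approximation: you use a multiplicative cutoff $g_n=g\chi_n$, whereas the paper truncates additively, keeping $f$ on $(\nicefrac1n,n)$ and extending it outside by functions of $s$-slope $\pm 1$ tapered to zero. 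Both work, and your preliminary observation that $\partial g/\partial s$ is globally bounded for $g\in\cD(\mathcal U)$ (read off from $\partial g/\partial s=\mathcal U g-\int(g(y)-g(\cdot))Q(\cdot,\dd y)$) is exactly what makes $g\chi_n\in D$.

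One technical slip: you require $\chi_n$ to drop from $1$ to $0$ on $[1/(n+1),1/n]$ and on $[n,n+1]$ while keeping $\sup_n\|\partial\chi_n/\partial s\|_\infty<\infty$. These two demands are incompatible for general $s$: with $s(x)=\ln x$, for instance, $s(n+1)-s(n)=\ln(1+1/n)\to 0$, so the $s$-derivative of any such cutoff must blow up. The fix is immediate — take the transition regions to have $s$-length $1$, i.e.\ support $\chi_n$ in $[s^{-1}(s(1/n)-1),\,s^{-1}(s(n)+1)]$, which is compactly contained in $(0,+\infty)$ because $s(0+)=-\infty$ and $s(+\infty)=+\infty$ — but as written the construction fails. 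A second, harmless, remark: the appeal in your last paragraph to the hypotheses $\limsup_{x\to\infty}Q(x,(0,M)\cup\{\d\})=0$ and $\limsup_{x\to 0}Q(x,\{\d\})=0$ is not needed to close the dominated convergence: for each fixed $x$ the measure $Q(x,\cdot)$ is finite and $|g_n|\le\|g\|_\infty$, so $\mathcal U g_n\to\mathcal U g$ pointwise with the uniform bound $\|\partial g/\partial s\|_\infty+\|g\|_\infty\sup_n\|\partial\chi_n/\partial s\|_\infty+2\|g\|_\infty\sup_x Q(x,E)$, which is all that bounded pointwise convergence (Proposition~4.3.1 in~\cite{EK-mp}) requires.
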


\begin{proof}
  Note that $X$ is a solution to the $(\mathcal U,D)$ martingale problem, so that the problem admits at least one solution.

  Assume now that $Y$ is a solution to the $(\mathcal U,D)$ martingale problem. Then, for all $h\in D$ and all $x\in E$,
  \[
    h(Y_t)-h(Y_0)-\int_0^t \left[\frac{\d h}{\d s}(Y_u)+\int_{(0,Y_u)}h(y)Q(Y_u,\mathrm dy)-h(Y_u)Q(Y_u,E)\right]\,\mathrm du
  \]
  is a $\PP_x$-martingale.
  Let $f\in \mathcal D(\mathcal U)$ such that $f(1)=0$. Note that $\d f/\d s$ is bounded. For all $n\geq 2$, let $h_n$ be the $s$-absolutely continuous compactly supported function defined by
  \[
    h_n(x)=\begin{cases}
      f(\d)&\text{ if }x=\d,\\
      \int_1^x \frac{\d f}{\d s}(y)\,s(\mathrm dy)&\text{ if }x\in(\nicefrac1n,n),\\
      (f(n)-s(x)+s(n))_+&\text{ if }x\geq n\text{ and }f(n)\geq 0,\\
      -(f(n)+s(x)-s(n))_-&\text{ if }x\geq n\text{ and }f(n)\leq 0,\\
      (f(\nicefrac1n)+s(x)-s(\nicefrac1n))_+&\text{ if }x\leq\nicefrac1n\text{ and }f(\nicefrac1n)\geq 0,\\
      -(f(\nicefrac1n)+s(n)-s(x))_-&\text{ if }x\leq \nicefrac1n\text{ and }f(\nicefrac1n)\leq 0,\\
    \end{cases}
  \]
  Then $h_n$ is bounded by $\|f\|_\infty$ and $h_n(x)$ converges toward $f(x)$ for all $x\in E$. Moreover, $\d h_n/\d s$ is bounded by $\|\d f/\d s\|_\infty\vee 1$ and $\d h_n/\d s(x)$ converges toward $\d f/\d s(x)$ for all $x\in E$, with the extension $\d h_n/\d s(\d)=0$. Finally, since $Q(x,\cdot)$ is a bounded measure and $h_n$ is uniformly bounded in $n$, $Q(\cdot,h_n)-h_n(\cdot)Q(\cdot,E)$ is bounded and, by dominated convergence, $Q(x,h_n)-h_n(x)Q(x,E)$ converges toward $Q(x,f)-f(x)Q(x,E)$ for all $x\in E$. We deduce that $(h_n,\mathcal U h_n)$ converges toward $(f,\mathcal U f)$ in the bounded point-wise sense, and hence that $f(Y_t)-f(x)-\int_0^t \mathcal Uf(Y_u)\,\mathrm du$ is a martingale. If $f(1)\neq 0$, then one derives the same result by considering the function $f-f(1)$.

  Since this is true for all $f\in\cD(\mathcal U)$, we deduce that $Y$ satisfies the $(\mathcal U,\cD(\mathcal U))$ martingale problem (see for instance Proposition~4.3.1 in~\cite{EK-mp}). But $(\mathcal U,\cD(\mathcal U))$ is the infinitesimal generator of the strongly continuous semigroup $T$, and hence its martingale problem is well-posed (this is a consequence of Hille-Yosida Theorem~1.2.6 and Theorem~4.4.1 in~\cite{EK-mp}). As a consequence the finite dimensional laws of $X$ and $Y$ are the same, which concludes the proof of Proposition~\ref{prop:martProblem}.
\end{proof}

\begin{proposition}
  \label{prop:martProblem2}
  Take the assumptions of the previous proposition.    Let $D'$ be the space of  functions $f\in D$, such that $\d f/\d s$ is continuous\footnote{The proof still holds true under stronger regularity conditions on $\d f/\d s$}, with the extension $\d f/\d s(\d)=0$. Then the $(\mathcal U,D')$ martingale problem is well posed, and its unique solution is the Markov process $X$.
\end{proposition}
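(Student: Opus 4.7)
The plan is to reduce to the previous Proposition~\ref{prop:martProblem}, which already establishes well-posedness of the $(\mathcal U,D)$-martingale problem, by showing that every c\`adl\`ag solution of the $(\mathcal U,D')$-martingale problem is also a solution of the $(\mathcal U,D)$-martingale problem. Existence is free, since the process $X$ constructed earlier solves the $(\mathcal U,D)$-martingale problem and $D'\subset D$.

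Let $Y$ be any c\`adl\`ag solution of the $(\mathcal U,D')$-martingale problem. To upgrade it to a solution of the $(\mathcal U,D)$-martingale problem, fix $f\in D$ and construct a sequence $(f_n)_{n\ge 1}\subset D'$ such that $f_n\to f$ and $\d f_n/\d s\to \d f/\d s$ pointwise on $E$, with $\|f_n\|_\infty$ and $\|\d f_n/\d s\|_\infty$ bounded uniformly in $n$, and with the supports of the $f_n$ contained in a common compact subset of $(0,+\infty)$. A concrete way to do this is to set $g(t):=(\d f/\d s)(s^{-1}(t))$, which is a bounded measurable function of compact support on $\mathbb R$, mollify $g$ by a smooth convolution kernel to obtain continuous bounded approximants $g_n$ supported in a slight enlargement of $\mathrm{supp}\, g$, and then let
\[
  f_n(x)=f(x_0)+\int_{s(x_0)}^{s(x)} g_n(t)\,\mathrm dt,\qquad x\in(0,+\infty),
\]
with $f_n(\partial)=f(\partial)=0$ and $x_0$ chosen to the left of the supports (and a subsequent truncation outside a common compact interval to guarantee compact support). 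By construction $f_n\in D'$, $\d f_n/\d s=g_n\circ s$ is continuous, and the standard properties of mollification give the required pointwise and bounded convergences.

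The second step is to transfer the martingale property across the limit. Since $Q(x,\cdot)$ is a finite measure and $\|f_n\|_\infty$ is bounded, the dominated convergence theorem yields
\[
  \int_{(0,x)}(f_n(y)-f_n(x))\,Q(x,\mathrm dy)
  \longrightarrow \int_{(0,x)}(f(y)-f(x))\,Q(x,\mathrm dy)
\]
pointwise in $x$, with uniformly bounded integrand, so $\mathcal U f_n\to \mathcal U f$ in the bounded pointwise sense. Applying bounded convergence inside conditional expectations in the martingale identity
\[
  \mathbb E\Bigl[f_n(Y_t)-f_n(Y_r)-\int_r^t \mathcal U f_n(Y_u)\,\mathrm du \,\Big|\,\mathcal F_r\Bigr]=0
\]
for $r\le t$ passes to the limit and produces the analogous identity for $f$. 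Hence $Y$ solves the $(\mathcal U,D)$-martingale problem, and Proposition~\ref{prop:martProblem} then forces the finite-dimensional distributions of $Y$ to coincide with those of $X$.

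The only delicate point is the construction of the approximating sequence $f_n$: one must control simultaneously the compact support, the uniform bound on $\d f_n/\d s$, and the continuity of $\d f_n/\d s$. The mollification-on-the-$s$-scale trick above handles all three requirements at once, since the continuity of $s^{-1}$ transports smoothness of $g_n$ into continuity of $\d f_n/\d s$, and the truncation outside a slightly larger compact set preserves the pointwise limits because the supports of $f$ and of all $f_n$ fit in a common compact set. Once this is done, the martingale-closure argument is routine.
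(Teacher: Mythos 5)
Your overall strategy is the same as the paper's: show that any solution of the $(\mathcal U,D')$-martingale problem also solves the $(\mathcal U,D)$-problem by approximating $f\in D$ with functions in $D'$, then invoke Proposition~\ref{prop:martProblem}. However, there is a genuine gap in the approximation step. Mollifying the bounded measurable function $g=(\d f/\d s)\circ s^{-1}$ gives $g_n\to g$ only at Lebesgue points, i.e.\ $\lambda_1$-almost everywhere, \emph{not} pointwise on all of $E$ as you assert. To pass to the limit in $\E\bigl[\int_r^t \tfrac{\d f_n}{\d s}(Y_u)\,\dd u\mid\cF_r\bigr]$ by bounded convergence you need $\tfrac{\d f_n}{\d s}(Y_u)\to \tfrac{\d f}{\d s}(Y_u)$ for $\dd\P\otimes\dd u$-a.e.\ $(\omega,u)$, i.e.\ you need the expected occupation measure $\mu_t(A)=\int_0^t S_u\1_A\,\dd u$ of the \emph{unknown} solution $Y$ not to charge the Lebesgue-null exceptional set. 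Nothing at this stage of the argument guarantees that $\mu_t$ is absolutely continuous with respect to $\lambda_1\circ s$ — that would be a structural property of $Y$ which is precisely what uniqueness is meant to establish, so it cannot be assumed. (By contrast, the convergence of the jump terms $\int(f_n(y)-f_n(x))Q(x,\dd y)$ is fine, since there you have genuine pointwise convergence of $f_n$ itself from the integrated formula.)

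The paper closes exactly this hole by choosing the approximating derivatives $g_n$ to converge to $\d f/\d s$ in $L^1(\mu_t+\lambda_1)$, where $\mu_t$ is the occupation measure of the candidate solution itself; such $g_n$ exist because continuous compactly supported functions are dense in $L^1$ of any finite measure. The $L^1(\lambda_1)$ part gives $h_n(x)=\int_a^x g_n\to f(x)$ pointwise, and the $L^1(\mu_t)$ part gives $\int_0^t S_u\frac{\d h_n}{\d s}(x)\,\dd u=\mu_t(g_n)\to\mu_t(\d f/\d s)$ directly, with no need for any pointwise statement along the trajectories of $Y$. If you replace your mollification-plus-bounded-convergence step with an approximating sequence adapted to $\mu_t+\lambda_1$ in this way (working with the integrated semigroup identity rather than the conditional martingale identity), your argument becomes correct and coincides with the paper's.
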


\begin{proof}
  Similarly to the proof of the previous proposition, we know that $X$ is a solution to the $(\mathcal U,D')$ martingale problem, and our aim is to show the uniqueness of the solution by a density argument. 

  Assume that $Y$ is a solution to the  $(\mathcal U,D')$ martingale problem and let $S$ be its semigroup, so that, for all $h\in D'$,
  \[
    S_t h(x)=h(x)+\int_0^t S_u\frac{\d h}{\d s}(x)\,\mathrm du+ \int_0^t S_u Q(\cdot,h)(x)\,\mathrm du-h(x)\int_0^t S_u Q(\cdot,E)(x)\,\mathrm du,\ \forall x\in E.
  \]

  Let $f\in D$ (where $D$ is defined in Proposition~\ref{prop:martProblem}) and denote by $[a,b]\subset(0,+\infty)$, $a<b$, a compact interval containing the support of $f\rvert_{(0,+\infty)}$. Fix $t>0$ and let $g_n$ be a bounded sequence of continuous functions with support in $[a/2,2b]\cup\{\d\}$ such that $g_n\to \d f/\d s$ in $L^1(\mu_t+\lambda_1)$, where
  \[
    \mu_t(A):= \int_0^t S_u\1_A\,\mathrm du, \text{ for all measurable set }A\subset(0,+\infty)\cup\{\d\}.
  \]
  Defining $h_n(x)=\int_a^x g_n(y)\,\mathrm dy$, we observe that $(h_n)_{n\in\N}$ is a bounded sequence in $D'$ such that $h_n(x)\to f(x)$ when $n\to+\infty$, for all $x>0$. In particular, using the fact that $A\mapsto \int_0^t S_u Q(\cdot,A)(x)\,\mathrm du$ defines a bounded measure and by dominated convergence,
  \[
    \int_0^t S_u Q(\cdot,h_n)(x)\,\mathrm du\xrightarrow[n\to+\infty]{}  \int_0^t S_u Q(\cdot,f)(x)\,\mathrm du.
  \]
  Similarly, $S_t h_n(x)\to S_t f(x)$ when $n\to+\infty$, for all $x>0$.
  Moreover, $\int_0^t S_u\frac{\d h_n}{\d s}(x)\,\mathrm du=\int_0^t S_u g_n(x)\,\mathrm du=\mu_t(g_n)$ converges to $\mu_t(\d f/\d s)=\int_0^t S_u\frac{\d h}{\d s}(x)\,\mathrm du$ when $n\to+\infty$. Finally, we proved that $S$ satisfies
  \[
    S_t f(x)=f(x)+\int_0^t S_u\frac{\d f}{\d s}(x)\,\mathrm du+ \int_0^t S_u Q(\cdot,f)(x)\,\mathrm du-f(x)\int_0^t S_u Q(\cdot,E)(x)\,\mathrm du,\ \forall f\in D.
  \]
  This implies that $Y$ satisfies the $(\mathcal U,D)$ martingale problem, and hence, according to Proposition~\ref{prop:martProblem}, that the finite dimensional laws of $Y$ and $X$ are the same. This concludes the proof of Proposition~\ref{prop:martProblem2}.
\end{proof}
\end{appendix}

\begin{acks}
  We thank two anonymous referees for their detailed and insightful comments
  which helped improve the article. Denis Villemonais acknowledges the support
  of the Institut Universitaire de France (IUF) as a Junior Member. This
  research was conducted with the support of the Inria associated team MAGO,
  which provided financial and collaborative assistance.
\end{acks}

\bibliography{GF}
\bibliographystyle{imsart-nameyear}

\end{document}